\documentclass[a4paper,11pt,reqno]{amsart}
\usepackage{amsthm}
\usepackage{amsmath}
\usepackage{amssymb}
\usepackage{mathrsfs}
\usepackage{latexsym}
\usepackage{exscale}
\usepackage{geometry} 
\usepackage[utf8]{inputenc}
\usepackage{verbatim}
\usepackage{enumerate} 
\usepackage{ esint } 
\usepackage{fancyhdr}

\usepackage{enumitem}

\makeatletter
\@namedef{subjclassname@2020}{%
  \textup{2020} Mathematics Subject Classification}
\makeatother

\usepackage[colorlinks=true, pdfstartview=FitV, linkcolor=blue, citecolor=red, urlcolor=blue]{hyperref} 

\theoremstyle{plain}

\newtheorem*{teo*}{Theorem}
\newtheorem*{propo*}{Proposition}
\newtheorem*{lema*}{Lemma}
\numberwithin{equation}{section}
\newtheorem{teo}{Theorem}[section]
\newtheorem{lema}[teo]{Lemma}
\newtheorem{coro}[teo]{Corollary}
\newtheorem{propo}[teo]{Proposition}

\theoremstyle{definition}
\newtheorem{remark}[teo]{Remark}

\newtheorem*{defi*}{Definition}
\newtheorem{defi}[teo]{Definition}

\calclayout

\allowdisplaybreaks

\newcommand{\R}{{\mathbb{R}^n}}

\begin{document}

\baselineskip=17pt

\title[Weights for variable fractional maximal and rough operators]{Characterization of weights for the variable fractional maximal operator and weighted inequalities for variable fractional rough operators}

\author[R.~M.~Pastrana]{Rodrigo M. Pastrana}
\address{R.~M.~Pastrana\\ FaMAF \\ Universidad Nacional de C\'ordoba \\
CIEM (CONICET) \\ 5000 C\'ordoba, Argentina}
\email{rodrigo.pastrana@unc.edu.ar}

\author[M.~S.~Riveros]{M. Silvina Riveros}
\address{M.~S.~Riveros \\ FaMAF \\ Universidad Nacional de C\'ordoba \\
CIEM (CONICET) \\ 5000 C\'ordoba, Argentina}
\email{msriveros@unc.edu.ar}

\author[R.~E.~Vidal]{Ra\'ul E. Vidal}
\address{R.~E.~Vidal \\ FaMAF \\ Universidad Nacional de C\'ordoba \\
	CIEM (CONICET) \\ 5000 C\'ordoba, Argentina}
\email{raul.vidal@unc.edu.ar}


\begin{abstract} 
	We characterize the class of weights related to the boundedness of variable fractional maximal operator $M_{\beta(\cdot),r(\cdot)}$	
	on variable Lebesgue spaces. This extend previously known results, including those corresponding to the fractional operator 
	$M_{\beta(\cdot),1}$. 
	In addition, we introduce a class of kernels $K$
	satisfying a new variable Hörmander-type condition $H_{\beta(\cdot),r(\cdot)}$. For the fractional operator $T_{\beta(\cdot)}$ given by a kernel in $H_{\beta(\cdot),r(\cdot)}$, we prove a Coifman-Fefferman inequality and weighted inequalities in variable Lebesgue space. Finally, we provide examples of kernels in this variable Hörmander class.
\end{abstract}

\thanks{ The authors are  partially supported by CONICET and SECYT-UNC}

\subjclass[2020]{42B20, 42B25, 42B35, 46E30}

\keywords{ Maximal operators
Calder\'on-Zygmund Operators,
Fractional Operators, 
generalized  H\"ormander's condition,   Weighted inequalities.\\
}

\maketitle

\section{Introduction and main results}

 Let $r(\cdot),\beta(\cdot)\in \mathcal{P}(\R)$. For a function $f\in L^{r(\cdot)}_{\text{loc}}(\R)$, the fractional maximal operator $M_{\beta(\cdot),r(\cdot)}$ is defined by
\begin{equation}\label{def M}
	M_{\beta(\cdot),r(\cdot)}f(x)=\sup_{Q\ni x}\|\chi_Q\|_{\beta(\cdot)}\frac{\|f\chi_Q\|_{r(\cdot)}}{\|\chi_Q\|_{r(\cdot)}}.
\end{equation}
where the supreme is taken over all cubes $Q$ with sides parallel to the coordinate axes.

When $\beta(\cdot)\equiv\infty$ we simply write $M_{r(\cdot)}$ instead of $M_{\infty,r(\cdot)}$. If $\beta(\cdot)\equiv\infty$ and $r(\cdot)\equiv 1$, we obtain the classical Hardy--Littlewood maximal operator, $M$. If $\beta(\cdot)\equiv n/\alpha$ with $0\leq\alpha<n$, then $M_{\beta(\cdot),1}$ reduces to the fractional maximal operator $M_{n/\alpha,1}$, usually denoted by $M_\alpha$ in the literature. If $r(\cdot)\equiv r\geq 1$ then the operator $M_{r(\cdot)}$ reduces to the maximal operator $M_r$, defined by $M_r(f)=\left(M(|f|^r)\right)^{1/r}$. If $\beta(\cdot)\equiv n/\alpha$ with $0\leq\alpha<n$ and $r(\cdot)\equiv r\geq 1$, we obtain the maximal operator $M_{n/\alpha,r}$, defined by
$$M_{n/\alpha,r}(f)(x)=\sup_{Q\ni x}|Q|^{\alpha/n}\left(\frac{1}{|Q|}\int_{Q}|f|^r\right)^{1/r},$$
where the supremum is taken over all cubes $Q$ with sides parallel to the coordinate axes.

The variable fractional maximal operator $M_{r(\cdot)}$ was introduced by Diening, Harjulehto, Hästö, and Růžička in \cite{libro Diening}. The general case of $M_{\beta(\cdot),r(\cdot)}$ was later defined by Melchiori and Pradolini in \cite{Luciana Potential}.

In \cite{libro Diening} and \cite{Luciana Potential}, the boundedness of the maximal operators $M_{r(\cdot)}$ and $M_{\beta(\cdot),r(\cdot)}$, respectively, was proved in variable Lebesgue spaces for exponents $p(\cdot)\in\mathcal{P}^{\text{log}}(\R)$ (see the definition in Section \ref{sec 2}). 
\begin{teo}[\cite{libro Diening}]\label{acot Mr}
	Let $p(\cdot),r(\cdot),s(\cdot)\in\mathcal{P}^{\text{log}}(\R)$ such that $p(\cdot)=r(\cdot)s(\cdot)$ and $s^->1$. Then 
	$$M_{r(\cdot)}:L^{p(\cdot)}(\R)\hookrightarrow L^{p(\cdot)}(\R).$$
\end{teo}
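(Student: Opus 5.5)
The plan is to reduce the boundedness of $M_{r(\cdot)}$ on $L^{p(\cdot)}(\R)$ to the boundedness of the Hardy--Littlewood maximal operator $M$ on $L^{s(\cdot)}(\R)$. For the latter we use Diening's theorem: $M$ is bounded on $L^{s(\cdot)}(\R)$ whenever $s(\cdot)\in\mathcal{P}^{\log}(\R)$ and $s^->1$. The bridge between the two is the key pointwise estimate
\[
\frac{\|f\chi_Q\|_{r(\cdot)}}{\|\chi_Q\|_{r(\cdot)}}\lesssim \Big(\fint_Q |f|^{r(\cdot)}\Big)^{1/r_Q}+\text{(error term)},
\]
where $r_Q$ is a suitable average of $r(\cdot)$ over $Q$ (for instance the harmonic mean, or $r_Q^-$). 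Since $r(\cdot)\in\mathcal{P}^{\log}$, we have $\|\chi_Q\|_{r(\cdot)}\approx |Q|^{1/r_Q}$ uniformly in $Q$. By homogeneity we may normalize $\|f\|_{p(\cdot)}\le 1$; then it suffices to show $\rho_{p(\cdot)}(M_{r(\cdot)}f)\lesssim 1$.

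\textbf{Step 1 (local estimate).} Fix a cube $Q$ and set $\lambda=\|f\chi_Q\|_{r(\cdot)}$. By the unit-ball property, $\int_Q (|f|/\lambda)^{r(\cdot)}=1$ when $\lambda>0$. I would split $f=f\chi_{\{|f|>1\}}+f\chi_{\{|f|\le 1\}}$.

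For the large part, $|f|\le 1+|f|^{r(\cdot)/r_Q^-}$ is not directly usable. Instead I would apply the standard "key lemma" from the Diening--Harjulehto--Hästö--Růžička book, in its form for $p(\cdot)=r(\cdot)s(\cdot)$ with $\|f\|_{p(\cdot)}\le1$:
\[
\Big(\frac{\|f\chi_Q\|_{r(\cdot)}}{\|\chi_Q\|_{r(\cdot)}}\Big)^{r(x)}\lesssim \fint_Q |f(y)|^{r(y)}\,dy + h(x)+\fint_Q h,
\]
for all $x\in Q$, where $h(x)=(e+|x|)^{-m}$ with $m$ large, so that $h^{s(\cdot)}\in L^1$.

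This lemma is exactly where the log-Hölder continuity (local and at infinity) of $r$ and $p$ is used, in the same way as in Diening's proof of the modular inequality for $M$ (the Jensen-type "$\fint_Q |g|^{p(\cdot)/p_Q}$" lemma). Its proof splits into the small-cube and large-cube cases:
\begin{itemize}
\item \emph{Small cubes:} $|Q|^{r_Q^- - r_Q^+}\lesssim 1$ by the local log-condition.
\item \emph{Large cubes:} we compare $r(\cdot)$ with $r_\infty$ via the decay condition, which produces the $h$ terms.
\end{itemize}

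\textbf{Step 2 (pointwise domination).} Taking the supremum over $Q\ni x$ in Step 1 gives
\[
\big(M_{r(\cdot)}f(x)\big)^{r(x)}\lesssim M\big(|f|^{r(\cdot)}\big)(x)+h(x)+Mh(x).
\]
Note also that $\big\||f|^{r(\cdot)}\big\|_{s(\cdot)}\le 1$, because $\rho_{s(\cdot)}(|f|^{r(\cdot)})=\rho_{p(\cdot)}(f)\le 1$.

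\textbf{Step 3 (conclusion).} Raising to the power $s(x)$ gives
\[
\rho_{p(\cdot)}(M_{r(\cdot)}f)=\int (M_{r(\cdot)}f)^{r(x)s(x)}\,dx\lesssim \rho_{s(\cdot)}\big(M(|f|^{r(\cdot)})\big)+\rho_{s(\cdot)}(h)+\rho_{s(\cdot)}(Mh).
\]
Each term is bounded by a constant:
\begin{itemize}
\item the first by the boundedness of $M$ on $L^{s(\cdot)}$ (since $s\in\mathcal{P}^{\log}$ and $s^->1$), using that a bounded norm gives a bounded modular;
\item the second by the choice of $m$;
\item the third again by the boundedness of $M$, since $h\in L^{s(\cdot)}$.
\end{itemize}
Hence $\|M_{r(\cdot)}f\|_{p(\cdot)}\lesssim 1$.

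\textbf{Main obstacle.} The main difficulty is Step 1, specifically making the comparison of $\|f\chi_Q\|_{r(\cdot)}/\|\chi_Q\|_{r(\cdot)}$ with a modular average uniform over all cubes. I would first try to obtain it by writing $\lambda^{r(y)}$ against $\lambda^{r_Q}$ and controlling $\lambda^{r(y)-r_Q}$. This requires an a priori bound $\lambda\lesssim \|\chi_Q\|_{r(\cdot)}^{\,}\,|Q|^{-1/p_Q}$-type quantities, which follow from Hölder's inequality with $p=rs$. With that bound in hand, the log-Hölder estimates $|Q|^{-(r^+_Q-r^-_Q)}\lesssim1$ and the decay condition close the argument.
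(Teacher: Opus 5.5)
The paper does not prove this statement; it is quoted from \cite{libro Diening}, so your argument has to stand on its own. Your reduction is sound. Suppose that for $\|f\|_{p(\cdot)}\le 1$ you have $(M_{r(\cdot)}f(x))^{r(x)}\lesssim M(|f|^{r(\cdot)})(x)+h(x)+Mh(x)$. Then Steps 2 and 3 follow from Diening's theorem on $L^{s(\cdot)}(\R)$, as you say. (Step 3 uses $r^+,s^+<\infty$, both to absorb $C^{s(x)}$ and to pass between norms and modulars. The statement allows unbounded exponents, and that case would require keeping the constant inside the power, $(\beta\,\cdot)^{r(x)}$.)

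The genuine gap is Step 1, which is the whole content of the theorem beyond Diening's result. It is not the key estimate of \cite{libro Diening}: that estimate concerns the $L^1$-average $|Q|^{-1}\int_Q|f|$. You need an estimate for the Luxemburg average $A:=\lambda/\|\chi_Q\|_{r(\cdot)}$ with $\lambda=\|f\chi_Q\|_{r(\cdot)}$, so it must be proved, and your sketch does not prove it.

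Write $|Q|^{-1}\int_Q|f|^{r(y)}dy=|Q|^{-1}\int_Q(|f|/\lambda)^{r(y)}\lambda^{r(y)}dy$ and use $\int_Q(|f|/\lambda)^{r(\cdot)}=1$. Then you must bound from below $\lambda^{r(y)-r(x)}$ for small cubes, or $\lambda^{r(y)-r_\infty}$ for large cubes.

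\emph{Small cubes.} For $|Q|\le 1$, the upper bound you mention only gives $\lambda\lesssim 1$. The decisive ingredient is a \emph{lower} bound, which comes from a dichotomy. Either $A^{r(x)}\le h(x)$, or $\lambda\gtrsim(|Q|h(x))^{1/r(x)}$; in the second case local log-H\"older plus decay give $\lambda^{r(y)-r(x)}\gtrsim 1$.

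\emph{Large cubes.} Here your mechanism genuinely breaks. Let $Q$ be huge and contain the origin. The a priori bound is $\lambda\lesssim\|\chi_Q\|_{r(\cdot)s'(\cdot)}\approx|Q|^{1/(rs')_\infty}$, and this is attained up to constants by $f=c|Q|^{-1/p_\infty}\chi_Q$. Meanwhile $r_\infty-r(y)$ may be of order one near $0$, so $\lambda^{r(y)-r_\infty}\to 0$ there. Neither $|Q|^{-(r^+_Q-r^-_Q)}\lesssim 1$ (trivial and useless when $|Q|\ge1$) nor the decay condition controls this pointwise.

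\emph{How to close it.} You need to show that this region carries little of the modular.
\begin{itemize}
\item By decay, $\{y:\lambda^{r_\infty-r(y)}>\theta\}$ lies in $B(0,\lambda^{\gamma})$, where $\gamma=c/\log\theta$ and $c$ is the decay constant of $r(\cdot)$.
\item On this ball, for $\lambda>1$, using $1\le r\le p$ and $\rho_{p(\cdot)}(f)\le1$,
\[
\int(|f|/\lambda)^{r(y)}dy\le\lambda^{-1}\int(1+|f|^{p(y)})dy\lesssim\lambda^{-1}(1+\lambda^{n\gamma}),
\]
which is $<1/2$ once $\theta$ and $\lambda$ are large. Bounded $\lambda$ is harmless.
\item Hence the rest of $Q$ carries modular at least $1/2$. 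There, points with $r(y)<r_\infty$ have the factor bounded below by definition.
\item For points with $r(y)\ge r_\infty$ you again need a lower bound on $\lambda$. It comes from the alternative $A^{r_\infty}\le C|Q|^{-1}\int_Q h$. If that alternative fails, then $\lambda^{r_\infty}\gtrsim\int_Q h\gtrsim(e+|y|)^{-m}$ for every $y\in Q$, since $\ell(Q)\ge1$.
\item Finally, $A\lesssim|Q|^{-1/p_\infty}\le 1$ on large cubes, so $A^{r(x)}\lesssim A^{r_\infty}+h(x)$.
\end{itemize}
Combining these proves your Step 1, and the rest of your argument then closes.
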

And for the general case,
\begin{teo}[\cite{Luciana Potential}]\label{acot MBr}
	Let $p(\cdot),q(\cdot),r(\cdot),s(\cdot)\in\mathcal{P}^{\text{log}}(\R)$ such that ${p(\cdot)}\leq{q(\cdot)}\leq q^+<\infty$, ${p(\cdot)}={r(\cdot)}{s(\cdot)}$ and $s^->1$. If ${\beta(\cdot)}$ is the exponent defined by $1/p(\cdot)-1/q(\cdot)=1/\beta(\cdot)$, then
	$$M_{{\beta(\cdot)},{r(\cdot)}}:L^{p(\cdot)}(\R)\hookrightarrow L^{q(\cdot)}(\R).$$
\end{teo}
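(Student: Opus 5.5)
The strategy is to reduce to Theorem~\ref{acot Mr} through a pointwise Hölder-type inequality that moves the fractional factor onto an $L^{q(\cdot)}$-controlled quantity. Fix $x$ and a cube $Q\ni x$. Since $1/\beta(\cdot)=1/p(\cdot)-1/q(\cdot)$, the generalized Hölder inequality in variable Lebesgue spaces, together with log-Hölder continuity, gives the standard estimate
$$\|\chi_Q\|_{\beta(\cdot)}\approx \frac{\|\chi_Q\|_{p(\cdot)}}{\|\chi_Q\|_{q(\cdot)}},$$
valid for all cubes, with constants depending only on the log-Hölder constants. The comparison is obtained separately for small cubes ($|Q|\le1$, where $\|\chi_Q\|_{p(\cdot)}\approx|Q|^{1/p(x)}$ for $x\in Q$) and for large cubes (where the exponents at infinity, $p_\infty$ etc., govern).

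The main step is a Hedberg-type splitting. First write
$$\|\chi_Q\|_{\beta(\cdot)}\frac{\|f\chi_Q\|_{r(\cdot)}}{\|\chi_Q\|_{r(\cdot)}} \le \Big(\frac{\|f\chi_Q\|_{r(\cdot)}}{\|\chi_Q\|_{r(\cdot)}}\Big)^{p(x)/q(x)}\cdot \Big(\|\chi_Q\|_{\beta(\cdot)}^{q(x)/(q(x)-p(x))}\frac{\|f\chi_Q\|_{r(\cdot)}}{\|\chi_Q\|_{r(\cdot)}}\Big)^{1-p(x)/q(x)}.$$
Then bound the second factor by a constant times $\|f\|_{p(\cdot)}^{1-p(x)/q(x)}$. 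Hölder gives $\|f\chi_Q\|_{r(\cdot)}\lesssim \|f\chi_Q\|_{p(\cdot)}\|\chi_Q\|_{rs'}$ with $1/r=1/p+1/(rs')$. Combining this with the norm-of-characteristic estimates above, one checks that $\|\chi_Q\|_{\beta}^{q/(q-p)}\|\chi_Q\|_{rs'}/\|\chi_Q\|_r\approx \|\chi_Q\|_{\beta}^{\beta/p}\|\chi_Q\|_{p}^{-1}\approx$ const. Morally, the exponent calculation is $|Q|^{1/\beta(x)\cdot \beta(x)/p(x)-1/p(x)}=1$. Here $x\in Q$ is used to freeze the exponents, using log-Hölder for small cubes and the decay condition for large ones.

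Normalizing $\|f\|_{p(\cdot)}=1$, taking the supremum over $Q$ yields
$$M_{\beta(\cdot),r(\cdot)}f(x)\lesssim \big(M_{r(\cdot)}f(x)\big)^{p(x)/q(x)}.$$
Hence
$$\int (M_{\beta,r}f)^{q(x)}\,dx\lesssim \int (M_{r}f)^{p(x)}\,dx.$$
The right-hand side is bounded by a constant, because Theorem~\ref{acot Mr} gives $\|M_{r(\cdot)}f\|_{p(\cdot)}\lesssim 1$, and a bounded norm implies a bounded modular. A modular bound in turn gives a norm bound, since $q^+<\infty$. Homogeneity then removes the normalization.

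I expect the main obstacle to be the uniform control of the factor involving $\|\chi_Q\|_{\beta(\cdot)}$ for large cubes. There $1/\beta$ may not be bounded away from the relevant values, and the constants in the pointwise inequality have to be independent of $x$. I would handle this by splitting $f=f\chi_{\{|f|>1\}}+f\chi_{\{|f|\le1\}}$ in the modular step, and by using the standard $\|\chi_Q\|_{p(\cdot)}\approx|Q|^{1/p_\infty}$ estimate for $|Q|\ge1$.
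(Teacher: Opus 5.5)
The paper gives no proof of this theorem; it is quoted from Melchiori and Pradolini, so your argument has to stand on its own. It does not: your central pointwise inequality is false. For $\|f\|_{p(\cdot)}=1$, test on $f=\chi_Q/\|\chi_Q\|_{p(\cdot)}$. Then $M_{r(\cdot)}f\le \|\chi_Q\|_{p(\cdot)}^{-1}$ and $M_{\beta(\cdot),r(\cdot)}f(x)\ge \|\chi_Q\|_{\beta(\cdot)}/\|\chi_Q\|_{p(\cdot)}$. So $M_{\beta(\cdot),r(\cdot)}f(x)\lesssim (M_{r(\cdot)}f(x))^{p(x)/q(x)}$ would force $\|\chi_Q\|_{\beta(\cdot)}\lesssim\|\chi_Q\|_{p(\cdot)}^{p(x)/\beta(x)}$ for every cube $Q\ni x$.

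For $|Q|\le1$ this holds, and your computation works there. One caveat: $\|\chi_Q\|_{\beta(\cdot)}^{\beta(x)/p(x)}/\|\chi_Q\|_{p(\cdot)}$ is not uniformly bounded when $\beta^+=\infty$; the constant only becomes harmless after raising to the power $1-p(x)/q(x)=p(x)/\beta(x)$.

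For $|Q|\ge1$ the two sides are $|Q|^{1/\beta_\infty}$ and $|Q|^{p(x)/(p_\infty\beta(x))}$. The inequality therefore requires $p(x)/q(x)\le p_\infty/q_\infty$, which the hypotheses do not give. The decay condition cannot freeze the exponent at $x$ on a cube through $x$ whose size is large compared with $|x|$. Concretely, take $n=1$, $r\equiv1$, $s\equiv p\equiv2$, and $q\in\mathcal{P}^{\log}$ Lipschitz with $q=2$ on $[-1,1]$ and $q=4$ off $[-2,2]$. All hypotheses hold, with $\beta=\infty$ on $[-1,1]$ and $\beta=4$ off $[-2,2]$. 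For $Q=[-R,R]$ and $f=(2R)^{-1/2}\chi_Q$ we have $\|f\|_2=1$ and $M_{\beta(\cdot),1}f(0)\ge\|\chi_{[2,R]}\|_4(2R)^{-1/2}\gtrsim R^{-1/4}$. But $(Mf(0))^{p(0)/q(0)}=(2R)^{-1/2}$, so the ratio blows up as $R\to\infty$.

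This $f$ satisfies $|f|\le1$, so your proposed split $f=f\chi_{\{|f|>1\}}+f\chi_{\{|f|\le1\}}$ does not touch the problem. The failure is in the pointwise step itself, and it needs an additive error term, not a decomposition of $f$. The standard repair is to freeze the exponents at infinity on large cubes.

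\begin{itemize}
\item If $|Q|\ge1$, set $A_Q=\|f\chi_Q\|_{r(\cdot)}/\|\chi_Q\|_{r(\cdot)}$. Your Hölder step gives $A_Q\lesssim\|\chi_Q\|_{p(\cdot)}^{-1}\approx|Q|^{-1/p_\infty}\lesssim1$.
\item Hence $\|\chi_Q\|_{\beta(\cdot)}A_Q\lesssim|Q|^{1/\beta_\infty}A_Q\lesssim A_Q^{1-p_\infty/\beta_\infty}=A_Q^{p_\infty/q_\infty}$.
\item Now use the elementary lemma: if $|a(x)-b(x)|\lesssim1/\log(e+|x|)$ and $a^->0$, then $t^{a(x)}\lesssim t^{b(x)}+(e+|x|)^{-N}$ for $0\le t\le C$, with $N$ as large as you like. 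The proof splits according to whether $t\le(e+|x|)^{-N}$ or not.
\item Apply it with $a(x)=q(x)p_\infty/q_\infty$ and $b(x)=p(x)$. These are close at infinity because $p^+\le q^+<\infty$ and $1/p(\cdot),1/q(\cdot)\in LH$. On large cubes this gives $(\|\chi_Q\|_{\beta(\cdot)}A_Q)^{q(x)}\lesssim(M_{r(\cdot)}f(x))^{p(x)}+(e+|x|)^{-N}$.
\end{itemize}

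Combined with your small-cube estimate, and taking $N>n$, you get $\int(M_{\beta(\cdot),r(\cdot)}f)^{q(x)}\,dx\lesssim\int(M_{r(\cdot)}f)^{p(x)}\,dx+1$. From there, Theorem \ref{acot Mr} and $q^+<\infty$ finish the argument exactly as you describe.
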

Note that these works did not consider weighted estimates. 

By a weight $\omega$ we mean a locally integrable non-negative function. The classical Muckenhoupt class $A_p$ consists of all weights $\omega$ such that
$$\sup_Q\left(\frac{1}{|Q|}\int_Q\omega\right)\left(\frac{1}{|Q|}\int_Q \omega^{-\frac{1}{p-1}}\right)^{p-1}<\infty,$$
where the supremum is taken over all cubes $Q$ with sides parallel to the coordinate axes. The class $A_\infty$ is defined as $A_\infty=\bigcup_{p\geq1}A_p$.
In the variable context it is said that $\omega\in \mathcal{A}_{p(\cdot)}$ if there exists a constant $C$ such that
\begin{equation*}
	\|\omega\chi_Q\|_{p(\cdot)}\|\omega^{-1}\chi_Q\|_{p^\prime(\cdot)}\leq C|Q|
\end{equation*}
uniformly for all cubes $Q$. Note that $\omega\in \mathcal{A}_p$ if and only if $\omega^p\in A_p$.
\begin{remark}
	We use the notation $\mathcal{A}$ or $\mathbb{A}$ (defined below) for the variable classes of weights and $A$ for the classical Muckenhoupt class for constant exponents (see \cite{pesos Apq constantes,pesos Ap constantes} for the properties of $A_p$).
\end{remark}

The boundedness of the maximal operator $M$ on weighted variable Lebesgue spaces was studied in \cite{C-U Diening Hasto} by Cruz-Uribe, Diening, and Hästö, and in \cite{C-U Fiorenza Neugebauer} by Cruz-Uribe, Fiorenza, and Neugebauer. They proved that if $p(\cdot)\in \mathcal{P}^{\text{log}}(\R)$ with $1<p^-\leq p^+<\infty$, then there exists a constant $C$ such that $$\|(Mf)\omega\|_{p(\cdot)}\leq C\|f\omega\|_{p(\cdot)}$$  
if and only if $\omega\in \mathcal{A}_{p(\cdot)}$.

In the proof carried out in \cite{C-U Diening Hasto}, the boundedness of the maximal operator $M_{r(\cdot)}$ was used as a key tool. We will follow the same approach, using the boundedness of the maximal operator $M_{\beta(\cdot),r(\cdot)}$ to establish the boundedness of $M_{\beta(\cdot),r(\cdot)}$ in weighted variable Lebesgue spaces.

If $0\leq \alpha<n$ and $r(\cdot)\equiv r$, Bernardis, Dalmasso, and Pradolini defined in \cite{pesos Apq 2014} the class of weights $\mathcal{A}_{p(\cdot),q(\cdot)}$ for exponents $p(\cdot),q(\cdot)$ satisfying $1/q(\cdot)=1/p(\cdot)-\alpha/n$ with $p^+<n/\alpha$ as: $\omega\in \mathcal{A}_{p(\cdot),q(\cdot)}$ if there exists a constant $C$ such that
$$\|\chi_Q\omega\|_{q(\cdot)}\|\chi_Q\omega^{-1}\|_{p^\prime(\cdot)}\leq C|Q|^{1-\frac{\alpha}{n}}$$
uniformly for all cubes $Q$. They showed that this class characterizes the boundedness of the maximal operator $M_{n/\alpha,r}$ on variable Lebesgue spaces:
\begin{teo}[\cite{pesos Apq 2014}]
	Let $0 \leq \alpha < n$, let $w$ be a weight, and let $p(\cdot)\in\mathcal{P}^{\text{log}}(\R)$ be such that 
	$1 < p^- \le p^+ < n/\alpha$ and
	$1/q(\cdot) = 1/p(\cdot)-\alpha/n$.	Let $r$ such that 
	$1 \leq r < p^-$. 
	Then
	$$\|(M_{n/\alpha,r}f)\omega\|_{q(\cdot)}\leq C\|f\omega\|_{p(\cdot)}$$
	if and only if 
	$w^{r} \in \mathcal{A}_{\frac{p(\cdot)}{r}, \frac{q(\cdot)}{r}}$.
\end{teo}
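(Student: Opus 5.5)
Both directions reduce to the case $r=1$ by writing $M_{n/\alpha,r}f=(M_{n\,r/\alpha,1}(|f|^r))^{1/r}$. Indeed,
$$|Q|^{\alpha/n}\Big(\tfrac{1}{|Q|}\int_Q|f|^r\Big)^{1/r}=\Big(|Q|^{\alpha r/n}\tfrac{1}{|Q|}\int_Q|f|^r\Big)^{1/r}.$$
Set $\tilde\alpha=\alpha r$, $\tilde p=p(\cdot)/r$, $\tilde q=q(\cdot)/r$ and $g=|f|^r$. Then $1/\tilde q=1/\tilde p-\tilde\alpha/n$ and $1<\tilde p^-\le\tilde p^+<n/\tilde\alpha$, since $r<p^-$ and $p^+<n/\alpha$. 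Also $\tilde p\in\mathcal P^{\log}$. Using $\|h\|_{s(\cdot)}^r=\||h|^r\|_{s(\cdot)/r}$, the weighted inequality for $M_{n/\alpha,r}$ with weight $\omega$ is equivalent to
$$\|(M_{\tilde\alpha}g)\,\omega^r\|_{\tilde q}\le C\|g\,\omega^r\|_{\tilde p}.$$
The condition $\omega^r\in\mathcal A_{\tilde p,\tilde q}$ is exactly the class for this exponent pair. So it suffices to prove the theorem for $r=1$: for $v=\omega$, $\|(M_\alpha f)v\|_{q(\cdot)}\le C\|fv\|_{p(\cdot)}$ holds if and only if $v\in\mathcal A_{p(\cdot),q(\cdot)}$.

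\emph{Necessity.} Fix a cube $Q$ and test against $f=\chi_Q h$ with $h\ge0$. For $x\in Q$ we have $M_\alpha f(x)\ge|Q|^{\alpha/n-1}\int_Q h$, so
$$|Q|^{\alpha/n-1}\Big(\int_Q h\Big)\|v\chi_Q\|_{q(\cdot)}\le C\|hv\chi_Q\|_{p(\cdot)}.$$
Taking the supremum over $h$ with $\|hv\chi_Q\|_{p(\cdot)}\le1$ and using the norm conjugate formula $\sup\int_Q h=\sup\int_Q (hv)v^{-1}\approx\|v^{-1}\chi_Q\|_{p'(\cdot)}$ gives the $\mathcal A_{p(\cdot),q(\cdot)}$ condition. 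The possibility $v^{-1}\chi_Q\notin L^{p'(\cdot)}$ is handled by truncation: replace $v^{-1}$ with $\min(v^{-1},k)$ and pass to the limit by monotone convergence.

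\emph{Sufficiency.} I would follow the Cruz-Uribe--Diening--Hästö scheme. First show that $v\in\mathcal A_{p(\cdot),q(\cdot)}$ implies $v^{-1}\in\mathcal A_{q'(\cdot),p'(\cdot)}$ (by symmetry of the definition) and, crucially, a self-improvement property. That is, there exists $s>1$ with $s<p^-$ such that $v^{s}$ satisfies the condition for the pair $(p(\cdot)/s,\,q(\cdot)/s)$ with the corresponding $\alpha s$; equivalently, a reverse-Hölder/openness property. This is proved via the known equivalence with $v^{q(\cdot)}$ belonging to a variable $A_\infty$-type class and the log-Hölder regularity, which lets $\|\chi_Q\|_{p(\cdot)}$ behave like $|Q|^{1/p(x)}$ for $x\in Q$ on small cubes, and like $|Q|^{1/p_\infty}$ on large ones.

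Given this, decompose $f$ and use the pointwise estimate
$$M_\alpha f(x)\lesssim M_{\alpha s/?,\,s}^{v}(\cdot)\cdots,$$
or more concretely a weighted maximal operator with respect to the measure $v^{q(\cdot)}dx$. Then invoke Theorem~\ref{acot MBr} (with $r(\cdot)\equiv s$ and $\beta=n/\alpha$) together with Hölder's inequality in variable exponent spaces to conclude. Alternatively, use a Calderón--Zygmund/dyadic decomposition: reduce to the dyadic operator, split the cubes into sets $E_j$ where the averages are about $a^k$, and sum using the $\mathcal A_{p,q}$ condition on each cube. Log-Hölder continuity is used to compare the modular on each cube with constant-exponent quantities.

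The main obstacle is the sufficiency direction, specifically the openness (left-openness) of $\mathcal A_{p(\cdot),q(\cdot)}$ in the variable setting. I would attack it first by establishing $v^{q(\cdot)}\in A_\infty$ (by comparing with a constant exponent on cubes via log-Hölder) and then applying the classical reverse Hölder inequality.
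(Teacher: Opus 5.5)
You correctly reduce to $r=1$ (the operator there is $M_{n/(\alpha r),1}$, i.e.\ $\tilde\alpha=\alpha r$, not $M_{nr/\alpha,1}$). Your necessity argument is also correct; it is essentially Proposition~\ref{caracterizacion Apr}(1) via the associate-norm formula. The paper does not prove this cited theorem separately. It is the case $\beta(\cdot)\equiv n/\alpha$, $r(\cdot)\equiv r$, $s(\cdot)=p(\cdot)/r$ of Theorem~\ref{teo M}, and by \eqref{exponente en la norma} the class $\mathbb{A}_{q(\cdot),\frac{\beta r}{\beta-r}}$ is exactly your condition $\omega^r\in\mathcal{A}_{p(\cdot)/r,q(\cdot)/r}$.

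\textbf{The gap is sufficiency, which you do not prove.} Write $\omega$ for your weight $v$ in the case $r=1$.
\begin{itemize}
\item The displayed pointwise estimate contains a placeholder, so it is not a statement.
\item The route through a maximal operator on $L^{q(\cdot)}(\omega^{q(\cdot)}dx)$ needs a boundedness theorem you neither state nor have.
\item The self-improvement you single out, $\omega^{s}\in\mathcal{A}_{p(\cdot)/s,q(\cdot)/s}$, is, by the statement itself at level $s$, the condition for the weighted bound of $M_{n/\alpha,s}\ge M_{n/\alpha,1}$. Following it with the \emph{unweighted} Theorem~\ref{acot MBr} for $r\equiv s$ yields nothing, because no step moves the weight inside the maximal operator. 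It also gives extra integrability of $\omega^{-1}$ only, while the argument needs it for $\omega$ as well.
\item Your alternative of adding up the $\mathcal{A}_{p,q}$ condition cube by cube through modulars is not an argument as it stands. Summing local estimates over a sparse, non-disjoint family in $L^{p(\cdot)}$ requires a global device.
\end{itemize}
Openness is not the real obstacle. By Lemma~\ref{Apr implica Ap} and Remark~\ref{equiv de Apq}, $\omega\in\mathcal{A}_{p(\cdot),q(\cdot)}$ gives $\omega\in\mathcal{A}_{q(\cdot)}$ and $\omega^{-1}\in\mathcal{A}_{p'(\cdot)}$. Proposition~\ref{Ap abierto izq} applied to both yields $s_0<1$ with $\omega^{-1/s_0}\in\mathcal{A}_{s_0q'(\cdot)}$ and $\omega^{1/s_0}\in\mathcal{A}_{s_0p(\cdot)}$.

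The missing mechanism is the one in the proof of Theorem~\ref{teo M}.
\begin{enumerate}
\item Define $u(\cdot)<p(\cdot)$, $t(\cdot)>p'(\cdot)$ and $\tilde v(\cdot)>q(\cdot)$ by $1/u=1/p+(1-s_0)$, $1/t=1/p'-(1-s_0)$ and $1/\tilde v=1/q-(1-s_0)$ (Remark~\ref{remark exp aux} with $r\equiv1$).
\item Dualize: $\|(M_{n/\alpha,1}f)\omega\|_{q(\cdot)}\approx\sup_{\|g\|_{q'(\cdot)}\le1}\int(M_{n/\alpha,1}f)\,\omega g$.
\item Use the sparse Calder\'on--Zygmund cubes $Q_j^k$ with disjoint sets $E_j^k$ from Lemma~\ref{lema descomposición de C-Z}. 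This bounds the dual integral by $\sum_{j,k}|E_j^k|\,|Q_j^k|^{\alpha/n-1}\bigl(\int_{Q_j^k}|f|\bigr)\,|3Q_j^k|^{-1}\int_{3Q_j^k}\omega|g|$.
\item Hölder's inequality (with $1=1/u+1/t$ and $1=1/\tilde v+1/\tilde v'$) and Lemma~\ref{norma de cubos} bound each summand by $\int_{E_j^k}M_{n/\alpha,u(\cdot)}(f\omega)\,M_{\tilde v'(\cdot)}g$ times
\[
\frac{\|\omega\chi_{3Q_j^k}\|_{\tilde v(\cdot)}}{\|\chi_{3Q_j^k}\|_{\tilde v(\cdot)}}\,\frac{\|\omega^{-1}\chi_{Q_j^k}\|_{t(\cdot)}}{\|\chi_{Q_j^k}\|_{t(\cdot)}}.
\]
This factor is uniformly bounded precisely by the two openness statements together with the $\mathcal{A}_{p(\cdot),q(\cdot)}$ condition.
\item Disjointness of the $E_j^k$, Hölder, and the \emph{unweighted} bounds finish the proof. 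These are $M_{n/\alpha,u(\cdot)}:L^{p(\cdot)}\to L^{q(\cdot)}$ (Theorem~\ref{acot MBr}, since $(p/u)^{-}>1$) and $M_{\tilde v'(\cdot)}:L^{q'(\cdot)}\to L^{q'(\cdot)}$ (Theorem~\ref{acot Mr}).
\end{enumerate}
The strict gaps $u<p$ and $\tilde v'<q'$ created by openness are what make the unweighted theorems usable, and this transfer step is what your sketch lacks.
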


In \cite{pesos Apq 2019}, Melchori, Pradolini, and Ramos generalized this class without restrictions on the exponents. We say that $\omega \in \mathcal{A}_{p(\cdot),q(\cdot)}$ if 
$$\|\chi_Q \omega\|_{q(\cdot)}\|\chi_Q \omega^{-1}\|_{p^\prime(\cdot)}\lesssim\|\chi_Q \|_{q(\cdot)}\|\chi_Q \|_{p^\prime(\cdot)}$$
uniformly for all cubes $Q$. They proved that for exponents $p(\cdot),q(\cdot),\beta(\cdot)$ satisfying $1/q(\cdot)=1/p(\cdot)-1/\beta(\cdot)$, if $\omega\in\mathcal{A}_{p(\cdot),q(\cdot)}$ then there exists a constant $C$ such that 
\begin{equation*}
	\|(T_{\beta(\cdot),1,\mathcal{S}}f)\omega\|_{q(\cdot)}\leq C\|f\omega\|_{p(\cdot)},	
\end{equation*}

where $\mathcal{S}$ is a sparse family (see the definition in Section \ref{sec 3}) and $T_{\beta(\cdot),1,\mathcal{S}}$ is the sparse operator defined by
$$T_{\beta(\cdot),1,\mathcal{S}}f=\sum_{Q\in \mathcal{S}}\|\chi_Q\|_{\beta(\cdot)}\|f\chi_Q\|_1 \|\chi_Q\|^{-1}_1.$$

In this work we introduce a new class of weights,  $\mathbb{A}_{p(\cdot),q(\cdot)}$, that generalizes the class $\mathcal{A}_{p(\cdot)}$ and coincides with the class $\mathcal{A}_{p(\cdot),q(\cdot)}$ when the exponents belong to $\mathcal{P}^{\text{log}}(\R)$ (see Section \ref{sec 2}). Under appropriate conditions on the exponents, these weights characterize the boundedness of the fractional maximal operator $M_{\beta(\cdot),r(\cdot)}$. This will be established below.
\begin{defi}\label{Apr}
	Let $p(\cdot),r(\cdot)\in\mathcal{P}(\R)$, such that $r(\cdot)\leq p(\cdot)$, and let $q(\cdot)$ be defined by $1/r(\cdot)=1/p(\cdot)+1/q(\cdot)$. We say that a weight $\omega$ belongs to the class $\mathbb{A}_{p{(\cdot)},r(\cdot)}$ if there exists a constant $C>0$ such that 
	$$\|\chi_Q\omega\|_{p(\cdot)}\|\chi_Q\omega^{-1}\|_{q(\cdot)}\leq C\|\chi_Q\|_{r(\cdot)}$$	 
	for all cubes $Q$ with sides parallel to the coordinate axes.
\end{defi}
\begin{teo}\label{teo M}
	Let $p(\cdot),q(\cdot),r(\cdot),s(\cdot)\in\mathcal{P}^{\log}(\R)$ such that $p(\cdot)\leq q(\cdot)\leq q^+<\infty$ and $p(\cdot)=r(\cdot)s(\cdot)$ with $s^->1$. Let $\beta(\cdot)$ be the exponent defined by $1/p(\cdot)-1/q(\cdot)=1/\beta(\cdot)$. Then exists a constant $C>0$ such that
	$$\left\|(M_{\beta(\cdot),r(\cdot)}f)\omega\right\|_{q(\cdot)}\leq C\|f\omega\|_{p(\cdot)}$$
	if and only if
	$\omega\in \mathbb{A}_{q(\cdot),\frac{\beta(\cdot) r(\cdot)}{\beta(\cdot)-r(\cdot)}}$.
\end{teo}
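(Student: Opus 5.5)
Let $t(\cdot)$ denote the exponent $\beta(\cdot)r(\cdot)/(\beta(\cdot)-r(\cdot))$, so that $1/t(\cdot)=1/r(\cdot)-1/\beta(\cdot)$. Using $1/\beta(\cdot)=1/p(\cdot)-1/q(\cdot)$ we get $1/t(\cdot)=1/r(\cdot)-1/p(\cdot)+1/q(\cdot)$. Let $\sigma(\cdot)$ be defined by $1/t(\cdot)=1/q(\cdot)+1/\sigma(\cdot)$. Then $1/\sigma(\cdot)=1/r(\cdot)-1/p(\cdot)=1/(r(\cdot)s'(\cdot))$. Since $p(\cdot)=r(\cdot)s(\cdot)$, we have $\sigma(\cdot)=r(\cdot)s'(\cdot)=p(\cdot)r(\cdot)/(p(\cdot)-r(\cdot))$. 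By Definition \ref{Apr}, the condition $\omega\in\mathbb{A}_{q(\cdot),t(\cdot)}$ therefore reads
$$\|\chi_Q\omega\|_{q(\cdot)}\|\chi_Q\omega^{-1}\|_{\sigma(\cdot)}\lesssim\|\chi_Q\|_{t(\cdot)}.$$
All exponents are log-Hölder. Hence the standard estimate $\|\chi_Q\|_{a(\cdot)}\approx |Q|^{1/a(\cdot)}$ (evaluated at $x_Q$ for small cubes, at $a_\infty$ for large ones) shows $\|\chi_Q\|_{t(\cdot)}\approx\|\chi_Q\|_{r(\cdot)}\|\chi_Q\|_{\beta(\cdot)}^{-1}$. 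I will use these norm-of-characteristic identities freely.

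\textbf{Necessity.} Fix a cube $Q$ and test the inequality on $f=\chi_Q\,\omega^{-1}g$, with $g\ge0$ supported in $Q$. For $x\in Q$,
$$M_{\beta(\cdot),r(\cdot)}f(x)\ge \|\chi_Q\|_{\beta(\cdot)}\|\chi_Q\|_{r(\cdot)}^{-1}\|\chi_Q\omega^{-1}g\|_{r(\cdot)}.$$
The hypothesis then gives
$$\|\chi_Q\omega\|_{q(\cdot)}\|\chi_Q\|_{\beta(\cdot)}\|\chi_Q\|_{r(\cdot)}^{-1}\|\chi_Q\omega^{-1}g\|_{r(\cdot)}\lesssim\|g\chi_Q\|_{p(\cdot)}.$$
Taking the supremum over $\|g\chi_Q\|_{p(\cdot)}\le1$ and using the variable-exponent duality for the Hölder triple $1/r=1/p+1/\sigma$, namely $\sup\{\|hg\|_{r(\cdot)}:\|g\|_{p(\cdot)}\le1\}\approx\|h\|_{\sigma(\cdot)}$ (the norm-conjugate formula for $L^{\sigma}$ as the multiplier space $L^p\to L^r$), yields the required $\mathbb{A}$ condition. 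One technical point remains: $\omega^{-1}$ may fail to be locally integrable. I handle it by replacing $\omega^{-1}$ with $\min(\omega^{-1},k)$ and letting $k\to\infty$, using the Fatou property of variable norms.

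\textbf{Sufficiency.} I would follow the Cruz-Uribe–Diening–Hästö scheme adapted to the fractional setting. By the usual dyadic reduction (the three-lattice trick), it suffices to bound a dyadic version $M^{\mathcal D}_{\beta(\cdot),r(\cdot)}$ and to construct a Calderón–Zygmund-type sparse family of cubes $Q_j^k$ on which
$$\|\chi_Q\|_{\beta(\cdot)}\|f\chi_Q\|_{r(\cdot)}/\|\chi_Q\|_{r(\cdot)}\approx a^k.$$
Write $f\chi_Q=(f\omega)\,\omega^{-1}\chi_Q$ and apply a Hölder inequality with the triple $(r,p,\sigma)$ locally. Then use the weight condition to replace $\|\omega^{-1}\chi_Q\|_{\sigma(\cdot)}$ by $\|\chi_Q\|_{t(\cdot)}/\|\chi_Q\omega\|_{q(\cdot)}$. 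This reduces the operator to a form controlled by $\|f\omega\chi_Q\|_{p(\cdot)}\|\chi_Q\|_{\beta(\cdot)}\|\chi_Q\|_{t(\cdot)}/(\|\chi_Q\|_{r(\cdot)}\|\omega\chi_Q\|_{q(\cdot)})$, which is essentially $\|f\omega\chi_Q\|_{p}/\|\omega\chi_Q\|_{q}$. I would control this via a weighted-measure maximal function with respect to $\omega^{q(\cdot)}$, in the spirit of the proof in \cite{C-U Diening Hasto}. The key ingredients are that $\omega^{q}$ satisfies a doubling/reverse-type property derived from the $\mathbb{A}$ condition, and the boundedness of $M_{\beta(\cdot),r(\cdot)}$ from Theorem \ref{acot MBr} in its unweighted form, applied through the $s^->1$ room. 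Concretely, I would show that the $\mathbb{A}_{q,t}$ condition implies $\omega^{-1}\in$ a slightly smaller class, i.e.\ a left-openness property. This lets me move the exponent $r(\cdot)$ to $r(\cdot)(1+\epsilon)$ and gain an $L^{p/r}$-type boundedness argument via Theorem \ref{acot Mr}.

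\textbf{Main obstacle.} The hardest step is the openness/self-improvement of the class $\mathbb{A}_{q(\cdot),t(\cdot)}$ in the variable setting. It is needed to absorb the $\omega^{-1}$ factor with some room and to sum the sparse decomposition modularly, where the variable exponent forbids simple homogeneity. My first attempt would be to reduce to $\mathcal{A}_{p(\cdot)}$-type statements: note that $\omega\in\mathbb{A}_{q,t}$ is equivalent to $\omega^{r}$ (suitably) belonging to an $\mathcal{A}_{p/r,q/r}$-type class, which coincides with $\mathcal{A}_{p(\cdot),q(\cdot)}$ of \cite{pesos Apq 2019} for log-Hölder exponents. I would then import its known boundedness for the sparse operator $T_{\beta(\cdot),1,\mathcal S}$ with $f$ replaced by $|f|^{r}$, since $M_{\beta,r}f\lesssim (M_{\beta r,1}|f|^r)^{1/r}$ up to norm comparability.
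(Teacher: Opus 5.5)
Your necessity argument is correct and is the paper's. Testing on $f=\chi_Q\omega^{-1}g$ is Proposition \ref{caracterizacion Apr}(1). Your duality $\sup_{\|g\|_{p(\cdot)}\le 1}\|hg\|_{r(\cdot)}\approx\|h\|_{\sigma(\cdot)}$ is Proposition \ref{propo norma conjugada}, which applies since $r(\cdot)<\sigma(\cdot)=r(\cdot)s'(\cdot)<\infty$.

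The sufficiency, however, has a genuine gap. It is a menu of strategies rather than a proof, and the decisive step (gaining room in the weight condition) is left open as your ``main obstacle''. The concrete routes you name do not close it:
\begin{itemize}
\item Using the $\mathbb{A}_{q(\cdot),t(\cdot)}$ condition at full strength, as in your middle step, just returns $\|f\omega\chi_Q\|_{p(\cdot)}/\|\omega\chi_Q\|_{q(\cdot)}$. Nothing in your sketch controls this. A maximal operator with respect to $\omega^{q(\cdot)}\,dx$ is not a natural tool here, because $\|\omega\chi_Q\|_{q(\cdot)}$ is not a power of $\int_Q\omega^{q(\cdot)}$.
\item Your fallback is ``$\omega\in\mathbb{A}_{q,t}$ iff $\omega^{r}$ lies in an $\mathcal{A}_{p/r,q/r}$ class'', together with writing $M_{\beta(\cdot),r(\cdot)}f$ as an $r$-th root of a fractional maximal function of $|f|^{r}$. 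Both rest on $\||f|^{s}\|_{p(\cdot)}=\|f\|^{s}_{sp(\cdot)}$, which holds only for a \emph{constant} power $s$. Even then the index should be $\beta/r$, not $\beta r$. So this covers constant $r$ and fails exactly in the variable $r(\cdot)$ case the theorem is about.
\end{itemize}

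The missing idea is that no new openness theorem is needed.
\begin{itemize}
\item By Lemma \ref{Apr implica Ap} (H\"older with $1/q'(\cdot)=1/\sigma(\cdot)+1/t'(\cdot)$), $\omega\in\mathbb{A}_{q(\cdot),t(\cdot)}$ gives two classical conditions: $\omega^{-1}\in\mathcal{A}_{q'(\cdot)}$ and $\omega\in\mathcal{A}_{\sigma'(\cdot)}$.
\item The known left-openness of $\mathcal{A}_{p(\cdot)}$ (Proposition \ref{Ap abierto izq}) then yields a single $s_0<1$ with $\omega^{-1/s_0}\in\mathcal{A}_{s_0q'(\cdot)}$ and $\omega^{1/s_0}\in\mathcal{A}_{s_0\sigma'(\cdot)}$.
\item The paper sets $1/u=1/p+(1-s_0)$, $1/v'=1/q'+(1-s_0)$ and $1/r=1/u+1/\tau$ (Remark \ref{remark exp aux}). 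The hypothesis $s^->1$ is what makes $u(\cdot)>r(\cdot)$.
\item It combines the two improved conditions with the H\"older lower bounds $|Q|\lesssim\|\omega\chi_Q\|_{q(\cdot)}\|\omega^{-1}\chi_Q\|_{q'(\cdot)}$ and $|Q|\lesssim\|\omega^{-1}\chi_Q\|_{\sigma(\cdot)}\|\omega\chi_Q\|_{\sigma'(\cdot)}$, and with the identity $1/v+1/\tau=1/t-2(1-s_0)$. This gives $\|\omega\chi_Q\|_{v(\cdot)}\|\omega^{-1}\chi_Q\|_{\tau(\cdot)}\lesssim\|\chi_Q\|_{v(\cdot)}\|\chi_Q\|_{\tau(\cdot)}$.
\end{itemize}

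With this room, no modular summation is needed:
\begin{itemize}
\item Dualize against $g$ with $\|g\|_{q'(\cdot)}\le 1$.
\item Bound $\int(M_{\beta(\cdot),r(\cdot)}f)\,\omega g$ by a sum over the sparse Calder\'on--Zygmund cubes $Q_j^k$. Each term is $|E_j^k|$ times the fractional $r(\cdot)$-average of $f$ on $Q_j^k$ times the average of $\omega g$ on $3Q_j^k$.
\item Split the first factor by H\"older in $(u,\tau)$ after writing $f=(f\omega)\omega^{-1}$, and the second factor in $(v,v')$.
\item What remains is $\int M_{\beta(\cdot),u(\cdot)}(f\omega)\,M_{v'(\cdot)}g$, which the \emph{unweighted} Theorems \ref{acot MBr} and \ref{acot Mr} control.
\end{itemize}

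Note that room is needed on both sides. It is needed for $\omega^{-1}$, to pass from $r$ to $u$, and for $\omega$, to pass from $q'$ to $v'$. Your sketch only mentions the first and never sets up the duality with $g$.
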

\begin{remark}
	 Observe that we are considering possible the case $\beta(\cdot)=\infty$.  We use the convention $\frac{r(\cdot)}{\infty}=0$ when $r(\cdot)$ is finite. Note that the expression $\frac{\beta(\cdot) r(\cdot)}{\beta(\cdot)-r(\cdot)}$ can be rewritten as $\frac{r(\cdot)}{1-\frac{r(\cdot)}{\beta(\cdot)}}$. Throughout the rest of the paper we assume this convention.
\end{remark}
If $p(\cdot)=q(\cdot)$, or equivalently $\beta(\cdot)\equiv\infty$, then we obtain the following corollary:
\begin{coro}\label{coro Mr}
	Let ${p(\cdot)},{r(\cdot)},s(\cdot)\in\mathcal{P}^{\text{log}}(\R)$ such that $p^+<\infty$, ${p(\cdot)}={r(\cdot)}{s(\cdot)}$ with $s^->1$. Then there exists a constant $C>0$ such that 
	$$\left\|(M_{r(\cdot)} f)\omega\right\|_{p(\cdot)}\leq C\|f\omega\|_{p(\cdot)}$$
	if and only if
	$\omega\in \mathbb{A}_{{p(\cdot)},{r(\cdot)}}$. 
\end{coro}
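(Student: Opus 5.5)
The corollary is the special case $q(\cdot)=p(\cdot)$ of Theorem \ref{teo M}, so I will check that the hypotheses and the weight class specialize correctly.

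\emph{Hypotheses.} Take $q(\cdot)=p(\cdot)$. Then $p(\cdot)\leq q(\cdot)$ holds trivially. The condition $q^+<\infty$ is exactly the assumption $p^+<\infty$. The factorization $p(\cdot)=r(\cdot)s(\cdot)$ with $s^->1$, and the log-Hölder regularity of $p(\cdot),r(\cdot),s(\cdot)$, are assumed directly. Thus all hypotheses of Theorem \ref{teo M} are met.

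\emph{The exponent $\beta(\cdot)$.} The defining relation becomes $1/\beta(\cdot)=1/p(\cdot)-1/p(\cdot)=0$, so $\beta(\cdot)\equiv\infty$. This is the admissible endpoint noted in the remark following Theorem \ref{teo M}. Since $\|\chi_Q\|_{\infty}=1$ for every cube $Q$, definition \eqref{def M} gives $M_{\infty,r(\cdot)}=M_{r(\cdot)}$, which matches the convention fixed in the introduction.

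\emph{The weight class.} The weight exponent in the theorem is $\frac{\beta(\cdot)r(\cdot)}{\beta(\cdot)-r(\cdot)}=\frac{r(\cdot)}{1-r(\cdot)/\beta(\cdot)}$. Because $r(\cdot)\leq p(\cdot)\leq p^+<\infty$, $r(\cdot)$ is finite, so the convention $r(\cdot)/\infty=0$ applies and the exponent equals $r(\cdot)$. The class therefore reduces to $\mathbb{A}_{p(\cdot),r(\cdot)}$.

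This class is well defined in the sense of Definition \ref{Apr}. Since $s^->1$, we have $r(\cdot)\leq p(\cdot)$. The auxiliary exponent is then given by $1/r(\cdot)-1/p(\cdot)=1/(r(\cdot)s'(\cdot))$.

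\emph{Conclusion.} Substituting these identifications into the equivalence of Theorem \ref{teo M} gives exactly the stated equivalence. The only step needing care is the degenerate case $\beta\equiv\infty$: one must confirm that the theorem's proof, and the convention for $r/\beta$, genuinely allow it. The remark after the theorem asserts this explicitly, so no further argument is needed.
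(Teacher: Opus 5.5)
Your proposal is correct and matches the paper's route. The paper also obtains this corollary by specializing Theorem \ref{teo M} to $q(\cdot)=p(\cdot)$, so that $\beta(\cdot)\equiv\infty$, $M_{\infty,r(\cdot)}=M_{r(\cdot)}$, and $\frac{\beta(\cdot)r(\cdot)}{\beta(\cdot)-r(\cdot)}=r(\cdot)$ under the stated convention, with your checks (in particular $r(\cdot)<p(\cdot)$, since $s^->1$) being what is needed for the hypotheses to transfer.
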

For the case $r(\cdot)\equiv1$ we obtain the following corollary:
\begin{coro}
	Let ${p(\cdot)},{q(\cdot)}\in\mathcal{P}^{\text{log}}(\R)$ such that $1<p^-\leq{p(\cdot)}\leq{q(\cdot)}\leq q^+<\infty$ and ${\beta(\cdot)}$ be the exponent defined by $1/p(\cdot)-1/q(\cdot)=1/\beta(\cdot)$. Then there exists a constant $C>0$ such that 
	$$\left\|(M_{{\beta(\cdot)},1}f)\omega\right\|_{q(\cdot)}\leq C\|f\omega\|_{p(\cdot)}$$
	if and only if
	$\omega\in \mathbb{A}_{{q(\cdot)},\beta^\prime(\cdot)}$.
\end{coro}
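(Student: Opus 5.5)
The corollary should follow directly from Theorem \ref{teo M} by taking $r(\cdot)\equiv 1$, with no new analysis. The work is to check the hypotheses of Theorem \ref{teo M} and to identify the weight class.

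First, the hypotheses. Take $r(\cdot)\equiv 1$ and $s(\cdot)=p(\cdot)$, so that $p(\cdot)=r(\cdot)s(\cdot)$ holds trivially. Then $s^-=p^->1$ by assumption. A constant exponent is trivially in $\mathcal{P}^{\log}(\R)$: both log-Hölder conditions hold with constant $0$. So $p(\cdot),q(\cdot),r(\cdot),s(\cdot)\in\mathcal{P}^{\log}(\R)$. The conditions $p(\cdot)\le q(\cdot)\le q^+<\infty$ and the definition of $\beta(\cdot)$ through $1/p(\cdot)-1/q(\cdot)=1/\beta(\cdot)$ are exactly those of Theorem \ref{teo M}.

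Next, the operator. With $r(\cdot)\equiv1$ we have $\|f\chi_Q\|_{1}/\|\chi_Q\|_{1}=|Q|^{-1}\int_Q|f|$. So $M_{\beta(\cdot),r(\cdot)}$ in \eqref{def M} is literally $M_{\beta(\cdot),1}$, and the norm inequality in Theorem \ref{teo M} is the one in the corollary.

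Then the weight class. Theorem \ref{teo M} gives the class $\mathbb{A}_{q(\cdot),\frac{\beta(\cdot)r(\cdot)}{\beta(\cdot)-r(\cdot)}}$, and with $r\equiv1$ the second index is $\frac{\beta(\cdot)}{\beta(\cdot)-1}=\beta'(\cdot)$. This is well defined pointwise because $1/\beta(\cdot)=1/p(\cdot)-1/q(\cdot)\le 1/p(\cdot)\le 1/p^-<1$, so $\beta(\cdot)>1$. At points where $\beta(x)=\infty$ (that is, $p(x)=q(x)$), the convention in the Remark gives $\frac{1}{1-1/\infty}=1=\beta'(x)$, which agrees with the usual conjugate exponent. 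It also helps to note that the class in Definition \ref{Apr} makes sense: we need $\beta'(\cdot)\le q(\cdot)$. This holds because $1/\beta'=1-1/p+1/q\ge 1/q$. The auxiliary exponent of the class satisfies $1/\beta'-1/q=1-1/p=1/p'$.

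Applying Theorem \ref{teo M} then gives the equivalence. The only point needing any care is the endpoint convention where $\beta=\infty$, which the Remark after Theorem \ref{teo M} already settles.
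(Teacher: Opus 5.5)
Your proposal is correct and follows exactly the paper's route: the corollary is stated as the specialization $r(\cdot)\equiv 1$ of Theorem~\ref{teo M}. Your choice $s(\cdot)=p(\cdot)$ (so $s^-=p^->1$) and the identification $\frac{\beta(\cdot)}{\beta(\cdot)-1}=\beta'(\cdot)$ are precisely what that specialization requires, and your extra checks ($\beta(\cdot)>1$, $\beta'(\cdot)\le q(\cdot)$, and the convention at points where $\beta=\infty$) are correct; they make explicit what the paper leaves implicit.
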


Let $T$ be an integral operator of the type
\begin{equation}\label{T}
	Tf(x)=\int_{\R}K(x,y)f(y)dy
\end{equation}
where $K\in L^1_{\text{loc}}\left(\mathbb{R}^{2n} \setminus \{(x,y)\in\mathbb{R}^{2n} : x=y\}\right)$.

Let $K$ be a kernel of the form $K(x,y)=\bar{K}(x-y)$, and $T$ be bounded on some $L^{p_0}(\R)$. Kurtz and Wheeden implicitly proved in \cite{Hr} that if $K$ satisfies the smoothness condition $H_r$ with $1<r<\infty$ (see the precise definition in \cite{Hr es sharp} or Definition \ref{def Hör} with $r(\cdot)\equiv r_0$ and $\beta(\cdot)\equiv\infty$), then $T$ is controlled by the maximal operator $M_{r^\prime}$. This is
\begin{equation}
\int_{\R}|Tf|^p \omega\leq C\int_{\R}\left(M_{r^\prime}f\right)^p \omega
\end{equation}
for $0<p<\infty$ and $\omega\in A_\infty$, whenever the left-side is finite.
 
In \cite{Hr es sharp}, Martell, Pérez and Trujillo-González showed that this estimate is sharp in the sense that, for $s<r^\prime$, there exists a kernel $K\in H_r$ such that the operator induced by $K$ is not controlled by $M_s$ as in (1.3). 

In \cite{H_A}, Lorente, Riveros and de la Torre proved that if $K$ satisfies the smoothness condition $H_\mathscr{A}$, then $T$ is controlled by the maximal operator $M_{\bar{\mathscr{A}}}$, where $A$ is a convex Young function, $\bar{\mathscr{A}}$ denotes its complementary function, and $M_\mathscr{A}$ is defined through the Luxemburg norm $\|\cdot\|_\mathscr{A}$ in the corresponding Orlicz space (see \cite{Oneil}). When $\mathscr{A}(t)=t^r$, the condition $H_\mathscr{A}$ reduces to the condition $H_r$. In \cite{S_A}, Bernardis, Lorente and Riveros replaced the boundedness on $L^{p_0}(\R)$ hypothesis by a size condition $\mathcal{S}_{\alpha, \mathscr{A}}$ and the condition $H_\mathscr{A}$ by $H_{\alpha,\mathscr{A}}$, under which the operator $T$ is controlled by the fractional maximal operator $M_{n/\alpha,\bar{\mathscr{A}}}$. Following these ideas we introduce a variable Hörmander-type condition and a variable size condition.
\begin{defi}\label{def Hör}
	Let $\beta(\cdot),r(\cdot)\in\mathcal{P}(\R)$, we say that a kernel $$K\in L^1_{\text{loc}}\left(\mathbb{R}^{2n} \setminus \{(x,y)\in\mathbb{R}^{2n} : x=y\}\right)$$ satisfy the
	$L^{\beta(\cdot),r(\cdot)}$-Hörmander-1 condition, and we denote $K \in
	H_{\beta(\cdot),r(\cdot),1}$, if 
	\begin{equation*} 
	\underset{Q}{\sup}\underset{x,z\in \frac{1}{2}Q}{\sup}\sum_{m=1}^{\infty} \frac{(2^m\ell(Q))^{n}}{\|\chi_{2^m Q}\|_{\beta(\cdot)}} \frac{\left\| \left[K(x,\cdot) - K(z,\cdot)\right]\chi_{2^mQ\setminus2^{m-1}Q}(\cdot)\right\|_{r(\cdot)}}{\left\|\chi_{2^m Q}\right\|_{r(\cdot)}} <\infty. 
	\end{equation*}
	Analogously, we say that $K$ satisfy the
	$L^{\beta(\cdot),r(\cdot)}$-Hörmander-2 condition, and we denote $K \in
	H_{\beta(\cdot),r(\cdot),2}$, if
	\begin{equation*} 
	\underset{Q}{\sup}\underset{x,z\in \frac{1}{2}Q}{\sup}\sum_{m=1}^{\infty} \frac{(2^m\ell(Q))^{n}}{\|\chi_{2^m Q}\|_{\beta(\cdot)}} \frac{\left\| \left[K(\cdot,x) - K(\cdot,z)\right]\chi_{2^mQ\setminus2^{m-1}Q}(\cdot)\right\|_{r(\cdot)}}{\left\|\chi_{2^m Q}\right\|_{r(\cdot)}} <\infty.
	\end{equation*}
	where supremum is taken over all cubes $Q$ with sides parallel to the coordinate axes. 
	
	We define $H_{\beta(\cdot),r(\cdot)}=H_{\beta(\cdot),r(\cdot),1}\cap H_{\beta(\cdot),r(\cdot),2}$. If $\beta(\cdot)\equiv\infty$, we simply write $H_{r(\cdot),i}$ instead of $H_{\infty,r(\cdot),i}$, and $H_{r(\cdot)}$ instead of $H_{\infty,r(\cdot)}$. If $K\in H_{r(\cdot),i}$, we say that $K$ satisfies the $L^{r(\cdot)}$-Hörmander-$i$ condition. If $K\in H_{r(\cdot)}$, we say that $K$ satisfies the $L^{r(\cdot)}$-Hörmander condition.  
\end{defi}
\begin{defi}\label{def SrB}
	Let ${\beta(\cdot)},{r(\cdot)}\in\mathcal{P}(\R)$. We say that the kernel $K$ satisfy the size condition $S_{\beta(\cdot),r(\cdot),1}$, and we write $K\in S_{\beta(\cdot),r(\cdot),1}$, if there exists a constant $C$ such that 
	\begin{equation*}
	\sup_{x\in\frac{1}{2}Q}\frac{\left\|k(x,\cdot)\chi_{2Q\setminus Q}(\cdot)\right\|_{r(\cdot)}}{\|\chi_{Q}\|_{r(\cdot)}}\leq C\frac{\|\chi_{Q}\|_{\beta(\cdot)}}{|Q|}
	\end{equation*}  
	uniformly for all cubes $Q$ with sides parallel to the coordinate axes. 
	
	Analogously, we say that the kernel $K$ satisfy the size condition $S_{\beta(\cdot),r(\cdot),2}$, and we write $K\in S_{\beta(\cdot),r(\cdot),2}$, if there exists a constant $C$ such that 
	\begin{equation*}
	\sup_{y\in\frac{1}{2}Q}\frac{\left\|k(\cdot,y)\chi_{2Q\setminus Q}(\cdot)\right\|_{r(\cdot)}}{\|\chi_{Q}\|_{r(\cdot)}}\leq C\frac{\|\chi_{Q}\|_{\beta(\cdot)}}{|Q|}
	\end{equation*}  
	uniformly for all cubes $Q$ with sides parallel to the coordinate axes. We define $S_{\beta(\cdot),r(\cdot)}=S_{\beta(\cdot),r(\cdot),1}\cap S_{\beta(\cdot),r(\cdot),2}$.
\end{defi}

 When $\mathscr{A}(t)=t^{r_0}$, the classes $S_{\beta(\cdot),r_0}$ are a generalization of the classes $\mathcal{S}_{\alpha,\mathscr{A}}$ defined in \cite{S_A} in the context of variable Lebesgue spaces.

The classes $H_{\beta(\cdot),r(\cdot),i}$ and $S_{\beta(\cdot),r(\cdot),i}$ are decreasing in the exponent \\$r(\cdot)\in\mathcal{P}^{\text{log}}(\R)$. Moreover, the inclusions
\begin{equation}\label{contenciones de Hr}
	H_{\infty,i}\subset H_{q(\cdot),i}\subset H_{p(\cdot),i}\subset\cdots \subset H_{1,i}
\end{equation}

 are strict, as we will see with the examples of kernels presented in Section~\ref{sec 4}.

Throughout, $T$ denotes an operator with kernel $K\in H_{r(\cdot)}$. When the kernel $K$ belongs to $H_{\beta(\cdot),r(\cdot),1} \cap S_{\beta(\cdot),r(\cdot),2}$, we write $T_{\beta(\cdot)}$ instead of $T$.

We prove that an integral operator of the type \eqref{T} satisfying these conditions is controlled by the maximal operator $M_{\beta(\cdot),r(\cdot)}$. In other words, we prove a Coifman-Fefferman inequality for these operators:

\begin{teo}\label{teo Hör}
	Let $r(\cdot)\in\mathcal{P}^{\log}(\R)$, and let $T$ be an integral operator bounded on some $L^{p_0}(\R)$ with $1<p_0<\infty$, whose kernel $K$ belongs to $H_{r(\cdot)}$. Then
	\begin{enumerate}[label=\alph*)]
		\item \label{a del teo Hör}
		For any $p$ with $0<p<\infty$, and any $w\in A_\infty$, there exists a constant $C$ such that
		\begin{equation*}
			\int_{\R}|Tf|^p w\leq C\int_{\R}\left(M_{r^\prime(\cdot)}f\right)^p w
		\end{equation*}
		whenever the left-hand side is finite.	
		\item \label{b del teo Hör}
		If $p(\cdot)\in\mathcal{P}^{\text{log}}(\R)$ with $1<p^-\leq p^+<\infty$, and $\omega\in \mathcal{A}_{p(\cdot)}$ then there exists a constant C such that
		\begin{equation*}
			\|(Tf)\omega\|_{p(\cdot)}\leq C \|(M_{r^\prime(\cdot)}f)\omega\|_{p(\cdot)}
		\end{equation*}
		whenever the left-hand side is finite.	
	\end{enumerate}
\end{teo}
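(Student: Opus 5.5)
The plan is to follow the classical Kurtz--Wheeden / Martell--Pérez--Trujillo-González scheme: prove a pointwise sharp-function estimate and then use the Fefferman--Stein inequality.

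\emph{Step 1: pointwise estimate.} We aim to show that for every $0<\delta<1$ there is $C$ with
\begin{equation*}
M^\#_\delta(Tf)(x)\le C\,M_{r'(\cdot)}f(x),
\end{equation*}
where $M^\#_\delta g=(M^\#(|g|^\delta))^{1/\delta}$. Fix $x\in Q$ and split $f=f_1+f_2$ with $f_1=f\chi_{2Q}$. Take the constant $c=Tf_2(z_Q)$, where $z_Q$ is the center of $Q$.

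For the local part, $T$ is bounded on $L^{p_0}$. Since $K\in H_{r(\cdot)}$ gives in particular the classical $L^1$-Hörmander condition (by the inclusions \eqref{contenciones de Hr}, $H_{r(\cdot)}\subset H_{1}$), $T$ is of weak type $(1,1)$. Kolmogorov's inequality with $\delta<1$ then yields
\begin{equation*}
\Big(\frac{1}{|Q|}\int_Q|Tf_1|^\delta\Big)^{1/\delta}\lesssim \frac{1}{|2Q|}\int_{2Q}|f|.
\end{equation*}
By the variable Hölder inequality, $\|f\chi_{2Q}\|_1\le 2\|f\chi_{2Q}\|_{r'(\cdot)}\|\chi_{2Q}\|_{r(\cdot)}$. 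For $r(\cdot)\in\mathcal P^{\log}$ we have the standard estimate $\|\chi_{Q}\|_{r(\cdot)}\|\chi_{Q}\|_{r'(\cdot)}\approx|Q|$. Together these give a bound by $M_{r'(\cdot)}f(x)$.

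For the far part, for $y\in Q$ we write
\begin{equation*}
|Tf_2(y)-Tf_2(z_Q)|\le\sum_{m\ge2}\int_{2^mQ\setminus 2^{m-1}Q}|K(y,w)-K(z_Q,w)||f(w)|\,dw.
\end{equation*}
We apply Hölder on each annulus with exponents $r(\cdot),r'(\cdot)$. Using $\|f\chi_{2^mQ}\|_{r'(\cdot)}\le M_{r'(\cdot)}f(x)\,\|\chi_{2^mQ}\|_{r'(\cdot)}$ and again $\|\chi_{2^mQ}\|_{r'(\cdot)}\approx|2^mQ|/\|\chi_{2^mQ}\|_{r(\cdot)}$, the sum becomes exactly the $H_{r(\cdot),1}$ expression with $\beta=\infty$, where $\|\chi\|_\infty=1$. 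That expression is bounded by hypothesis (applied to the cube $2Q$ so that $y,z_Q\in\frac12(2Q)$). This completes Step 1.

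\emph{Step 2: part a).} Given $w\in A_\infty$ and $0<p<\infty$, choose $\delta<\min(p,1)$. The Fefferman--Stein inequality $\int (M_\delta g)^pw\lesssim\int (M_\delta^\# g)^pw$ holds whenever $\int(M_\delta g)^pw<\infty$. Combined with $|Tf|\le M_\delta(Tf)$ a.e. and Step 1, this gives a). The main obstacle is the finiteness hypothesis: one only knows that $\int|Tf|^pw$ is finite, not $\int(M_\delta Tf)^pw$. I would handle this as in Martell--Pérez--Trujillo-González, either by a truncation/approximation argument with $f$ bounded of compact support and a weight truncated so that $M_\delta Tf\in L^p(w)$, or by using the version of Fefferman--Stein valid for $A_\infty$ weights under the weaker hypothesis $|\{|g|>t\}|<\infty$. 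The latter holds here because $Tf\in L^{p_0}$ for nice $f$.

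\emph{Step 3: part b).} Here we use extrapolation in variable Lebesgue spaces (Cruz-Uribe--Fiorenza--Martell--Pérez style, weighted version in $\mathcal A_{p(\cdot)}$ as in Cruz-Uribe--Wang). Part a) with $p=p_0$ gives, for all $w\in A_1$, the family of pairs $(|Tf|,M_{r'(\cdot)}f)$ satisfying a weighted $L^{p_0}(w)$ inequality. Since $p(\cdot)\in\mathcal P^{\log}$ with $1<p^-\le p^+<\infty$ and $\omega\in\mathcal A_{p(\cdot)}$, the maximal operator is bounded on the weighted space $L^{p(\cdot)}(\omega)$ and on its associate space, by the Cruz-Uribe--Diening--Hästö characterization. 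The Rubio de Francia iteration algorithm then yields $\|(Tf)\omega\|_{p(\cdot)}\lesssim\|(M_{r'(\cdot)}f)\omega\|_{p(\cdot)}$. The finiteness of the left-hand side is used precisely to guarantee finiteness of the $A_1$-weighted integrals in the extrapolation step.
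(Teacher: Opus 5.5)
Your overall architecture is the paper's. Step 1 is essentially the paper's pointwise lemma: the same splitting with $\tilde Q=2Q$, weak type $(1,1)$ plus Kolmogorov for the local part, and H\"older on dyadic annuli plus $H_{r(\cdot),1}$ for the far part. Using the center $z_Q$ instead of the point $x$ makes no difference. Part b) also comes from the same weighted variable extrapolation.

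The gap is the step you yourself call the main obstacle in part a): you never establish that $\int (M_\delta(Tf))^p w<\infty$. You offer two alternatives, and neither closes the argument as stated:
\begin{itemize}
\item The truncation option is unspecified. You do not say which truncation you mean, nor why it would put $M_\delta(Tf)$ into $L^p(w)$.
\item The second option imports a more delicate Fefferman--Stein inequality for functions whose level sets have finite measure. That only covers $f$ with $|\{|Tf|>t\}|<\infty$, for instance $f\in L^{p_0}$, not every $f$ with finite left-hand side.
\end{itemize}

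The missing idea is to let $\delta$ depend on $w$, not only on $p$. Since $w\in A_\infty$, we have $w\in A_q$ for some $q$. Choose $0<\delta<\min(1,p/q)$. Then $p/\delta>\max(1,q)$, so $w\in A_{p/\delta}$, and Muckenhoupt's theorem gives
\begin{equation*}
\int_{\R}\bigl(M_\delta(Tf)\bigr)^p w=\int_{\R}\bigl(M(|Tf|^\delta)\bigr)^{p/\delta}w\lesssim\int_{\R}|Tf|^p w<\infty .
\end{equation*}
This is exactly the a priori finiteness needed to apply the Fefferman--Stein inequality to $|Tf|^\delta$ with exponent $p/\delta$. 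Step 1, which holds for every $\delta\in(0,1)$, then finishes a) for every $f$ with finite left-hand side. This is how the paper argues.

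A minor point on Step 3. Part a) holds for every $A_\infty$ weight and every $p$, so you can invoke the extrapolation theorem with its $A_{p_0}$ hypothesis directly. Your description mixes two mechanisms. The Rubio de Francia argument that uses boundedness of $M$ on $L^{p(\cdot)}(\omega)$ and on $L^{p'(\cdot)}(\omega^{-1})$ produces weights of the form $H_1^{1-p_0}H_2\in A_{p_0}$, not $A_1$ weights. An inequality known only for $A_1$ weights would therefore not feed that scheme; an $A_1$-based scheme would instead need $p_0<p^-$ and boundedness of $M$ on the associate space of $L^{p(\cdot)/p_0}(\omega^{p_0})$.
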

As a consequence of the previous theorem, we have the following weighted strong estimate for the integral operator $T$.
\begin{teo}\label{teo T}
		Let ${p(\cdot)},{r(\cdot)},s(\cdot)\in\mathcal{P}^{\log}(\R)$ such that $p^+<\infty$ and ${p(\cdot)}={r^\prime(\cdot)}{s(\cdot)}$ with $s^->1$. Let $T$ be an integral operator bounded on some $L^{p_0}(\R)$ with $1<p_0<\infty$, whose kernel $K$ belongs to $H_{r(\cdot)}$. If $\omega\in \mathbb{A}_{{p(\cdot)},{r^\prime(\cdot)}}$ then there exists a constant $C>0$ such that 
	$$\left\|(T f)\omega\right\|_{p(\cdot)}\leq C\|f\omega\|_{p(\cdot)}$$
	   whenever the left-hand side is finite.	
 \end{teo}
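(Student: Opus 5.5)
The plan is to chain Theorem \ref{teo Hör} b) with Corollary \ref{coro Mr}, applied to the exponent $r'(\cdot)$ in place of $r(\cdot)$. The hypotheses of Theorem \ref{teo T} give $p(\cdot)=r'(\cdot)s(\cdot)$ with $s^->1$ and $p^+<\infty$. Corollary \ref{coro Mr}, used with $r'(\cdot)$ in the role of $r(\cdot)$ (note $r'(\cdot)\in\mathcal{P}^{\log}(\R)$ whenever $r(\cdot)$ is), then says that $\omega\in\mathbb{A}_{p(\cdot),r'(\cdot)}$ is equivalent to
$$\|(M_{r'(\cdot)}f)\omega\|_{p(\cdot)}\le C\|f\omega\|_{p(\cdot)}.$$
So the weight hypothesis immediately yields the needed bound for the maximal operator.

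The second step is to check the hypotheses of Theorem \ref{teo Hör} b): $1<p^-\le p^+<\infty$ and $\omega\in\mathcal{A}_{p(\cdot)}$.

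\textbf{Lower bound on $p^-$.} Since $r'(\cdot)\ge 1$ and $s^->1$, we get $p^-\ge s^->1$.

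\textbf{Weight inclusion.} I would show $\mathbb{A}_{p(\cdot),r'(\cdot)}\subset\mathcal{A}_{p(\cdot)}$. In Definition \ref{Apr} with $r'(\cdot)$ in place of $r(\cdot)$, the second exponent $\tilde q(\cdot)$ satisfies $1/\tilde q=1/r'-1/p$. Since $1/p'=1-1/p\ge 1/r'-1/p$, we have $p'(\cdot)\le\tilde q(\cdot)$. By the generalized Hölder inequality in variable spaces, with the exponent $t(\cdot)$ defined by $1/p'=1/\tilde q+1/t$, i.e. $1/t=1-1/r'=1/r$,
$$\|\chi_Q\omega^{-1}\|_{p'(\cdot)}\lesssim\|\chi_Q\omega^{-1}\|_{\tilde q(\cdot)}\|\chi_Q\|_{r(\cdot)}.$$
Multiplying by $\|\chi_Q\omega\|_{p(\cdot)}$ and applying the $\mathbb{A}_{p(\cdot),r'(\cdot)}$ condition gives a bound by $\|\chi_Q\|_{r'(\cdot)}\|\chi_Q\|_{r(\cdot)}$. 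For log-Hölder exponents, $\|\chi_Q\|_{r(\cdot)}\|\chi_Q\|_{r'(\cdot)}\approx|Q|$, which is the standard estimate from \cite{libro Diening}. Hence $\omega\in\mathcal{A}_{p(\cdot)}$.

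\textbf{Conclusion.} Theorem \ref{teo Hör} b) then gives
$$\|(Tf)\omega\|_{p(\cdot)}\le C\|(M_{r'(\cdot)}f)\omega\|_{p(\cdot)}\le C\|f\omega\|_{p(\cdot)},$$
whenever the left-hand side is finite.

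The main obstacle is the weight inclusion $\mathbb{A}_{p(\cdot),r'(\cdot)}\subset\mathcal{A}_{p(\cdot)}$. It relies on the norm equivalence $\|\chi_Q\|_{r}\|\chi_Q\|_{r'}\approx|Q|$, which requires log-Hölder continuity; the boundary case $r'=\infty$ must be handled separately. Where $r(\cdot)=1$ and $r'=\infty$, Hölder's inequality still applies and $\|\chi_Q\|_\infty=1$, so the estimate goes through. If the inclusion proved delicate, I would instead note that the proof of Theorem \ref{teo Hör} b) only needs $\omega^{-1}$-duality and the boundedness of $M$ on a related weighted space. That boundedness could be recovered from the corollary by extrapolation, since $M\le M_{r'(\cdot)}$.
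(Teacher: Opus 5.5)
Your proposal is correct and is the paper's proof. The paper combines Lemma \ref{Apr implica Ap}, whose Hölder argument you reproduce inline, with part b) of Theorem \ref{teo Hör} and Corollary \ref{coro Mr}; your explicit check that $p^-\ge s^->1$ supplies a hypothesis of Theorem \ref{teo Hör} b) that the paper leaves implicit. The boundary case $r^\prime=\infty$ you worry about cannot occur, since $r^\prime(\cdot)\le r^\prime(\cdot)s(\cdot)=p(\cdot)\le p^+<\infty$, so your fallback argument is unnecessary.
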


\begin{teo}\label{teo Hör frac}
	Let $r(\cdot),\beta(\cdot)\in\mathcal{P}^{\log}(\R)$ such that $\beta^+<\infty$, and let $T_{\beta(\cdot)}$ be an integral operator whose kernel $K$ belongs to $H_{\beta(\cdot),r(\cdot),1}\cap S_{\beta(\cdot),1,2}$.
	\begin{enumerate}[label=\alph*)]
		\item For any $p$ with $1<p<\infty$ and any $w\in A_\infty$, there exists a constant $C$ such that
		\begin{equation*}
			\int_{\R}|T_{\beta(\cdot)}f|^p w\leq 	C\int_{\R}\left(M_{\beta(\cdot),r^\prime(\cdot)}f\right)^p w
		\end{equation*}
	whenever the left-hand side is finite.
		\item 
		
		If $p(\cdot)\in\mathcal{P}^{\text{log}}(\R)$ with $1<p^-\leq p^+<\infty$, and $\omega\in \mathcal{A}_{p(\cdot)}$ then there exists a constant C such that
	\begin{equation*}
		\|\left(T_{\beta(\cdot)}f\right)\omega\|_{p(\cdot)}\leq C \|(M_{\beta(\cdot),r(\cdot)})\omega\|_{p(\cdot)}
	\end{equation*}
	whenever the left-hand side is finite.	
	\end{enumerate}
\end{teo}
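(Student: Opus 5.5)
We follow the scheme of Kurtz--Wheeden and of Bernardis--Lorente--Riveros: first a pointwise sharp maximal estimate, then the Fefferman--Stein inequality. Part b) then follows from part a) by extrapolation.

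The central step is the pointwise bound
\begin{equation*}
M^{\#}_\delta(T_{\beta(\cdot)}f)(x)\leq C\, M_{\beta(\cdot),r^\prime(\cdot)}f(x),\qquad 0<\delta<1,
\end{equation*}
for bounded $f$ with compact support. Here $M^\#_\delta g=(M^\#|g|^\delta)^{1/\delta}$.

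To prove it, fix a cube $Q\ni x$ and split $f=f_1+f_2$ with $f_1=f\chi_{2Q}$. Take the constant $c_Q=T_{\beta(\cdot)}f_2(x_Q)$ and bound
\begin{equation*}
\Big(\frac{1}{|Q|}\int_Q\big||T_{\beta(\cdot)}f|^\delta-|c_Q|^\delta\big|\Big)^{1/\delta}.
\end{equation*}

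\emph{Local part.} Since no $L^{p_0}$ boundedness is assumed, we avoid Kolmogorov's inequality. For $z\in Q$ we write $|T_{\beta(\cdot)}f_1(z)|\le\int_{2Q}|K(z,y)||f(y)|dy$ and average in $z$ over $Q$. We decompose $Q$ into dyadic annuli around each $y$, or apply Fubini and use $S_{\beta(\cdot),1,2}$, which controls $\int_{\text{annulus}}|K(z,y)|dz$ by $\|\chi_{Q'}\|_{\beta(\cdot)}$ at scale $\ell(Q')$. Summing the scales should give $\frac1{|Q|}\int_Q|T_{\beta(\cdot)}f_1|\lesssim \|\chi_{2Q}\|_{\beta(\cdot)}\frac{1}{|Q|}\int_{2Q}|f|$. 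This sum is geometric because, for log-Hölder $\beta$ with $\beta^+<\infty$, $\|\chi_{Q'}\|_{\beta(\cdot)}\approx|Q'|^{1/\beta(x)}$ locally.

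By the generalized Hölder inequality, $\int_{2Q}|f|\lesssim\|f\chi_{2Q}\|_{r'(\cdot)}\|\chi_{2Q}\|_{r(\cdot)}$. We then use $\|\chi_{2Q}\|_{r(\cdot)}\|\chi_{2Q}\|_{r'(\cdot)}\approx|2Q|$, valid for $r\in\mathcal P^{\log}$. Together these give the bound $CM_{\beta(\cdot),r'(\cdot)}f(x)$, with $\delta<1$ handled by Jensen's inequality.

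\emph{Far part.} For $z\in Q$ we write
\begin{equation*}
|T_{\beta(\cdot)}f_2(z)-T_{\beta(\cdot)}f_2(x_Q)|\le\sum_{m\ge1}\int_{2^{m+1}Q\setminus2^mQ}|K(z,y)-K(x_Q,y)||f(y)|dy.
\end{equation*}
Applying Hölder with $r(\cdot),r'(\cdot)$ bounds each term by
\begin{equation*}
\|[K(z,\cdot)-K(x_Q,\cdot)]\chi_{2^{m+1}Q\setminus 2^mQ}\|_{r(\cdot)}\,\|f\chi_{2^{m+1}Q}\|_{r'(\cdot)}.
\end{equation*}
We multiply and divide by $\frac{(2^{m+1}\ell(Q))^n}{\|\chi_{2^{m+1}Q}\|_{\beta(\cdot)}\|\chi_{2^{m+1}Q}\|_{r(\cdot)}}$ and again use $\|\chi_{P}\|_{r}\|\chi_P\|_{r'}\approx|P|$. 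Each remaining factor is then at most $M_{\beta(\cdot),r'(\cdot)}f(x)$. The $H_{\beta(\cdot),r(\cdot),1}$ condition, applied to the cube $2Q$ (so that $z,x_Q\in\frac12(2Q)$), sums the kernel factors to a constant.

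With the pointwise bound in hand, part a) follows from the Fefferman--Stein inequality $\|g\|_{L^p(w)}\lesssim\|M^\#_\delta g\|_{L^p(w)}$ for $w\in A_\infty$. This inequality requires $\|M_\delta(T_{\beta(\cdot)}f)\|_{L^p(w)}<\infty$. We ensure this by the standard truncation/approximation argument, or directly from finiteness of the left-hand side when $p>1$ via $M_\delta g\le Mg$ and a localization.

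For part b), we apply the Rubio de Francia extrapolation theorem in variable Lebesgue spaces (Cruz-Uribe--Fiorenza--Martell--Pérez type). It takes the family of pairs $(|T_{\beta(\cdot)}f|,M_{\beta(\cdot),r'(\cdot)}f)$, which satisfy the $L^{p_0}(w)$ inequality for all $w\in A_\infty$. Its conclusion is the bound in the weighted $L^{p(\cdot)}$ norm for $\omega\in\mathcal A_{p(\cdot)}$, because $M$ is bounded on $L^{p(\cdot)}(\omega)$ and on the dual space.

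The main obstacle is the local term. It must be controlled using only the size condition $S_{\beta(\cdot),1,2}$, with variable norms at different scales that have to be compared to $\|\chi_{2Q}\|_{\beta(\cdot)}$. This is where the log-Hölder hypothesis and $\beta^+<\infty$ are essential. The finiteness issue in Fefferman--Stein is the secondary technical point.
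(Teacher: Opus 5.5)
Your proposal follows the paper's route. The paper proves exactly your pointwise bound $(M^\sharp|T_{\beta(\cdot)}f|^\delta)^{1/\delta}\lesssim M_{\beta(\cdot),r'(\cdot)}f$, in the same two parts:
\begin{itemize}
\item the local term uses $S_{\beta(\cdot),1,2}$, Fubini, and annuli around each $y$, summed by a lemma giving $\sum_{m\ge0}\|\chi_{2^{-m}Q}\|_{\beta(\cdot)}\lesssim\|\chi_Q\|_{\beta(\cdot)}$;
\item the far term uses $H_{\beta(\cdot),r(\cdot),1}$ on $2Q$.
\end{itemize}
It then concludes with Fefferman--Stein and extrapolation, as in the non-fractional case. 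The summation lemma also treats large cubes via $\beta_\infty$, not only the ``local'' regime you invoke.

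There is one step that fails as you wrote it: the finiteness hypothesis of the Fefferman--Stein inequality. From $\int|T_{\beta(\cdot)}f|^pw<\infty$ and $M_\delta g\le Mg$ you cannot get $\int(M_\delta T_{\beta(\cdot)}f)^pw<\infty$. The reason is that $M$ is bounded on $L^p(w)$ only when $w\in A_p$, and you only know $w\in A_\infty$. For a counterexample, take $w(x)=|x|^a$ with $a\ge n(p-1)$ and $g=\chi_{B(0,1)}$: then $\int|g|^pw<\infty$ but $\int(Mg)^pw=\infty$. The inequality $M_\delta g\le Mg$ discards exactly the decay you need, and ``$p>1$'' does not help.

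The paper's fix is to choose $\delta$ depending on $w$, which your pointwise bound allows since it holds for every $\delta\in(0,1)$. If $w\in A_{q_0}$, take $\delta<\min\{1,p/q_0\}$, so that $w\in A_{p/\delta}$ and
\[
\int_{\R}\bigl(M|T_{\beta(\cdot)}f|^\delta\bigr)^{p/\delta}w\lesssim\int_{\R}|T_{\beta(\cdot)}f|^pw<\infty .
\]
Then apply Fefferman--Stein to $|T_{\beta(\cdot)}f|^\delta$ with exponent $p/\delta$; your vague ``truncation/approximation'' alternative is not needed.

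Likewise, drop the restriction to bounded compactly supported $f$ in the pointwise estimate. The argument never uses it, and the bound is trivial where $M_{\beta(\cdot),r'(\cdot)}f(x)=\infty$. Keeping it would require an approximation argument you have not supplied, since the theorem is claimed for every $f$ with finite left-hand side.
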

In the fractional case we also obtain the following weighted strong estimate for the operator $T_{\beta(\cdot)}$.
\begin{teo}\label{teo TB}
	Let $p(\cdot),q(\cdot),r(\cdot),s(\cdot)\in\mathcal{P}^{\log}(\R)$ such that $p(\cdot)<q(\cdot)\leq q^+<\infty$ and $p(\cdot)=r^\prime(\cdot)s(\cdot)$ with $s^->1$. Let $\beta(\cdot)$ be the exponent defined by $1/\beta(\cdot)=1/p(\cdot)-1/q(\cdot)$. Let $T_{\beta(\cdot)}$ be an integral operator whose kernel $K$ belongs to $H_{\beta(\cdot),r(\cdot),1}\cap S_{\beta(\cdot),1,2}$. If $\beta^+<\infty$ and $\omega\in \mathbb{A}_{q(\cdot),\frac{\beta(\cdot) r^\prime(\cdot)}{\beta(\cdot)-r^\prime(\cdot)}}$, then exists a constant $C$ such that
	$$\left\|\left(T_{\beta(\cdot)}f\right)\omega\right\|_{q(\cdot)}\leq C\|f\omega\|_{p(\cdot)}$$
	whenever the left-hand side is finite.	
\end{teo}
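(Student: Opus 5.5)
Theorem \ref{teo TB} follows by chaining two earlier results: the Coifman--Fefferman estimate of Theorem \ref{teo Hör frac}\,(b), written with exponent $q(\cdot)$, and the weighted bound of Theorem \ref{teo M} for $M_{\beta(\cdot),r^\prime(\cdot)}$ from $L^{p(\cdot)}$ to $L^{q(\cdot)}$. The target chain is
\begin{equation*}
\|(T_{\beta(\cdot)}f)\omega\|_{q(\cdot)}\lesssim \|(M_{\beta(\cdot),r^\prime(\cdot)}f)\omega\|_{q(\cdot)}\lesssim \|f\omega\|_{p(\cdot)}.
\end{equation*}

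\textbf{Second inequality.} This is exactly Theorem \ref{teo M} with $r(\cdot)$ replaced by $r^\prime(\cdot)$. The hypotheses transfer directly.
\begin{itemize}
\item We have $p\le q\le q^+<\infty$.
\item We have $p(\cdot)=r^\prime(\cdot)s(\cdot)$ with $s^->1$.
\item All exponents are log-Hölder.
\item The weight condition $\omega\in\mathbb{A}_{q(\cdot),\frac{\beta r^\prime}{\beta-r^\prime}}$ is precisely the one required there.
\end{itemize}
One detail needs checking: $r^\prime(\cdot)\in\mathcal{P}^{\log}$ whenever $r(\cdot)$ is. This is standard, since $1/r^\prime=1-1/r$ and log-Hölder continuity is a condition on $1/r$.

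\textbf{First inequality: the weight and exponent.} Here I invoke Theorem \ref{teo Hör frac}\,(b) with the exponent $q(\cdot)$ in place of $p(\cdot)$. I read its right-hand side as $M_{\beta(\cdot),r^\prime(\cdot)}f$, matching part (a). For this I need two things.

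First, $1<q^-\le q^+<\infty$. We have $q^+<\infty$ by hypothesis, and $q^-\ge p^->1$ because $p=r^\prime s$ with $r^\prime\ge 1$ and $s^->1$.

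Second, $\omega\in\mathcal{A}_{q(\cdot)}$. I expect this reduction to be the main point. Set $t(\cdot)=\frac{\beta r^\prime}{\beta-r^\prime}$ and let $\sigma(\cdot)$ be defined by $1/t=1/q+1/\sigma$. The $\mathbb{A}$ condition says
\begin{equation*}
\|\chi_Q\omega\|_{q(\cdot)}\,\|\chi_Q\omega^{-1}\|_{\sigma(\cdot)}\lesssim\|\chi_Q\|_{t(\cdot)}.
\end{equation*}
Since $t\ge 1$, we get $1/\sigma=1/t-1/q\le 1-1/q=1/q^\prime$, so $\sigma\ge q^\prime$. By Hölder's inequality in variable Lebesgue spaces,
\begin{equation*}
\|\chi_Q\omega^{-1}\|_{q^\prime(\cdot)}\lesssim \|\chi_Q\omega^{-1}\|_{\sigma(\cdot)}\,\|\chi_Q\|_{u(\cdot)},\qquad \frac{1}{u}=\frac{1}{q^\prime}-\frac{1}{\sigma}=\frac{1}{t}-\frac1q+\frac{1}{q^\prime}.
\end{equation*}
Combining, it remains to show
\begin{equation*}
\|\chi_Q\|_{t(\cdot)}\,\|\chi_Q\|_{u(\cdot)}\lesssim |Q|.
\end{equation*}
Note that $1/t+1/u$ is not $1$ in general, so this is not a plain Hölder step. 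For log-Hölder exponents, however, $\|\chi_Q\|_{a(\cdot)}\approx |Q|^{1/a(x)}$ for $x\in Q$ when $|Q|\le1$, and $\approx|Q|^{1/a(\infty)}$ for large cubes. Both regimes reduce to checking that the exponents add up correctly, and this needs care. I expect it to be the main obstacle.

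\textbf{Fallback for the weight step.} If the pointwise exponent bookkeeping fails, I would instead use the following route. Theorem \ref{teo M} gives boundedness of $M_{\beta(\cdot),r^\prime(\cdot)}$, hence of the smaller operator $M_{r^\prime(\cdot)}$ locally, and this would yield $\mathcal{A}_{q}$-type control through the characterization in Corollary \ref{coro Mr}.

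\textbf{Finiteness of the left-hand side.} The first inequality is applied in the form "whenever the left-hand side is finite". Since our statement carries the same proviso, no extra approximation argument is needed.
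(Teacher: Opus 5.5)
Your chain is the paper's. You apply Theorem \ref{teo Hör frac}(b) with exponent $q(\cdot)$, with $M_{\beta(\cdot),r'(\cdot)}$ on the right as in the paper's proof, and then Theorem \ref{teo M} with $r'(\cdot)$ in place of $r(\cdot)$. Your checks that $r'(\cdot)\in\mathcal{P}^{\log}(\R)$ and $q^-\ge p^->1$ are correct.

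The gap is the weight step $\omega\in\mathbb{A}_{q(\cdot),t(\cdot)}\Rightarrow\omega\in\mathcal{A}_{q(\cdot)}$, which you leave open, and it comes from a sign error. Since $1/\sigma=1/t-1/q$,
\[
\frac{1}{u}=\frac{1}{q'}-\frac{1}{\sigma}=\frac{1}{q'}-\frac{1}{t}+\frac{1}{q}=1-\frac{1}{t},
\]
so $u=t'(\cdot)$ and $1/t+1/u=1$ identically. Your remark that this is ``not a plain Hölder step'' is therefore wrong.

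With the $u$ you actually wrote down, the bound you need would be false. There $1/t+1/u=1+2(1/t-1/q)$, and $1/t-1/q=\frac{1}{r'}\bigl(1-\frac{1}{s}\bigr)>0$. So already for constant exponents $\|\chi_Q\|_{t}\|\chi_Q\|_{u}=|Q|^{1/t+1/u}\not\lesssim|Q|$ for large cubes, and the step as you set it up cannot be completed.

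With the correct $u=t'$ the step is immediate. First, $t(\cdot)\in\mathcal{P}^{\log}(\R)$ by Lemma \ref{algebra de Plog}, since $1/t=1/r'-1/\beta$. Then Lemma \ref{norma de cubos} gives $\|\chi_Q\|_{t(\cdot)}\|\chi_Q\|_{t'(\cdot)}\approx\|\chi_Q\|_{1}=|Q|$. This is exactly Lemma \ref{Apr implica Ap}, which the paper cites and which you could have cited directly.

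Your fallback would not have rescued the step, for two reasons. First, $M_{r'(\cdot)}$ is not dominated by $M_{\beta(\cdot),r'(\cdot)}$. Since $\beta^+<\infty$, $\|\chi_Q\|_{\beta(\cdot)}\lesssim|Q|^{1/\beta^+}\to0$ for small cubes, so locally the fractional operator is the smaller one. Second, an $L^{p(\cdot)}(\omega)\to L^{q(\cdot)}(\omega)$ bound gives no $L^{q(\cdot)}(\omega)\to L^{q(\cdot)}(\omega)$ bound, and the latter is what Corollary \ref{coro Mr} would require.
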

\begin{remark}
	The examples presented in Section \ref{sec 4} satisfy that the left-hand sides of the inequalities in Theorems \ref{teo T} and \ref{teo TB} are finite. See Propositions \ref{lado izq finito K}, \ref{lado izq finito K tilde} and \ref{lado izq finito K alpha}. 	
\end{remark}

The sharp maximal function $M^\sharp$ is defined as
\begin{equation*}
M^\sharp f(x)=\sup_{Q\ni x}\inf_{a\in\mathbb{R}}\frac{1}{|Q|}\int_Q |f(y)-a|dy.
\end{equation*}
The weighted Bounded Mean Oscillation space, $BMO_\omega$ is defined by
\begin{equation*}
	BMO_\omega=\{f\in L^1_{\text{loc}}(\R): \omega M^\sharp f\in L^{\infty}(\R) \},
\end{equation*}
The seminorm on this space is given by
\begin{equation*}
\|f\|_{*,\omega}=\sup_Q \|\chi_Q\omega\|_\infty\left[\frac{1}{|Q|}\int_Q\left(f(x)-\frac{1}{|Q|}\int_Qf(y)dy\right)dx\right].
\end{equation*}
Is easy to see that $\|f\|_{*,\omega}\approx\left\|\omega M^\sharp f\right\|_\infty$ (For details, see \cite{BMOw}.)
\begin{teo}\label{borde}
	Let ${r(\cdot)},{\beta(\cdot)}\in\mathcal{P}^{\text{log}}(\R)$ such that $r^\prime(\cdot)\leq\beta(\cdot)\leq \beta^+<\infty$. Let $T_{\beta(\cdot)}$ an integral operator whose kernel $k$ belongs to $H_{{\beta(\cdot)},{r(\cdot)},1}\cap S_{{\beta(\cdot)},1,2}$. If $\omega\in \mathbb{A}_{\infty,\frac{{r^\prime(\cdot)}{\beta(\cdot)}}{{\beta(\cdot)}-{r^\prime(\cdot)}}}$, then there exists a constant $C$ such that
	\begin{equation*}
	\|T_{\beta(\cdot)}f\|_{*,\omega}\leq C \|\omega f\|_{\beta(\cdot)}.
	\end{equation*}
\end{teo}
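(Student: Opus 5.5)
We will prove Theorem \ref{borde} by estimating, for each cube $Q$, the mean oscillation of $T_{\beta(\cdot)}f$ over $Q$ against a suitable constant. Set $\gamma(\cdot)=\frac{r'(\cdot)\beta(\cdot)}{\beta(\cdot)-r'(\cdot)}$, so that $1/r'(\cdot)=1/\beta(\cdot)+1/\gamma(\cdot)$. By the definition of $\mathbb{A}_{\infty,\gamma(\cdot)}$ with $p(\cdot)\equiv\infty$, the companion exponent is $\gamma(\cdot)$, and the hypothesis on $\omega$ reads
\[
\|\chi_Q\omega\|_\infty\,\|\chi_Q\omega^{-1}\|_{\gamma(\cdot)}\le C\|\chi_Q\|_{\gamma(\cdot)}.
\]
Since $\|f\|_{*,\omega}\approx\|\omega M^\sharp f\|_\infty$, it suffices to show, for every cube $Q$,
\[
\|\chi_Q\omega\|_\infty\,\frac{1}{|Q|}\int_Q|T_{\beta(\cdot)}f-a_Q|\le C\|\omega f\|_{\beta(\cdot)}
\]
for some constant $a_Q$.

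Fix $Q$ and write $f=f_1+f_2$ with $f_1=f\chi_{2Q}$. Take $a_Q=T_{\beta(\cdot)}f_2(x_Q)$, where $x_Q$ is the center of $Q$. The mean oscillation is then at most
\[
\frac1{|Q|}\int_Q|T_{\beta(\cdot)}f_1|+\frac1{|Q|}\int_Q|T_{\beta(\cdot)}f_2(x)-T_{\beta(\cdot)}f_2(x_Q)|\,dx.
\]

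\emph{Local term.} We use Fubini and the size condition $S_{\beta(\cdot),1,2}$. For $y\in 2Q$, $\int_Q|K(x,y)|\,dx$ is controlled by summing the $L^1$ norms of $K(\cdot,y)$ over dyadic annuli around $y$ contained in a fixed dilate of $Q$. Each annulus $2^{-j+1}Q_y\setminus 2^{-j}Q_y$ contributes at most $C\|\chi_{2^{-j}Q_y}\|_{\beta(\cdot)}$. Because $\beta\in\mathcal P^{\log}$ with $\beta^+<\infty$, the quantity $\|\chi_P\|_{\beta(\cdot)}$ behaves like $|P|^{1/\beta_P}$ with $\beta_P\le\beta^+$, so these terms decay geometrically as $j$ increases. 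The sum is therefore $\lesssim\|\chi_{Q}\|_{\beta(\cdot)}$, and we get
\[
\frac1{|Q|}\int_Q|T_{\beta(\cdot)}f_1|\lesssim \frac{\|\chi_Q\|_{\beta(\cdot)}}{|Q|}\int_{2Q}|f|.
\]
By the generalized Hölder inequality, $\int_{2Q}|f|\le \|f\omega\chi_{2Q}\|_{\beta(\cdot)}\|\omega^{-1}\chi_{2Q}\|_{\beta'(\cdot)}$. We then need
\[
\|\chi_Q\omega\|_\infty\|\omega^{-1}\chi_{2Q}\|_{\beta'(\cdot)}\|\chi_Q\|_{\beta(\cdot)}\lesssim|Q|.
\]
To get this, apply Hölder with $1/\beta'(\cdot)=1/r(\cdot)+1/\gamma(\cdot)$, which follows from $1/r'=1/\beta+1/\gamma$, to obtain $\|\omega^{-1}\chi\|_{\beta'}\le\|\chi\|_{r}\|\omega^{-1}\chi\|_\gamma$. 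Combining this with the weight condition gives the bound
\[
\|\chi_{2Q}\|_{r(\cdot)}\,\|\chi_{2Q}\|_{\gamma(\cdot)}\,\|\chi_Q\|_{\beta(\cdot)}\approx|Q|,
\]
where the last equivalence uses the $\mathcal P^{\log}$ norm estimates of characteristic functions and doubling.

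\emph{Global term.} Decompose $\mathbb{R}^n\setminus 2Q$ into the annuli $2^{m+1}Q\setminus 2^mQ$. On each annulus, apply Hölder with $r(\cdot)$ and $r'(\cdot)$ to $[K(x,\cdot)-K(x_Q,\cdot)]$ and $f$. This gives
\[
\frac1{|Q|}\int_Q|T_{\beta(\cdot)}f_2(x)-T_{\beta(\cdot)}f_2(x_Q)|\,dx\le \sup_{x\in Q}\sum_m\|[K(x,\cdot)-K(x_Q,\cdot)]\chi_{\text{ann}}\|_{r(\cdot)}\,\|f\chi_{2^{m+1}Q}\|_{r'(\cdot)}.
\]
Next, split $\|f\chi\|_{r'}\le\|f\omega\chi\|_{\beta}\|\omega^{-1}\chi_{2^{m+1}Q}\|_{\gamma}$. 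The weight condition on $2^{m+1}Q$ gives $\|\omega^{-1}\chi_{2^{m+1}Q}\|_\gamma\le C\|\chi_{2^{m+1}Q}\|_\gamma/\|\chi_{2^{m+1}Q}\omega\|_\infty$, and $\|\chi_{2^{m+1}Q}\omega\|_\infty\ge\|\chi_Q\omega\|_\infty$, which absorbs the weight factor in front. We are left with
\[
\sum_m\|[K(x,\cdot)-K(x_Q,\cdot)]\chi_{\text{ann}}\|_{r(\cdot)}\,\|\chi_{2^{m}Q}\|_{\gamma(\cdot)}\,\|f\omega\|_{\beta(\cdot)}.
\]
Using $\|\chi_{P}\|_{\gamma}\approx|P|/(\|\chi_P\|_{\beta}\|\chi_P\|_{r})$, again from the $\mathcal P^{\log}$ estimates, this sum is exactly the quantity in the $H_{\beta(\cdot),r(\cdot),1}$ condition, applied with a cube $Q'\supset Q$ so that $x,x_Q\in\frac12Q'$. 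Hence it is bounded.

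\emph{Main obstacle.} The delicate point is the identity $\|\chi_P\|_{r(\cdot)}\|\chi_P\|_{\gamma(\cdot)}\|\chi_P\|_{\beta(\cdot)}\approx|P|$ holding uniformly for all cubes, large and small. I would derive it from the standard fact that for $p(\cdot)\in\mathcal P^{\log}$, $\|\chi_P\|_{p(\cdot)}\approx|P|^{1/p_\infty}$ for large cubes and $\approx|P|^{1/p(x)}$ for small cubes with $x\in P$, together with $1/r+1/\gamma+1/\beta=1$ pointwise. This requires checking that $\gamma\in\mathcal P^{\log}$, which follows since its reciprocal is a difference of log-Hölder reciprocals. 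The condition $r'\le\beta$ ensures $\gamma$ is a genuine exponent.
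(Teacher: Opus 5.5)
Your proof is correct and follows essentially the paper's route, with its two ingredients merged: the paper first proves $\|\omega\, M_{\beta(\cdot),r'(\cdot)}f\|_\infty\lesssim\|f\omega\|_{\beta(\cdot)}$ by the same H\"older splitting $1/r'(\cdot)=1/\beta(\cdot)+1/\gamma(\cdot)$, the $\mathbb{A}_{\infty,\gamma(\cdot)}$ condition and Lemma \ref{norma de cubos}, and then quotes its pointwise estimate $M^\sharp(T_{\beta(\cdot)}f)\lesssim M_{\beta(\cdot),r'(\cdot)}f$ (the fractional sharp-function lemma with $\delta=1$), whose proof is the same local/global decomposition you carry out inline. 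You insert the weight scale by scale via $\|\chi_Q\omega\|_\infty\le\|\chi_{2^{m+1}Q}\omega\|_\infty$, which is the same monotonicity the paper uses when it bounds $\omega(x)$ by $\|\chi_Q\omega\|_\infty$ for cubes $Q\ni x$.
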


The rest of the paper is organized as follows:
in Section \ref{sec 2} we state some necessary results concerning the variable Lebesgue spaces and we give some properties of the class $\mathbb{A}_{p(\cdot),r(\cdot)}$. In Section \ref{sec 3} we prove Theorem \ref{teo M}. In Section \ref{sec 4} we prove Theorems \ref{teo Hör}, \ref{teo T}, \ref{teo Hör frac}, \ref{teo TB}, and \ref{borde}. We also give different examples of operators whose kernels satisfy the conditions \ref{def Hör} and \ref{def SrB}. In Section \ref{sec 5}, we present some lemmas and technical proofs of some results that are necessary but not central to the paper. In the rest of the paper, $C$ will denote constants that may vary from line to line. When we write $f\lesssim g$, we mean that  $f\leq C g$, and when we write $f\approx g$, we mean that both $f\lesssim g$ and $g\lesssim f$ hold. We will denote by $Q$ a cube with sides parallel to the coordinate axes., $c_Q$ its center, and $\ell(Q)$ its side length.
 For $m\in(0,\infty)$, $mQ$ denotes the cube concentric with $Q$ and $\ell(mQ)=m\ell(Q)$.

\section{Preliminaries}\label{sec 2}
\subsection{Variable Lebesgue space}
\

Next, we give some definitions and state some basic results on variable Lebesgue spaces.

We denote by $\mathcal{P}(\mathbb{R})$ the set of exponents consisting of all measurable functions $p(\cdot): \mathbb{R} \to [1,\infty]$. For $p(\cdot) \in \mathcal{P}(\mathbb{R})$, we single out the following sets:
 \begin{align*}
 (\R)^{p(\cdot)}_\infty&=\left\{x\in\R : p(x)=\infty\right\}\\
 (\R)^{p(\cdot)}_1&=\left\{x\in\R : p(x)=1\right\}\\
 (\R)^{p(\cdot)}_*&=\left\{x\in\R : 1<p(x)<\infty\right\}.
 \end{align*} 
 For a measurable $E\subset\R$, denote by $p^+_E=\text{esssup}_E\ p(\cdot)$ and $p^-=\text{essinf}_E\ p(\cdot)$. If $E=\R$ we simply write $p^+$ and $p^-$ respectively. The conjugated exponent $p^\prime(\cdot)$ is defined as usually, by, 
 $1/p(\cdot)+1/p^\prime(\cdot)=1$.
  The variable Lebesgue space $L^{p(\cdot)}(\R)$ is defined as the set of all measurable functions on $\R$ such that, for some $\lambda>0$, the modular convex function satisfy
\begin{equation*}
	\rho_{p(\cdot)}\left(\frac{f}{\lambda}\right)=\int_{\R\setminus(\R)^{p(\cdot)}_\infty}\left|\frac{f(x)}{\lambda}\right|^{p(x)}dx+\left\|f\chi_{(\R)^{p(\cdot)}_\infty}\right\|_\infty<\infty.
\end{equation*}
This is a Banach space with norm
\begin{equation}\label{def norma}
	\|f\|_{p(\cdot)}=\inf\left\{\lambda>0:	\rho_{p(\cdot)}(f/\lambda)\leq 1\right\},
\end{equation}
and generalized the clasical Lebesgue space, if $p(\cdot)$ is constantly $p_0$ then $L^{p(\cdot)}(\R)=L^{p_0}(\R)$ and $\|\cdot\|_{p(\cdot)}=\|\cdot\|_{p_0}$.

Given a weight $\omega$ (a non-negative measurable function), the weighted variable Lebesgue space, $L^{p(\cdot)}(\omega)$, is defined as the set of all measurable functions $f$ such that $f\omega\in L^{p(\cdot)}(\R)$. The norm in this space is given by $\|f\omega\|_{p(\cdot)}$.
\begin{lema}[\cite{Extrapol con pesos}]\label{densidad en Lp de omega}
	Given $p(\cdot)\in \mathcal{P}(\R)$ with $p^+<\infty$, and a weight $\omega\in L_{\text{loc}}^{p(\cdot)}$, then the smooth functions of compact support, $C^\infty_c$, are dense in $L^{p(\cdot)}(\omega)$.
\end{lema}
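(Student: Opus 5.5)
We argue by approximation in the modular: first reduce to bounded functions with bounded support, then to continuous functions with compact support, and finally mollify. Fix $f\in L^{p(\cdot)}(\omega)$, i.e. $f\omega\in L^{p(\cdot)}(\R)$. Since $p^+<\infty$, the set $(\R)^{p(\cdot)}_\infty$ is empty and norm convergence is equivalent to modular convergence. So it suffices to find $g\in C^\infty_c$ with $\int_{\R}|f-g|^{p(x)}\omega(x)^{p(x)}\,dx$ small.

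\emph{Step 1 (truncation).} Set $f_k=f\chi_{\{|f|\le k\}}\chi_{B(0,k)}$. Then $|f-f_k|\omega\le|f|\omega$ and $(f-f_k)\omega\to0$ pointwise. Since $\int|f\omega|^{p(x)}<\infty$, dominated convergence gives $\rho_{p(\cdot)}((f-f_k)\omega)\to0$. So we may assume $f$ is bounded with compact support.

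\emph{Step 2 (continuous approximation).} With $f$ bounded by $k$ and supported in a ball $B$, the hypothesis $\omega\in L^{p(\cdot)}_{\text{loc}}$ says $\mu(E)=\int_E\omega^{p(x)}\,dx$ is a finite measure on $B$ that is absolutely continuous with respect to Lebesgue measure. Choose $\phi\in C_c(2B)$ with $|\phi|\le k$ and $\phi=f$ outside a set $E$ with $|E|$ arbitrarily small (Lusin's theorem). Then
\[
\int|f-\phi|^{p(x)}\omega^{p(x)}\le(2k+1)^{p^+}\mu(E),
\]
and absolute continuity of $\mu$ makes this small.

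\emph{Step 3 (mollification).} Let $\phi_\varepsilon=\phi*\eta_\varepsilon$. Then $\phi_\varepsilon\to\phi$ uniformly, the supports stay in a fixed compact set $K$, and $|\phi_\varepsilon-\phi|\le 2k$. Hence
\[
\int|\phi_\varepsilon-\phi|^{p(x)}\omega^{p(x)}\le\|\phi_\varepsilon-\phi\|_\infty^{\,p^-}\,\mu(K)
\]
once the sup is below $1$. This tends to $0$ because $\mu(K)<\infty$, which is again the local integrability hypothesis.

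Combining the three steps with the triangle inequality for the norm, and using that modular smallness implies norm smallness when $p^+<\infty$, gives density. The main care point is Step 2: the weight could be unbounded, so one must control the bad set through the measure $\omega^{p(\cdot)}dx$ rather than through Lebesgue measure. Absolute continuity handles this. The hypothesis $p^+<\infty$ is used throughout, both for dominated convergence and for the equivalence of modular and norm convergence.
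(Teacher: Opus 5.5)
Your proof is correct. The paper gives no proof of this lemma; it is quoted from the weighted extrapolation reference, so there is no argument in the text to compare against. Your three steps (truncation, Lusin's theorem with the measure $\omega^{p(\cdot)}dx$, mollification) form a complete, self-contained proof.

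A few facts you use implicitly are worth stating.

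\emph{Finiteness of the modulars.} Because $p^+<\infty$, we have $\rho_{p(\cdot)}(g)\le\max\{1,\lambda^{p^+}\}\,\rho_{p(\cdot)}(g/\lambda)$ for every $\lambda>0$. Hence $f\omega\in L^{p(\cdot)}(\R)$ gives $\rho_{p(\cdot)}(f\omega)<\infty$, which is the integrable majorant you need for dominated convergence in Step 1. Likewise $\omega\in L^{p(\cdot)}_{\text{loc}}$ gives $\int_K\omega(x)^{p(x)}\,dx<\infty$ for every compact $K$, which is exactly what makes $\mu$ locally finite.

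\emph{Step 2.} The exceptional set $E=\{f\neq\phi\}$ lies in $2B$, not in $B$. So the absolute continuity you invoke should be that of the finite measure $\mu$ restricted to $2B$. This is harmless, but it should be said.

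\emph{A shortcut.} You can merge Steps 2 and 3 and drop Lusin's theorem. Take $f$ bounded by $k$ and supported in $B$. By the Lebesgue differentiation theorem, $f*\eta_\varepsilon\to f$ a.e. Moreover $|f*\eta_\varepsilon-f|\le 2k\chi_K$ for a fixed compact $K$ and all $\varepsilon\le 1$. Dominated convergence with the integrable majorant $(2k+1)^{p^+}\omega^{p(\cdot)}\chi_K$ then gives $\rho_{p(\cdot)}\bigl((f*\eta_\varepsilon-f)\omega\bigr)\to0$ directly. Your version, which goes through uniform convergence of the mollified continuous function, is equally valid; it just has one more step.
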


The following properties hold:
\begin{propo}[\cite{libro Diening,libro CU-FIO}]\label{propo modular} Let $p(\cdot)\in \mathcal{P}(\R)$:
	\begin{enumerate}
		\item If $|f(x)|\leq|g(x)|$, then $\rho_{p(\cdot)}(f)\leq \rho_{p(\cdot)}(g)$ and $\|f\|_{p(\cdot)}\leq\|g\|_{p(\cdot)}$.
		\item If there exist $\lambda_0>0$ such that $\rho_{p(\cdot)}(f/\lambda_0)<\infty$, then the funcition $\lambda\longmapsto\rho_{p(\cdot)}(f/\lambda)$ is continuous and drecreasing. Moreover 
		
		$\rho_{p(\cdot)}(f/\lambda)\to 0$ when $\lambda\to\infty$.
		\item If $\alpha>1$, then $\alpha\rho_{p(\cdot)}(f)\leq \rho_{p(\cdot)}(\alpha f)$. If $0<\alpha<1$, then $\rho_{p(\cdot)}(\alpha f)\leq \alpha\rho_{p(\cdot)}(f)$.
		\item If $\|f\|_{p(\cdot)}\leq 1$, then $\rho_{p(\cdot)}(f)\leq \|f\|_{p(\cdot)}$. If $\|f\|_{p(\cdot)}\geq 1$, then $\|f\|_{p(\cdot)}\leq\rho_{p(\cdot)}(f)$.
	\end{enumerate}
\end{propo}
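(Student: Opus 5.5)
The four items of Proposition~\ref{propo modular} will be proved directly from the definition of the modular $\rho_{p(\cdot)}$ and of the Luxemburg norm \eqref{def norma}. Throughout, write $\Omega_\infty=(\R)^{p(\cdot)}_\infty$, so that
\[
\rho_{p(\cdot)}(f)=\int_{\R\setminus\Omega_\infty}|f|^{p(x)}\,dx+\|f\chi_{\Omega_\infty}\|_\infty .
\]

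\emph{Item (1).} If $|f|\le|g|$ pointwise, then $|f|^{p(x)}\le|g|^{p(x)}$ on $\R\setminus\Omega_\infty$ and $\|f\chi_{\Omega_\infty}\|_\infty\le\|g\chi_{\Omega_\infty}\|_\infty$. Adding these gives $\rho_{p(\cdot)}(f)\le\rho_{p(\cdot)}(g)$. Applying this to $f/\lambda$ and $g/\lambda$ shows that every admissible $\lambda$ for $g$ is admissible for $f$, hence $\|f\|_{p(\cdot)}\le\|g\|_{p(\cdot)}$.

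\emph{Item (3).} Since $p(x)\ge1$, for $\alpha>1$ we have $\alpha^{p(x)}\ge\alpha$, and for $0<\alpha<1$ we have $\alpha^{p(x)}\le\alpha$. The $L^\infty$ part is exactly homogeneous. Integrating the pointwise inequality therefore gives both claims.

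\emph{Item (2).} Monotonicity in $\lambda$ follows from (1). For $\lambda\ge\lambda_0$, the integrand $|f/\lambda|^{p(x)}$ is dominated by $|f/\lambda_0|^{p(x)}$, which is integrable. Dominated convergence then gives continuity of the integral part on $[\lambda_0,\infty)$ and its convergence to $0$ as $\lambda\to\infty$. The $L^\infty$ part equals $\|f\chi_{\Omega_\infty}\|_\infty/\lambda$, which is finite, continuous in $\lambda$ and tends to $0$.

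Continuity is subtle only on $(0,\lambda_0)$, where the modular may become infinite. There continuity must be read in the extended sense. Monotone convergence gives left continuity as $\lambda\uparrow\lambda_1$. For $\lambda\downarrow\lambda_1$ we use dominated convergence if $\rho_{p(\cdot)}(f/\lambda)<\infty$ for some $\lambda<\lambda_1$, and monotone convergence otherwise. This case split is the main point needing care, and I would state the continuity claim precisely on the set where the modular is finite.

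\emph{Item (4).} Suppose $\|f\|_{p(\cdot)}=\mu\le1$ with $\mu>0$. By (2), $\rho_{p(\cdot)}(f/\mu)\le1$ (take the limit $\lambda\downarrow\mu$, using right continuity). Applying (3) with $\alpha=\mu<1$ gives
\[
\rho_{p(\cdot)}(f)=\rho_{p(\cdot)}\bigl(\mu\,(f/\mu)\bigr)\le\mu\,\rho_{p(\cdot)}(f/\mu)\le\mu .
\]
If $\mu=0$, then $f=0$ almost everywhere and the claim is trivial.

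Now suppose $\mu\ge1$. For any $1<\lambda<\mu$ we have $\rho_{p(\cdot)}(f/\lambda)>1$ by the definition of the norm. By (3) with $\alpha=1/\lambda<1$,
\[
\rho_{p(\cdot)}(f/\lambda)\le\rho_{p(\cdot)}(f)/\lambda ,
\]
so $\rho_{p(\cdot)}(f)>\lambda$. Letting $\lambda\uparrow\mu$ gives $\rho_{p(\cdot)}(f)\ge\mu$. The case $\mu=1$ holds trivially.
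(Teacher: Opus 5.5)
Items (1) and (3), continuity on $[\lambda_0,\infty)$ together with the limit at infinity in (2), the first half of (4), and the second half of (4) for $\|f\|_{p(\cdot)}>1$ are all correctly proved. These are also the only parts the paper actually uses: the intermediate-value step with $\lambda_0=1$ in the first Appendix lemma, and $\rho_{r(\cdot)}(h)\le\|h\|_{r(\cdot)}$ when $\|h\|_{r(\cdot)}\le1$. There are, however, two genuine errors. Both come from the fact that $\lambda\mapsto\rho_{p(\cdot)}(f/\lambda)$ need not be left continuous when $p$ is unbounded.

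\emph{Item (2).} You have the directions of monotone convergence reversed. As $\lambda\uparrow\lambda_1$ the integrands $|f/\lambda|^{p(x)}$ \emph{decrease} to $|f/\lambda_1|^{p(x)}$, so monotone convergence says nothing. You would need dominated convergence, i.e.\ finiteness of the modular at some $\lambda<\lambda_1$. As $\lambda\downarrow\lambda_1$ the integrands increase, so monotone convergence gives right continuity unconditionally; your case split belongs on the other side. Left continuity genuinely fails in general. Take pairwise disjoint unit cubes $A_k\subset\R$, $k\ge1$, let $p\equiv k$ on $A_k$ and $p\equiv1$ elsewhere, and put $f=(2k^2)^{-1/k}$ on $A_k$, $f=0$ elsewhere. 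Then
\[
\rho_{p(\cdot)}(f/\lambda)=\sum_{k\ge1}\frac{\lambda^{-k}}{2k^2},
\]
which is $+\infty$ for every $\lambda<1$ but equals $\pi^2/12$ at $\lambda=1$. So the continuity in (2) holds only on $[\lambda_0,\infty)$, or equivalently on the set where the modular is finite, as you suggest at the end. It does not hold on $(0,\infty)$, even in the extended sense. Drop the argument you sketched for $(0,\lambda_0)$.

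\emph{Item (4), case $\|f\|_{p(\cdot)}=1$.} This case is not trivial. The range $1<\lambda<\mu$ is empty, and what you need is $\rho_{p(\cdot)}(f)\ge1$, which is exactly left continuity of the modular at $\lambda=1$. In the example above, the modular is at most $\pi^2/12<1$ for $\lambda\ge1$ and infinite for $\lambda<1$, so $\|f\|_{p(\cdot)}=1$. Yet $\rho_{p(\cdot)}(f)=\pi^2/12<1$. Hence the second half of (4) is false as written when $\|f\|_{p(\cdot)}=1$.

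The correct statement, which is also the usual formulation for semimodulars, requires $\|f\|_{p(\cdot)}>1$. That is precisely what your argument proves, including the case $\|f\|_{p(\cdot)}=\infty$. Alternatively, suppose $p$ is bounded on $E=\R\setminus(\R)^{p(\cdot)}_\infty$. For $\lambda<\lambda_0$ the bound $|f/\lambda|^{p(x)}\le(\lambda_0/\lambda)^{p^+_E}|f/\lambda_0|^{p(x)}$ shows the modular is finite for all $\lambda>0$. Dominated convergence then gives continuity on all of $(0,\infty)$, and in that setting $\|f\|_{p(\cdot)}=1$ does force $\rho_{p(\cdot)}(f)=1$.
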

If $p^+<\infty$, then for every constant $s_0\geq1/p^-$, it follows from the definition that
\begin{equation}\label{exponente en la norma}
	\left\||f|^{s_0}\right\|_{p(\cdot)}=\|f\|^{s_0}_{s_0p(\cdot)}.
\end{equation}

The restriction on $s_0$ can be replaced by $s_0 > 0$ if we define $\|\cdot\|_{s_0 p(\cdot)}$ as \eqref{def norma}, even when $s_0 p(\cdot) < 1$.

For this norm, we have the following generalized version of Hölder's inequality.
\begin{equation*}
	\|fg\|_1\leq 4\|f\|_{p(\cdot)}\|g\|_{p^\prime(\cdot)},
\end{equation*}
even more general: 
\begin{teo}[Hölder’s inequality, \cite{libro Diening,libro CU-FIO}]\label{teo Hölder}
		Let $r(\cdot),p(\cdot),q(\cdot)\in\mathcal{P}(\R)$ such that $1/r(\cdot)=1/p(\cdot)+1/q(\cdot)$, then
		\begin{equation*}
		\|fg\|_{r(\cdot)}\leq C\|f\|_{p(\cdot)}\|g\|_{q(\cdot)}.
		\end{equation*}
\end{teo}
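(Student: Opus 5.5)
The plan is to reduce the general Hölder inequality to the case $r(\cdot)\equiv 1$, that is, $\|fg\|_1\le 4\|f\|_{p(\cdot)}\|g\|_{p'(\cdot)}$, which we take as known. By homogeneity of the norm we may assume $\|f\|_{p(\cdot)}=\|g\|_{q(\cdot)}=1$, and it suffices to show $\rho_{r(\cdot)}(fg/C)\le 1$ for a fixed constant $C$. Everything then comes down to a pointwise inequality for the modular.

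\textbf{Pointwise Young inequality.} At points where $p(x),q(x)<\infty$ (so $r(x)<\infty$), apply Young's inequality with the exponents $p(x)/r(x)$ and $q(x)/r(x)$, which are conjugate because $r/p+r/q=1$:
$$|f(x)g(x)|^{r(x)}\le \frac{r(x)}{p(x)}|f(x)|^{p(x)}+\frac{r(x)}{q(x)}|g(x)|^{q(x)}\le |f(x)|^{p(x)}+|g(x)|^{q(x)}.$$
Integrating over the set where all exponents are finite bounds the integral part of $\rho_{r(\cdot)}(fg)$ by the integral parts of $\rho_{p(\cdot)}(f)$ and $\rho_{q(\cdot)}(g)$. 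Since both norms equal $1$, each of these is at most $1$ by Proposition \ref{propo modular}(4).

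\textbf{Infinite exponents.} Split $\R$ into three pieces.
\begin{enumerate}
\item Where $p=q=\infty$ we have $r=\infty$, and $\|fg\chi\|_\infty\le \|f\chi\|_\infty\|g\chi\|_\infty\le 1$.
\item Where $p=\infty$ and $q<\infty$ we have $r=q$, and $|fg|^{q}\le \|f\chi_{\{p=\infty\}}\|_\infty^{q}|g|^{q}\le |g|^{q}$ because the $L^\infty$ part is at most $1$.
\item The case $q=\infty$, $p<\infty$ is symmetric.
\end{enumerate}
Summing all contributions gives $\rho_{r(\cdot)}(fg)\le K$ for a fixed $K$ (for instance $K=5$). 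Here the $L^\infty$ pieces of the modulars are taken as a sup, so they do not add integrally, but each is at most $1$.

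\textbf{From the modular to the norm.} Use Proposition \ref{propo modular}(3) with $\alpha=1/K<1$ to get $\rho_{r(\cdot)}(fg/K)\le \rho_{r(\cdot)}(fg)/K\le 1$. Hence $\|fg\|_{r(\cdot)}\le K$, and by homogeneity this gives the inequality with $C=K$.

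The main obstacle is bookkeeping in the mixed regions, where one exponent is infinite and the modular combines an integral with an esssup term. The $L^\infty$ part of $\rho_{r(\cdot)}$ lives only on $\{p=q=\infty\}$, while the mixed regions contribute integral terms. This is handled by the region-by-region estimate above together with the convention $1/\infty=0$. It also needs the convexity inequality in Proposition \ref{propo modular}(3), which holds only for $r(\cdot)\ge 1$; that is guaranteed because $r(\cdot)\in\mathcal P(\R)$.
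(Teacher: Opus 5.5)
Your argument is correct. It is essentially the standard proof in the cited references, since the paper itself only quotes this result: normalize, apply Young's inequality pointwise with the conjugate exponents $p(x)/r(x)$ and $q(x)/r(x)$, treat the sets where $p(\cdot)$ or $q(\cdot)$ is infinite separately, and use Proposition~\ref{propo modular}(3) to pass from $\rho_{r(\cdot)}(fg)\le K$ to $\|fg\|_{r(\cdot)}\le K$. The only blemish is your opening sentence announcing a reduction to the case $r(\cdot)\equiv 1$, which you never use and can simply delete.
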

We say that $p(\cdot)$ satisfies the continuous log-Hölder condition, and we write $p(\cdot)\in LH$, if there exist constants $c_0$, $c_\infty$ and $p_\infty$ such that
\begin{itemize}
    \item
	\begin{equation*}
	|p(x)-p(y)|\leq\frac{c_0}{-\log(|x-y|)},\qquad x,y \in\R,\ |x-y|<\frac{1}{2}.
	\end{equation*} 
	\item
	\begin{equation*}
	|p(x)-p_\infty|\leq\frac{c_\infty}{\log(e+|x|)}\qquad x\in\R.
	\end{equation*}
\end{itemize} 
	We denote by $\mathcal{P}^{\text{log}}(\R)$ the class of all exponents $p(\cdot)$ such that $1/p(\cdot)\in LH$. The set $\mathcal{P}^{\text{log}}(\R)$ satisfies the following properties:
	\begin{lema}[\cite{libro CU-FIO}] \label{algebra de Plog} Let $p(\cdot),q(\cdot)\in\mathcal{P}(\R)$ and let $r(\cdot)$ be the exponent defined by $1/r(\cdot)=1/p(\cdot)+1/q(\cdot)$.  Then:
		\begin{enumerate}
			\item If $p(\cdot)\in \mathcal{P}^{\text{log}}(\R)$ then $p^\prime(\cdot)\in\mathcal{P}^{\text{log}}(\R)$.
			\item If $p(\cdot),q(\cdot)\in \mathcal{P}^{\text{log}}(\R)$ then $r(\cdot)\in\mathcal{P}^{\text{log}}(\R)$.
			\item If $r(\cdot),p(\cdot)\in \mathcal{P}^{\text{log}}(\R)$ then $q(\cdot)\in\mathcal{P}^{\text{log}}(\R)$.
			\item If $p^+,q^+<\infty$ then $p(\cdot)q(\cdot)\in\mathcal{P}^{\text{log}}(\R)$.
		\end{enumerate}	
	\end{lema}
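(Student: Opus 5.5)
All four items reduce to elementary manipulations of the functions $1/p(\cdot)$, $1/q(\cdot)$, $1/r(\cdot)$. These take values in $[0,1]$, and $\mathcal{P}^{\text{log}}(\R)$ is defined by requiring exactly these reciprocals to be in $LH$. The plan is therefore to write each new reciprocal exponent as an algebraic combination of LH functions and check the two LH conditions (the local one for $|x-y|<1/2$ and the decay one at infinity) directly. The constants and the limit value at infinity are obtained explicitly each time. I also have to check that the new exponent lies in $\mathcal{P}(\R)$, i.e.\ takes values in $[1,\infty]$; in items (2) and (3) this holds because $r(\cdot)$ and $q(\cdot)$ belong to $\mathcal{P}(\R)$ by hypothesis.

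For (1), I use $1/p^\prime(\cdot)=1-1/p(\cdot)$. Then $|1/p^\prime(x)-1/p^\prime(y)|=|1/p(x)-1/p(y)|$, so the local condition holds with the same $c_0$. Setting $1/p^\prime_\infty:=1-1/p_\infty$, we get $|1/p^\prime(x)-1/p^\prime_\infty|=|1/p(x)-1/p_\infty|$, which gives the decay condition with the same $c_\infty$. Values in $[1,\infty]$ are automatic.

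For (2), I use $1/r=1/p+1/q$ and the triangle inequality. This gives the local bound with constant $c_0(p)+c_0(q)$ and, with $1/r_\infty:=1/p_\infty+1/q_\infty$, the decay bound with constant $c_\infty(p)+c_\infty(q)$. For (3), I do the same with $1/q=1/r-1/p$ and $1/q_\infty:=1/r_\infty-1/p_\infty$.

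For (4), the relevant function is the product $1/(p(x)q(x))=\frac1{p(x)}\cdot\frac1{q(x)}$, where $p(\cdot),q(\cdot)\in\mathcal{P}^{\text{log}}(\R)$ is understood. I write
$$\Bigl|\tfrac{1}{p(x)q(x)}-\tfrac{1}{p(y)q(y)}\Bigr|\le \tfrac{1}{p(x)}\Bigl|\tfrac1{q(x)}-\tfrac1{q(y)}\Bigr|+\tfrac1{q(y)}\Bigl|\tfrac1{p(x)}-\tfrac1{p(y)}\Bigr|,$$
and bound both prefactors by $1$. This gives the local condition with constant $c_0(p)+c_0(q)$. The same splitting around $1/(p_\infty q_\infty)$ gives the decay condition, where I use that $1/p_\infty,1/q_\infty\le1$, which follows from taking limits in the LH decay condition. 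Since $p(\cdot)q(\cdot)\ge1$, the product lies in $\mathcal{P}(\R)$.

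The only point needing care is the role of $p^+,q^+<\infty$ in (4). As computed, the reciprocal argument does not seem to need it. I would check whether the intended conclusion is log-Hölder continuity of $p(\cdot)q(\cdot)$ itself. In that case I would use $|p(x)-p(y)|\le (p^+)^2|1/p(x)-1/p(y)|$ and the analogous bound for $q$, which is where boundedness of the exponents is needed. Either way the estimate is routine, so I do not expect a genuine obstacle.
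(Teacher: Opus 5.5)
Your proof is correct. The paper gives no proof of this lemma and only quotes it from Cruz-Uribe and Fiorenza's book, so your direct verification is what is needed under the paper's convention that $p(\cdot)\in\mathcal{P}^{\text{log}}(\R)$ means $1/p(\cdot)\in LH$:
\begin{itemize}
\item Items (1)--(3) are affine or linear combinations of $LH$ functions.
\item Item (4) is a product of two $LH$ functions bounded by $1$. Your splitting gives constants $c_0(p)+c_0(q)$ and $c_\infty(p)+c_\infty(q)$.
\end{itemize}

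Your two side remarks are also right.
\begin{itemize}
\item Item (4) tacitly requires $p(\cdot),q(\cdot)\in\mathcal{P}^{\text{log}}(\R)$. Without it the statement fails, e.g.\ $q\equiv1$ and $p(\cdot)$ a bounded discontinuous exponent.
\item Under the paper's reciprocal definition, the hypothesis $p^+,q^+<\infty$ is superfluous. It only matters if one wants the exponent $p(\cdot)q(\cdot)$ itself, rather than its reciprocal, to be log-H\"older continuous. In that case the estimate $|p(x)-p(y)|\le (p^+)^2|1/p(x)-1/p(y)|$, together with the same product splitting (prefactors now bounded by $p^+$ and $q^+$), gives the result.
\end{itemize}
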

	For a cube $Q$, the harmonic mean is defined by
	$$\frac{1}{p_Q}=\frac{1}{|Q|}\int_{Q}\frac{1}{p(x)}dx.$$ 
	For exponents $p(\cdot)\in\mathcal{P}^{\text{log}}(\R)$, the following estimates for the norm of the indicator function of a cube $Q$ are know:
	\begin{lema}[\cite{libro Diening}]\label{norma de cubos comparada con |Q|}
	 	Let $p(\cdot)\in \mathcal{P}^{\text{log}}(\R)$. Then $\|\chi_Q\|_{p(\cdot)}\approx|Q|^{1/p_Q}$ for all cubes $Q$. Moreover
	 	\begin{equation*}
	 		\|\chi_Q\|_{p(\cdot)}\approx \left\{ \begin{array}{lcc} |Q|^{\frac{1}{p(x)}} & \text{if} & |Q|\leq 2^n \ \text{and} \ x\in Q,\\ \\ |Q|^{\frac{1}{p_\infty}} & \text{if} & |Q|\geq 1 \end{array} \right.
	 	\end{equation*}
	 	uniformly for all cubes $Q$.
	\end{lema}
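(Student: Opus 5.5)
The plan is to compare $\rho_{p(\cdot)}(\chi_Q)$ with powers of $|Q|$, treating small and large cubes separately, and then pass from the modular to the norm using Proposition \ref{propo modular}. Throughout I write $p_Q^-$, $p_Q^+$ for the essential infimum and supremum of $p(\cdot)$ over $Q$. Recall that $\mathcal{P}^{\log}(\R)$ is defined through $1/p(\cdot)\in LH$, so all estimates will be done on $1/p$; this also covers points where $p=\infty$.

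\textbf{Small cubes, $|Q|\le 2^n$.} Then $\ell(Q)\le 2$, and the local log-Hölder condition (after a bounded number of subdivisions for the range $1/2\le \ell(Q)\le 2$) gives
\[
\frac{1}{p_Q^-}-\frac{1}{p_Q^+}\le \frac{C}{-\log \ell(Q)} \quad \text{when } \ell(Q)<1/2 .
\]
Hence $|Q|^{1/p(x)-1/p(y)}\approx 1$ uniformly for $x,y\in Q$, since $|Q|^{-C/\log(1/\ell(Q))}=e^{Cn}$. In the same way $|Q|^{1/p_Q}\approx |Q|^{1/p(x)}$ for every $x\in Q$, because $1/p_Q$ lies between $1/p_Q^+$ and $1/p_Q^-$.

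Next take $\lambda=|Q|^{1/p(x_0)}$ for a point $x_0\in Q$. The modular $\rho_{p(\cdot)}(\chi_Q/(c\lambda))$ becomes $\int_Q (c\lambda)^{-p(y)}\,dy$, plus the $L^\infty$ part if $p=\infty$ on part of $Q$. Each term $\lambda^{-p(y)}=|Q|^{-p(y)/p(x_0)}$ is comparable to $|Q|^{-1}$ by the oscillation bound. Therefore the modular is $\lesssim c^{-p_Q^-}$ and $\gtrsim c^{-p_Q^+}$, which gives $\|\chi_Q\|_{p(\cdot)}\approx\lambda$. When $p^+=\infty$ I would argue with $1/p$ directly, using $\lambda^{-p(y)}=(\lambda^{-1/p(y)\cdot p(x_0)})^{\dots}$, or more simply the standard trick that $\|\chi_Q\|_{p(\cdot)}\le\lambda$ if and only if $\int_Q \lambda^{-p(y)}\le 1$, plus the $\infty$ part.

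\textbf{Large cubes, $|Q|\ge 1$.} This is the main obstacle, because the oscillation of $p$ on $Q$ is no longer small and we must use decay to $p_\infty$. I would split $\chi_Q$ as a sum of the function $\chi_Q$ measured in $L^{p_\infty}$ plus an error, invoking the standard lemma (Diening's) that the decay condition gives
\[
\int_{\R}|f|^{p(y)}\le C\int |f|^{p_\infty}+\int (e+|y|)^{-N}
\]
for $|f|\le 1$, and conversely.

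Apply this with $f=\chi_Q/(c|Q|^{1/p_\infty})$, which is $\le 1$ since $|Q|\ge 1$. This bounds the modular by $C c^{-p_\infty}+$ a constant, which is at most $1$ once $c$ is large; hence $\|\chi_Q\|_{p(\cdot)}\lesssim|Q|^{1/p_\infty}$. The reverse inequality follows symmetrically. An alternative route is to use the duality $\|\chi_Q\|_{p(\cdot)}\|\chi_Q\|_{p'(\cdot)}\approx|Q|$ together with the upper bound for $p'$.

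\textbf{Combining.} Finally, $|Q|^{1/p_Q}\approx|Q|^{1/p_\infty}$ for $|Q|\ge1$. This comes from averaging the decay bound:
\[
\left|\frac1{p_Q}-\frac1{p_\infty}\right|\log|Q|\lesssim \frac{1}{|Q|}\int_Q\frac{\log|Q|}{\log(e+|y|)}\,dy ,
\]
which is bounded, since most of $Q$ lies at distance $\gtrsim \ell(Q)$ from the origin. Together with the small-cube case this gives the unified statement $\|\chi_Q\|_{p(\cdot)}\approx|Q|^{1/p_Q}$.
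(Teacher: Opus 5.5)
The paper gives no proof of this lemma; it is quoted from \cite{libro Diening}. Your architecture is the standard one: local log-H\"older for $|Q|\le 2^n$, decay at infinity for $|Q|\ge 1$, and averaging $1/p$ to pass to $p_Q$. There is, however, a genuine gap in the small-cube step when $p$ is unbounded on $Q$.

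The statement covers all of $\mathcal{P}^{\log}(\R)$, which is defined through $1/p$ precisely so that $p^+=\infty$ is allowed. The paper applies these cube estimates, directly or through Lemma~\ref{norma de cubos}, to exponents such as $r'(\cdot)$ and $\beta(\cdot)$ that may be unbounded. Your own duality route also goes through $p'(\cdot)$, which is unbounded as soon as $p^-=1$. Writing $|Q|^{-p(y)/p(x_0)}=|Q|^{-1}\bigl(|Q|^{1/p(y)-1/p(x_0)}\bigr)^{p(y)}$ shows that the oscillation bound gives comparability with $|Q|^{-1}$ only up to factors $C^{\pm p(y)}$. These are not uniform when $p^+_Q=\infty$. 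The remedy you sketch for that case is not an argument: the displayed identity does not parse, and the ``standard trick'' is just the definition of the norm.

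A clean repair is to test with $\lambda_\pm=|Q|^{1/p^\pm_Q}$ instead of $|Q|^{1/p(x_0)}$.
\begin{itemize}
\item For $|Q|\le 1$, monotonicity in the exponent gives $\lambda_+^{-p(y)}\le |Q|^{-1}\le \lambda_-^{-p(y)}$ pointwise, with no constant raised to the power $p(y)$.
\item If $p=\infty$ on a subset of $Q$ of positive measure, then $\lambda_+=1$ and $\lambda_-\le 1$, and the $L^\infty$ part of the modular costs at most a factor $2$.
\item Hence $\lambda_-\lesssim\|\chi_Q\|_{p(\cdot)}\lesssim\lambda_+$. Both $|Q|^{1/p(x)}$ and $|Q|^{1/p_Q}$ lie between $\lambda_-$ and $\lambda_+$, and the log-H\"older bound $|Q|^{1/p^+_Q-1/p^-_Q}\lesssim 1$ closes the estimate.
\item For $1\le |Q|\le 2^n$, exchange the roles of $\lambda_+$ and $\lambda_-$; there $\lambda_-/\lambda_+\le 2^n$ trivially.
\end{itemize}
Alternatively, keep $\lambda=|Q|^{1/p(x_0)}$ and absorb the constants. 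Use $(C/c)^{p(y)}\le C/c$ for $c\ge C$, and on $\{p=\infty\}\cap Q$ use $1/p(x_0)\le 1/p^-_Q-1/p^+_Q$.

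There are also smaller points in the large-cube step.
\begin{itemize}
\item The error term $\int(e+|y|)^{-N}\,dy$ does not depend on $c$, so enlarging $c$ does not make the modular at most $1$. The conclusion survives because $\rho_{p(\cdot)}(g)\le M$ with $M\ge 1$ gives $\|g\|_{p(\cdot)}\le M$ by Proposition~\ref{propo modular}.
\item The decay lemma must be used in a form valid under $1/p\in LH_\infty$, with a small constant inside the power. The version you wrote can fail when $p_\infty=\infty$.
\item For the reverse inequality, the route $|Q|\le 4\|\chi_Q\|_{p(\cdot)}\|\chi_Q\|_{p'(\cdot)}$, plus the upper bound for $p'(\cdot)\in\mathcal{P}^{\log}(\R)$ with $(p')_\infty=(p_\infty)'$, is cleaner than ``symmetrically''. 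It again requires the unbounded case.
\item In the averaging step, knowing only that a fixed proportion of $Q$ lies at distance $\gtrsim\ell(Q)$ from the origin is not enough, since the remaining part is multiplied by the unbounded factor $\log|Q|$. Use $|Q\cap B(0,t)|\lesssim t^n$ with $t=\ell(Q)^{1/2}$. This gives $\log|Q|\cdot\frac{1}{|Q|}\int_Q\frac{dy}{\log(e+|y|)}\lesssim \ell(Q)^{-n/2}\log\ell(Q)+\frac{\log \ell(Q)}{\log(e+\ell(Q)^{1/2})}\lesssim 1$.
\end{itemize}
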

	\begin{lema}[\cite{libro Diening}]\label{norma de cubos}
		Let $p(\cdot),q(\cdot),r(\cdot)\in \mathcal{P}^{\text{log}}(\R)$ such that $1/r(\cdot)=1/p(\cdot)+1/q(\cdot)$. Then 
		$$\|\chi_Q\|_{r(\cdot)}\approx\|\chi_Q\|_{p(\cdot)}\|\chi_Q\|_{q(\cdot)}$$
		uniformly for all cubes $Q$.
	\end{lema}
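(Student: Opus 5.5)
We derive the statement from the preceding Lemma \ref{norma de cubos comparada con |Q|}, which gives $\|\chi_Q\|_{s(\cdot)}\approx|Q|^{1/s_Q}$ uniformly in $Q$ for each $s(\cdot)\in\mathcal{P}^{\log}(\R)$. The key point is that harmonic means are additive in the reciprocal exponent. Since $1/r(\cdot)=1/p(\cdot)+1/q(\cdot)$ pointwise, averaging over $Q$ gives
\[
\frac{1}{r_Q}=\frac{1}{|Q|}\int_Q\frac{1}{r(x)}\,dx=\frac{1}{|Q|}\int_Q\frac{1}{p(x)}\,dx+\frac{1}{|Q|}\int_Q\frac{1}{q(x)}\,dx=\frac{1}{p_Q}+\frac{1}{q_Q}.
\]
Applying Lemma \ref{norma de cubos comparada con |Q|} to $p(\cdot)$, $q(\cdot)$ and $r(\cdot)$, which all belong to $\mathcal{P}^{\log}(\R)$ by hypothesis, we get
\[
\|\chi_Q\|_{p(\cdot)}\|\chi_Q\|_{q(\cdot)}\approx|Q|^{1/p_Q}|Q|^{1/q_Q}=|Q|^{1/r_Q}\approx\|\chi_Q\|_{r(\cdot)}.
\]
The implicit constants depend only on the log-Hölder constants of the three exponents, so the equivalence is uniform in $Q$.

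The only delicate point is that the lemma must apply when some exponent takes the value $\infty$. This causes no difficulty, because the log-Hölder condition is imposed on $1/p(\cdot)$, and $1/p_Q$ is defined through $1/p$ with the convention $1/\infty=0$. So the formula $\|\chi_Q\|_{p(\cdot)}\approx|Q|^{1/p_Q}$ remains valid, and the additivity identity above holds pointwise even where exponents are infinite.

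If one prefers not to rely on the general formula, there is a fallback that splits into small and large cubes. For $|Q|\le 2^n$, fix $x\in Q$; the second part of Lemma \ref{norma de cubos comparada con |Q|} gives $\|\chi_Q\|_{s(\cdot)}\approx|Q|^{1/s(x)}$ for $s=p,q,r$, and $1/r(x)=1/p(x)+1/q(x)$ gives the result. For $|Q|\ge1$, use $|Q|^{1/s_\infty}$ together with $1/r_\infty=1/p_\infty+1/q_\infty$, which follows by letting $|x|\to\infty$ in the decay condition. The two ranges overlap, so every cube is covered.

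As a sanity check, Hölder's inequality (Theorem \ref{teo Hölder}) already gives one direction with no regularity assumption:
\[
\|\chi_Q\|_{r(\cdot)}=\|\chi_Q\chi_Q\|_{r(\cdot)}\le C\|\chi_Q\|_{p(\cdot)}\|\chi_Q\|_{q(\cdot)}.
\]
The reverse inequality is where the log-Hölder hypothesis is genuinely used, through the uniform comparison with $|Q|^{1/s_Q}$.
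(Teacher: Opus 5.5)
Your proposal is correct. The paper gives no proof of this lemma beyond the citation to Diening et al., and your derivation is the standard argument behind the cited result: combine $\|\chi_Q\|_{s(\cdot)}\approx|Q|^{1/s_Q}$ from the preceding lemma, for $s=p,q,r$, with the additivity $1/r_Q=1/p_Q+1/q_Q$. The implicit constants depend only on the log-H\"older constants. Your fallback splitting into cubes with $|Q|\le 2^n$ and $|Q|\ge 1$, and your H\"older remark for the direction $\|\chi_Q\|_{r(\cdot)}\lesssim\|\chi_Q\|_{p(\cdot)}\|\chi_Q\|_{q(\cdot)}$, are also correct but not needed.
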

	For exponents $p(\cdot)\in\mathcal{P}^{\text{log}}(\R)$, there exists a constant $D_{p(\cdot)}$ such that for all cube $Q$.
		\begin{equation}\label{doblante}
			\|\chi_{2Q}\|_{p(\cdot)}\leq D_{{p(\cdot)}}\|\chi_Q\|_{p(\cdot)}.
		\end{equation}
	The following result extends a local property to the global setting.
	\begin{teo}[\cite{C-U Diening Hasto}]\label{teo suma de normas}
		If $p(\cdot)\in\mathcal{P}^{\text{log}}(\R)$ and $\mathcal{G}$ is a family of pairwise disjoint cubes, then 
		\begin{equation*}
			\sum_{Q\in\mathcal{G}}\|f\chi_Q\|_{p(\cdot)} \|g\chi_Q\|_{p^\prime(\cdot)}\leq C \|f\|_{p(\cdot)}\|g\|_{p^\prime(\cdot)}
		\end{equation*}
		for every pair of functions $f\in L^{p(\cdot)}_{\text{loc}}(\R)$ and $g\in L^{p^\prime(\cdot)}_{\text{loc}}(\R)$.
	\end{teo}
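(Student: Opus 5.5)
The plan follows Diening's proof of the local-to-global principle, via the stronger statement that the local pieces of an $L^{p(\cdot)}$ function behave like an $\ell^{p_\infty}$ sequence. First reduce by homogeneity to $\|f\|_{p(\cdot)}=\|g\|_{p^\prime(\cdot)}=1$. By Hölder's inequality in $\ell^{p_\infty}\times\ell^{p_\infty'}$ it then suffices to prove
\begin{equation*}
\Big(\sum_{Q\in\mathcal{G}}\|f\chi_Q\|_{p(\cdot)}^{p_\infty}\Big)^{1/p_\infty}\lesssim\|f\|_{p(\cdot)}
\end{equation*}
and the analogous bound for $g$ with $p^\prime(\cdot)$. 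Since $p^\prime(\cdot)\in\mathcal{P}^{\log}(\R)$ by Lemma \ref{algebra de Plog}, the second bound is the first applied to $p^\prime$. The cases $p_\infty=1$ or $\infty$ need the usual convention, with $\ell^\infty$ norms in place of sums. I would treat these separately or avoid them by working directly with products.

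The key step is to split $\mathcal{G}$ into small cubes ($|Q|\le 1$) and big cubes ($|Q|>1$).

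\emph{Small cubes.} Use the local log-Hölder condition. For $|Q|\le1$ and $\|f\chi_Q\|_{p(\cdot)}\le 1$, one has $\rho_{p(\cdot)}(f\chi_Q)\approx\|f\chi_Q\|^{p_Q}_{p(\cdot)}$ up to constants. Moreover $\|f\chi_Q\|_{p(\cdot)}^{p_\infty}\lesssim \|f\chi_Q\|_{p(\cdot)}^{p_Q}+(e+\mathrm{dist}(0,Q))^{-N}$, which is the standard decay trick using the condition at infinity, $|p(x)-p_\infty|\le c_\infty/\log(e+|x|)$. Summing over disjoint small cubes, the modular parts add up to at most $\rho_{p(\cdot)}(f)\le1$. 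The error terms sum because disjoint unit-size cubes give $\sum_Q (e+|c_Q|)^{-N}\lesssim\int (e+|x|)^{-N}dx<\infty$.

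\emph{Big cubes.} Here the exponent is essentially $p_\infty$ far out, but not near the origin. I would use the known comparison, valid for $p(\cdot)\in\mathcal{P}^{\log}$: $\|h\|_{p(\cdot)}\approx\|h\|_{p_\infty}$ up to an additive error controlled by $\|h\|$ in a weighted sense. Concretely, the pointwise estimate $|h(x)|^{p_\infty}\lesssim |h(x)|^{p(x)}+(e+|x|)^{-Np^-}$ for $|h|\le1$ holds, and symmetrically. Applying it with $h=f\chi_Q/\|f\|_{p(\cdot)}$ and summing gives $\sum\|f\chi_Q\|_{p_\infty}^{p_\infty}\lesssim 1$. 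It then remains to compare $\|f\chi_Q\|_{p(\cdot)}$ with $\|f\chi_Q\|_{p_\infty}$ on each big cube, again via the same pointwise inequality in the other direction.

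The main obstacle is the case where $p^+=\infty$, or where $p(\cdot)$ takes the value $\infty$ on a set. There modular arguments break down and the $\|\cdot\|_\infty$ part of $\rho_{p(\cdot)}$ must be handled by a separate splitting of $f$ on $(\R)^{p(\cdot)}_\infty$. I would first prove everything assuming $1<p^-\le p^+<\infty$. I would then handle the general case by decomposing $f=f\chi_{\{p\le 2\}}+f\chi_{\{p>2\}}$ and using $1/p\in LH$, which lets one treat $1/p$ as a bounded Hölder function with values in $[0,1]$, so the argument can be run with $1/p$ throughout.
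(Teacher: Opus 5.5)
The paper quotes this statement from Cruz-Uribe--Diening--H\"ast\"o without proof, so I am judging your argument on its own. There is a genuine gap at the first reduction: the bound $\bigl(\sum_{Q\in\mathcal{G}}\|f\chi_Q\|_{p(\cdot)}^{p_\infty}\bigr)^{1/p_\infty}\lesssim\|f\|_{p(\cdot)}$ is false as soon as $p>p_\infty$ on some ball. Likewise, your companion bound for $g$ fails as soon as $p<p_\infty$ on some ball.

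Here is an example. Let $1/p$ be Lipschitz with $p\equiv 3$ on a ball $B$ and $p\equiv 2$ outside $2B$. Fix a cube $Q_0\subset B$, put $f=|Q_0|^{-1/3}\chi_{Q_0}$, and let $\mathcal{G}$ be the $2^{kn}$ dyadic subcubes of $Q_0$ of side $2^{-k}\ell(Q_0)$. Then $\|f\|_{p(\cdot)}=\|f\|_3=1$ and $\|f\chi_Q\|_{p(\cdot)}=2^{-kn/3}$ for every $Q\in\mathcal{G}$, so
\[
\sum_{Q\in\mathcal{G}}\|f\chi_Q\|_{p(\cdot)}^{p_\infty}=2^{kn}\,2^{-2kn/3}=2^{kn/3}\longrightarrow\infty .
\]
Your argument breaks in the small-cube step. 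The error term $(e+\mathrm{dist}(0,Q))^{-N}$ carries no factor $|Q|$, and cubes with $|Q|\le 1$ are not ``unit-size''. Infinitely many tiny disjoint cubes near the origin therefore make $\sum_Q(e+|c_Q|)^{-N}$ diverge. The two-sided claim $\rho_{p(\cdot)}(f\chi_Q)\approx\|f\chi_Q\|_{p(\cdot)}^{p_Q}$ is also false without an additive error: take $f=\epsilon\chi_Q$ with $\epsilon\to0$ and $p$ nonconstant on $Q$. Locally the pieces of $f$ behave like an $\ell^{p(x)}$-sequence, not an $\ell^{p_\infty}$ one.

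What works is to treat the two scales separately. Assume $1<p^-\le p^+<\infty$ and $\|f\|_{p(\cdot)}=\|g\|_{p^\prime(\cdot)}=1$.

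\emph{Small cubes, $|Q|\le1$.} Put $t=\|f\chi_Q\|_{p(\cdot)}\le 1$. From $\rho_{p(\cdot)}(f\chi_Q/t)=1$ you get $t^{p_Q^+}\le\rho_{p(\cdot)}(f\chi_Q)$. Now distinguish $t\ge\tau_Q:=\bigl(|Q|(e+|c_Q|)^{-M}\bigr)^{1/p^-}$ from $t<\tau_Q$. The local condition $|Q|^{-(p_Q^+-p_Q^-)}\lesssim 1$ and the decay condition then give
\[
t^{p_Q}\lesssim\rho_{p(\cdot)}(f\chi_Q)+\int_Q(e+|x|)^{-M}\,dx ,
\]
and this error \emph{is} summable over disjoint cubes. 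The same holds for $g$ with the exponent $(p^\prime)_Q=(p_Q)^\prime$. Young's inequality $ab\le a^{p_Q}+b^{(p_Q)^\prime}$, applied cube by cube, then bounds the sum over small cubes.

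\emph{Big cubes, $|Q|>1$.} Only here is H\"older in $\ell^{p_\infty}\times\ell^{p_\infty^\prime}$ the right tool. Apply the inequality $s^{p_\infty}\lesssim s^{p(x)}+(e+|x|)^{-M}$ to the scalar $s=\|f\chi_Q\|_{p(\cdot)}\le 1$, inside
\[
s^{p_\infty}=\int_Q s^{p_\infty}|f\chi_Q/s|^{p(x)}\,dx .
\]
Do not apply it to $|f(x)|$, which need not be $\le 1$. Comparing $\|f\chi_Q\|_{p(\cdot)}$ with $\|f\chi_Q\|_{p_\infty}$, as you propose, fails in general, because $f$ may have local singularities in $L^{p(\cdot)}$ but not in $L^{p_\infty}$. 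The scalar version yields $s^{p_\infty}\lesssim\rho_{p(\cdot)}(f\chi_Q)+(e+\mathrm{dist}(0,Q))^{-M}$. The last term is $\lesssim\int_Q(e+|x|)^{-M}\,dx$ precisely because $|Q|>1$, so it is summable.

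Finally, your last paragraph on the cases $p^-=1$ or $p^+=\infty$, where the modular contains an $L^\infty$ part, is only a sketch and still needs an actual argument.
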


\subsection{Classes of weights $\mathbb{A}_{p(\cdot),r(\cdot)}$}
\

The classes $\mathcal{A}_{p(\cdot)}$ are left open in the following sense.
\begin{propo}[\cite{C-U Diening Hasto}]\label{Ap abierto izq}
	Let $p(\cdot)\in\mathcal{P}^{\text{log}}(\R)$ with $p^->1$ and $\omega\in\mathcal{A}_{p(\cdot)}$. Then there exists $s_0<1$ such that for every $s\in (s_0,1)$, $\omega^{1/s}\in\mathcal{A}_{sp(\cdot)}$.
\end{propo}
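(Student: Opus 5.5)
The plan is to verify the $\mathcal{A}_{sp(\cdot)}$ condition for $\omega^{1/s}$ directly. The first factor is handled exactly by \eqref{exponente en la norma}. The second factor reduces to a \emph{variable reverse H\"older inequality} for the dual weight $\sigma=\omega^{-1}$, which is where the real work lies.

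\textbf{Step 1 (reduction).} Take $s$ close to $1$ with $sp^->1$. By \eqref{exponente en la norma},
$$\|\omega^{1/s}\chi_Q\|_{sp(\cdot)}=\|\omega\chi_Q\|_{p(\cdot)}^{1/s}, \qquad \|\omega^{-1/s}\chi_Q\|_{(sp)^\prime(\cdot)}=\|\omega^{-1}\chi_Q\|_{t(\cdot)}^{1/s},$$
where $t(\cdot)=(sp)^\prime(\cdot)/s$. A direct computation gives
$$\frac{1}{t(\cdot)}=s-\frac1{p(\cdot)}=\frac1{p^\prime(\cdot)}-(1-s).$$
Thus $t(\cdot)>p^\prime(\cdot)$ is a slight enlargement of $p^\prime(\cdot)$, and $t(\cdot)\in\mathcal{P}^{\log}(\R)$ by Lemma \ref{algebra de Plog}.

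\textbf{Step 2 (target estimate).} The goal is the reverse H\"older inequality
\begin{equation*}
\|\omega^{-1}\chi_Q\|_{t(\cdot)}\lesssim \|\omega^{-1}\chi_Q\|_{p^\prime(\cdot)}\,\frac{\|\chi_Q\|_{t(\cdot)}}{\|\chi_Q\|_{p^\prime(\cdot)}}
\end{equation*}
uniformly in $Q$, for $s$ close enough to $1$. Granting this, the $\mathcal{A}_{p(\cdot)}$ condition and Lemma \ref{norma de cubos comparada con |Q|} give
$$\|\omega^{1/s}\chi_Q\|_{sp(\cdot)}\|\omega^{-1/s}\chi_Q\|_{(sp)^\prime(\cdot)}\lesssim \Big(|Q|\,|Q|^{1/t_Q-1/p^\prime_Q}\Big)^{1/s}=|Q|^{(1-(1-s))/s}=|Q|,$$
which is the desired condition.

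\textbf{Step 3 (proof of the reverse H\"older inequality).} This is the main obstacle. The first task is to pass to a classical weight: show that $W=\omega^{-p^\prime(\cdot)}$ belongs to $A_\infty$. I would use the symmetric fact that $\omega^{-1}\in\mathcal{A}_{p^\prime(\cdot)}$, together with the boundedness of $M$ on $L^{p^\prime(\cdot)}(\omega^{-1})$ (the Cruz-Uribe--Diening--H\"asto theorem quoted in the introduction). Applying this to $f=\chi_E$, $E\subset Q$, yields a quantitative condition of the form $\|\omega^{-1}\chi_E\|_{p^\prime(\cdot)}\gtrsim (|E|/|Q|)\|\omega^{-1}\chi_Q\|_{p^\prime(\cdot)}$. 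For small cubes, the log-H\"older condition lets one freeze the exponent up to constants, which converts this into a classical $A_\infty$-type condition for $W$. The classical reverse H\"older inequality then gives $\big(\frac1{|Q|}\int_Q W^{1+\varepsilon}\big)^{1/(1+\varepsilon)}\lesssim\frac1{|Q|}\int_Q W$.

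The second task is to convert this modular statement back into norms.
\begin{enumerate}
\item For cubes with $|Q|\le 2^n$, I would use $\|\chi_Q\|_{p(\cdot)}\approx |Q|^{1/p(x)}$ and the log-H\"older continuity. These let me replace $p^\prime(\cdot)$ and $t(\cdot)$ on $Q$ by constants $p^\prime_Q$ and $t_Q$, at the cost of harmless factors. This requires the standard normalization trick, dividing by $\|\omega^{-1}\chi_Q\|_{p^\prime(\cdot)}$ and using Proposition \ref{propo modular}.
\item For large cubes, I would use the decay condition and the $p_\infty$-form of Lemma \ref{norma de cubos comparada con |Q|}, comparing with the constant exponents $p^\prime_\infty$ and $t_\infty$.
\end{enumerate}
Choosing $1-s$ small enough that $t(\cdot)/p^\prime(\cdot)\le 1+\varepsilon$ closes the argument. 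I expect the large-cube, variable-to-constant comparison to be the most delicate point. If it fails directly, I would fall back on a Rubio de Francia iteration: build an $A_1$ majorant of $\omega^{-1}$ adapted to $L^{p^\prime(\cdot)}(\omega^{-1})$, and extract the reverse H\"older gain from it.
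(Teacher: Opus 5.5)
The paper gives no proof of this proposition; it is quoted from Cruz-Uribe--Diening--H\"asto. So your argument has to stand on its own. Your Steps~1 and~2 are correct: $1/t(\cdot)=s-1/p(\cdot)=1/p'(\cdot)-(1-s)$, and the reverse H\"older estimate you isolate is in fact equivalent to the proposition. All the content therefore sits in Step~3, and there the proposal is not yet a proof.

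\textbf{Circularity.} You obtain $\|\omega^{-1}\chi_E\|_{p'(\cdot)}\gtrsim\frac{|E|}{|Q|}\|\omega^{-1}\chi_Q\|_{p'(\cdot)}$ from the boundedness of $M$ on $L^{p'(\cdot)}(\omega^{-1})$. But in the Cruz-Uribe--Diening--H\"asto approach, which this paper follows in the proof of Theorem~\ref{teo M}, sufficiency is derived \emph{from} this left-openness. (The case $\beta\equiv\infty$, $r\equiv1$ of Theorem~\ref{teo M} is exactly that theorem.)

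This particular use is harmless. For $f=\chi_E$ you only need the averaging operator, and its uniform boundedness on $L^{p'(\cdot)}(\omega^{-1})$ follows from H\"older's inequality and the $\mathcal{A}_{p'(\cdot)}$ condition alone (Proposition~\ref{caracterizacion Apr} with $\beta\equiv\infty$, $r\equiv1$).

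Your Rubio de Francia fallback, however, really needs $M$ bounded on the weighted space. It is circular unless you have a proof of the weighted maximal theorem that avoids left-openness. If you do, it closes quickly when run in $L^{p(\cdot)}(\omega)$ rather than in the dual space:
\begin{enumerate}
\item Iterating on $g\chi_Q$ gives $G\ge g\chi_Q$ with $[G]_{A_1}\le 2\|M\|$ and $\|G\omega\|_{p(\cdot)}\le 2\|g\chi_Q\omega\|_{p(\cdot)}$.
\item The reverse H\"older inequality for $A_1$ weights then yields $\big(\frac{1}{|Q|}\int_Q g^{1+\varepsilon}\big)^{1/(1+\varepsilon)}\|\omega\chi_Q\|_{p(\cdot)}\lesssim\|g\chi_Q\omega\|_{p(\cdot)}$.
\item Setting $g=|f|^s$ with $1/s=1+\varepsilon$ and using \eqref{exponente en la norma}, this is the uniform boundedness of averages on $L^{sp(\cdot)}(\omega^{1/s})$, i.e.\ $\omega^{1/s}\in\mathcal{A}_{sp(\cdot)}$.
\end{enumerate}

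\textbf{The substantive gap} is the passage from the averaging inequality to an $A_\infty$ or reverse H\"older statement. Set $\lambda_Q=\|\omega^{-1}\chi_Q\|_{p'(\cdot)}$ and $V_Q=(\omega^{-1}/\lambda_Q)^{p'(\cdot)}=\lambda_Q^{-p'(\cdot)}W$. Proposition~\ref{propo modular} gives $\int_E V_Q\ge(c|E|/|Q|)^{(p')^+}\int_Q V_Q$, which is a property of a weight that changes with $Q$.

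Passing to the fixed weight $W$ requires $\lambda_Q^{p'(y)-p'(z)}\approx1$ on $Q$. Log-H\"older gives this only after a priori bounds $\ell(Q)^{C}\lesssim\lambda_Q\lesssim\ell(Q)^{-C}$, up to polynomial factors in $e+|c_Q|$ that the decay condition absorbs. These bounds have to be extracted from the $\mathcal{A}_{p'(\cdot)}$ condition, and you do not address this.

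For $\ell(Q)\le1$ you can actually bypass $W$:
\begin{enumerate}
\item For $P\subset Q$, the averaging estimate gives $c|P|/|Q|\le\lambda_P/\lambda_Q\le1$. So $(\lambda_P/\lambda_Q)^{p'(\cdot)}$ is essentially constant on $P$, and $V_Q$ is uniformly $A_\infty$ on the subcubes of $Q$.
\item With $\theta=t/p'$, one has $(|Q|^{1-s}\omega^{-1}/\lambda_Q)^{t}=V_Q(|Q|V_Q)^{\theta-1}\le V_Q+|Q|^{\varepsilon}V_Q^{1+\varepsilon}$ once $\theta-1\le\varepsilon$.
\item This turns the reverse H\"older inequality for $V_Q$ directly into your Step~2 estimate.
\end{enumerate}

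On large cubes the difficulty is real, not technical.
\begin{enumerate}
\item Passing from $V_P$ to $W$ costs a factor $\lambda_P^{\pm((p')^+_P-(p')^-_P)}$. This is unbounded for large $P$ containing the origin; already for $\omega\equiv1$ one has $\lambda_P\approx|P|^{1/p'_\infty}$.
\item The normalized weight is not uniformly $A_\infty$ on subcubes either. For $\omega\equiv1$, large $Q\ni0$, and $p'<p'_\infty$ near $0$, one has $V_Q\approx|Q|^{-p'(y)/p'_\infty}$. Near the origin this exceeds its typical size $|Q|^{-1}$ by the factor $|Q|^{1-p'(y)/p'_\infty}$.
\item Your proposed ``comparison with $p'_\infty$ and $t_\infty$'' is not carried out. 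The standard decay-condition comparison of $g^{p'(y)}$ with $g^{p'_\infty}$ requires $0\le g\le1$, so it says nothing on $\{\omega^{-1}>\lambda_Q\}$.
\end{enumerate}

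You yourself leave the large-cube case open, and it is exactly the missing argument. As written, Step~3 is a program rather than a proof.
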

\begin{lema}[\cite{C-U Diening Hasto}]\label{def equiv de Ap}
	Let $p(\cdot)\in\mathcal{P}^{\text{log}}(\R)$ such that $1<p^-\leq p^+<\infty$. Then, $\omega\in\mathcal{A}_{p(\cdot)}$ if and only if
	$$\|\omega(\cdot)^{\cdot}\|_1\|\omega(\cdot)^{-p(\cdot)}\|_{p^\prime(\cdot)/p(\cdot)}\leq C|Q|^{p_Q}$$
	uniformly for all cube, where $\|\cdot\|_{p^\prime(\cdot)/p(\cdot)}$ is defined as \eqref{def norma} even when $p^\prime(\cdot)/p(\cdot)<1$.
	 
\end{lema}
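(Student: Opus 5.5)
The plan is to show that, for cubes $Q$, both sides of the $\mathcal{A}_{p(\cdot)}$ condition can be compared factor by factor with the two factors in the modular-type condition, which is read as $\|\omega^{p(\cdot)}\chi_Q\|_1\|\omega^{-p(\cdot)}\chi_Q\|_{p'(\cdot)/p(\cdot)}\le C|Q|^{p_Q}$. That is, I would aim for
\begin{equation*}
\|\omega\chi_Q\|_{p(\cdot)}^{p_Q}\approx \int_Q\omega^{p(x)}dx,\qquad \|\omega^{-1}\chi_Q\|_{p'(\cdot)}^{p_Q}\approx \|\omega^{-p(\cdot)}\chi_Q\|_{p'(\cdot)/p(\cdot)},
\end{equation*}
uniformly in $Q$, under either of the two conditions. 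By Lemma \ref{norma de cubos comparada con |Q|}, $|Q|^{p_Q}\approx\|\chi_Q\|_{p(\cdot)}^{p_Q}\|\chi_Q\|_{p'(\cdot)}^{p_Q}$. Raising the $\mathcal{A}_{p(\cdot)}$ inequality to the power $p_Q\in[p^-,p^+]$ would then give the equivalence directly.

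\textbf{Scaling step.} The scaling comparisons come from the definition of the norm. If $\lambda=\|\omega\chi_Q\|_{p(\cdot)}$, then $\int_Q(\omega/\lambda)^{p(x)}dx=1$ (using $p^+<\infty$). Consequently
\begin{equation*}
\lambda^{p^-_Q}\lesssim\int_Q\omega^{p(x)}dx\lesssim\lambda^{p^+_Q}\quad\text{or the reverse, according to whether }\lambda\ge 1\text{ or }\lambda\le 1.
\end{equation*}
So it suffices to prove $\lambda^{p^+_Q-p^-_Q}\approx 1$. The same argument applies to $\mu=\|\omega^{-1}\chi_Q\|_{p'(\cdot)}$ and the norm $\|\omega^{-p(\cdot)}\chi_Q\|_{p'/p}$. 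There the modular is $\int_Q\omega^{-p'(x)}\nu^{-p'(x)/p(x)}dx$, and setting $\nu=\mu^{p_Q}$ produces the error factor $\mu^{p'(x)(p_Q-p(x))/p(x)}$, which is again controlled by $\mu^{\pm(p^+_Q-p^-_Q)p'^+}$.

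\textbf{Small cubes ($|Q|\le1$).} The log-Hölder condition gives $p^+_Q-p^-_Q\lesssim 1/\log(e+1/\ell(Q))$. Hence $t^{p^+_Q-p^-_Q}\approx1$ for every $t\in[|Q|^{N},|Q|^{-N}]$ with $N$ fixed. I would therefore show that under either hypothesis $\lambda,\mu\in[|Q|^{N},|Q|^{-N}]$, up to constants depending on $\omega$. For this, compare $Q$ with the unit cube $Q_0$ sharing its corner, or a bounded number of such cubes. The upper bound $\lambda\le\|\omega\chi_{Q_0}\|_{p(\cdot)}$ is immediate. The lower bound comes from the weight condition itself, $\lambda\gtrsim|Q|/\mu\ge|Q|/\|\omega^{-1}\chi_{Q_0}\|_{p'(\cdot)}$, which gives a polynomial lower bound.

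The uniformity over all locations $Q_0$ is the first delicate point. I would handle it by the doubling property of the norms that the $\mathcal{A}_{p(\cdot)}$ (respectively modular) condition yields, comparing $Q_0$ with a fixed cube centered at the origin at the cost of a factor polynomial in $|x|$. This polynomial factor is then absorbed by the decay condition.

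\textbf{Large cubes ($|Q|\ge1$).} This is the main obstacle, because $p^+_Q-p^-_Q$ need not be small. Here I would use the $p_\infty$-decay in the standard form of Diening's lemma (as in \cite{libro Diening,C-U Diening Hasto}): for $F\le 1$ on $Q$, or more generally $F\ge0$ and $R(x)=(e+|x|)^{-nN}$,
\begin{equation*}
\int_Q F^{p(x)}\lesssim\int_Q F^{p_\infty}+\int_Q R^{p^-},
\end{equation*}
together with the symmetric inequality. Applied to $F=\omega/\lambda$, and combined with $\|\chi_Q\|_{p(\cdot)}\approx|Q|^{1/p_\infty}\approx|Q|^{1/p_Q}$ from Lemma \ref{norma de cubos comparada con |Q|}, this replaces $p(x)$ by the constant $p_\infty$ up to additive error terms. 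These errors are bounded by a constant, while the main terms are $\gtrsim1$ after normalization by the weight condition, so they can be absorbed.

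Both implications then follow by multiplying the two comparisons. The direction from the modular condition to $\mathcal{A}_{p(\cdot)}$ uses the same estimates, since the polynomial a priori bounds on $\lambda$ and $\mu$ are derived from that condition in the same manner.
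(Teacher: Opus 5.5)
The paper gives no proof of this lemma; it is quoted from Cruz-Uribe, Diening and H\"ast\"o, so your argument has to stand on its own. Your reduction is sound. Set $\lambda=\|\omega\chi_Q\|_{p(\cdot)}$ and $\mu=\|\omega^{-1}\chi_Q\|_{p'(\cdot)}$. It suffices to show $\int_Q\omega^{p(x)}dx\approx\lambda^{p_Q}$ and $\|\omega^{-p(\cdot)}\chi_Q\|_{p'(\cdot)/p(\cdot)}\approx\mu^{p_Q}$ uniformly in $Q$. For $|Q|\le1$, your combination of polynomial a priori bounds with local log-H\"older continuity and the decay condition does this. One correction there: $\lambda\mu\ge|Q|/4$ is H\"older's inequality applied to $|Q|=\int_Q\omega\,\omega^{-1}$, not the weight condition, which gives the reverse inequality. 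H\"older is available under either hypothesis, so nothing breaks.

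The large-cube step has a genuine gap.
\begin{itemize}
\item The estimate $\int_QF^{p(x)}\lesssim\int_QF^{p_\infty}+\int_QR^{p^-}$ holds only for $0\le F\le1$. For general $F\ge0$ it fails: take $F=M\chi_B$ with $B$ a ball on which $p>p_\infty$ and let $M\to\infty$. Moreover, $F=\omega/\lambda$ is not bounded by $1$.
\item Even if the estimate applied, it would compare $\lambda$ with $\int_Q\omega^{p_\infty}$. The modular condition involves $\int_Q\omega^{p(x)}$, which is a different quantity.
\item A merely bounded additive error cannot be absorbed for the lower bound: $1=\int_QF^{p(x)}\lesssim\int_QF^{p_\infty}+C$ yields nothing unless the error is below $1$.
\item $|Q|^{1/p_\infty}\approx|Q|^{1/p_Q}$ does not turn $\lambda^{p_\infty}$ into $\lambda^{p_Q}$.
\end{itemize}

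The missing idea is to put the weight into the measure. With $W=\omega^{p(\cdot)}$ and the \emph{constant} $t=1/\lambda$, one has $\int_Qt^{p(x)}W(x)\,dx=1$.
\begin{itemize}
\item If $t\le1$, integrate the pointwise bounds $t^{p(x)}\le C_mt^{p_\infty}+(e+|x|)^{-m}$ and $t^{p_\infty}\le C_mt^{p(x)}+(e+|x|)^{-m}$ against $W$ over $Q$. The error is at most $e^{-(m-m_0)}\int_{\mathbb{R}^n}(e+|x|)^{-m_0}W\,dx$. This is finite because either hypothesis makes $W(Q_R)$ grow polynomially in $R$, by the same estimate as in your small-cube step. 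It is at most $1/2$ once $m$ is large, so $W(Q)\approx\lambda^{p_\infty}$.
\item If $t>1$, apply H\"older on a unit subcube of $Q$ closest to the origin to get $\lambda\gtrsim(e+\operatorname{dist}(0,Q))^{-M}$. Then $t^{|p(x)-p_\infty|}\lesssim1$ for every $x\in Q$, and no error term arises.
\item To get $\lambda^{p_\infty}\approx\lambda^{p_Q}$ you need $|p_Q-p_\infty|\lesssim1/\log(e+|c_Q|+\ell(Q))$, together with two-sided polynomial bounds on $\lambda$ in $e+|c_Q|+\ell(Q)$.
\end{itemize}
The same argument is needed to compare $\mu^{p_Q}$ with $\|\omega^{-p(\cdot)}\chi_Q\|_{p'(\cdot)/p(\cdot)}$. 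There the measure is $\omega^{-p'(\cdot)}$, applied to the exponents $p'(\cdot)$ and $p'(\cdot)/p(\cdot)$.
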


\begin{remark}\label{equiv de Apq}
	By definition of the class $\mathbb{A}_{p(\cdot),r(\cdot)}$, if $r(\cdot)\equiv1$ then $$\mathbb{A}_{p(\cdot),1}=\mathcal{A}_{p(\cdot)}.$$ Moreover, if $p(\cdot), q(\cdot) \in \mathcal{P}^{\mathrm{log}}(\mathbb{R})$ with $p(\cdot) \le q(\cdot)$, and if $\beta(\cdot)$ is the exponent defined by $1/{p(\cdot)} - 1/{q(\cdot)} = 1/{\beta(\cdot)}$,
	then, by Lemmas \ref{algebra de Plog} and \ref{norma de cubos}, we have
	$\mathcal{A}_{p(\cdot),q(\cdot)} = \mathbb{A}_{q(\cdot),\beta'(\cdot)}$.
\end{remark}
If either $p(\cdot)$ or $q(\cdot)$ is constant, then the class $\mathbb{A}_{p(\cdot), q(\cdot)}$ reduces to some $\mathcal{A}_{t(\cdot)}$ in the following sense.
\begin{lema}
	Let ${p(\cdot)}\in\mathcal{P}(\R)$ and let $r\leq p^{-}$. Then $\omega\in\mathbb{A}_{{p(\cdot)},r}$ if and only if $\omega^r\in \mathcal{A}_{p(\cdot)/r}$. 
\end{lema}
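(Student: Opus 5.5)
Set $q(\cdot)$ by $1/r=1/p(\cdot)+1/q(\cdot)$, i.e. $q(\cdot)=\frac{rp(\cdot)}{p(\cdot)-r}$. The plan is to rewrite both conditions as statements about the same two cube norms and compare them.

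The condition $\omega\in\mathbb{A}_{p(\cdot),r}$ reads
$$\|\chi_Q\omega\|_{p(\cdot)}\|\chi_Q\omega^{-1}\|_{q(\cdot)}\le C\|\chi_Q\|_r=C|Q|^{1/r}.$$

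The condition $\omega^r\in\mathcal{A}_{p(\cdot)/r}$ reads
$$\|\chi_Q\omega^r\|_{p(\cdot)/r}\|\chi_Q\omega^{-r}\|_{(p(\cdot)/r)'}\le C|Q|.$$
Since $r\le p^-$, the exponent $p(\cdot)/r$ lies in $\mathcal{P}(\R)$, so this class makes sense.

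Next, pass the power $r$ inside the norms using \eqref{exponente en la norma}:
$$\|\,|g|^{r}\|_{t(\cdot)}=\|g\|_{rt(\cdot)}^{r}.$$
This identity follows directly from the modular definition. For the part where $t(\cdot)<\infty$ one has $\rho_{t(\cdot)}(|g|^r/\lambda^r)=\rho_{rt(\cdot)}(g/\lambda)$. For the part where $t(\cdot)=\infty$ one has $\||g|^r\|_\infty=\|g\|_\infty^r$. I would verify it for the $L^\infty$ part separately, because the paper states \eqref{exponente en la norma} only under $p^+<\infty$, while here $p(\cdot)$ may take the value $\infty$.

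Applying it with $t=p(\cdot)/r$ gives
$$\|\chi_Q\omega^r\|_{p(\cdot)/r}=\|\chi_Q\omega\|_{p(\cdot)}^r.$$
For the second factor, compute the conjugate exponent:
$$(p/r)'=\frac{p/r}{p/r-1}=\frac{p}{p-r},\qquad\text{so}\qquad r\,(p/r)'=\frac{rp}{p-r}=q(\cdot).$$
Hence
$$\|\chi_Q\omega^{-r}\|_{(p/r)'}=\|\chi_Q\omega^{-1}\|_{q(\cdot)}^r.$$
The conventions at $p=r$ (where $q=\infty$) and at $p=\infty$ (where $q=r$) are consistent with both sides of this computation.

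Therefore the $\mathcal{A}_{p(\cdot)/r}$ condition is exactly the $r$-th power of the $\mathbb{A}_{p(\cdot),r}$ condition, with constant $C^r$. Taking $r$-th roots gives the equivalence in both directions.

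The only delicate step is justifying the power identity at points where $p=\infty$, or where $(p/r)'$ is infinite. There I would argue directly from the modular, splitting off the $\|\cdot\|_\infty$ term, and check that the infimum over $\lambda$ transforms correctly under $\lambda\mapsto\lambda^r$.
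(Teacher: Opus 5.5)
Your argument is the paper's. Its proof is the single line ``definitions plus \eqref{exponente en la norma}'', and your bookkeeping is correct: $\|\chi_Q\|_r=|Q|^{1/r}$, $r\,(p(\cdot)/r)'=q(\cdot)$, and the conventions at $p=r$ and $p=\infty$. You are also right that \eqref{exponente en la norma} is stated only for $p^+<\infty$, which the lemma does not assume.

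\textbf{The identity is not exact for mixed exponents.} The check you propose for that step will not succeed as stated. Suppose $t(\cdot)$ is infinite on a set of positive measure and finite on another. Then $\||g|^r\|_{t(\cdot)}=\|g\|_{rt(\cdot)}^r$ is false, because the infimum does not transform under $\lambda\mapsto\lambda^r$. After that substitution, the modular of $|g|^r$ contains the term $\|g\chi_{\{t=\infty\}}\|_\infty^r\lambda^{-r}$, while the modular of $g$ contains $\|g\chi_{\{t=\infty\}}\|_\infty\lambda^{-1}$; each is added to the same integral. For instance, take $n=1$, $r=2$, $p(\cdot)=2$ on $[0,1)$ and $p(\cdot)=\infty$ elsewhere, $Q=[0,2]$, and $\omega\equiv1$. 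Then $\|\chi_Q\omega^2\|_{p(\cdot)/2}=2$, whereas $\|\chi_Q\omega\|_{p(\cdot)}^2=(3+\sqrt{5})/2$. So the $\mathcal{A}_{p(\cdot)/r}$ condition is not literally the $r$-th power of the $\mathbb{A}_{p(\cdot),r}$ condition with constant $C^r$.

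\textbf{What your splitting does give.} It gives an equivalence up to constants, which is all the lemma needs. The modular identity is exact on $\{t<\infty\}$ (no bound on $t^+$ is needed there) and trivially exact on $\{t=\infty\}$. Moreover,
\[
\max\{\|f\chi_{\{t<\infty\}}\|_{t(\cdot)},\|f\chi_{\{t=\infty\}}\|_\infty\}\le\|f\|_{t(\cdot)}\le\|f\chi_{\{t<\infty\}}\|_{t(\cdot)}+\|f\chi_{\{t=\infty\}}\|_\infty .
\]
Hence $2^{-r}\|g\|_{rt(\cdot)}^r\le\||g|^r\|_{t(\cdot)}\le 2\|g\|_{rt(\cdot)}^r$. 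Apply this to both factors, with $t=p(\cdot)/r$ and $t=(p(\cdot)/r)'$. Each of the two conditions then implies the other, with the constant changed only by a factor depending on $r$.
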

\begin{proof}
The proof follows immediately from the definitions and \eqref{exponente en la norma}.
\end{proof}
\begin{coro}
	Let $p\geq r$. Then $\omega\in \mathbb{A}_{p,r}$ if and only if $\omega^r\in \mathcal{A}_{p/r}$ if and only if $\omega^p\in A_{p/r}$.
\end{coro}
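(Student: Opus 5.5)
The corollary has constant exponents $p\ge r$, and I will obtain it by chaining the preceding lemma with the classical identification of $\mathcal{A}_{t}$ with $A_t$ noted in the introduction. I take $p>r$ (so $p/r>1$), since the $A_{p/r}$ condition is written with the exponent $-\frac{1}{p/r-1}$; the case $p=r$ is degenerate, because then the auxiliary exponent defined by $1/r=1/p+1/q$ equals $\infty$, and I treat it separately below.

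\textbf{First equivalence.} I apply the preceding lemma with the constant exponent $p(\cdot)\equiv p$, for which $r\le p^-=p$. This gives $\omega\in\mathbb{A}_{p,r}$ if and only if $\omega^r\in\mathcal{A}_{p/r}$. The mechanism is \eqref{exponente en la norma}: $\|\chi_Q\omega\|_p=\|\chi_Q\omega^r\|_{p/r}^{1/r}$, and the analogous identities hold for $\omega^{-1}$ and for $\|\chi_Q\|_r=|Q|^{1/r}$. Note that here $1/q=1/r-1/p=\frac{1}{r}\left(1-\frac{r}{p}\right)=\frac{1}{r\,(p/r)'}$, so $q=r\,(p/r)'$ and $\|\chi_Q\omega^{-1}\|_q=\|\chi_Q\omega^{-r}\|_{(p/r)'}^{1/r}$.

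\textbf{Second equivalence.} I use the remark in the introduction that $u\in\mathcal{A}_t$ if and only if $u^t\in A_t$, taking $u=\omega^r$ and $t=p/r$. This gives $\omega^r\in\mathcal{A}_{p/r}$ if and only if $\omega^p\in A_{p/r}$. To check the remark directly with $t>1$: the $\mathcal{A}_t$ condition $\|u\chi_Q\|_t\|u^{-1}\chi_Q\|_{t'}\le C|Q|$ becomes, after raising it to the power $t$,
$$\left(\frac{1}{|Q|}\int_Q u^t\right)\left(\frac{1}{|Q|}\int_Q u^{-t'}\right)^{t/t'}\le C^t.$$
Since $t/t'=t-1$ and $u^{-t'}=(u^t)^{-1/(t-1)}$, this is exactly the $A_t$ condition for $u^t$.

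\textbf{The case $p=r$.} Here $q=\infty$. The class $\mathbb{A}_{r,r}$ requires $\|\chi_Q\omega\|_r\,\|\chi_Q\omega^{-1}\|_\infty\le C|Q|^{1/r}$. Raising this to the power $r$ gives
$$\frac{1}{|Q|}\int_Q\omega^r\le C\,\operatorname*{ess\,inf}_Q\omega^r,$$
which is the $A_1$ condition for $\omega^r=\omega^p$. This matches $\mathcal{A}_1$ (with $t'=\infty$) and $A_1$, provided one reads $A_{p/r}=A_1$ in this case.

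The only delicate point is this endpoint bookkeeping: making sure the conventions for $q=\infty$ and $t=1$ agree. The remaining steps are direct computations.
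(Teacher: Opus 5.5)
Your proof is correct and follows essentially the same route as the paper. The paper obtains the corollary by specializing the preceding lemma to the constant exponent $p(\cdot)\equiv p$ and then invoking the observation from the introduction that $\omega\in\mathcal{A}_p$ if and only if $\omega^p\in A_p$; you do the same, and additionally verify that observation directly and treat the endpoint $p=r$ (where $q=\infty$ and $A_{p/r}$ must be read as $A_1$), which the paper leaves implicit.
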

\begin{lema}\label{Lema equivalencia entre clases Apr y Ap/r} Let ${r(\cdot)}\in \mathcal{P}^{\text{log}}(\R)$ such that $r^+<\infty$, and let $p$ such that $r^+<p<\infty$. Then
	$\omega\in\mathbb{A}_{p,{r(\cdot)}}$ if and only if $\omega^r\in \mathcal{A}_{p/r(\cdot)}$.
\end{lema}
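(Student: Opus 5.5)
The natural approach is to reduce both conditions to statements about modulars on a single cube and compare them directly; I read $\omega^r$ as $\omega^{r(\cdot)}$. Define $q(\cdot)$ by $1/r(\cdot)=1/p+1/q(\cdot)$. Since $p>r^+$, we get $q(\cdot)=\frac{p\,r(\cdot)}{p-r(\cdot)}$, with $q^+<\infty$ and $q(\cdot)\in\mathcal{P}^{\log}(\R)$ by Lemma~\ref{algebra de Plog}. The conjugate of $p/r(\cdot)$ is $\big(p/r(\cdot)\big)'=\frac{p}{p-r(\cdot)}=q(\cdot)/r(\cdot)$. So, by Lemma~\ref{def equiv de Ap} or directly from the definition, the two conditions to compare on a fixed cube $Q$ are
\[
\|\chi_Q\omega\|_{p}\,\|\chi_Q\omega^{-1}\|_{q(\cdot)}\lesssim \|\chi_Q\|_{r(\cdot)}\approx |Q|^{1/r_Q}
\]
and
\[
\|\chi_Q\omega^{r(\cdot)}\|_{p/r(\cdot)}\,\|\chi_Q\omega^{-r(\cdot)}\|_{q(\cdot)/r(\cdot)}\lesssim |Q| .
\]
Here Lemma~\ref{norma de cubos comparada con |Q|} has been used for $\|\chi_Q\|_{r(\cdot)}$. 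The goal is to show
\[
\|\chi_Q\omega^{r(\cdot)}\|_{p/r(\cdot)}\approx \|\chi_Q\omega\|_p^{\,r_Q}
\quad\text{and}\quad
\|\chi_Q\omega^{-r(\cdot)}\|_{q(\cdot)/r(\cdot)}\approx \|\chi_Q\omega^{-1}\|_{q(\cdot)}^{\,r_Q},
\]
with constants independent of $Q$. Raising the first condition to the power $r_Q$ then gives the second, and conversely.

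To prove these equivalences I would work with modulars. Put $\nu=\|\chi_Q\omega\|_p$, so $\int_Q\omega^p=\nu^p$. For the candidate value $\lambda=\nu^{r_Q}$, the modular is
\[
\int_Q\big(\omega^{r(x)}/\lambda\big)^{p/r(x)}dx=\int_Q\omega^p\,\nu^{-p r_Q/r(x)}dx .
\]
This is comparable to $1$ as soon as $\nu^{\,p(1-r_Q/r(x))}$ is bounded above and below uniformly for $x\in Q$. The same computation applies to $\mu=\|\chi_Q\omega^{-1}\|_{q(\cdot)}$, using $\int_Q(\omega^{-1}/\mu)^{q(x)}dx=1$ and the candidate $\mu^{r_Q}$. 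Once the modular is comparable to $1$, Proposition~\ref{propo modular} (items (3)–(4)) converts this into comparability of norms.

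The main obstacle is the uniform bound on $\nu^{\,r(x)-r_Q}$ and $\mu^{\,r(x)-r_Q}$. Log-Hölder continuity only gives $|r(x)-r_Q|\lesssim 1/\log(e+1/\ell(Q))$ for small cubes, and decay to $r_\infty$ for large ones. So I need $\nu$ and $\mu$ to lie between $|Q|^{-C}$ and $|Q|^{C}$ (for small cubes), or to be controlled through the $\log(e+|x|)$ decay (for large cubes).

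First, I would exploit the fact that the $\mathbb{A}_{p,r(\cdot)}$ condition is invariant under $\omega\mapsto t\omega$, and normalize $\omega$ so that $\|\chi_{Q_0}\omega\|_p=1$ for the unit cube $Q_0$. Then, using the weight condition itself together with the doubling property \eqref{doblante} and Hölder's inequality (Theorem~\ref{teo Hölder}) applied to $\chi_Q=(\chi_Q\omega)(\chi_Q\omega^{-1})$, I would derive the two-sided power bounds
\[
|Q|^{1/r_Q}\lesssim \nu\mu\lesssim|Q|^{1/r_Q},
\qquad
\nu\gtrsim |Q|^{C}\ \text{for small cubes } Q\subset Q_0 .
\]
Small cubes far from the origin and large cubes would be handled by the $LH_\infty$ condition, comparing $r(\cdot)$ with $r_\infty$ as in the standard proof of Lemma~\ref{norma de cubos comparada con |Q|}. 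If the normalization does not give enough control, I would split the integrals over $Q$ into the parts where $\omega^p\le|Q|^{-N}$ and where it is larger. The first part carries the variable exponent harmlessly. For the second part, I would use the known "Diening-type" estimate that $t^{r(x)}\approx t^{r_Q}$ on $Q$ whenever $|Q|^{N}\le t\le |Q|^{-N}$, and bound the remainder separately.
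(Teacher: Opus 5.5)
Your reduction to the two cube-wise comparisons $\|\chi_Q\omega^{r(\cdot)}\|_{p/r(\cdot)}\approx\nu^{r_Q}$ and $\|\chi_Q\omega^{-r(\cdot)}\|_{q(\cdot)/r(\cdot)}\approx\mu^{r_Q}$ is sound, and those comparisons are true. The gap is in how you propose to prove them: the mechanism fails on large cubes, which is exactly the hard case.

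You require $\nu^{p(1-r_Q/r(x))}$ to be bounded above and below uniformly for $x\in Q$. For a large cube centred at the origin, $r_Q$ is essentially $r_\infty$, but $r(x)$ near the origin need not be, and $\nu$ grows like a power of $|Q|$. Already for $\omega\equiv1$ (which lies in $\mathbb{A}_{p,r(\cdot)}$ by Lemma \ref{norma de cubos}) with your normalization, $\nu=|Q|^{1/p}$. The factor is then $|Q|^{1-r_Q/r(x)}$, which near the origin is unbounded or tends to $0$ as $|Q|\to\infty$.

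Your fallback does not address this. Since $(\omega^{r(x)})^{p/r(x)}=\omega^p$ and $(\omega^{-r(x)})^{q(x)/r(x)}=\omega^{-q(x)}$, the discrepancy between the modulars depends on $x$ only through $r(x)$ and on the constants $\nu,\mu$. Splitting by the size of $\omega^p$ is therefore beside the point, and the estimate ``$t^{r(x)}\approx t^{r_Q}$ for $|Q|^N\le t\le|Q|^{-N}$'' is vacuous once $|Q|>1$.

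What would work is a split in $x$: Diening's inequality $t^{a(x)}\lesssim t^{a_\infty}+(e+|x|)^{-M}$ for $0\le t\le 1$. Its error term now carries the weight, $\int_Q\omega^p(e+|x|)^{-M}\,dx$, and likewise with $\omega^{-q(\cdot)}$. This error is bounded because the weight-doubling you derive gives polynomial growth of the normalized weights. A bounded error suffices, since each implication needs only a one-sided comparison. For the converse implication, all these power bounds must also be derived from $\omega^{r(\cdot)}\in\mathcal{A}_{p/r(\cdot)}$, not from the $\mathbb{A}_{p,r(\cdot)}$ condition; your sketch does not do this.

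More to the point, you are re-deriving by hand the content of Lemma \ref{def equiv de Ap}. The simple idea you miss is to raise the $\mathbb{A}_{p,r(\cdot)}$ condition to the constant power $p$ rather than to the cube-dependent power $r_Q$. The paper applies Lemma \ref{def equiv de Ap} to the weight $\omega^{r(\cdot)}$ and the exponent $p/r(\cdot)\in\mathcal{P}^{\log}(\R)$, for which $1<p/r^+\le p/r^-<\infty$. Since $(\omega^{r(\cdot)})^{p/r(\cdot)}=\omega^p$ and $(p/r(\cdot))'/(p/r(\cdot))=r(\cdot)/(p-r(\cdot))=q(\cdot)/p$, that lemma says $\omega^{r(\cdot)}\in\mathcal{A}_{p/r(\cdot)}$ if and only if $\|\omega^p\chi_Q\|_1\|\omega^{-p}\chi_Q\|_{q(\cdot)/p}\lesssim|Q|^{(p/r)_Q}$ uniformly in $Q$. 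Because $p$ is constant, \eqref{exponente en la norma} turns the left-hand side into $\big(\|\omega\chi_Q\|_p\|\omega^{-1}\chi_Q\|_{q(\cdot)}\big)^p$. So the two conditions have identical left-hand sides and differ only through $\|\chi_Q\|_{r(\cdot)}^p$ versus $|Q|^{(p/r)_Q}$, which are comparable by Lemma \ref{norma de cubos comparada con |Q|}. No control of the weight's growth is needed at all.
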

\begin{proof} By Lemma \ref{def equiv de Ap} and \eqref{exponente en la norma}, it suffices to show that for every cube $Q$,
	$$|Q|^{\frac{p}{r_Q}} \approx \|\chi_Q\|_{r(\cdot)}^p,$$
	which is a direct consequence of Lemma \ref{norma de cubos comparada con |Q|}.
\end{proof}

\begin{lema}\label{Apr implica Ap}
	Let ${p(\cdot)},{r(\cdot)}\in\mathcal{P}^{\mathrm{log}}(\R)$ such that ${r(\cdot)}\leq{p(\cdot)}$. If $\omega\in\mathbb{A}_{{p(\cdot)},{r(\cdot)}}$ then $\omega\in\mathcal{A}_{p(\cdot)}$ and $\omega^{-1}\in \mathcal{A}_{{q(\cdot)}}$ where $1/r(\cdot)=1/p(\cdot)+1/q(\cdot)$.
\end{lema}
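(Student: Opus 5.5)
We need to show: if $\|\chi_Q\omega\|_{p(\cdot)}\|\chi_Q\omega^{-1}\|_{q(\cdot)}\lesssim\|\chi_Q\|_{r(\cdot)}$ uniformly in $Q$, then
\[
\|\chi_Q\omega\|_{p(\cdot)}\|\chi_Q\omega^{-1}\|_{p'(\cdot)}\lesssim|Q|
\quad\text{and}\quad
\|\chi_Q\omega^{-1}\|_{q(\cdot)}\|\chi_Q\omega\|_{q'(\cdot)}\lesssim|Q| .
\]
The plan is to use the generalized Hölder inequality (Theorem \ref{teo Hölder}) to bound $\|\chi_Q\omega^{-1}\|_{p'(\cdot)}$ by the product of $\|\chi_Q\omega^{-1}\|_{q(\cdot)}$ and a norm of $\chi_Q$ in a suitable exponent. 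We then combine the resulting norms of $\chi_Q$ with Lemma \ref{norma de cubos}. By Lemma \ref{algebra de Plog}, all auxiliary exponents lie in $\mathcal{P}^{\log}(\R)$.

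\textbf{First inclusion.} From $1/r=1/p+1/q$ and $r\ge1$ we get $1/q=1/r-1/p\le 1-1/p=1/p'$. Hence $q(\cdot)\ge p'(\cdot)$, and we may define $t(\cdot)$ by $1/p'(\cdot)=1/q(\cdot)+1/t(\cdot)$. Then
\[
\frac1t=1-\frac1p-\frac1r+\frac1p=1-\frac1r=\frac1{r'} ,
\]
so $t=r'$. By Theorem \ref{teo Hölder},
\[
\|\chi_Q\omega^{-1}\|_{p'(\cdot)}\lesssim\|\chi_Q\omega^{-1}\|_{q(\cdot)}\|\chi_Q\|_{r'(\cdot)} .
\]
Multiplying by $\|\chi_Q\omega\|_{p(\cdot)}$ and using the $\mathbb{A}_{p(\cdot),r(\cdot)}$ condition gives a bound $\lesssim\|\chi_Q\|_{r(\cdot)}\|\chi_Q\|_{r'(\cdot)}$. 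Since $1/r+1/r'=1$, Lemma \ref{norma de cubos} yields $\|\chi_Q\|_{r(\cdot)}\|\chi_Q\|_{r'(\cdot)}\approx\|\chi_Q\|_1=|Q|$. Thus $\omega\in\mathcal{A}_{p(\cdot)}$.

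\textbf{Second inclusion.} Symmetrically, $1/p=1/r-1/q\le 1/q'$, so $p(\cdot)\ge q'(\cdot)$. The same computation gives $1/q'=1/p+1/r'$. Hölder then gives
\[
\|\chi_Q\omega\|_{q'(\cdot)}\lesssim\|\chi_Q\omega\|_{p(\cdot)}\|\chi_Q\|_{r'(\cdot)} .
\]
Multiplying by $\|\chi_Q\omega^{-1}\|_{q(\cdot)}$ and arguing as before yields $\lesssim|Q|$, so $\omega^{-1}\in\mathcal{A}_{q(\cdot)}$.

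\textbf{Remaining checks.} The main care points are bookkeeping at the endpoints.
\begin{itemize}
\item The exponent $r'(\cdot)$ may equal $\infty$ where $r=1$, and $q(\cdot)$ may be infinite where $r=p$. The Hölder inequality and Lemma \ref{norma de cubos} are stated for $\mathcal{P}(\R)$ and $\mathcal{P}^{\log}(\R)$ exponents, so these cases are allowed under the paper's reciprocal conventions.
\item $r'(\cdot)\in\mathcal{P}^{\log}(\R)$ by Lemma \ref{algebra de Plog}(1).
\item $q(\cdot)\in\mathcal{P}^{\log}(\R)$ by Lemma \ref{algebra de Plog}(3).
\end{itemize}
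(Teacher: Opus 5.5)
Your proposal is correct and is essentially the paper's proof: Hölder's inequality with $1/p'(\cdot)=1/q(\cdot)+1/r'(\cdot)$, then the $\mathbb{A}_{p(\cdot),r(\cdot)}$ condition, then Lemma \ref{norma de cubos} to get $\|\chi_Q\|_{r(\cdot)}\|\chi_Q\|_{r'(\cdot)}\lesssim|Q|$. The only difference is that you check $\omega^{-1}\in\mathcal{A}_{q(\cdot)}$ directly via $1/q'(\cdot)=1/p(\cdot)+1/r'(\cdot)$, whereas the paper obtains it from the symmetry $\omega\in\mathbb{A}_{p(\cdot),r(\cdot)}\iff\omega^{-1}\in\mathbb{A}_{q(\cdot),r(\cdot)}$.
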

\begin{proof} By Lemma \ref{algebra de Plog}, $q(\cdot)\in\mathcal{P}^{\mathrm{log}}(\R)$.  Since $\omega\in \mathbb{A}_{p(\cdot),r(\cdot)}$ is equivalent to $\omega^{-1}\in\mathbb{A}_{q(\cdot),r(\cdot)}$, it is enough to show that $\omega\in\mathcal{A}_{p(\cdot)}$. Note that $1/p^\prime(\cdot)=1/q(\cdot)+1/r^\prime(\cdot).$ Then by Hölder's inequality \ref{teo Hölder}, the condition $\mathbb{A}_{p(\cdot),r(\cdot)}$ and Lemma \ref{norma de cubos}, 
	\begin{align*}
		\|\omega\chi_Q\|_{p(\cdot)}\|\omega^{-1}\chi_Q\|_{p^\prime(\cdot)}&\lesssim \|\omega\chi_Q\|_{p(\cdot)}\|\omega^{-1}\chi_Q\|_{q(\cdot)}\|\chi_Q\|_{r^\prime(\cdot)}\\
		&\lesssim\|\chi_Q\|_{r(\cdot)}\|\chi_Q\|_{r^\prime(\cdot)}
		\lesssim|Q|
	\end{align*}
	 for all cubes $Q$.
\end{proof}

\section{Characterization of weights for $M_{\beta(\cdot),r(\cdot)}$. }\label{sec 3}
In this section we characterize the weights for the maximal fractional operator $M_{\beta(\cdot),r(\cdot)}$. Before proving Theorem \ref{teo M}, we establish some auxiliary results.
	Let $p(\cdot),{r(\cdot)}\in\mathcal{P}(\R)$ such that ${r(\cdot)}\leq{p(\cdot)}$. Let ${q(\cdot)}$ be the exponent defined by	$$\frac{1}{r(\cdot)}=\frac{1}{p(\cdot)}+\frac{1}{q(\cdot)}.$$ 
	For $f\in L^1_{\text{loc}}(\R)$ we define
	\begin{equation}\label{norma conjugada}
		\|f\|^\ast_{p(\cdot)}=\underset{\|g\|_{q(\cdot)}\leq1}{\sup}\|fg\|_{r(\cdot)}.
	\end{equation}

It is not hard to see that $\|\cdot\|^\ast_{p(\cdot)}$ defines a norm on $L^{p(\cdot)}(\R)$. Moreover, under appropriate conditions on the exponents ${p(\cdot)}$ and ${r(\cdot)}$, the norms $\|\cdot\|^\ast_{p(\cdot)}$ and $\|\cdot\|_{p(\cdot)}$ are equivalent.
\begin{propo}\label{propo norma conjugada}
	Let ${p(\cdot)},{q(\cdot)},{r(\cdot)}\in\mathcal{P}(\R)$ such that 
	$$\frac{1}{r(x)}=\frac{1}{p(x)}+\frac{1}{q(x)}.$$ 
	If either $r(\cdot)<p(\cdot)<\infty$ or $r(\cdot)\leq p(\cdot)<\infty$ with $r^+<\infty$, then 
	\begin{equation*}
		\|f\|^\ast_{p(\cdot)}=\underset{\|g\|_{q(\cdot)}\leq1}{\sup}\|fg\|_{r(\cdot)}\approx\|f\|_{p(\cdot)}.
	\end{equation*}
\end{propo}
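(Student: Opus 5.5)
The upper bound $\|f\|^\ast_{p(\cdot)}\lesssim\|f\|_{p(\cdot)}$ is immediate from the generalized Hölder inequality (Theorem \ref{teo Hölder}). For any $g$ with $\|g\|_{q(\cdot)}\le 1$ we have $\|fg\|_{r(\cdot)}\le C\|f\|_{p(\cdot)}\|g\|_{q(\cdot)}\le C\|f\|_{p(\cdot)}$, and taking the supremum gives the claim. The real work is the reverse inequality $\|f\|_{p(\cdot)}\lesssim\|f\|^\ast_{p(\cdot)}$. By homogeneity of both sides we may assume $\|f\|_{p(\cdot)}=1$, and it then suffices to construct a single test function $g$ with $\|g\|_{q(\cdot)}\le C$ and $\|fg\|_{r(\cdot)}\ge c>0$.

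The natural candidate is the pointwise extremizer of Young's inequality. Since $p(\cdot)<\infty$, Proposition \ref{propo modular}(2),(4) gives $\rho_{p(\cdot)}(f)=1$. We then set
$$g(x)=|f(x)|^{p(x)/q(x)}$$
on the set where $q(x)<\infty$. From the relation $1/r=1/p+1/q$ we get $p/q=p/r-1$, so that $|g|^{q(x)}=|f|^{p(x)}$ and $|fg|^{r(x)}=|f|^{r(x)(1+p(x)/q(x))}=|f|^{p(x)}$. On the set where $q(x)=\infty$, that is where $r(x)=p(x)$, we instead take $g=\chi$ of that set; then $\|g\chi\|_\infty\le1$ and again $|fg|^{r(x)}=|f|^{p(x)}$. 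Away from the set where $r=\infty$ (excluded because $p<\infty$) the modulars then satisfy
$$\rho_{q(\cdot)}(g)\le \int|f|^{p(x)}+1\le 2,\qquad \rho_{r(\cdot)}(fg)=\int|f|^{p(x)}=1.$$
By Proposition \ref{propo modular}(3),(4) it follows that $\|g\|_{q(\cdot)}\le 2$ and $\|fg\|_{r(\cdot)}=1$. Normalizing $g/2$ then yields $\|f\|^\ast_{p(\cdot)}\ge 1/2=\tfrac12\|f\|_{p(\cdot)}$.

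The main obstacle is checking that the modular identities really translate into norm bounds, given that the modular of $q(\cdot)$ includes an $L^\infty$ part on $\{q=\infty\}$. This is exactly where the two alternative hypotheses enter. If $r(\cdot)<p(\cdot)<\infty$, then $q<\infty$ everywhere, so the $L^\infty$ part is absent and the argument above goes through directly. If instead $r\le p$ with $r^+<\infty$, the set $\{q=\infty\}=\{r=p\}$ is handled by the indicator choice described above, and $r^+<\infty$ ensures that the $r(\cdot)$-modular has no $L^\infty$ component. In both cases the modular equality $\rho_{r(\cdot)}(fg)=1$ gives $\|fg\|_{r(\cdot)}=1$ by the continuity in Proposition \ref{propo modular}(2). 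Finally, I would remark that if $f\notin L^{p(\cdot)}$, a truncation $f_k=\min(|f|,k)\chi_{B(0,k)}$ combined with the monotonicity in Proposition \ref{propo modular}(1) shows that both quantities are infinite, so the equivalence also holds in that case.
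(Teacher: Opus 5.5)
The gap is the step you use to feed your test function. From $\|f\|_{p(\cdot)}=1$ you conclude ``since $p(\cdot)<\infty$'' that $\rho_{p(\cdot)}(f)=1$. The proposition assumes only $p(x)<\infty$ pointwise, so $p^+=\infty$ is allowed, and then this step is false. Proposition~\ref{propo modular}(4) gives only $\rho_{p(\cdot)}(f)\le 1$. Equality would need left-continuity of $\lambda\mapsto\rho_{p(\cdot)}(f/\lambda)$ at $\lambda=1$, but that map is continuous only on $[\lambda_0,\infty)$ for $\lambda_0$ with $\rho_{p(\cdot)}(f/\lambda_0)<\infty$; for $\lambda<1$ it may be $+\infty$.

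A concrete example: take $p(x)=\max\{2,|x|^{-1}\}$ for $x\neq 0$, $r\equiv 1$, $q=p'$ (so $r<p<\infty$), and $f=\chi_{B(0,\epsilon)}$ with $\epsilon<1/2$. Then $\rho_{p(\cdot)}(f/\lambda)=\int_{B(0,\epsilon)}\lambda^{-1/|x|}\,dx=\infty$ for every $\lambda<1$. Hence $\|f\|_{p(\cdot)}=1$ while $\rho_{p(\cdot)}(f)=|B(0,\epsilon)|$. Your $g$ is $\chi_{B(0,\epsilon)}$, and your identity $\rho_{r(\cdot)}(fg)=\rho_{p(\cdot)}(f)$ gives $\|fg\|_{1}=|B(0,\epsilon)|$, not $1$. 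So no uniform lower bound comes out, and as written the argument proves the proposition only when $p^+<\infty$. Your test function itself is the right one: it is exactly the paper's $g=(|f|/\lambda)^{(p(\cdot)-r(\cdot))/r(\cdot)}$, completed by $\chi_{\{q=\infty\}}$.

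The missing ingredient is truncation, which you reserve for $f\notin L^{p(\cdot)}$ but which is needed throughout. Let $f_k=\min\{|f|,k\}\chi_{E_k}$ with $E_k=B(0,k)\cap\{p<k\}$. Then $\rho_{p(\cdot)}(f_k/\lambda)<\infty$ for every $\lambda>0$, so the modular is continuous on all of $(0,\infty)$ and $\rho_{p(\cdot)}(f_k/\|f_k\|_{p(\cdot)})=1$. Your construction applied to $f_k/\|f_k\|_{p(\cdot)}$ now gives $\|f_k\|_{p(\cdot)}\le 2\|f_k\|^\ast_{p(\cdot)}\le 2\|f\|^\ast_{p(\cdot)}$. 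Monotone convergence finishes: $\rho_{p(\cdot)}(f_k/M)\le 1$ for all $k$ implies $\rho_{p(\cdot)}(f/M)\le 1$. Proposition~\ref{propo modular}(1) alone does not give $\|f_k\|_{p(\cdot)}\to\|f\|_{p(\cdot)}$, as your last sentence suggests.

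The paper does this in contrapositive form. It normalizes $\|f\|^\ast_{p(\cdot)}=1$; if $\rho_{p(\cdot)}(f_k)>1$, it picks $\lambda>1$ with $\rho_{p(\cdot)}(f_k/\lambda)=1$ (legitimate because $\rho_{p(\cdot)}(f_k)<\infty$), tests with the same $g$ to reach a contradiction, and concludes with Fatou.

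Two smaller points. First, $\|fg\|_{r(\cdot)}\ge 1$ should be deduced from Proposition~\ref{propo modular}(3), not from continuity, since in the first case $r^+$ may be infinite. Second, $r\le p<\infty$ already rules out an $L^\infty$ part in $\rho_{r(\cdot)}$, so your stated use of $r^+<\infty$ is spurious. In fact, because you rescale $g$ at the level of norms (dividing by $2$) rather than inside the modular, the repaired argument never uses $r^+<\infty$ at all.
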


The proof can be found in Section \ref{sec 5}.
In the particular case $r(\cdot)\equiv 1$, this coincides with the conjugate norm (see \cite{libro Diening,libro CU-FIO}).

The class introduced in \ref{Apr} is closely related with the boundedness of the average operator ${\bf A}_{\beta(\cdot),r(\cdot),Q}$ defined by 
$${\bf A}_{\beta(\cdot),r(\cdot),Q}f=\chi_Q \|\chi_Q\|_{\beta(\cdot)}\frac{ \|f\chi_Q\|_{r(\cdot)}}{\|\chi_Q\|_{r(\cdot)}},$$
as shown below. 
\begin{propo}\label{caracterizacion Apr}
	Let $\omega$ be a weight. Let $p(\cdot),q(\cdot),\beta(\cdot)\in\mathcal{P}(\R)$ such that $p(\cdot)\leq q(\cdot)$ and $1/p(\cdot)-1/q(\cdot)=1/\beta(\cdot)$.
	
	\begin{enumerate}
		\item Let $r(\cdot)\in\mathcal{P}(\R)$ such that either ${r(\cdot)}<{p(\cdot)}$ or ${r(\cdot)}\leq{p(\cdot)}$ with $r^+<\infty$. If 
		$$\|({\bf A}_{{\beta(\cdot)},{r(\cdot)},Q}f)\omega\|_{q(\cdot)}\lesssim\|f\omega\|_{p(\cdot)}$$
		uniformly for all cubes $Q$, then $\omega\in \mathbb{A}_{{q(\cdot)},\frac{{\beta(\cdot)}{r(\cdot)}}{{\beta(\cdot)}-{r(\cdot)}}}$.	
		\item\label{2 de caract}  Let $r(\cdot)\in\mathcal{P}^{\text{log}}(\R)$ such that ${r(\cdot)}\leq{p(\cdot)}$ and let
		
		 $\beta(\cdot)\in\mathcal{P}^{\text{log}}(\R)$. If $\omega\in \mathbb{A}_{{q(\cdot)},\frac{{\beta(\cdot)}{r(\cdot)}}{{\beta(\cdot)}-{r(\cdot)}}}$ then 	
		$$\|({\bf A}_{{\beta(\cdot)},{r(\cdot)},Q}f)\omega\|_{q(\cdot)}\lesssim\|f\omega\|_{p(\cdot)}$$
		uniformly for all cubes $Q$.
	\end{enumerate}
\end{propo}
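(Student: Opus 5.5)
Throughout, write $t(\cdot)=\frac{\beta(\cdot)r(\cdot)}{\beta(\cdot)-r(\cdot)}$, so that $1/t(\cdot)=1/r(\cdot)-1/\beta(\cdot)$. Let $s(\cdot)$ be defined by $1/r(\cdot)=1/p(\cdot)+1/s(\cdot)$. Since $1/p-1/q=1/\beta$, we get $1/t=1/q+1/s$. By Definition \ref{Apr}, membership $\omega\in\mathbb{A}_{q(\cdot),t(\cdot)}$ therefore means
$$\|\chi_Q\omega\|_{q(\cdot)}\|\chi_Q\omega^{-1}\|_{s(\cdot)}\lesssim\|\chi_Q\|_{t(\cdot)}.$$
Both parts reduce to comparing this with the operator norm of ${\bf A}_{\beta(\cdot),r(\cdot),Q}$. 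Indeed,
$$\|({\bf A}_{\beta(\cdot),r(\cdot),Q}f)\omega\|_{q(\cdot)}=\|\chi_Q\omega\|_{q(\cdot)}\,\|\chi_Q\|_{\beta(\cdot)}\,\frac{\|f\chi_Q\|_{r(\cdot)}}{\|\chi_Q\|_{r(\cdot)}}.$$

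\emph{Part (2).} By Hölder's inequality (Theorem \ref{teo Hölder}) with $1/r=1/p+1/s$, we have $\|f\chi_Q\|_{r(\cdot)}\lesssim\|f\omega\|_{p(\cdot)}\|\omega^{-1}\chi_Q\|_{s(\cdot)}$. Inserting this and using the $\mathbb{A}_{q(\cdot),t(\cdot)}$ condition gives
$$\|({\bf A}_{\beta(\cdot),r(\cdot),Q}f)\omega\|_{q(\cdot)}\lesssim\|f\omega\|_{p(\cdot)}\,\frac{\|\chi_Q\|_{t(\cdot)}\|\chi_Q\|_{\beta(\cdot)}}{\|\chi_Q\|_{r(\cdot)}}.$$
Since $r(\cdot),\beta(\cdot)\in\mathcal{P}^{\log}(\R)$, Lemma \ref{algebra de Plog}(3) gives $t(\cdot)\in\mathcal{P}^{\log}(\R)$. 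Then Lemma \ref{norma de cubos}, applied to $1/r=1/t+1/\beta$, gives $\|\chi_Q\|_{t(\cdot)}\|\chi_Q\|_{\beta(\cdot)}\approx\|\chi_Q\|_{r(\cdot)}$ uniformly in $Q$, and this finishes part (2). If $\beta\equiv\infty$ then $t=r$ and the last step is trivial.

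\emph{Part (1).} I would test the hypothesis on functions $f=h\omega^{-1}\chi_Q$ with $\|h\|_{p(\cdot)}\le1$ and $h$ supported in $Q$. Then $\|f\omega\|_{p(\cdot)}\le1$, and the hypothesis yields
$$\|\chi_Q\omega\|_{q(\cdot)}\|\chi_Q\|_{\beta(\cdot)}\,\|h\,\omega^{-1}\chi_Q\|_{r(\cdot)}\lesssim\|\chi_Q\|_{r(\cdot)}.$$
Taking the supremum over such $h$ and applying Proposition \ref{propo norma conjugada}, with its "$p$" played by $s(\cdot)$ and its "$q$" played by $p(\cdot)$, gives $\sup_h\|h\omega^{-1}\chi_Q\|_{r(\cdot)}\approx\|\omega^{-1}\chi_Q\|_{s(\cdot)}$. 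Restricting to $h$ supported in $Q$ costs nothing, since replacing $h$ by $h\chi_Q$ does not increase the norm. Hence
$$\|\chi_Q\omega\|_{q(\cdot)}\|\chi_Q\omega^{-1}\|_{s(\cdot)}\lesssim\frac{\|\chi_Q\|_{r(\cdot)}}{\|\chi_Q\|_{\beta(\cdot)}}.$$
To conclude without log-Hölder, only the inequality $\|\chi_Q\|_{r(\cdot)}\lesssim\|\chi_Q\|_{t(\cdot)}\|\chi_Q\|_{\beta(\cdot)}$ is needed. This is Hölder's inequality (Theorem \ref{teo Hölder}) applied to $\chi_Q=\chi_Q\cdot\chi_Q$ with $1/r=1/t+1/\beta$.

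The main obstacle is the duality step in part (1). First, one must check that the hypotheses of Proposition \ref{propo norma conjugada} hold for the triple $(r,s,p)$. The assumption $r<p$ (resp. $r\le p$ with $r^+<\infty$) is exactly what makes $r<s$ (resp. $r\le s$ with $r^+<\infty$). Finiteness of $s$ may fail where $r=p$, so I would handle that region via the $r^+<\infty$ alternative, or treat the sets $\{r=p\}$ and $\{r<p\}$ separately. Second, a priori $\omega^{-1}\chi_Q$ need not lie in $L^{s(\cdot)}$. I would therefore apply the argument to the truncations $\min(\omega^{-1},N)\chi_Q$ (equivalently, to $\omega_N=\max(\omega,1/N)$ in the test functions) and let $N\to\infty$ by monotone convergence of the norm. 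This also shows that the $\mathbb{A}$ condition forces $\omega^{-1}\chi_Q\in L^{s(\cdot)}$.
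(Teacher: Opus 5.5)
Your proposal is correct and follows the paper's proof essentially line by line. Part (2) is Hölder with $1/r(\cdot)=1/p(\cdot)+1/s(\cdot)$, then the $\mathbb{A}_{q(\cdot),t(\cdot)}$ condition, then Lemma \ref{norma de cubos} for $1/r(\cdot)=1/t(\cdot)+1/\beta(\cdot)$. Part (1) tests the hypothesis on $g\omega^{-1}\chi_Q$ with $\|g\|_{p(\cdot)}\le 1$, invokes Proposition \ref{propo norma conjugada} for the norm $\|\cdot\|_{s(\cdot)}=\|\cdot\|_{\frac{p(\cdot)r(\cdot)}{p(\cdot)-r(\cdot)}}$, and closes with the plain Hölder bound $\|\chi_Q\|_{r(\cdot)}\lesssim\|\chi_Q\|_{\beta(\cdot)}\|\chi_Q\|_{t(\cdot)}$.

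One correction to your bookkeeping, a point the paper also passes over. The assumption $r<p$ gives $s<\infty$, whereas $r<s$ holds exactly where $p<\infty$. So neither alternative of Proposition \ref{propo norma conjugada} covers $\{r=p\}$, where $s=\infty$, and the $r^+<\infty$ alternative does not rescue it. What works is your other option: split off that set (and, where needed, $\{p=\infty\}$, where $s=r$) and recover those pieces directly by testing with normalized indicator functions.
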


\begin{proof}
	Note that $$\frac{1}{{p(\cdot)}}-\frac{1}{{q(\cdot)}}=\frac{1}{{\beta(\cdot)}}\qquad \text{if and only if}\qquad
	\frac{1}{{q(\cdot)}}+\frac{{p(\cdot)}-{r(\cdot)}}{{p(\cdot)}{r(\cdot)}}=\frac{{\beta(\cdot)}-{r(\cdot)}}{{\beta(\cdot)}{r(\cdot)}}.$$
	Furthermore, ${r(\cdot)}\leq{p(\cdot)}\leq{\beta(\cdot)}$ and therefore $$\frac{1}{{r(\cdot)}}=\frac{1}{{\beta(\cdot)}}+\frac{{\beta(\cdot)}
		-{r(\cdot)}}{{\beta(\cdot)}{r(\cdot)}}\qquad\text{ and }\qquad\frac{1}{{r(\cdot)}}=\frac{1}{{p(\cdot)}}+\frac{{p(\cdot)}
		-{r(\cdot)}}{{p(\cdot)}{r(\cdot)}}.$$
	\begin{enumerate}
		\item By Proposition \ref{propo norma conjugada} applied to the norm $\left\|\cdot\right\|_{\frac{{p(\cdot)}{r(\cdot)}}{{p(\cdot)}-{r(\cdot)}}}$ and Hölder's inequality \ref{teo Hölder}, we get,
		\begin{align*}
			\|\omega\chi_Q\|_{q(\cdot)}\|\omega^{-1}\chi_Q\|_\frac{{p(\cdot)}{r(\cdot)}}{{p(\cdot)}-{r(\cdot)}}&\lesssim\|\omega\chi_Q\|_{q(\cdot)}\sup_{\|g\|_{p(\cdot)} \leq1}\|g\omega^{-1}\chi_Q\|_{r(\cdot)}\\
			&\lesssim \|\chi_Q\|_{\frac{{\beta(\cdot)}{r(\cdot)}}{{\beta(\cdot)}-{r(\cdot)}}}\sup_{\|g\|_{p(\cdot)} \leq1} \left\|\omega {\bf A}_{{\beta(\cdot)},{r(\cdot)},Q}\left(g\omega^{-1}\right)\right\|_{q(\cdot)}\\
			&\lesssim\|\chi_Q\|_{\frac{\beta(\cdot)r(\cdot)}{\beta(\cdot)-r(\cdot)}}.
		\end{align*}

		\item If $\omega\in\mathbb{A}_{{q(\cdot)},\frac{{\beta(\cdot)}{r(\cdot)}}{{\beta(\cdot)}-{r(\cdot)}}}$, by Hölder's inequality \ref{teo Hölder} and Lemmas \ref{algebra de Plog} and \ref{norma de cubos}, we obtain,
			\begin{align*}
			\left\| \left({\bf A}_{{\beta(\cdot)},{r(\cdot)},Q}f\right)\omega\right\|_{q(\cdot)}&=\frac{\|\chi_Q\|_{\beta(\cdot)}}{\|\chi_Q\|_{r(\cdot)}}\|f\chi_Q \omega\omega^{-1}\|_{r(\cdot)}\|\omega\chi_Q\|_{q(\cdot)}\\
			&\lesssim\|f\omega\|_{p(\cdot)} \frac{\|\omega\chi_Q\|_{q(\cdot)}\|\omega^{-1}\chi_Q\|_{\frac{{p(\cdot)}{r(\cdot)}}{{p(\cdot)}-{r(\cdot)}}}}{\|\chi_Q\|_{\frac{{\beta(\cdot)}{r(\cdot)}}{{\beta(\cdot)}-{r(\cdot)}}}}\lesssim \|f\omega\|_{p(\cdot)}.
		\end{align*}
	\end{enumerate}
\end{proof}
\begin{remark}
	If $\beta(\cdot)\equiv\infty$ then the hypothesis, $r(\cdot)\in\mathcal{P}^{\text{log}}(\R)$, in $\eqref{2 de caract}$ can be replaced by $r(\cdot)\in\mathcal{P}(\R)$. 
\end{remark}
\begin{coro}
	Let $p(\cdot),r(\cdot)\in \mathcal{P}(\R)$ such that either ${r(\cdot)}<{p(\cdot)}$ or ${r(\cdot)}\leq{p(\cdot)}$ with $r^+<\infty$. Then $$\|({\bf A}_{r(\cdot),Q}f)\omega\|_{p(\cdot)}\lesssim\|f\omega\|_{p(\cdot)}$$
	uniformly for all cubes $Q$ if and only if $\omega\in \mathbb{A}_{p(\cdot),r(\cdot)}$.	
\end{coro}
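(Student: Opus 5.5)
This corollary is the special case $\beta(\cdot)\equiv\infty$ of Proposition \ref{caracterizacion Apr}. Here ${\bf A}_{r(\cdot),Q}$ means ${\bf A}_{\infty,r(\cdot),Q}$, and we use the convention $\|\chi_Q\|_\infty=1$. Taking $q(\cdot)=p(\cdot)$ gives $1/p(\cdot)-1/q(\cdot)=0=1/\beta(\cdot)$, so the hypotheses on $p,q,\beta$ in the proposition hold. By the convention in the remark following Theorem \ref{teo M}, the exponent $\frac{\beta(\cdot)r(\cdot)}{\beta(\cdot)-r(\cdot)}=\frac{r(\cdot)}{1-r(\cdot)/\beta(\cdot)}$ reduces to $r(\cdot)$. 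The target class $\mathbb{A}_{q(\cdot),\frac{\beta(\cdot) r(\cdot)}{\beta(\cdot)-r(\cdot)}}$ therefore becomes $\mathbb{A}_{p(\cdot),r(\cdot)}$.

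For the necessity direction, we invoke part (1) of Proposition \ref{caracterizacion Apr} directly. Its hypothesis on $r(\cdot)$ (either $r(\cdot)<p(\cdot)$, or $r(\cdot)\le p(\cdot)$ with $r^+<\infty$) is exactly the hypothesis of the corollary. We should still check that the proof of part (1) survives $\beta\equiv\infty$. That proof uses Hölder's inequality with $1/r=1/\beta+(\beta-r)/(\beta r)$, which becomes the trivial identity $1/r=0+1/r$. It also uses Proposition \ref{propo norma conjugada} for the exponent $\frac{p(\cdot)r(\cdot)}{p(\cdot)-r(\cdot)}$, and this step is unaffected by $\beta$.

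For the sufficiency direction, we use part (2) together with the remark following the proposition: when $\beta\equiv\infty$, the requirement $r(\cdot)\in\mathcal{P}^{\log}$ can be replaced by $r(\cdot)\in\mathcal{P}(\R)$. To justify that remark, we redo the computation in (2) with $\|\chi_Q\|_\beta=1$. We have
\[
\|({\bf A}_{r(\cdot),Q}f)\omega\|_{p(\cdot)}=\frac{\|f\chi_Q\|_{r(\cdot)}\|\omega\chi_Q\|_{p(\cdot)}}{\|\chi_Q\|_{r(\cdot)}} .
\]
Apply generalized Hölder (Theorem \ref{teo Hölder}) to $\|f\omega\cdot\omega^{-1}\chi_Q\|_{r(\cdot)}$ with $1/r=1/p+(p-r)/(pr)$. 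This bounds the right-hand side by
\[
C\|f\omega\|_{p(\cdot)}\,\frac{\|\omega\chi_Q\|_{p(\cdot)}\|\omega^{-1}\chi_Q\|_{\frac{p r}{p-r}}}{\|\chi_Q\|_{r(\cdot)}}\lesssim\|f\omega\|_{p(\cdot)} ,
\]
where the last step is the definition of $\mathbb{A}_{p(\cdot),r(\cdot)}$. No log-Hölder input (Lemmas \ref{algebra de Plog}, \ref{norma de cubos}) is needed, because there is no product of cube norms to compare.

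The main point needing care is the edge case in the necessity direction where $p(\cdot)$ is not assumed finite. There, Proposition \ref{propo norma conjugada} must be applied with the correct roles of the exponents: the dual pairing is over $g\in L^{p(\cdot)}$ with $\|g\|_{p(\cdot)}\le1$. We rely on the stated hypotheses of that proposition, which match the corollary's assumptions on $r(\cdot)$ and $p(\cdot)$.
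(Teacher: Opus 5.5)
\textbf{Overall.} Your reduction is the one the paper intends. The corollary is stated without proof right after Proposition \ref{caracterizacion Apr} and the remark allowing $r(\cdot)\in\mathcal{P}(\R)$ when $\beta(\cdot)\equiv\infty$. Your sufficiency computation (Hölder with $1/r=1/p+(p-r)/(pr)$, then the $\mathbb{A}_{p(\cdot),r(\cdot)}$ condition) is correct, and it is exactly the justification of that remark, which the paper leaves implicit.

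\textbf{The gap: your last paragraph.} You correctly identify the roles: norm exponent $P(\cdot)=\frac{p(\cdot)r(\cdot)}{p(\cdot)-r(\cdot)}$, dual exponent $p(\cdot)$, middle exponent $r(\cdot)$. With those roles, Proposition \ref{propo norma conjugada} requires either $r(\cdot)<P(\cdot)<\infty$, or $r(\cdot)\le P(\cdot)<\infty$ with $r^+<\infty$. These are conditions on $P$, not on $p$, and they do not follow from the corollary's hypotheses.
\begin{itemize}
\item On $\{r=p\}$, which the second branch allows, you get $P=\infty$. Proposition \ref{propo norma conjugada} excludes this, since it assumes the norm exponent is finite.
\item On $\{p=\infty\}$ you get $P=r$. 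So the alternative $r<P$ fails, and the first branch of the corollary does not supply $r^+<\infty$.
\end{itemize}
The paper's proof of Proposition \ref{caracterizacion Apr}(1) has the same mismatch, so you inherit it rather than create it. Still, asserting that the hypotheses ``match'' is not an argument, and as stated it is false.

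\textbf{The fix.} Let $h=\omega^{-1}\chi_Q$. The quantity $\sup_{\|g\|_{p(\cdot)}\le1}\|hg\|_{r(\cdot)}$ is monotone in $|h|$. So by the triangle inequality it suffices to bound $\|h\chi_{E}\|_{P(\cdot)}$ by it for each of the sets $E_1=\{r=p\}$, $E_2=\{r<p=\infty\}$, $E_3=\{r<p<\infty\}$.
\begin{itemize}
\item On $E_1$ we have $\|h\chi_{E_1}\|_{P(\cdot)}=\|h\chi_{E_1}\|_\infty$. If $|h|>\lambda$ on some $F\subset E_1\cap Q$ with $|F|>0$, test with $g=\chi_F/\|\chi_F\|_{p(\cdot)}$. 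Since $r=p$ on $F$, this gives $\|hg\|_{r(\cdot)}\ge\lambda\|\chi_F\|_{r(\cdot)}/\|\chi_F\|_{p(\cdot)}=\lambda$.
\item On $E_2$ we have $P=r$. The function $g=\chi_{E_2}$ satisfies $\|g\|_{p(\cdot)}\le 1$, so $\|h\chi_{E_2}\|_{P(\cdot)}=\|hg\|_{r(\cdot)}$ directly.
\item On $E_3$ we have $r<P<\infty$. Proposition \ref{propo norma conjugada} applies there after redefining the exponents off $E_3$ (e.g.\ $p=2$, $r=1$), which does not change any of the norms involved.
\end{itemize}
With this patch, your necessity argument goes through under exactly the corollary's hypotheses, and the rest of your proof stands.
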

\begin{remark}
	Observe that if we consider weights of the type $\mathcal{A}_{p(\cdot),q(\cdot)}$ in the proof of Proposition \ref{caracterizacion Apr}, we also need the exponents to lie in $\mathcal{P}^{\text{log}}(\R)$, which restricts the class of admissible exponents.
\end{remark}

We also need a version of the Calderón-Zygmund decomposition. Let $\mathcal{D}$ be a dyadic collection of cubes. We say that a family $\mathcal{S}\subset\mathcal{D}$ is $\eta$-sparse, with $\eta\in(0,1)$, if for each $Q\in \mathcal{S}$ there exists a measurable set $E_Q\subset Q$ such that 
	$$\eta|Q|\leq |E_Q|$$
	and the sets $E_Q$ are pairwise disjoint. 
	
 We define the dyadic maximal operator $M^{\mathcal{D}}_{\beta(\cdot),r(\cdot)}$ by
	\begin{equation*}
		M^{\mathcal{D}}_{\beta(\cdot),r(\cdot)}f(x)=\sup_{\substack{Q\ni x\\ Q\in\mathcal{D}}}\|\chi_Q\|_{\beta(\cdot)}\frac{\|f\chi_Q\|_{r(\cdot)}}{\|\chi_Q\|_{r(\cdot)}}.
	\end{equation*}

\begin{lema}\label{lema descomposición de C-Z}
	Let ${\beta(\cdot)},{r(\cdot)}\in\mathcal{P}^{\text{log}}(\R)$ and let $f$ a measurable function. If
	\begin{equation}\label{hipotesis de C-Z}
		\|\chi_Q\|_{\beta(\cdot)}\frac{\|f\chi_Q\|_{r(\cdot)}}{\|\chi_Q\|_{r(\cdot)}}\underset{|Q|\to\infty}{\longrightarrow}0,
	\end{equation}
	then
	\begin{enumerate}
		\item\label{1 de CZ} For each $\lambda>0$ there exist a family of pairwise disjoint cubes $\{Q_j\}_{j\in{\mathbb{J}}}\subset{\mathcal{D}}$  such that 
		\begin{equation*}
			\lambda<\|\chi_{Q_j}\|_{\beta(\cdot)}\frac{\|f\chi_{Q_j}\|_{r(\cdot)}}{\|\chi_{Q_j}\|_{r(\cdot)}}\leq D^2_{r(\cdot)}\lambda,
		\end{equation*}
		and
		\begin{equation*}
			\Omega^{\mathcal{D}}_\lambda=\left\{x\in\R : M^{\mathcal{D}}_{{\beta(\cdot)},{r(\cdot)}}f(x)>\lambda\right\}=\underset{j\in{\mathbb{J}}}{\bigcup}Q_j.
		\end{equation*}
		
		\item\label{2 de CZ} If $\{Q_j\}_{j\in{\mathbb{J}}}$ are the cubes provided by $\eqref{1 de CZ}$ at the highest $\lambda$, then
		\begin{equation*}
			\left\{x\in\R : M_{{\beta(\cdot)},{r(\cdot)}}f(x)>2^nD^2_{\beta(\cdot)} D^3_{r(\cdot)}\lambda\right\}\subset\underset{j\in{\mathbb{J}}}{\bigcup}3Q_j.
		\end{equation*}
		
		\item \label{3 de CZ} There exists a constant $a_0$ such that if $a>a_0$ and $\left\{Q_j^k\right\}_{j\in{\mathbb{J}}}$ are the cubes provided by $\eqref{1 de CZ}$ at the highest $\lambda=a^k$, then the family $S=\left\{Q_j^k\right\}_{j\in{\mathbb{J}},k\in\mathbb{Z}}$ is sparse.
	\end{enumerate}
\end{lema}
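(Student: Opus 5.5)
We follow the classical Calderón--Zygmund stopping-time argument. Throughout we write $\Phi_Q(f)=\|\chi_Q\|_{\beta(\cdot)}\|f\chi_Q\|_{r(\cdot)}/\|\chi_Q\|_{r(\cdot)}$. The only inputs are the doubling constants \eqref{doblante} for $\beta(\cdot)$ and $r(\cdot)$ and the monotonicity of the norm (Proposition \ref{propo modular}).

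For \eqref{1 de CZ}, fix $\lambda>0$. By hypothesis \eqref{hipotesis de C-Z}, every dyadic cube $Q$ with $\Phi_Q(f)>\lambda$ has bounded measure. Hence each $x\in\Omega^{\mathcal D}_\lambda$ lies in a maximal dyadic cube $Q_j$ with $\Phi_{Q_j}(f)>\lambda$. (Dyadic cubes containing $x$ form a chain, and their sizes are bounded above.) Distinct maximal cubes are disjoint, and $\Omega^{\mathcal D}_\lambda=\bigcup_j Q_j$ by construction. For the upper bound, let $\hat Q_j$ be the dyadic parent of $Q_j$. By maximality $\Phi_{\hat Q_j}(f)\le\lambda$. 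Since $Q_j\subset\hat Q_j\subset 2Q_j$ (up to a harmless translation; the doubling estimate \eqref{doblante} applies to $\hat Q_j$ versus $Q_j$ because both are contained in a dilate of one another), we get
\[
\Phi_{Q_j}(f)\le \frac{\|\chi_{\hat Q_j}\|_{\beta(\cdot)}\|f\chi_{\hat Q_j}\|_{r(\cdot)}}{\|\chi_{\hat Q_j}\|_{r(\cdot)}}\cdot\frac{\|\chi_{\hat Q_j}\|_{r(\cdot)}}{\|\chi_{Q_j}\|_{r(\cdot)}}\le D_{r(\cdot)}^2\lambda .
\]
Here the factor $D_{r(\cdot)}$ appears possibly twice, depending on how the parent is compared with $2Q_j$. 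This matches the stated constant.

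For \eqref{2 de CZ}, take $x$ with $M_{\beta(\cdot),r(\cdot)}f(x)>2^nD_{\beta(\cdot)}^2D_{r(\cdot)}^3\lambda$, and a cube $Q\ni x$ realizing this. Choose a dyadic cube $P$ with $\ell(Q)/2<\ell(P)\le\ell(Q)$ meeting $Q$; at most $2^n$ such cubes cover $Q$. By sublinearity (the triangle inequality in $L^{r(\cdot)}$), some such $P$ has $\|f\chi_{P\cap Q}\|_{r(\cdot)}\ge 2^{-n}\|f\chi_Q\|_{r(\cdot)}$. Then $Q\subset 3P$, and we compare $\|\chi_Q\|_{\beta(\cdot)}\le\|\chi_{3P}\|_{\beta(\cdot)}\le D_{\beta(\cdot)}^2\|\chi_P\|_{\beta(\cdot)}$ and $\|\chi_P\|_{r(\cdot)}\le \|\chi_{4P'}\|\lesssim D^3_{r(\cdot)}\|\chi_Q\|_{r(\cdot)}$, using that $P$ sits inside a bounded dilate of $Q$. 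This gives $\Phi_P(f)>\lambda$, so $P\subset Q_j$ for some $j$. Therefore $x\in Q\subset 3P\subset 3Q_j$. The bookkeeping of exactly which powers of the doubling constants arise is the fiddliest step, but any fixed constant suffices for later use.

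For \eqref{3 de CZ}, put $\Omega_k=\bigcup_j Q^k_j$ and $E_{Q^k_j}=Q^k_j\setminus\Omega_{k+1}$. These sets are pairwise disjoint, since the $\Omega_k$ are nested and the cubes at each level are disjoint. It remains to show $|Q^k_j\cap\Omega_{k+1}|\le\frac12|Q^k_j|$ for $a$ large, and this is the main obstacle. In the constant case it follows from the weak-type $(1,1)$ estimate; here we argue instead as follows. Each $Q^{k+1}_i\subset Q^k_j$ satisfies $\Phi_{Q^{k+1}_i}(f\chi_{Q^k_j})>a^{k+1}$. We intend to use Theorem \ref{acot MBr} (with the local exponents, or via a weak-type consequence) applied to $f\chi_{Q^k_j}$, together with Lemma \ref{norma de cubos comparada con |Q|}, to bound
\[
|Q^k_j\cap\Omega_{k+1}|\lesssim \Big(\frac{\Phi_{Q^k_j}(f)}{a^{k+1}}\Big)^{\theta}|Q^k_j|\le \Big(\frac{D^2_{r(\cdot)}}{a}\Big)^{\theta}|Q^k_j|
\]
for some $\theta>0$. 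As an alternative, one can sum over the disjoint $Q^{k+1}_i$ using the modular and the log-Hölder comparison $\|\chi_Q\|_{p(\cdot)}\approx|Q|^{1/p_Q}$. Choosing $a_0$ so that the right-hand side is at most $\frac12|Q^k_j|$ then gives sparseness with $\eta=\frac12$.
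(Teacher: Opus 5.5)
Up to a slip noted at the end, your parts \eqref{1 de CZ} and \eqref{2 de CZ} are the standard stopping-time arguments; the paper also just adapts the classical proofs there. Part \eqref{3 de CZ}, however, has a genuine gap. You flag it as the main obstacle, but the key estimate is announced rather than proved, and the route you sketch does not work.

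First, Theorem \ref{acot MBr} requires $1/\beta(\cdot)=1/p(\cdot)-1/q(\cdot)$ with $p(\cdot)=r(\cdot)s(\cdot)$ and $s^->1$. This forces $\beta(\cdot)\ge s^-r(\cdot)$, whereas the lemma allows arbitrary $\beta(\cdot),r(\cdot)\in\mathcal{P}^{\log}(\R)$. More importantly, even when the theorem applies, a strong $L^{p(\cdot)}\to L^{q(\cdot)}$ bound controls $|\{M^{\mathcal D}_{\beta(\cdot),r(\cdot)}(f\chi_{Q^k_j})>a^{k+1}\}|$ only through $\|f\chi_{Q^k_j}\|_{p(\cdot)}$ with $p(\cdot)>r(\cdot)$. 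The stopping rule, by contrast, only controls the $L^{r(\cdot)}$-quantity $\Phi_{Q^k_j}(f)\le D^2_{r(\cdot)}a^k$. You would need an endpoint weak-type estimate at the level of $L^{r(\cdot)}$, and none of the quoted results provides one.

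Your alternative (modular plus $\|\chi_Q\|_{p(\cdot)}\approx|Q|^{1/p_Q}$) is not carried out. That comparison alone does not let you sum over disjoint subcubes of very different sizes, because the harmonic means $r_{Q^{k+1}_i}$ change from cube to cube.

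The missing ingredient is Theorem \ref{teo suma de normas}, which is how the paper closes the argument. Since $\Omega_{k+1}\subset\Omega_k$, the set $Q^k_j\cap\Omega_{k+1}$ is the disjoint union of the $Q^{k+1}_i\subset Q^k_j$. Use Lemma \ref{norma de cubos} (with $1=1/r(\cdot)+1/r'(\cdot)$), the lower stopping bound for $Q^{k+1}_i$, and $\|\chi_{Q^{k+1}_i}\|_{\beta(\cdot)}\le\|\chi_{Q^k_j}\|_{\beta(\cdot)}$ to get
\[
|Q^{k+1}_i|\lesssim\|\chi_{Q^{k+1}_i}\|_{r(\cdot)}\|\chi_{Q^{k+1}_i}\|_{r'(\cdot)}\le a^{-(k+1)}\|\chi_{Q^k_j}\|_{\beta(\cdot)}\,\|f\chi_{Q^k_j}\chi_{Q^{k+1}_i}\|_{r(\cdot)}\,\|\chi_{Q^k_j}\chi_{Q^{k+1}_i}\|_{r'(\cdot)}.
\]
Sum over $i$ using Theorem \ref{teo suma de normas} applied to $f\chi_{Q^k_j}$ and $\chi_{Q^k_j}$. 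Then use $\Phi_{Q^k_j}(f)\le D^2_{r(\cdot)}a^k$ and Lemma \ref{norma de cubos} once more. This gives $|Q^k_j\cap\Omega_{k+1}|\le\frac{C}{a}|Q^k_j|$, which is your estimate with $\theta=1$, and it needs no relation between $\beta(\cdot)$ and $r(\cdot)$.

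The smaller slip is in \eqref{2 de CZ}: the dyadic cubes must satisfy $\ell(Q)\le\ell(P)<2\ell(Q)$, not $\ell(Q)/2<\ell(P)\le\ell(Q)$. With your choice, $Q$ can meet $3^n$ of them, and $Q\subset 3P$ fails in general (only $Q\subset 5P$ holds). With the reversed choice there are at most $2^n$ such cubes. Then $Q\subset 3P\subset 4P$ gives the factor $D^2_{\beta(\cdot)}$, and $P\subset 5Q\subset 8Q$ gives $D^3_{r(\cdot)}$, which is exactly the stated constant. Similarly, in \eqref{1 de CZ} one has $\hat Q_j\subset 3Q_j\subset 4Q_j$ rather than $\hat Q_j\subset 2Q_j$, and this is where $D^2_{r(\cdot)}$ comes from.
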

The proof is in Section \ref{sec 5}.

In the following remark, we give a construction of exponents and some properties they satisfy that will be used in the proof of Theorem \ref{teo M}.
	
\begin{remark}\label{remark exp aux}
	Let ${p(\cdot)},{q(\cdot)},{r(\cdot)},{s(\cdot)},{\beta(\cdot)}$ be as in the hypotheses of Theorem \ref{teo M}. For $s_0\in\left( \max\left\{\frac{1}{p^-},\frac{1}{(q^\prime)^-},1-\frac{s^--1}{p^+}\right\},1\right)$, the functions $u(\cdot)=\frac{p(\cdot)}{p(\cdot)(1-s_0)+1}$ and $v^\prime(\cdot)=\frac{q^\prime(\cdot)}{q^\prime(\cdot)(1-s_0)+1}$ are well defined exponents, moreover $s_0>1-\frac{s^--1}{p^+}$ implies that $r(\cdot)<u(\cdot)$. Let $t(\cdot)$ be the exponent defined by $1/r(\cdot)=1/u(\cdot)+1/t(\cdot)$. By Lemma \ref{algebra de Plog}, ${u(\cdot)},{v(\cdot)},t(\cdot)\in\mathcal{P}^{\text{log}}(\R)$ and by a direct computation we have
	\begin{enumerate}
		\item $s_0{v(\cdot)}=(s_0{q^\prime(\cdot)})^\prime$.
		\item  $s_0{t(\cdot)}=\left[s_0\left(\frac{{p(\cdot)}{r(\cdot)}}{{p(\cdot)}-{r(\cdot)}}\right)^\prime\right]^\prime.$
		\item\label{remark3}
		$s_0>\frac{1}{2}$ and $\frac{1}{{v(\cdot)}}+\frac{1}{{t(\cdot)}}+2(1-s_0)=\frac{{\beta(\cdot)}-{r(\cdot)}}{{\beta(\cdot)}{r(\cdot)}}$.
		\end{enumerate}
	By Theorems \ref{acot MBr} and \ref{acot Mr}, the operator $M_{{\beta(\cdot)},{u(\cdot)}}$ is bounded from $L^{p(\cdot)}(\R)$ to $L^{q(\cdot)}(\R)$ and the operator $M_{v^\prime(\cdot)}$ is bounded on $L^{q^\prime(\cdot)}(\R)$.
\end{remark}
\begin{proof}[Proof of Theorem \ref{teo M}]
	The necessity follows immediately from Proposition~\ref{caracterizacion Apr} and from the fact that ${\bf A}_{{\beta(\cdot)},{r(\cdot)},Q}f(x)\leq M_{{\beta(\cdot)},{r(\cdot)}}f(x)$ for all $x\in\R$.
	 
	We now prove the sufficiency. Assume that $\omega\in \mathbb{A}_{{q(\cdot)},\frac{{\beta(\cdot)}{r(\cdot)}}{{\beta(\cdot)}-{r(\cdot)}}}$. By Lemma \ref{Apr implica Ap}, $\omega\in \mathcal{A}_{q(\cdot)}$ and $\omega^{-1}\in\mathcal{A}_{\frac{{p(\cdot)}{r(\cdot)}}{{p(\cdot)}-{r(\cdot)}}}$, or equivalently, $\omega^{-1}\in \mathcal{A}_{q^\prime(\cdot)}$ and $\omega\in\mathcal{A}_{\left(\frac{{p(\cdot)}{r(\cdot)}}{{p(\cdot)}-{r(\cdot)}}\right)^\prime}$. By \\Proposition \ref{Ap abierto izq} there exists $s_0\in\left( \max\left\{\frac{1}{p^-},\frac{1}{(q^\prime)^-},1-\frac{s^--1}{p^+}\right\},1\right)$ such that $\omega^{-1/s_0}\in \mathcal{A}_{s_0{q^\prime(\cdot)}}$ and $\omega^{1/s_0}\in\mathcal{A}_{s_0\left(\frac{{p(\cdot)}{r(\cdot)}}{{p(\cdot)}-{r(\cdot)}}\right)^\prime}$. Let ${u^\prime(\cdot)}$ and ${v(\cdot)}$ be the exponents defined as in Remark \ref{remark exp aux}. 
	
	Since $r^+<\infty$, by Lemma \ref{densidad en Lp de omega}, it is enough to consider functions belonging to $C_c^\infty$, which also satisfy \eqref{hipotesis de C-Z}. Indeed, let $f\in C^\infty_c$, we note that $\beta(\cdot)\geq r(\cdot)s(\cdot)$ with $s^->1$ implies that $\left(\frac{\beta r}{\beta-r}\right)_\infty<\infty$. Therefore, by Lemmas \ref{norma de cubos} and \ref{norma de cubos comparada con |Q|}, 
	\begin{equation*}
	\|\chi_Q\|_{\beta(\cdot)}\frac{\|f\chi_Q\|_{r(\cdot)}}{\|\chi_Q\|_{r(\cdot)}}\lesssim\|f\|_{r(\cdot)}\|\chi_Q\|^{-1}_{\frac{\beta(\cdot) r(\cdot)}{\beta(\cdot)-r(\cdot)}}\approx\|f\|_{r(\cdot)}|Q|^{-\frac{1}{\left(\frac{\beta r}{\beta-r}\right)_\infty}}\to 0
	\end{equation*}
	as $|Q|\to \infty$.
	
	By Lemma \ref{lema descomposición de C-Z}, for sufficiently large $a$, there exist dyadic cubes $\{Q_j^k\}$, pairwise disjoint measurable sets $E_j^k\subset Q_j^k$, and constants $\eta,C_0>0$ such that $|Q_j^k|\leq \eta |E_j^k|$,
	\begin{equation*}
		a^k<\left\|\chi_{Q_j^k}\right\|_{\beta(\cdot)}\frac{\left\|f\chi_{Q_j^k}\right\|_{r(\cdot)}}{\left\|\chi_{Q_j^k}\right\|_{r(\cdot)}}
	\end{equation*}
	and
	\begin{equation*}
		\Omega_k=\left\{x\in{\R}: M_{{\beta(\cdot)},{r(\cdot)}}f(x)>C_0a^k\right\}\subset\bigcup_{j,k}3Q_j^k.
	\end{equation*}
	To control $\|(Mf)\omega\|_{q(\cdot)}$, by Proposition \ref{propo norma conjugada} is enough to estimate $\int_{\R} \left(M_{{\beta(\cdot)},{r(\cdot)}}f\right)\omega g$ with $\|g\|_{q^\prime(\cdot)}\leq1$. If $\|g\|_{q^\prime(\cdot)}\leq 1$, then
	\begin{align*}
		\int_{\R} \left(M_{{\beta(\cdot)},{r(\cdot)}}f\right)\omega g&\leq \sum_{k}\int_{\Omega_k\setminus\Omega_{k+1}} \left(M_{{\beta(\cdot)},{r(\cdot)}}f\right)\omega g\leq C_0 a\sum_{k}a^k\int_{\Omega_k\setminus\Omega_{k+1}} \omega g\\
		&\lesssim \sum_{k,j}a^k\int_{3Q_j^k}\omega g\\
		&\leq \sum_{k,j} \left\|\chi_{Q_j^k}\right\|_{\beta(\cdot)}\frac{\left\|f\chi_{Q_j^k}\right\|_{r(\cdot)}}{\left\|\chi_{Q_j^k}\right\|_{r(\cdot)}} \frac{\left|3Q_j^k\right|}{\left|3Q_j^k\right|}\int_{3Q_j^k}\omega g\\
		&\lesssim \sum_{k,j} \left|E_j^k\right| \left\|\chi_{Q_j^k}\right\|_{\beta(\cdot)}\frac{\left\|f\chi_{Q_j^k}\right\|_{r(\cdot)}}{\left\|\chi_{Q_j^k}\right\|_{r(\cdot)}} \frac{1}{\left|3Q_j^k\right|}\int_{3Q_j^k}|\omega g|.
	\end{align*}
By Lemma \ref{norma de cubos}, $|3Q_j^k|\approx\|\chi_{3Q_j^k}\|_{v(\cdot)}\|\chi_{3Q_j^k}\|_{v^\prime(\cdot)}$ and $\|3Q_j^k\|_{r(\cdot)}\approx\|\chi_{3Q_j^k}\|_{u(\cdot)}\|\chi_{3Q_j^k}\|_{t(\cdot)}$. Applying Hölder's inequality \ref{teo Hölder} to $\left\|f\omega\omega^{-1}\chi_{Q_j^k}\right\|_{r(\cdot)}$ and $\int_{3Q_j^k}|\omega g|$,
	\begin{align*}
		\int_{\R} &\left(M_{{\beta(\cdot)},{r(\cdot)}}f\right)\omega g\\
		&\lesssim \sum_{k,j}\int_{E_j^k}\left\|\chi_{Q_j^k}\right\|_{\beta(\cdot)}\frac{\left\|f\omega\chi_{Q_j^k}\right\|_{u(\cdot)}}{\left\|\chi_{Q_j^k}\right\|_{u(\cdot)}}\frac{\|\omega^{-1}\chi_{Q_j^k}\|_{t(\cdot)}}{\|\chi_{Q_j^k}\|_{t(\cdot)}} \frac{\|\omega\chi_{3Q_j^k}\|_{v(\cdot)}}{\|\chi_{3Q_j^k}\|_{v(\cdot)}}\frac{\|g\chi_{3Q_j^k}\|_{v^\prime(\cdot)}}{\|\chi_{3Q_j^k}\|_{v^\prime(\cdot)}}\\
		&\leq \int_{{\R}}M_{{\beta(\cdot)},{u(\cdot)}}(f\omega)M_{v^\prime(\cdot)}(g) \frac{\|\omega\chi_{3Q_j^k}\|_{v(\cdot)}}{\|\chi_{3Q_j^k}\|_{v(\cdot)}} \frac{\|\omega^{-1}\chi_{Q_j^k}\|_{t(\cdot)}}{\|\chi_{Q_j^k}\|_{t(\cdot)}}.
	\end{align*}
	Assume for the moment that
	\begin{equation}\label{cond peso}
		\frac{\|\omega\chi_{3Q_j^k}\|_{v(\cdot)}}{\|\chi_{3Q_j^k}\|_{v(\cdot)}} \frac{\|\omega^{-1}\chi_{Q_j^k}\|_{t(\cdot)}}{\|\chi_{Q_j^k}\|_{t(\cdot)}}
	\end{equation}  
	is uniformly bounded for all cubes $Q$. Again, by Hölder's inequality \ref{teo Hölder} and Remark \ref{remark exp aux},
	\begin{align*}
		\int_{\R} \left(M_{{\beta(\cdot)},{r(\cdot)}}f\right)\omega g\lesssim \|M_{{\beta(\cdot)},{r(\cdot)}}(f\omega)\|_{q(\cdot)}\|M_{v^\prime(\cdot)}g\|_{q^\prime(\cdot)}\lesssim \|f\omega\|_{p(\cdot)}.
	\end{align*}
	
	Finally, to see the boundedness of \eqref{cond peso}, using Lemma \ref{doblante}, is enough to control it for any cube $Q$. By Remark \ref{remark exp aux}, $\omega^{-1/s_0}\in\mathcal{A}_{\left(s_0{v(\cdot)}\right)^\prime}=\mathcal{A}_{s_0{q^\prime(\cdot)}}$ and $\omega^{1/s_0}\in\mathcal{A}_{(s_0{t(\cdot)})^\prime}=\mathcal{A}_{s_0\left(\frac{{p(\cdot)}{r(\cdot)}}{{p(\cdot)}-{r(\cdot)}}\right)^\prime}$, therefore
	\begin{align*}
		\left\|\omega\chi_Q\right\|_{v(\cdot)}\left\|\omega ^{-1}\chi_Q\right\|_{t(\cdot)}&=\left(\left\|\omega^{1/s_0}\chi_Q\right\|_{s_0{v(\cdot)}}\left\|\omega^{-1/s_0}\chi_Q\right\|_{s_0{t(\cdot)}}\right)^{s_0}\\
		&\lesssim|Q|^{2s_0}\left(\left\|\omega^{-1}\chi_Q\right\|_{{q^\prime(\cdot)}}\left\|\omega\chi_Q\right\|_{\left(\frac{{p(\cdot)}{r(\cdot)}}{{p(\cdot)}-{r(\cdot)}}\right)^\prime}\right)^{-1}\\
		&\lesssim |Q|^{2s_0-2}\|\omega\chi_Q\|_{q(\cdot)}\|\omega^{-1}\chi_Q\|_{\frac{{p(\cdot)}{r(\cdot)}}{{p(\cdot)}-{r(\cdot)}}}\lesssim \frac{\|\chi_Q\|_{\frac{{\beta(\cdot)}{r(\cdot)}}{{\beta(\cdot)}-{r(\cdot)}}}}{|Q|^{2(1-s_0)}}\\
		&\lesssim \|\chi_Q\|_{v(\cdot)}\|\chi_Q\|_{t(\cdot)},
	\end{align*}
	where the last inequality follows from \eqref{remark3} of Remark \ref{remark exp aux}. 
\end{proof}

 \section{Hörmander-type Condition}\label{sec 4}
In this section, we present the proofs of Theorems \ref{teo Hör} and \ref{teo Hör frac}, which establish the domination of the operators $T$ and $T_{\beta(\cdot)}$ by the maximal operator $M_{\beta(\cdot), r(\cdot)}$, as well as weighted strong estimates for the integral operators. We also provide examples of kernels that satisfy the hypotheses of Theorems~\ref{teo Hör} and~\ref{teo Hör frac}.
\subsection{Proof of Theorems \ref{teo Hör} and \ref{teo Hör frac}}

\

Observe that if $p(\cdot),q(\cdot)\in\mathcal{P}^{\text{log}}(\R)$ and ${p(\cdot)}\leq {q(\cdot)}$, then by Hölder's inequality \ref{teo Hölder} and Lemma \ref{norma de cubos}, 
	\begin{equation}\label{comparacion de promedios}
		\frac{\|f\chi_Q\|_{p(\cdot)}}{\|\chi_Q\|_{p(\cdot)}}\lesssim\frac{\|f\chi_Q\|_{q(\cdot)}}{\|\chi_Q\|_{q(\cdot)}}.
	\end{equation}
Therefore
	\begin{equation*}
	M_{p(\cdot)}f(x)\lesssim M_{q(\cdot)}f(x)	
	\end{equation*}
	for all $x\in\R$, and the inclusions 
	\begin{equation*}
	H_{\infty,i}\subset H_{q(\cdot),i}\subset H_{p(\cdot),i}\subset\cdots \subset H_{1,i}
	\end{equation*}
	 holds.

\begin{remark}\label{T es debil (1,1)}
	Let $r(\cdot)\in \mathcal{P}^\text{log}(\R)$ and $K\in H_{r(\cdot)}\subset H_{1,2}$. If the operator $T$ given by the kernel is bounded on some $L^{p_0}$ with $1<p_0<\infty$, then it is of weak type $(1,1)$ (see Theorem 5.10 in \cite{Duo}).
\end{remark}

The following estimate for the Hardy-Littlewood maximal operator is well known:
\begin{teo}[\cite{Garcia-Cuerva}]\label{teo sharp}For any $0<p<\infty$ and $\omega\in A_\infty$ there exists a constant C such that
\begin{equation*}
	\int_\R (Mf(x))^p\omega(x)dx\leq C	\int_\R \left(M^\sharp f(x)\right)^p\omega(x)dx.
\end{equation*}	
whenever the left-hand side is finite.
\end{teo}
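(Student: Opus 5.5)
The plan is to prove the Fefferman--Stein inequality through a good-$\lambda$ inequality for a dyadic maximal operator. We then pass to $M$ using a finite family of shifted dyadic grids. Since $M^\sharp|f|\le 2M^\sharp f$ and $Mf=M|f|$, we may assume $f\ge 0$.

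\emph{Reduction to dyadic operators.} There are $3^n$ dyadic grids $\mathcal{D}^t$ such that every cube $Q$ is contained in some $P\in\mathcal{D}^t$ with $\ell(P)\le 6\ell(Q)$. This gives the pointwise bound
$$Mf\lesssim \sum_t M^{\mathcal{D}^t}f.$$
It therefore suffices to prove, for a fixed grid $\mathcal{D}$ and with $M^{\mathcal{D}}$ the dyadic Hardy--Littlewood operator,
$$\int (M^{\mathcal{D}}f)^p w\lesssim \int (M^\sharp f)^p w$$
whenever the left-hand side is finite. For this step it is convenient that the original finiteness assumption controls each dyadic term; if this is awkward, I would run the argument below for $\min(M^{\mathcal D}f,N)$ and let $N\to\infty$.

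\emph{Good-$\lambda$ inequality.} The key claim is that there are $C,\delta>0$, depending on the $A_\infty$ constant of $w$, such that for all $\lambda>0$ and $0<\gamma<1$,
$$w\bigl(\{M^{\mathcal{D}}f>2\lambda,\ M^\sharp f\le\gamma\lambda\}\bigr)\le C\gamma^\delta\, w\bigl(\{M^{\mathcal{D}}f>\lambda\}\bigr).$$
\begin{enumerate}
\item Since $w\bigl(\{M^{\mathcal{D}}f>\lambda\}\bigr)<\infty$ and $w(\R)=\infty$ for $w\in A_\infty$, the set $\{M^{\mathcal{D}}f>\lambda\}$ is not all of $\R$. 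We may also assume it is nonempty, otherwise there is nothing to prove.
\item Averages over increasing dyadic chains then stay $\le\lambda$ eventually, so $\{M^{\mathcal{D}}f>\lambda\}$ is a disjoint union of maximal cubes $Q_j$. Each $Q_j$ has dyadic parent $\hat Q_j$ with $f_{\hat Q_j}\le\lambda$.
\item Fix $j$ and a point $x\in Q_j$ with $M^{\mathcal{D}}f(x)>2\lambda$. Only dyadic cubes inside $Q_j$ can exceed $2\lambda$, so $M^{\mathcal{D}}\bigl((f-f_{\hat Q_j})\chi_{Q_j}\bigr)(x)>\lambda$.
\item If moreover $Q_j$ contains a point where $M^\sharp f\le\gamma\lambda$, then
$$\frac{1}{|Q_j|}\int_{Q_j}|f-f_{\hat Q_j}|\le 2^n\frac{1}{|\hat Q_j|}\int_{\hat Q_j}|f-f_{\hat Q_j}|\le 2^{n+1}\gamma\lambda.$$
\item The weak $(1,1)$ bound for $M^{\mathcal{D}}$ now gives
$$\bigl|\{x\in Q_j: M^{\mathcal{D}}f>2\lambda,\ M^\sharp f\le\gamma\lambda\}\bigr|\le 2^{n+1}\gamma|Q_j|.$$
\item The $A_\infty$ property $w(E)/w(Q)\le C(|E|/|Q|)^\delta$ converts this into the weighted estimate on each $Q_j$. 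Summing over $j$ gives the claim.
\end{enumerate}

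\emph{Integration and absorption.} Using the layer-cake formula and the change of variables $\lambda\mapsto 2\lambda$,
$$\int (M^{\mathcal{D}}f)^pw=2^p p\int_0^\infty\lambda^{p-1}w(\{M^{\mathcal{D}}f>2\lambda\})\,d\lambda.$$
Split the level set according to whether $M^\sharp f\le\gamma\lambda$ or not, and apply the good-$\lambda$ inequality to the first piece. This yields
$$\int (M^{\mathcal{D}}f)^pw\le 2^pC\gamma^\delta\int (M^{\mathcal{D}}f)^pw+2^p\gamma^{-p}\int(M^\sharp f)^pw.$$
Choose $\gamma$ so small that $2^pC\gamma^\delta\le 1/2$. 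Because the left-hand side is finite, the first term can be absorbed, which finishes the dyadic inequality and hence the theorem.

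\emph{Main obstacle.} I expect the delicate part to be the bookkeeping around finiteness. First, the hypothesis must guarantee that the maximal-cube decomposition exists at every level $\lambda$. Second, finiteness must hold for the dyadic operators to which the absorption is applied, and not only for $Mf$; pointwise $M^{\mathcal{D}}f\le Mf$ handles this directly. The good-$\lambda$ step itself is routine.
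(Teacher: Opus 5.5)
The paper does not prove this statement. It is quoted from Garc\'{\i}a-Cuerva and Rubio de Francia and used as a black box in the Coifman--Fefferman estimates of Section 4. Your proposal is the standard Fefferman--Stein good-$\lambda$ argument, and it is correct. The following steps all go through as written:
\begin{itemize}
\item the bound $Mf\le 6^n\sum_t M^{\mathcal{D}^t}f$;
\item the transfer of finiteness via $M^{\mathcal{D}^t}f\le Mf$, which also gives $w(\{M^{\mathcal{D}}f>\lambda\})\le\lambda^{-p}\int(M^{\mathcal{D}}f)^pw<\infty$ for every $\lambda$;
\item the localization to the maximal cubes and the oscillation bound on the parent $\hat Q_j$;
\item the weak $(1,1)$ step, the $A_\infty$ conversion, and the absorption.
\end{itemize}

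One inference in step 2 needs fixing. Knowing only that $\{M^{\mathcal{D}}f>\lambda\}\neq\R$ does not force the averages along every increasing dyadic chain to fall below $\lambda$. For the unshifted grid, which is one of your $3^n$ grids, the union of a chain is only an orthant. For example, with $f=\chi_{[0,\infty)^n}$ and $\lambda=1/2$, the level set is exactly the positive orthant and has no maximal cubes. Argue with the chain itself instead. Suppose $Q_1\subset Q_2\subset\cdots$ in $\mathcal{D}$ had $f_{Q_k}>\lambda$ for all $k$. Then $\bigcup_k Q_k\subset\{M^{\mathcal{D}}f>\lambda\}$. The $A_\infty$ estimate with $E=Q_1\subset Q_k$ gives $w(Q_k)\ge C^{-1}(|Q_k|/|Q_1|)^{\delta}w(Q_1)\to\infty$, contradicting the finiteness above. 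This repair uses only ingredients you already invoke.

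Compared with the usual textbook proof, the only real difference is how you pass from $M^{\mathcal{D}}$ to $M$. The classical route stays within one grid and compares level sets. If $Q_j$ are the maximal cubes at height $\lambda$, then $\{Mf>4^n\lambda\}\subset\bigcup_j 3Q_j$. Doubling of $w$ then gives $w(\{Mf>4^n\lambda\})\lesssim\sum_j w(Q_j)=w(\{M^{\mathcal{D}}f>\lambda\})$, hence $\int(Mf)^pw\lesssim\int(M^{\mathcal{D}}f)^pw$. This is the inclusion the paper adapts to $M_{\beta(\cdot),r(\cdot)}$ in part (2) of its Calder\'on--Zygmund lemma in Section 3.

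Your three-lattice reduction replaces this covering argument and the use of doubling with a pointwise domination, at the cost of quoting the $3^n$-grid lemma. In both routes the core of the proof is the same good-$\lambda$ inequality.
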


The key to the proof Theorem \ref{teo Hör} is a pointwise estimate of $(M^\sharp(Tf)^\delta)^{1/\delta}$ by $M_{r^\prime(\cdot)}f$, established in the following lemma.

\begin{lema} \label{acotación puntal del maximal sharp del operador} Let $r(\cdot)\in\mathcal{P}^{\log}(\R)$, and let $T$ be an integral operator bounded on some $L^{p_0}(\R)$ with $1<p_0<\infty$, whose kernel $K$ belongs to $H_{r(\cdot)}$. For all $0<\delta<1$, there exists a constant $C_\delta$ such that
	\begin{equation*}
	\left(M^\sharp\left|Tf\right|^\delta(x)\right)^\frac{1}{\delta}\leq C_\delta M_{r^\prime(\cdot)}f(x)
	\end{equation*}
	for all $x\in\R$.
\end{lema}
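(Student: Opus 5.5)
Fix $x\in\R$ and a cube $Q\ni x$. Since $0<\delta<1$, we have $\big||a|^\delta-|b|^\delta\big|\le|a-b|^\delta$, so it is enough to find a constant $c_Q$ with
\begin{equation*}
\left(\frac{1}{|Q|}\int_Q\big||Tf(y)|^\delta-|c_Q|^\delta\big|\,dy\right)^{1/\delta}\lesssim M_{r^\prime(\cdot)}f(x).
\end{equation*}
Write $f=f_1+f_2$ with $f_1=f\chi_{2Q}$, and take $c_Q=Tf_2(x_Q)$, where $x_Q$ is the center of $Q$ (after a routine modification so that $Tf_2(x_Q)$ is finite; for instance, average over $z\in\frac12 Q$). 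The left-hand side is then controlled by the sum of a local term $\big(\frac{1}{|Q|}\int_Q|Tf_1|^\delta\big)^{1/\delta}$ and a tail term $\frac{1}{|Q|}\int_Q|Tf_2(y)-Tf_2(x_Q)|\,dy$.

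\textbf{Local term.} By Remark \ref{T es debil (1,1)}, $T$ is of weak type $(1,1)$. Kolmogorov's inequality (valid for $\delta<1$) gives
\begin{equation*}
\Big(\frac{1}{|Q|}\int_Q|Tf_1|^\delta\Big)^{1/\delta}\lesssim\frac{1}{|Q|}\int_{2Q}|f|.
\end{equation*}
By H\"older's inequality (Theorem \ref{teo Hölder}) and Lemma \ref{norma de cubos}, $\|\chi_{2Q}\|_{r(\cdot)}\|\chi_{2Q}\|_{r^\prime(\cdot)}\approx|2Q|$, hence
\begin{equation*}
\frac{1}{|2Q|}\int_{2Q}|f|\lesssim \frac{\|f\chi_{2Q}\|_{r^\prime(\cdot)}}{\|\chi_{2Q}\|_{r^\prime(\cdot)}}\le M_{r^\prime(\cdot)}f(x).
\end{equation*}
Equivalently, this is \eqref{comparacion de promedios} with exponents $1\le r^\prime(\cdot)$.

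\textbf{Tail term.} For $y\in Q$ (to match $x,z\in\frac12\tilde Q$ in Definition \ref{def Hör}, replace $Q$ by $\tilde Q=2Q$ throughout), decompose $\R\setminus 2Q$ into dyadic annuli $2^{m+1}Q\setminus2^mQ$. This gives
\begin{equation*}
|Tf_2(y)-Tf_2(x_Q)|\le\sum_{m\ge1}\int_{2^{m+1}Q\setminus 2^mQ}|K(y,w)-K(x_Q,w)||f(w)|\,dw.
\end{equation*}
Apply H\"older with $r(\cdot),r^\prime(\cdot)$ on each annulus and insert the normalizations:
\begin{equation*}
\int_{2^{m+1}Q\setminus2^mQ}\cdots\lesssim |2^{m+1}Q|\,\frac{\|(K(y,\cdot)-K(x_Q,\cdot))\chi_{2^{m+1}Q\setminus2^mQ}\|_{r(\cdot)}}{\|\chi_{2^{m+1}Q}\|_{r(\cdot)}}\cdot\frac{\|f\chi_{2^{m+1}Q}\|_{r^\prime(\cdot)}}{\|\chi_{2^{m+1}Q}\|_{r^\prime(\cdot)}},
\end{equation*}
again using $\|\chi_P\|_{r(\cdot)}\|\chi_P\|_{r^\prime(\cdot)}\approx|P|$ from Lemma \ref{norma de cubos}. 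The last factor is at most $M_{r^\prime(\cdot)}f(x)$ because $x\in 2^{m+1}Q$. Since $|2^{m+1}Q|\approx(2^m\ell(\tilde Q))^n$ and $\beta\equiv\infty$ means $\|\chi\|_\infty=1$, the remaining sum over $m$ is exactly the quantity in $H_{r(\cdot),1}$, uniformly in $y,x_Q\in\frac12\tilde Q$. Averaging over $y\in Q$ finishes this term.

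The main obstacle is the bookkeeping in the tail. The annuli in Definition \ref{def Hör} are $2^m\tilde Q\setminus2^{m-1}\tilde Q$ relative to a cube $\tilde Q$ with $x,z\in\frac12\tilde Q$, so one has to align the dilations, using $\tilde Q=2Q$ and the doubling \eqref{doblante}. One also has to justify that $Tf_2(x_Q)$ is finite, or first reduce to bounded compactly supported $f$, which makes the integrals absolutely convergent. The constants come from Lemma \ref{norma de cubos} and Kolmogorov, so they depend only on $\delta$, $r(\cdot)$, $n$ and the constant of $T$.
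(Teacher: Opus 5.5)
Your proposal is correct and follows essentially the same route as the paper: split $f=f\chi_{2Q}+f\chi_{\R\setminus 2Q}$. The local part is handled by the weak type $(1,1)$ of Remark \ref{T es debil (1,1)}, Kolmogorov's inequality and $Mf\lesssim M_{r^\prime(\cdot)}f$. The tail is handled by H\"older on the annuli $2^m\tilde{Q}\setminus 2^{m-1}\tilde{Q}$ with $\tilde{Q}=2Q$, Lemma \ref{norma de cubos}, and the $H_{r(\cdot),1}$ condition.

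The only cosmetic difference is that you take the constant $Tf_2(x_Q)$ at the center of $Q$ rather than at the point $x_0$ itself. This is immaterial, since both points lie in $\frac12\tilde{Q}=Q$. The doubling \eqref{doblante} is also not needed, because your annuli $2^{m+1}Q\setminus 2^mQ$ coincide exactly with those in the definition of $H_{r(\cdot),1}$.
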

\begin{proof} Let $0<\delta<1$. Let $x_0$ be fixed and let $Q$ be any cube containing $x_0$. We must estimate
	\begin{equation}\label{a}
	\frac{1}{|Q|}\int_{Q}\left||Tf|^\delta - |a|^\delta \right|\leq\frac{1}{|Q|}\int_{Q}|Tf-a|^\delta
	\end{equation}
	to appropriate $a$.	Let ${\tilde{Q}}=2Q$, $f_1=f\chi_{\tilde{Q}}$, $f_2=f-f_1$, and $a=Tf_2(x_0)$. Therefore the right-side of \eqref{a} is dominated by
	\begin{equation*}
	\frac{1}{|Q|}\int_{Q}|Tf_1(x)|^\delta dx+\frac{1}{|Q|}\int_{Q}|Tf_2(x)-Tf_2(x_0)|^\delta dx=I+II.
	\end{equation*}
	
	For $I$,  by Remark \ref{T es debil (1,1)}, $T$ is weak type $(1,1)$, therefore by Kolmogorov’s inequality, 
	\begin{equation*}
	I\lesssim\left[\frac{1}{|{\tilde{Q}}|}\int_{{\tilde{Q}}}|f_1|\right]^\delta=\left[\frac{1}{|{\tilde{Q}}|}\int_{{\tilde{Q}}}|f|\right]^\delta\leq \left[Mf(x_0)\right]^\delta\lesssim\left[M_{{r^\prime(\cdot)}}f(x_0)\right]^\delta.
	\end{equation*}
	For $II$. For all $x\in Q$, by Hölder's inequality \ref{teo Hölder} and Lemma \ref{norma de cubos}
	\begin{align*}
	|Tf_2(&x)-Tf_2(x_0)|\\
	&\leq \int_{{\tilde{Q}}^c}\left|K(x,y)-K(x_0,y)\right||f(y)|dy\\
	&=\sum_{m=1}^{\infty}\frac{\left(2^m\ell({\tilde{Q}})\right)^n}{|2^m{\tilde{Q}}|}\int_{2^m{\tilde{Q}}}\left|K(x,y)-K(x_0,y)\right|\chi_{2^m{\tilde{Q}}\setminus2^{m-1}{\tilde{Q}}}|f(y)|dy\\	
	&\lesssim\sum_{m=1}^{\infty}\left(2^m\ell({\tilde{Q}})\right)^n\frac{\left\|K(x,\cdot)-K(x_0,\cdot)\chi_{2^m{\tilde{Q}}\setminus2^{m-1}{\tilde{Q}}}\right\|_{r(\cdot)}}{\|\chi_{2^m{\tilde{Q}}}\|_{r(\cdot)}}\frac{\|f\chi_{2^m{\tilde{Q}}}\|_{r^\prime(\cdot)}}{\|\chi_{2^m{\tilde{Q}}}\|_{r^\prime(\cdot)}}\\
	&\leq M_{r^\prime(\cdot)} f(x_0)\sum_{m=1}^{\infty}\left(2^m\ell({\tilde{Q}})\right)^n\frac{\left\|K(x,\cdot)-K(x_0,\cdot)\chi_{2^m{\tilde{Q}}\setminus2^{m-1}{\tilde{Q}}}\right\|_{r(\cdot)}}{\|\chi_{2^m{\tilde{Q}}}\|_{r(\cdot)}}
	\end{align*}
	Finally, since $K\in H_{r(\cdot)}\subset H_{{r(\cdot)},1}$.
	\begin{equation*}
	II\lesssim\frac{1}{|Q|}\int_{Q}\left[M_{r^\prime(\cdot)} f(x_0)\right]^\delta dx=\left[M_{r^\prime(\cdot)} f(x_0)\right]^\delta.
	\end{equation*}
\end{proof}
\begin{remark}\label{cambio de hipo}
	The conditions that $K \in H_{r(\cdot),2}$ and that $T$ is bounded on some $L^{p_0}(\mathbb{R})$ were only used to control $I = \frac{1}{|Q|}\int_Q |T_1 f(x)|^\delta dx$. These hypothesis can be replaced by the boundedness of $T$ on $L^{r^\prime(\cdot)}(\mathbb{R})$. In fact, by Jensen's inequality and \eqref{comparacion de promedios},
	\begin{equation*}
		I\leq \left(\frac{1}{|Q|}\int_Q |T_1 f(x)|  dx\right)^\delta\lesssim \left(\frac{\|Tf_1\chi_Q\|_{r^\prime(\cdot)}}{\|\chi_Q\|_{r^\prime(\cdot)}}\right)^\delta\lesssim  \left( \frac{\|f\chi_Q\|_{r^\prime(\cdot)}}{\|\chi_Q\|_{r^\prime(\cdot)}} \right)^\delta\leq \left(M_{r^\prime(\cdot)} f\right)^\delta.
	\end{equation*}
	
\end{remark}

To prove the part $\ref{b del teo Hör}$ of Theorem \ref{teo Hör} we need a extrapolation result. In \cite{Extrapol sin pesos} was established the first result on extrapolation in the context of unweighted variable Lebesgue spaces. Later, in \cite{Extrapol con pesos}, the following result on extrapolation was proved in the context of weighted Lebesgue spaces.

\begin{teo}[\cite{Extrapol con pesos}]\label{extrapol}
	Let $\mathcal{F}$ be a family of pairs of functions. Suppose that for some $p_0$, $1<p_0<\infty$, and every $\omega_0\in A_{p_0}$,
	\begin{equation*}
		\int_\R f(x)^{p_0}\omega_0(x)dx\leq C\int_\R g(x)^{p_0}\omega_0(x)dx 
	\end{equation*}
	for all $(f,g)\in \mathcal{F}$, whenever the left-hand side is finite. If $p(\cdot)\in\mathcal{P}^{\text{log}}(\R)$ with $1<p^-\leq p^+<\infty$ and $\omega\in\mathcal{A}_{p(\cdot)}$, then
	\begin{equation*}
		\|f\omega\|_{p(\cdot)}\leq C\|g\omega\|_{p(\cdot)}
	\end{equation*}
	for all $(f,g)\in \mathcal{F}$, whenever the left-hand side is finite.
\end{teo}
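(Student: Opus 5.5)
This theorem is cited from \cite{Extrapol con pesos}. The argument I would give is the Rubio de Francia iteration algorithm, run in the weighted variable Lebesgue spaces and combined with duality.

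\textbf{Step 1: boundedness of $M$ on the weighted spaces.} Since $\omega\in\mathcal{A}_{p(\cdot)}$ and $p(\cdot)\in\mathcal{P}^{\text{log}}(\R)$ with $1<p^-\le p^+<\infty$, the result of Cruz-Uribe, Diening and H\"ast\"o quoted in the introduction gives $\|(Mh)\omega\|_{p(\cdot)}\le B_1\|h\omega\|_{p(\cdot)}$. The definition of $\mathcal{A}_{p(\cdot)}$ is symmetric, so $\omega^{-1}\in\mathcal{A}_{p'(\cdot)}$, and the same theorem gives $\|(Mh)\omega^{-1}\|_{p'(\cdot)}\le B_2\|h\omega^{-1}\|_{p'(\cdot)}$.

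\textbf{Step 2: the two iteration operators.} Define
\[
\mathcal{R}_1h=\sum_{k\ge0}\frac{M^kh}{(2B_1)^k},\qquad \mathcal{R}_2h=\sum_{k\ge0}\frac{M^kh}{(2B_2)^k}.
\]
These satisfy $|h|\le\mathcal{R}_ih$ and $\|(\mathcal{R}_1h)\omega\|_{p(\cdot)}\le2\|h\omega\|_{p(\cdot)}$. Likewise $\|(\mathcal{R}_2h)\omega^{-1}\|_{p'(\cdot)}\le2\|h\omega^{-1}\|_{p'(\cdot)}$. Finally, $\mathcal{R}_ih\in A_1$ with $[\mathcal{R}_ih]_{A_1}\le 2B_i$.

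\textbf{Step 3: duality.} Fix $(f,g)\in\mathcal{F}$ with $\|f\omega\|_{p(\cdot)}<\infty$. We may assume $\|g\omega\|_{p(\cdot)}<\infty$, otherwise there is nothing to prove. By Proposition \ref{propo norma conjugada} with $r\equiv1$, it suffices to bound $\int f\omega h$ uniformly over $h\ge0$ with $\|h\|_{p'(\cdot)}\le1$. Set $H=\mathcal{R}_2(h\omega)$; then $h\omega\le H$, $H\in A_1$ and $\|H\omega^{-1}\|_{p'(\cdot)}\le2$. Set
\[
G=\mathcal{R}_1\Big(\frac{f}{\|f\omega\|_{p(\cdot)}}+\frac{g}{\|g\omega\|_{p(\cdot)}}\Big).
\]
Then $G\in A_1$, $\|G\omega\|_{p(\cdot)}\le4$, $f\le\|f\omega\|_{p(\cdot)}G$ and $g\le\|g\omega\|_{p(\cdot)}G$.

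\textbf{Step 4: the $A_{p_0}$ weight and H\"older.} Let $\omega_0=G^{1-p_0}H$. By the easy half of Jones' factorization, a product $u_1u_2^{1-p_0}$ of two $A_1$ weights lies in $A_{p_0}$, so $\omega_0\in A_{p_0}$. H\"older's inequality with exponents $p_0,p_0'$ gives
\[
\int fH=\int fG^{-1/p_0'}H^{1/p_0}\,G^{1/p_0'}H^{1/p_0'}\le\Big(\int f^{p_0}\omega_0\Big)^{1/p_0}\Big(\int GH\Big)^{1/p_0'}.
\]
By H\"older in the variable spaces, $\int GH\le4\|G\omega\|_{p(\cdot)}\|H\omega^{-1}\|_{p'(\cdot)}\lesssim1$.

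\textbf{Step 5: finiteness and applying the hypothesis.} The hypothesis may only be applied when $\int f^{p_0}\omega_0<\infty$, so this must be checked first. Using $f\le\|f\omega\|_{p(\cdot)}G$ we get $f^{p_0}\omega_0\le\|f\omega\|_{p(\cdot)}^{p_0-1}fH$, and $\int fH\le4\|f\omega\|_{p(\cdot)}\|H\omega^{-1}\|_{p'(\cdot)}<\infty$. This is exactly why $f$ is built into $G$. The hypothesis then gives
\[
\int f^{p_0}\omega_0\le C\int g^{p_0}\omega_0\le C\|g\omega\|_{p(\cdot)}^{p_0-1}\int gH\lesssim\|g\omega\|_{p(\cdot)}^{p_0}.
\]
Combining with Step 4, $\int f\omega h\le\int fH\lesssim\|g\omega\|_{p(\cdot)}$, and taking the supremum over $h$ finishes the proof.

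\textbf{Main obstacle.} The main difficulty is the bookkeeping in Step 5 that makes the finiteness hypothesis legitimate. A secondary point is checking that $\omega_0\in A_{p_0}$ with a constant depending only on $B_1,B_2,p_0$, which comes from the $A_1$ bounds in Step 2.
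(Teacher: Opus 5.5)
The paper does not prove this theorem; it imports it from \cite{Extrapol con pesos}. Your argument is correct and is essentially the standard proof behind that citation: Rubio de Francia iteration on both $L^{p(\cdot)}(\omega)$ and $L^{p'(\cdot)}(\omega^{-1})$, the weight $\omega_0=G^{1-p_0}H\in A_{p_0}$ from the easy half of Jones factorization, and the finiteness check $f^{p_0}\omega_0\le\|f\omega\|_{p(\cdot)}^{p_0-1}fH$. The only loose end is the degenerate case $\|g\omega\|_{p(\cdot)}=0$, where $G$ is undefined; it is fixed by normalizing with $\|g\omega\|_{p(\cdot)}+\varepsilon$ and letting $\varepsilon\to0$.
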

Now we are in a position to prove Theorem \ref{teo Hör}.
\begin{proof}[Proof of Theorem \ref{teo Hör}]
	For \ref{a del teo Hör}, suppose that $\int_{\R}|Tf|^p w<\infty$. Let $p$, with $0<p<\infty$, and $\omega\in A_\infty$, for a sufficiently small $\delta$, $0<\delta<1$, $\omega\in A_{p/\delta}$, so
	\begin{equation*}
		\int_{\R}\left(M|Tf|^\delta\right)^{\frac{p}{\delta}}\omega\lesssim\int_{\R}|Tf|^p \omega<\infty.
	\end{equation*}
	By Theorem \ref{teo sharp} and Lemma \ref{acotación puntal del maximal sharp del operador},
		\begin{equation*}
			\int_{\R}|Tf|^p \omega\leq\int_{\R}\left(M|Tf|^\delta\right)^\frac{p}{\delta}\omega
			\lesssim\int_{\R}\left(M^\sharp|Tf|^\delta\right)^\frac{p}{\delta}\omega
			\lesssim \int_{\R}\left(M_{r^\prime(\cdot)} f(x)\right)^p \omega.
		\end{equation*}
		
	On the other hand, \ref{b del teo Hör} is an immediate consequence of \ref{a del teo Hör} and Theorem \ref{extrapol} applied to the pairs of functions $(Tf, M_{r^\prime(\cdot)}f)$.

\end{proof}
\begin{proof}[Proof of Theorem \ref{teo T}]
	Since $\mathbb{A}_{p(\cdot),r^\prime(\cdot)}\subset\mathcal{A}_{p(\cdot)}$ by Lemma \ref{Apr implica Ap}. If\\ $\omega\in\mathbb{A}_{p(\cdot),r^\prime(\cdot)}$, by \ref{b del teo Hör} of Theorem \ref{teo Hör} and Corollary \ref{coro Mr}, then
	\begin{equation*}
		\|(Tf)\omega\|_{p(\cdot)}\lesssim		\|(M_{r^\prime(\cdot)}f)\omega\|_{p(\cdot)}\lesssim	\|f\omega\|_{p(\cdot)}
	\end{equation*}
	whenever the left-hand side is finite.
\end{proof}

We follow the same approach as in the proof of Theorem \ref{teo Hör} to prove Theorem \ref{teo Hör frac}.

\begin{lema}\label{sumabilidad de beta}
	Let $\beta(\cdot)\in\mathcal{P}^{\text{log}}(\R)$. If $\beta^+<\infty$ then 	\begin{equation*}
	\sum_{m=0}^{\infty}\|\chi_{2^{-m}Q}\|_{\beta(\cdot)}\lesssim \|\chi_Q\|_{\beta(\cdot)}.
	\end{equation*}
\end{lema}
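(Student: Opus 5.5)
The plan is to compare each term with $\|\chi_Q\|_{\beta(\cdot)}$ via Lemma \ref{norma de cubos comparada con |Q|}, which gives $\|\chi_P\|_{\beta(\cdot)}\approx|P|^{1/\beta_P}$. The key input is that $\beta^+<\infty$ forces $1/\beta(\cdot)\ge 1/\beta^+>0$. Hence the harmonic means satisfy $1/\beta_P\ge 1/\beta^+$ for every cube $P$. We therefore expect geometric decay of the terms, with ratio roughly $2^{-n/\beta^+}$ per halving of the side length. I will prove a uniform quotient estimate: there is $C$ such that for all cubes $P\subset Q$,
\begin{equation*}
\frac{\|\chi_P\|_{\beta(\cdot)}}{\|\chi_Q\|_{\beta(\cdot)}}\lesssim\left(\frac{|P|}{|Q|}\right)^{1/\beta^+}.
\end{equation*}
Applying this with $P=2^{-m}Q$ makes the ratio at most $C\,2^{-mn/\beta^+}$. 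Summing the geometric series over $m\ge0$ then gives the claim.

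To prove the quotient estimate I will split according to the sizes of $P$ and $Q$, using the two regimes of Lemma \ref{norma de cubos comparada con |Q|}.

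\emph{Case 1: $|Q|\le 2^n$.} Fix $x\in P\subset Q$. Then $\|\chi_P\|_{\beta(\cdot)}\approx|P|^{1/\beta(x)}$ and $\|\chi_Q\|_{\beta(\cdot)}\approx|Q|^{1/\beta(x)}$, with the same point $x$ in both. The quotient is therefore $\approx(|P|/|Q|)^{1/\beta(x)}\le(|P|/|Q|)^{1/\beta^+}$, since $|P|/|Q|\le1$.

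\emph{Case 2: $|P|\ge1$.} Both norms are $\approx|\cdot|^{1/\beta_\infty}$, so the quotient is $\approx(|P|/|Q|)^{1/\beta_\infty}\le(|P|/|Q|)^{1/\beta^+}$, because $\beta_\infty\le\beta^+$.

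\emph{Case 3: $|P|<1<|Q|$.} This is the mixed case and the one step needing care. I will insert an intermediate cube $R$ with $P\subset R\subset Q$ and $|R|=1$, for instance a dilate of $P$ about a point of $P$ chosen to stay inside $Q$. Since $|R|\le 2^n$, Case 1 applies to the pair $(P,R)$, and Case 2 applies to $(R,Q)$. Multiplying the two bounds gives
\begin{equation*}
\frac{\|\chi_P\|_{\beta(\cdot)}}{\|\chi_Q\|_{\beta(\cdot)}}=\frac{\|\chi_P\|_{\beta(\cdot)}}{\|\chi_R\|_{\beta(\cdot)}}\cdot\frac{\|\chi_R\|_{\beta(\cdot)}}{\|\chi_Q\|_{\beta(\cdot)}}\lesssim\left(\frac{|P|}{|R|}\right)^{1/\beta^+}\left(\frac{|R|}{|Q|}\right)^{1/\beta^+}=\left(\frac{|P|}{|Q|}\right)^{1/\beta^+},
\end{equation*}
with constants independent of the cubes.

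A simpler route avoids the case split. Use $\|\chi_P\|_{\beta(\cdot)}\approx|P|^{1/\beta_P}$ directly and try to show $|1/\beta_P-1/\beta_Q|\log(|Q|/|P|)\lesssim1$ using the $LH$ condition on $1/\beta$. Here $|Q|/|P|=2^{mn}$. This is less transparent, so I will use the case analysis above.
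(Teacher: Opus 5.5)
Your argument is correct. It uses the same ingredients as the paper's proof: the two regimes of Lemma~\ref{norma de cubos comparada con |Q|}, together with $1/\beta(x)\ge 1/\beta^+$ and $1/\beta_\infty\ge 1/\beta^+$. The organization is different, though.

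The paper estimates the sum directly. For $|Q|\le 1$ it uses the common point $c_Q\in 2^{-m}Q$ and sums $|2^{-m}Q|^{1/\beta(c_Q)}$. For $|Q|>1$ it cuts the series at the index $N$ where $2^{-m}Q$ drops below unit volume. It then bounds the large cubes by $|Q|^{1/\beta_\infty}$ and the small ones by $|Q|^{1/\beta^+}\le|Q|^{1/\beta_\infty}$.

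You instead prove a uniform decay estimate
$$\|\chi_P\|_{\beta(\cdot)}\lesssim(|P|/|Q|)^{1/\beta^+}\|\chi_Q\|_{\beta(\cdot)}$$
for all nested cubes $P\subset Q$. You handle the crossover multiplicatively through an intermediate cube $R$ with $\ell(R)=1$. Such an $R$ exists, coordinate by coordinate, whenever $P\subset Q$ and $\ell(P)\le 1\le\ell(Q)$. This costs an extra case but gives a stronger, reusable statement: termwise geometric decay $\|\chi_{2^{-m}Q}\|_{\beta(\cdot)}\lesssim 2^{-mn/\beta^+}\|\chi_Q\|_{\beta(\cdot)}$. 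It also holds for non-concentric nested cubes, whereas the paper's computation is tailored to the concentric sum.

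The alternative you set aside would fail as stated. The quantity $|1/\beta_P-1/\beta_Q|$ need not be small when $P=2^{-m}Q$ is tiny and $Q$ is huge. For example, center both at a point where $1/\beta$ differs from $1/\beta_\infty$: then $1/\beta_Q\to 1/\beta_\infty$ while $1/\beta_P$ stays near $1/\beta$ at that point. Meanwhile $\log(|Q|/|P|)=mn\log 2$ is unbounded. So the case analysis is the right choice.
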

\begin{proof}
	If $|Q|\leq1$, by Lemma \ref{norma de cubos comparada con |Q|},
	\begin{equation*}
	\sum_{m=0}^{\infty}\|\chi_{2^{-m}Q}\|_{\beta(\cdot)}\approx\sum_{m=0}^{\infty}|2^{-m}Q|^\frac{1}{\beta(c_Q)}\approx|Q|^{\frac{1}{\beta(c_Q)}}\approx \|\chi_Q\|_{\beta(\cdot)}.
	\end{equation*}
	If $|Q|>1$, let $N$ be such that $|2^{-N-1}Q|\leq 1<|2^{-N}Q|$. Hence $|2^{-m}Q|\leq 1$ if and only if $m> N$. By Lemma \ref{norma de cubos comparada con |Q|},
	\begin{align*}
	\sum_{m=0}^{\infty}\|\chi_{2^{-m}Q}\|_{\beta(\cdot)}&\approx\sum_{m=0}^{N}|2^{-m}Q|^\frac{1}{\beta_\infty}+\sum_{m=N+1}^{\infty}|2^{-m}Q|^\frac{1}{\beta(c_Q)}\\
	&\leq |Q|^{\frac{1}{\beta_\infty}}\sum_{m=0}^{N}2^{-\frac{mn}{\beta_\infty}}+\sum_{m=N+1}^{\infty}|2^{-m}Q|^\frac{1}{\beta^+}\\
	&\lesssim |Q|^\frac{1}{\beta_\infty}+|Q|^{\frac{1}{\beta^+}}\leq|Q|^{\frac{1}{\beta_\infty}}\approx\|\chi_Q\|_{\beta(\cdot)}.
	\end{align*}
\end{proof}

\begin{lema} \label{acotación puntal del maximal sharp del operador fraccionario} 
Let $r(\cdot),\beta(\cdot)\in\mathcal{P}^{\log}(\R)$ such that $\beta^+<\infty$, and let $T_{\beta(\cdot)}$ be an integral operator whose kernel $K$ belongs to $H_{\beta(\cdot),r(\cdot),1}\cap S_{\beta(\cdot),1,2}$. For all $0<\delta\leq1$, there exists a constant $C$ such that
	\begin{equation*}
	\left(M^\sharp\left|T_{\beta(\cdot)}f\right|^\delta(x)\right)^\frac{1}{\delta}\leq C M_{{\beta(\cdot)},{r^\prime(\cdot)}}f(x)
	\end{equation*}
	for all $x\in\R$.
\end{lema}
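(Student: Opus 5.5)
The plan is to mimic the proof of Lemma \ref{acotación puntal del maximal sharp del operador}, with the weak $(1,1)$ argument for the local part replaced by the size condition $S_{\beta(\cdot),1,2}$ and Lemma \ref{sumabilidad de beta}.

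Fix $x_0$ and a cube $Q\ni x_0$, and set $\tilde Q=2Q$, $f_1=f\chi_{\tilde Q}$, $f_2=f-f_1$, $a=T_{\beta(\cdot)}f_2(x_0)$. Since $0<\delta\le 1$, we have $\bigl||Tf|^\delta-|a|^\delta\bigr|\le|Tf-a|^\delta$. By Jensen's inequality it is enough to bound
\begin{equation*}
I=\frac{1}{|Q|}\int_Q|T_{\beta(\cdot)}f_1|
\qquad\text{and}\qquad
II=\sup_{x\in Q}|T_{\beta(\cdot)}f_2(x)-T_{\beta(\cdot)}f_2(x_0)|
\end{equation*}
by $M_{\beta(\cdot),r^\prime(\cdot)}f(x_0)$.

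\textbf{Term $II$.} This is handled exactly as in Lemma \ref{acotación puntal del maximal sharp del operador}. Decompose $\tilde Q^c$ into the annuli $2^m\tilde Q\setminus 2^{m-1}\tilde Q$ and apply Hölder's inequality \ref{teo Hölder} on each annulus, using $|2^m\tilde Q|\approx\|\chi_{2^m\tilde Q}\|_{r(\cdot)}\|\chi_{2^m\tilde Q}\|_{r^\prime(\cdot)}$ from Lemma \ref{norma de cubos}. Then multiply and divide by $\|\chi_{2^m\tilde Q}\|_{\beta(\cdot)}$. Each factor $\|\chi_{2^m\tilde Q}\|_{\beta(\cdot)}\|f\chi_{2^m\tilde Q}\|_{r^\prime(\cdot)}/\|\chi_{2^m\tilde Q}\|_{r^\prime(\cdot)}$ is at most $M_{\beta(\cdot),r^\prime(\cdot)}f(x_0)$. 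The remaining sum is exactly the quantity in the definition of $H_{\beta(\cdot),r(\cdot),1}$ with $x,x_0\in Q=\frac12\tilde Q$, so it is uniformly bounded.

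\textbf{Term $I$.} This is the main obstacle, since no $L^{p_0}$ boundedness is assumed. By Fubini,
\begin{equation*}
I\le\frac{1}{|Q|}\int_{\tilde Q}|f(y)|\int_Q|K(x,y)|\,dx\,dy .
\end{equation*}
For fixed $y\in\tilde Q$, I would bound the inner integral by decomposing $Q$ dyadically around $y$. Let $P_k$ be the cubes centered at $y$ with $\ell(P_k)=2^{-k}\ell(Q)$ up to a fixed dilation factor, for $k\ge -2$, so that $Q\setminus\{y\}\subset\bigcup_k(2P_k\setminus P_k)$ and $y\in\frac12 P_k$. Applying $S_{\beta(\cdot),1,2}$ to each $P_k$ gives
\begin{equation*}
\int_{2P_k\setminus P_k}|K(x,y)|\,dx\lesssim\|\chi_{P_k}\|_{\beta(\cdot)} .
\end{equation*}
The cubes $P_k$ are all contained in a fixed dilate $c\tilde Q$ of $\tilde Q$, so Lemma \ref{sumabilidad de beta} applied to the concentric family $\{P_k\}$ gives
\begin{equation*}
\sum_k\|\chi_{P_k}\|_{\beta(\cdot)}\lesssim\|\chi_{P_{-2}}\|_{\beta(\cdot)}\lesssim\|\chi_{\tilde Q}\|_{\beta(\cdot)} ,
\end{equation*}
where the last step uses the doubling property \eqref{doblante}. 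One has to check that the cube containment and doubling argument goes through with constants uniform in $y$; I expect this bookkeeping to be the only delicate point. It then follows that
\begin{equation*}
I\lesssim\|\chi_{\tilde Q}\|_{\beta(\cdot)}\frac{1}{|\tilde Q|}\int_{\tilde Q}|f|\lesssim\|\chi_{\tilde Q}\|_{\beta(\cdot)}\frac{\|f\chi_{\tilde Q}\|_{r^\prime(\cdot)}}{\|\chi_{\tilde Q}\|_{r^\prime(\cdot)}}\le M_{\beta(\cdot),r^\prime(\cdot)}f(x_0),
\end{equation*}
where the middle inequality is \eqref{comparacion de promedios} with $1\le r^\prime(\cdot)$.

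Combining the two terms and taking the supremum over $Q\ni x_0$ gives the claim, with a constant that does not depend on $\delta$.
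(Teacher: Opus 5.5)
Your proposal is correct and follows essentially the same route as the paper: the local term is handled by Fubini, the size condition $S_{\beta(\cdot),1,2}$ on dyadic shells of cubes centered at $y$ (the paper uses $2^{-m}\tilde{Q}_y$ with $\ell(\tilde{Q}_y)=\ell(\tilde{Q})$), Lemma \ref{sumabilidad de beta} and doubling, while the non-local term uses H\"older on the annuli and $H_{\beta(\cdot),r(\cdot),1}$, exactly as in the non-fractional lemma. The bookkeeping you flagged is harmless, since $P_{-2}\subset 4\tilde{Q}$ uniformly in $y\in\tilde{Q}$. Your final step in $I$ via \eqref{comparacion de promedios} correctly gives $M_{\beta(\cdot),r^\prime(\cdot)}f(x_0)$, where the paper's text writes $M_{\beta(\cdot),r(\cdot)}$.
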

\begin{proof}
	Let $0<\delta\leq1$. Let $x_0$ be fixed and let $Q$ be any cube containing $x_0$. Let ${\tilde{Q}}=2Q$, $f_1=f\chi_{\tilde{Q}}$, $f_2=f-f_1$ and $a=T_{\beta(\cdot)}f_2(x_0)$.
	\begin{align*}
	\left(\frac{1}{|Q|}\int_Q\left||T_{\beta(\cdot)}f(x)|^\delta-|a|^\delta\right|dx\right)^\frac{1}{\delta}\\
	\leq\frac{1}{|Q|}\int_Q|T_{\beta(\cdot)}f_1(x)&|dx+\frac{1}{|Q|}\int_Q|T_{\beta(\cdot)}f_2(x)-T_{\beta(\cdot)}f_2(x_0)|dx\\
	=I+II.\ \qquad \qquad&
	\end{align*}
	For $I$: for each $y\in \R$, let $\tilde{Q}_y$ be the cube centered at $y$ with $\ell(\tilde{Q}_y)=\ell(\tilde{Q})$. Since $K\in S_{{\beta(\cdot)},1,2}$ and Lemma \ref{sumabilidad de beta},
	\begin{align*}
		I&=\frac{1}{|Q|}\int_Q\left|\int_{\tilde{Q}}K(x,y)f(y)dy\right|dx\lesssim\frac{1}{|\tilde{Q}|}\int_{\tilde{Q}}|f(y)|\int_{2\tilde{Q}_y}|K(x,y)|dx dy\\
		&=\frac{1}{|\tilde{Q}|}\int_{\tilde{Q}}|f(y)|\sum_{m=0}^{\infty}\int_{\R}|k(x,y)|\chi_{2^{-m+1}\tilde{Q}_y\setminus2^{-m}\tilde{Q}_y}(x)dxdy\\
		&\lesssim\frac{1}{|\tilde{Q}|}\int_{\tilde{Q}}|f(y)|\|\chi_{\tilde{Q}_y}\|_{\beta(\cdot)}dy\leq\frac{1}{|\tilde{Q}|}\int_{\tilde{Q}}|f(y)|\|\chi_{2\tilde{Q}}\|_{\beta(\cdot)}dy\\
		&\lesssim M_{{\beta(\cdot)},1}f(x_0)\lesssim  M_{{\beta(\cdot)},{r(\cdot)}}f(x_0).
	\end{align*}
	For $II$, we proceed analogously to the proof of Lemma \ref{acotación puntal del maximal sharp del operador}. Since $K\in H_{\beta(\cdot),r(\cdot),1}$, for all $x\in Q$,
	\begin{align*}
	|T&f_2(x)-Tf_2(x_0)|\\
	&\leq\sum_{m=1}^{\infty}\int_{2^m \tilde{Q}\setminus2^{m-1}\tilde{Q}}|K(x,y)-K(x_0,y)||f(y)|dy\\
	&\lesssim \sum_{m=1}^{\infty}\frac{(2^m\ell(\tilde{Q}))^n\|\chi_{2^m\tilde{Q}}\|_{\beta(\cdot)}\int_{2^m \tilde{Q}\setminus2^{m-1}\tilde{Q}}|K(x,y)-K(x_0,y)||f(y)|dy}{\|\chi_{2^m\tilde{Q}}\|_{r(\cdot)}\|\chi_{2^m\tilde{Q}}\|_{r^\prime(\cdot)}\|\chi_{2^m\tilde{Q}}\|_{\beta(\cdot)}}\\
	&\lesssim M_{{\beta(\cdot)},{r^\prime(\cdot)}}f(x_0)\sum_{m=1}^{\infty} \frac{(2^m\ell(\tilde{Q}))^n}{\left\|\chi_{2^m\tilde{Q}}\right\|_{\beta(\cdot)}}\frac{\left\|(K(x,\cdot)-K(x_0,\cdot))\chi_{2^m \tilde{Q}\setminus2^{m-1}\tilde{Q}}\right\|_{r(\cdot)}}{\left\|\chi_{2^m \tilde{Q}}\right\|_{r(\cdot)}}\\
	&\lesssim M_{{\beta(\cdot)},{r^\prime(\cdot)}}f(x_0).
	\end{align*}
 	Therefore
	\begin{equation*}
	II\lesssim \frac{1}{|Q|}\int_QM_{{\beta(\cdot)},{r(\cdot)}}f(x_0)dx=M_{{\beta(\cdot)},{r(\cdot)}}f(x_0).
	\end{equation*}
\end{proof}

\begin{proof}[Proof of Theorem \ref{teo Hör frac}] The proof is analogous to the proof of Theorem \ref{teo Hör}, using Lemma \ref{acotación puntal del maximal sharp del operador fraccionario} instead of Lemma \ref{acotación puntal del maximal sharp del operador}.
\end{proof}
\begin{proof}[Proof of Theorem \ref{teo TB}]
		Since $\mathbb{A}_{q(\cdot),\frac{\beta(\cdot) r^\prime(\cdot)}{\beta(\cdot)-r^\prime(\cdot)}}\subset\mathcal{A}_{q(\cdot)}$ by Lemma \ref{Apr implica Ap}. If $\omega\in\mathbb{A}_{q(\cdot),\frac{\beta(\cdot) r^\prime(\cdot)}{\beta(\cdot)-r^\prime(\cdot)}}$, by \ref{b del teo Hör} of Theorem \ref{teo Hör frac} and Theorem \ref{teo M}
	\begin{equation*}
	\|(T_{\beta(\cdot)}f)\omega\|_{q(\cdot)}\lesssim		\|(M_{\beta(\cdot),r^\prime(\cdot)}f)\omega\|_{q(\cdot)}\lesssim	\|f\omega\|_{p(\cdot)}
	\end{equation*}
	whenever the left-hand side is finite.
\end{proof}

\begin{proof}[Proof of Theorem \ref{borde}]
	Let $Q$ be a cube. We note that ${q(\cdot)}=\frac{\beta(\cdot)r^\prime(\cdot)}{{\beta(\cdot)}-{r^\prime(\cdot)}}\in\mathcal{P}^{\text{log}}(\R)$ and satisfies $1/r^\prime(\cdot)=1/\beta(\cdot)+1/q(\cdot)$. Therefore, for all $x\in Q$, by Hölder's inequality, Lemma \ref{norma de cubos}, and $\mathbb{A}_{\infty,q(\cdot)}$ condition,
	\begin{equation*}
		\omega(x) \|\chi_Q\|_{\beta(\cdot)}\frac{\|\chi_Q f\|_{r^\prime(\cdot)}}{\|\chi_Q\|_{r^\prime(\cdot)}}\lesssim  \frac{\|\chi_Q \omega\|_\infty}{\|\chi_Q\|_{q(\cdot)}}\|\chi_Q\omega^{-1}\|_{q(\cdot)}\|f\omega\|_{\beta(\cdot)}\lesssim \|f\omega\|_{\beta(\cdot)}.
	\end{equation*}
	Hence,
	\begin{equation*}
	\| (M_{{\beta(\cdot)},{r^\prime(\cdot)}}f)\omega\|_\infty\lesssim\|f\omega\|_{\beta(\cdot)}.
	\end{equation*}
	Finally, by Lemma \ref{acotación puntal del maximal sharp del operador fraccionario} for $\delta=1$,
	\begin{equation*}
	\|Tf\|_{*,\omega}\approx \|\omega M^\sharp Tf\|_\infty\lesssim \|\omega M_{{\beta(\cdot)},{r^\prime(\cdot)}}f\|_\infty\lesssim\|\omega f\|_{\beta(\cdot)}.
	\end{equation*}
	
\end{proof}

\subsection{Examples of kernels in $H_{r(\cdot)}$}

\

By adapting the example of the kernel presented in \cite{Hr es sharp}, we will see that the classes $H_{r(\cdot),i}$ are not empty and that the inclusions given in \eqref{contenciones de Hr} are strict. Although the examples are given for particular exponents, rescaling and translating, the result yield valid conclusions for general exponents $p(\cdot)\in\mathcal{P}^{\text{log}}(\mathbb{R})$. 

For $\beta>0$ let $K_1(t)=\chi_{[2,3]}(t)$ and $K_2(t)=t^{-\frac{1}{2}}\left[\log\left(\frac{e}{t}\right)\right]^{-\frac{1+\beta}{2}}\chi_{(0,1)}(t)$. We define the kernels $K$ and $\tilde{K}$ by
$$K(x,y)=K_1(x-y)K_2(y),$$
\begin{equation}\label{K tilde}
	\tilde{K}(x,y)=K_1(y)K_2(x-y-1).
\end{equation}
\begin{remark}\label{norma de K2} By a simple change of variables, $K_2(t)\in L^s(\mathbb{R})$ if and only if $s\leq 2$. Moreover, if ${r(\cdot)}\in \mathcal{P}(\mathbb{R})$ and $r(x)= r_0$ for all $x\in (0,\epsilon)$, for some $\epsilon>0$, then 
	$K_2\in L^{r(\cdot)}(\mathbb{R})$ if and only if $r_0\leq 2$.
\end{remark}
\begin{propo} [$H_{r(\cdot),1}\neq\emptyset$]\label{ejemplo de núcleo en Hr1}
	Let $r(\cdot)\in \mathcal{P}^{\log}(\mathbb{R})$. If $r(x)=2$ for all $x\in\left[0,1\right]$ and $r^-_{[0,16]}>1$, then $K\in H_{{r(\cdot)},1}$.
\end{propo}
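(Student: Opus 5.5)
Here $n=1$, $\beta(\cdot)\equiv\infty$ and $K(x,y)=\chi_{[2,3]}(x-y)K_2(y)$. The task is to bound, uniformly over intervals $Q$ and $x,z\in\frac12 Q$,
\[
S(Q,x,z)=\sum_{m\ge1}2^m\ell(Q)\,\frac{\|[K(x,\cdot)-K(z,\cdot)]\chi_{2^mQ\setminus2^{m-1}Q}\|_{r(\cdot)}}{\|\chi_{2^mQ}\|_{r(\cdot)}}.
\]
Two support facts drive everything. The function $y\mapsto K(x,y)$ is supported in $[x-3,x-2]\cap(0,1)$. Hence $K(x,\cdot)-K(z,\cdot)$ is supported in $(0,1)$, where $r\equiv2$, so all numerator norms are $L^2$ norms on $(0,1)$. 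We will control them by $|K(x,\cdot)-K(z,\cdot)|\le K_2\,|\chi_{[x-3,x-2]}-\chi_{[z-3,z-2]}|$. This is nonzero only if $x$ or $z$ lies in $(2,4)$.

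\emph{Large intervals.} Suppose $\ell(Q)\ge1$. The difference is supported in the union of two intervals of length $|x-z|\le\ell(Q)/2$, which lie inside $(0,1)$. Its $L^2$ norm is at most $\|K_2\|_{L^2}<\infty$ by Remark \ref{norma de K2}. Since $x\in\frac12Q$ and $x-3\le y\le x-2$, the points $y$ with $|y-x|\ge2^{m-2}\ell(Q)$ only occur for $2^{m-2}\ell(Q)\le3$. So only boundedly many $m$ contribute, and for those $\ell(2^mQ)\lesssim1$. By Lemma \ref{norma de cubos comparada con |Q|}, the denominator satisfies $\|\chi_{2^mQ}\|_{r(\cdot)}\approx|2^mQ|^{1/r(y_0)}$ for some $y_0\in(0,1)\cap2^mQ$, i.e.\ $\approx(2^m\ell(Q))^{1/2}$. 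Each term is then $\lesssim(2^m\ell(Q))^{1/2}\|K_2\|_{L^2}\lesssim1$.

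\emph{Small intervals.} Suppose $\ell(Q)<1$. The nonzero terms are those with $2^{m-1}\ell(Q)\lesssim$ dist$(x,\mathrm{supp})\le3$. Moreover the annulus $2^mQ\setminus2^{m-1}Q$ must meet $(0,1)$, and $|y-x|\ge2$ there. This forces $2^m\ell(Q)\approx$ const, with at most $O(1)$ values of $m$. Again $2^mQ$ meets $(0,1)$ and has length $\lesssim 16$ (this is where $r^-_{[0,16]}>1$ and the log-Hölder bound are used). So the denominator is $\approx(2^m\ell(Q))^{1/r(\cdot)}\approx1$, and each term is bounded by a constant times $\|K_2\|_{L^2}$. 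We do not even need smallness from $|x-z|$: boundedness suffices for the Hörmander-1 condition.

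Combining both cases gives $\sup_Q\sup_{x,z}S(Q,x,z)<\infty$, i.e.\ $K\in H_{r(\cdot),1}$. The main obstacle is the bookkeeping showing that only $O(1)$ annuli contribute, each with scale $2^m\ell(Q)\approx1$. Once that holds, the local norm estimate from Lemma \ref{norma de cubos comparada con |Q|} turns every surviving term into a constant multiple of $\|K_2\|_{L^2}$. I would check carefully the case where $\ell(Q)$ is huge: there $2^mQ\supset(0,1)$ for all $m\ge1$, so no annulus with $m\ge2$ meets the support. The $m=1$ term has numerator $\le\|K_2\|_2$ and denominator $\approx|2Q|^{1/r_\infty}$. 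Multiplying by $2\ell(Q)$ gives $\ell(Q)^{1-1/r_\infty}$, which is unbounded unless $x,z\in\frac12Q$ forces the $m=1$ annulus $2Q\setminus Q$ to miss $(0,1)$. It does: $|y-x|\le3<\ell(Q)/2$ places $y\in Q$. This step is exactly what makes the argument work.
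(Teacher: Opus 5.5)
Your argument is correct, but it takes a different and sharper route than the paper in the counting step.

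The paper only extracts from the supports that $A_m\neq0$ forces $c_Q\in(0,16)$ and $|2^{m-1}Q|<32$. It then bounds \emph{every} term with $m\le m_0$ by $2\|K_2\|_2\,2^m\ell(Q)/\|\chi_{2^mQ}\|_{r(\cdot)}\approx\|\chi_{2^mQ}\|_{r'(\cdot)}$ and sums a geometric series down to the scale of $Q$. That is exactly where $r^-_{[0,16]}>1$ enters: it keeps $1/r'(c_Q)$ bounded away from $0$ uniformly for $c_Q\in(0,16)$.

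You additionally use that $K(x,\cdot)$ lives at distance between $2$ and $3$ from $x$, and likewise for $z$. For every $Q$ this forces $2^m\ell(Q)$ into a fixed window, e.g.\ $8/3\le 2^m\ell(Q)\le 24$. So at most four annuli survive. Each surviving term is at most $2^m\ell(Q)\,\|K_2\|_2/\|\chi_{2^mQ}\|_{r(\cdot)}\le 24\|K_2\|_2$, because $|2^mQ|\ge1$ gives $\|\chi_{2^mQ}\|_{r(\cdot)}\ge1$ directly from the definition of the norm.

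Contrary to your parenthetical remark, this never uses $r^-_{[0,16]}>1$, nor really the log-H\"older estimates. Your route therefore shows that hypothesis is superfluous for this kernel. The paper needs it only because its count ignores where the support sits inside the annuli and sums over all scales down to $\ell(Q)$.

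Two constants should be fixed, though neither affects the conclusion:
\begin{itemize}
\item Since $x$ is only in $\frac12Q$, a point $y\in2^mQ\setminus2^{m-1}Q$ satisfies $|y-x|\ge2^{m-2}\ell(Q)-\ell(Q)/4\ge2^{m-3}\ell(Q)$, not $|y-x|\ge2^{m-2}\ell(Q)$.
\item In your last paragraph, $|y-x|\le3$ places $y\in Q$ only once $3\le\ell(Q)/4$, not $3<\ell(Q)/2$.
\end{itemize}
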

\begin{proof}
	Let $Q=[a,b]$, let $x,y\in\frac{1}{2}Q$, and define $$A_m=\left\|\left[K(x,\cdot)-K(y,\cdot)\right]\chi_{2^mQ\setminus2^{m-1}Q}(\cdot)\right\|_{r(\cdot)}.$$
	
 	We claim that if $A_m\neq 0$, then $c_Q\in(0,16)$ and $|2^{m-1}Q|<32$. Assume that, let $m_0\in\mathbb{N}$ such that $|2^{m_0-1}Q|<32\leq|2^{m_0}Q|$. Since $A_m\leq2\|K_2\|_{r(\cdot)}=2\|K_2\|_2<\infty$,
	\begin{equation*}
	\sum_{m=1}^{\infty}2^m \ell(Q)\frac{\left\|\left[K(x,\cdot)-K(y,\cdot)\right]\chi_{2^mQ\setminus2^{m-1}Q}(\cdot)\right\|_{r(\cdot)}}{\|\chi_{2^m Q}\|_{r(\cdot)}}
	\lesssim \sum_{m=1}^{m_0}\frac{2^m \ell(Q)}{\|\chi_{2^m Q}\|_{r(\cdot)}}.
	\end{equation*} 
	
	We show that this sum is uniformly bounded for all $Q$. Note that $|2^{m_0}Q| = 2|2^{m_0-1}Q| < 64$. Let $N$ be such that $|2^NQ|<1\leq|2^{N+1}Q|$. By Lemmas \ref{norma de cubos} and \ref{norma de cubos comparada con |Q|},
	\begin{align*}\label{suma no depende del cubo Q}
	\sum_{m=1}^{m_0}\frac{2^m \ell(Q)}{\|\chi_{2^m Q}\|_{r(\cdot)}}&\approx\sum_{m=1}^{m_0}\|\chi_{2^m Q}\|_{r^\prime(\cdot)}\\
	&\approx \sum_{m=1}^{N}|2^mQ|^{\frac{1}{r^\prime\left(c_Q\right)}} + \sum_{m=N+1}^{m_0}|2^mQ|^{\frac{1}{r^\prime_\infty}}\\\
	&=|2^{m_0}Q|^{\frac{1}{r^\prime\left(c_Q\right)}}\sum_{m=1}^{m_0}(2^{m-m_0})^{\frac{1}{r^\prime\left(c_Q\right)}} + |2^{m_0}Q|^{\frac{1}{r^\prime_\infty}}\sum_{m=1}^{m_0}(2^{m-m_0})^{\frac{1}{r_\infty^\prime}}\\
	&\leq64\left[\sum_{n=0}^{\infty}\left(\frac{1}{2}\right)^{\frac{n}{r^\prime(c_Q)}} + \sum_{n=0}^{\infty}\left(\frac{1}{2}\right)^\frac{n}{r^\prime_\infty} \right]\\
	&\leq64\left[\sum_{n=0}^{\infty}\left(\frac{1}{2}\right)^{\frac{n}{\left(r^-_{(0,16)}\right)^\prime}} + \sum_{n=0}^{\infty}\left(\frac{1}{2}\right)^\frac{n}{r^\prime_\infty} \right]<\infty.
	\end{align*}

To prove the claim note that, by the definition of $A_m$, if $(0,1)\subset Q$, then $A_m=0$. Also, if $\frac{1}{2}Q\cap(2,4)=\emptyset$, then $A_m=0$. In fact, let $x_0\in\frac{1}{2}Q=\left[\frac{3a+b}{4},\frac{a+3b}{4}\right]$. If $\frac{1}{2}Q\cap(2,4)=\emptyset$, then $x_0\geq4$ or $x_0\leq2$. Therefore $\chi_{[2,3]}(x_0-z)=0$ for all $z\in(0,1)$ and consequently, $A_m=0$. 

Therefore, if $a\leq 0$ and $\frac{1}{2}Q\cap(2,4)\neq\emptyset$, then $(0,1)\subset Q$. If $4\leq\frac{3a+b}{4}$ and $a>0$, then $\frac{1}{2}Q\cap(2,4)=\emptyset$. So, we only need to consider $a>0$ and $b<16$ which implies $c_Q\in(0,16)$. Finally, since $c_{2^mQ}=c_Q\in (0,16)$, if $|2^{m-1}Q|\geq32$, then $(0,1)\subset 2^{m-1}Q$, and $(2^mQ\setminus2^{m-1}Q)\cap (0,1)=\emptyset$ and consequently, $A_m=0$.
\end{proof}

\begin{propo}($H_{{s(\cdot)},1}\varsubsetneq H_{{r(\cdot)},1}$) Let $s(\cdot)\in\mathcal{P} (\mathbb{R})$. If $s(x)=s_0>2$ for all $x\in[0,1]$, then $K\notin H_{s(\cdot),1}$.
	
\end{propo}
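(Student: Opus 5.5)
We have to show that the supremum in Definition \ref{def Hör} (with $\beta(\cdot)\equiv\infty$, $n=1$) is infinite for $K(x,y)=\chi_{[2,3]}(x-y)K_2(y)$ when $s(\cdot)\equiv s_0>2$ on $[0,1]$. The idea is to exhibit one family of cubes and points $x,z$ for which a single annulus term already blows up. That term will contain $\|K_2\chi_{(0,\epsilon)}\|_{s(\cdot)}$, which equals $\|K_2\chi_{(0,\epsilon)}\|_{s_0}=\infty$ by Remark \ref{norma de K2}.

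\textbf{Choice of cube and points.} Take a fixed cube $Q=[a,b]$ with center near $2$ and small length, so that:
\begin{itemize}
\item $x,z\in\frac12 Q$ can be chosen with $x\in[2,3]$ and $z\notin[2,3]+(0,\delta)$;
\item some annulus $2^mQ\setminus 2^{m-1}Q$ contains an interval $I\subset(0,1)$.
\end{itemize}
A concrete choice is $Q=[1.5,2.5]$, so $\frac12Q=[1.75,2.25]$. Take $x=2.25$ and $z=1.75$.

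\textbf{Where the two kernels differ.} For $y\in(0,1)$ we have $K(x,y)=\chi_{[2,3]}(2.25-y)K_2(y)$, which is nonzero iff $y\le 0.25$. Likewise $K(z,y)=\chi_{[2,3]}(1.75-y)K_2(y)=0$ for all $y>0$. Hence $K(x,y)-K(z,y)=K_2(y)$ on $(0,0.25]$.

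\textbf{Locating the annulus.} Now $Q=[1.5,2.5]$, and the annulus $2Q\setminus Q=[1,1.5)\cup(2.5,3]$ does not reach $(0,0.25)$. The next annulus $4Q\setminus 2Q=[0,1)\cup(3,4]$ contains $(0,0.25)$. So for $m=2$ the corresponding term is bounded below by
$$\frac{2^2\ell(Q)}{\|\chi_{4Q}\|_{s(\cdot)}}\,\|K_2\chi_{(0,0.25)}\|_{s(\cdot)}.$$

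\textbf{The blow-up.} The denominator is finite and positive. For the other factor, Proposition \ref{propo modular}(1) gives
$$\|K_2\chi_{(0,0.25)}\|_{s(\cdot)}=\|K_2\chi_{(0,0.25)}\|_{s_0},$$
since the modular only sees $s(\cdot)$ on $(0,0.25)\subset[0,1]$, where it equals $s_0$. This norm is infinite because $s_0>2$ (the change of variables in Remark \ref{norma de K2}: $\int_0^{\epsilon}t^{-s_0/2}\log(e/t)^{-s_0(1+\beta)/2}\,dt=\infty$ for $s_0>2$). The sum in Definition \ref{def Hör} is therefore $+\infty$ for this $Q,x,z$, so $K\notin H_{s(\cdot),1}$.

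\textbf{Conclusion and main obstacle.} Combining this with Proposition \ref{ejemplo de núcleo en Hr1}, any $r(\cdot)\in\mathcal{P}^{\log}$ with $r\equiv2$ on $[0,1]$ gives $K\in H_{r(\cdot),1}\setminus H_{s(\cdot),1}$, which is the strict inclusion. The only delicate point is the bookkeeping of which annulus meets $(0,0.25)$ and checking the endpoints, i.e. that $K(z,\cdot)$ vanishes there. The local identification of the variable norm with the $L^{s_0}$ norm is immediate from the definition of the modular.
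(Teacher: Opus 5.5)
Your proof is correct and follows essentially the paper's route: fix one cube and two points of $\frac{1}{2}Q$ with $K(z,\cdot)\equiv 0$, while $K(x,\cdot)=K_2$ on an initial segment $(0,\epsilon)$ lying inside a single annulus, so that one term of the sum dominates $\|K_2\chi_{(0,\epsilon)}\|_{s_0}=\infty$. The paper uses $Q=[1,3]$, $x\in(2,\frac{5}{2})$, $z\in(\frac{3}{2},2)$ and the first annulus $[0,1)\cup(3,4]$ instead of your $Q=[1.5,2.5]$ with the $m=2$ annulus, which is a purely cosmetic difference; note also that your closing strict-inclusion remark additionally needs the hypothesis $r^-_{[0,16]}>1$ from the proposition showing $K\in H_{r(\cdot),1}$.
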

\begin{proof}
	Let $Q=[1,3]$, note that $(0,1)\subset2Q\setminus Q$, and for all $x\in\left(2,\frac{5}{2}\right)$ and $y\in\left(\frac{3}{2},2\right)$  we have,  $x,y\in\frac{1}{2}Q=\left[\frac{3}{2},\frac{5}{2}\right]$. Hence, $y-z<2$ for all $z\in (0,1)$. Therefore,
	\begin{equation*}
	\left\|\left[K(x,\cdot)-K(y,\cdot)\right]\chi_{2Q\setminus Q}(\cdot)\right\|_{s(\cdot)}
	=\left\|\chi_{[2,3]}(x-\cdot)K_2(\cdot)\right\|_{s_0}\\
	\geq\|\chi_{[0,x-2]}K_2 \|_{s_0}.
	\end{equation*}
	By Remark \ref{norma de K2}, the last term is not finite.
\end{proof}

For $r(\cdot)$ as in Proposition \ref{ejemplo de núcleo en Hr1}, although we do not prove that $K \in H_{r(\cdot),2}$, and thus that Lemma \ref{acotación puntal del maximal sharp del operador} (and consequently Theorem \ref{teo Hör}) holds, we will show that $K$ satisfies Remark \ref{cambio de hipo}. Moreover, the operator $T$ induced by $K$ satisfies that the left-hand side of the inequality in Theorem \ref{teo T} is finite for $f \in C_c^\infty$, as will see in the following propositions.

\begin{propo}\label{fuerte rprima}
	Let $r(\cdot)\in \mathcal{P}^{\log}(\mathbb{R})$. If $r(x)=2$ for all $x\in\left[0,1\right]$, then the operator $T$ induced by $K$ is bounded on $L^{r^\prime(\cdot)} (\mathbb{R})$.

\end{propo}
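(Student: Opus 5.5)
The kernel is $K(x,y)=\chi_{[2,3]}(x-y)K_2(y)$, and $K_2$ is supported in $(0,1)$. Hence
$$Tf(x)=\int_0^1\chi_{[2,3]}(x-y)K_2(y)f(y)\,dy .$$
Only the values of $f$ on $(0,1)$ enter, and $Tf$ is supported in $[2,4]$. On $(0,1)$ we have $r(\cdot)\equiv 2$, so $r^\prime(\cdot)\equiv 2$ there. The plan is therefore to bound $Tf$ pointwise by a constant multiple of $\|f\chi_{(0,1)}\|_{2}$, and then compare the relevant $L^{r^\prime(\cdot)}$ norms with $L^2$ norms on the two bounded sets $(0,1)$ and $[2,4]$.

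\emph{Pointwise bound.} For every $x$, the Cauchy–Schwarz inequality gives
$$|Tf(x)|\le \int_0^1 |K_2(y)||f(y)|\,dy\le \|K_2\|_2\,\|f\chi_{(0,1)}\|_2 .$$
Remark \ref{norma de K2} gives $\|K_2\|_2<\infty$. Since the exponent equals $2$ on $(0,1)$, the modulars agree there, so $\|f\chi_{(0,1)}\|_2=\|f\chi_{(0,1)}\|_{r^\prime(\cdot)}$. Combined with Proposition \ref{propo modular}(1), this yields
$$\|f\chi_{(0,1)}\|_2\le \|f\|_{r^\prime(\cdot)} .$$

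\emph{Output side.} Because $Tf$ vanishes outside $[2,4]$, we get
$$\|Tf\|_{r^\prime(\cdot)}\le \|K_2\|_2\,\|f\|_{r^\prime(\cdot)}\,\|\chi_{[2,4]}\|_{r^\prime(\cdot)} .$$
The last factor is a finite constant: for instance Lemma \ref{norma de cubos comparada con |Q|} gives it directly, and even without the log-Hölder hypothesis, the norm of the indicator of a bounded set is finite whenever the exponent takes values in $[1,\infty]$ (when $r^\prime=\infty$ somewhere, the $\|\cdot\|_\infty$ term of the modular still equals $1$). This gives the boundedness of $T$ on $L^{r^\prime(\cdot)}(\mathbb{R})$.

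\emph{Main obstacle.} There is essentially none. The only point needing care is the identification of $\|f\chi_{(0,1)}\|_{r^\prime(\cdot)}$ with the $L^2$ norm, which uses that $r(\cdot)$ is exactly $2$ on $[0,1]$, together with the monotonicity of the norm under restriction. I would also remark that the argument gives $T\colon L^{r^\prime(\cdot)}\to L^\infty$ with compact support, which is stronger than needed.
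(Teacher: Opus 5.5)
Your proof is correct and is essentially the paper's argument. Both use a pointwise H\"older bound in $y$ that exploits $r\equiv 2$ on the support of $K_2$ (so $\|K_2\|_{r(\cdot)}=\|K_2\|_2<\infty$), combine it with the fact that $Tf$ is supported in $[2,4]$, and conclude $\|Tf\|_{r^\prime(\cdot)}\lesssim\|\chi_{[2,4]}\|_{r^\prime(\cdot)}\|K_2\|_2\|f\|_{r^\prime(\cdot)}$. The only cosmetic difference is that you restrict $f$ to $(0,1)$ and use the classical Cauchy--Schwarz inequality there, whereas the paper applies the variable-exponent H\"older inequality directly and then identifies $\|K_2\|_{r(\cdot)}$ with $\|K_2\|_2$.
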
 

\begin{proof}
	Let $f\in L^{r^\prime(\cdot)}(\mathbb{R})$, by Hölder's inequality,
	\begin{equation*}
	\|Tf\|_{r^\prime(\cdot)}\leq\left\| \left\|K_1(x-y)K_2(y) \right\|_{r(y)}\right\|_{r^\prime(x)}\left\|f\right\|_{r^\prime(\cdot)}.
	\end{equation*}
	
	Also, for $y\in supp[K_1(x-\cdot)K_2(\cdot)]$, we have $K_1(x-y)=\chi_{[2,3]}(x-y)\leq\chi_{[2,4]}(x)$. Therefore,
	\begin{equation*}
	\left\| \left\|K_1(x-y)K_2(y) \right\|_{r(y)}    \right\|_{r^\prime(x)}\leq\|\chi_{[2,4]}\|_{r^\prime(\cdot)}\|K_2\|_{r(\cdot)}\lesssim\|K_2\|_2<\infty.
	\end{equation*}
\end{proof}

\begin{propo}\label{lado izq finito K}
	Let $p(\cdot)\in \mathcal{P}(\mathbb{R})$, $T$ be the operator induced by $K$, and $\omega\in \mathcal{A}_{p(\cdot)}$. If $f\in C^\infty_c$, then $\|(Tf)\omega\|_{p(\cdot)}<\infty$.
\end{propo}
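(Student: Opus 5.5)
The plan is to exploit the very concrete structure of $K(x,y)=\chi_{[2,3]}(x-y)K_2(y)$. Since $K_2$ is supported in $(0,1)$, the integral defining $Tf(x)$ only involves $y\in(0,1)$. If $x-y\in[2,3]$ for some such $y$, then $x\in[2,4]$. Hence $Tf$ is supported in $[2,4]$ for every $f$, with no smoothness of $f$ needed for this. The goal is therefore to show that $Tf$ is bounded there, and then use the local integrability of $\omega$ in $L^{p(\cdot)}$ on the compact set $[2,4]$.

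For the pointwise bound, take $f\in C^\infty_c$. For $x\in[2,4]$,
$$|Tf(x)|\le \|f\|_\infty\int_0^1 K_2(y)\,dy=\|f\|_\infty\|K_2\|_{1}.$$
This is finite because $K_2\in L^1$ by Remark \ref{norma de K2}: $1\le 2$, or directly, $t^{-1/2}$ is integrable near $0$ and the logarithmic factor is bounded on $(0,1)$. So $|Tf|\le C_f\chi_{[2,4]}$. By Proposition \ref{propo modular}(1),
$$\|(Tf)\omega\|_{p(\cdot)}\le C_f\|\omega\chi_{[2,4]}\|_{p(\cdot)}.$$

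The remaining step, which is the only nontrivial one, is to show $\|\omega\chi_{[2,4]}\|_{p(\cdot)}<\infty$ from $\omega\in\mathcal{A}_{p(\cdot)}$. I would take the cube $Q=[2,4]$ in the defining inequality
$$\|\omega\chi_Q\|_{p(\cdot)}\|\omega^{-1}\chi_Q\|_{p'(\cdot)}\le C|Q|.$$
The second factor is strictly positive, since $\omega^{-1}>0$ wherever it is defined, and $\omega$ is finite a.e. and not identically infinite on $Q$. Dividing by it gives finiteness of $\|\omega\chi_Q\|_{p(\cdot)}$.

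The obstacle is the degenerate case where the product in the $\mathcal{A}_{p(\cdot)}$ condition could be of the form $\infty\cdot 0$. Positivity of $\|\omega^{-1}\chi_Q\|_{p'(\cdot)}$ rules this out, because $\omega^{-1}\chi_Q$ is a positive function on a set of positive measure. This gives the finiteness, implicitly using the standard convention that weights in $\mathcal{A}_{p(\cdot)}$ are locally in $L^{p(\cdot)}$.
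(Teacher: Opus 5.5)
Your proof is correct and follows the same route as the paper. Both arguments localize the support of $Tf$ to $[2,4]$, bound $|Tf|$ pointwise by a Hölder-type estimate, and conclude from $\|\omega\chi_{[2,4]}\|_{p(\cdot)}<\infty$. The differences are minor: the paper pairs $K_2$ and $f$ in $L^2$ (giving $\|K_2\|_2\|f\|_2$) where you use $\|K_2\|_1\|f\|_\infty$. The paper also leaves the finiteness of $\|\omega\chi_{[2,4]}\|_{p(\cdot)}$ implicit, whereas you correctly derive it from the $\mathcal{A}_{p(\cdot)}$ condition on $Q=[2,4]$ together with $\|\omega^{-1}\chi_Q\|_{p'(\cdot)}>0$.
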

\begin{proof}
	Let $f\in C^\infty_c$. Note that if $y\in(0,1)$, then $x-y\in [2,3]$ implies $x\in[2,4]$. Therefore, by Hölder's inequality \ref{teo Hölder},
	\begin{equation*}
		\|(Tf)\omega\|_{p(\cdot)}\leq \left\|\omega(\cdot)\int_{\mathbb{R}}|\chi_{[2,3]}(\cdot-y)K_2(y)f(y)|dy\right\|_{p(\cdot)}\lesssim\|\omega\chi_{[2,4]}\|_{p(\cdot)}\|K_2\|_2\|f\|_2.
	\end{equation*}
	By Remark \ref{norma de K2}, the last term is finite.
\end{proof}

The following theorem is deduced from Propositions \ref{fuerte rprima} and \ref{lado izq finito K}, Remark \ref{cambio de hipo}, and Theorem \ref{teo T}.
\begin{teo}
	Let ${p(\cdot)},{r(\cdot)},s(\cdot)\in\mathcal{P}^{\log}(\mathbb{R})$ such that, $p^+<\infty$, $r(x)=2$ for all $x\in[0,1]$, $r^-_{[0,16]}>1$, and ${p(\cdot)}={r^\prime(\cdot)}{s(\cdot)}$ with $s^->1$. Let $T$ the integral operator given by the kernel $K$. If $\omega\in \mathbb{A}_{{p(\cdot)},{r^\prime(\cdot)}}$ then  
	$$\left\|(T f)\omega\right\|_{p(\cdot)}\lesssim \|f\omega\|_{p(\cdot)}.$$
	
\end{teo}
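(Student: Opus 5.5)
The proof will rerun the argument of Theorem \ref{teo T}. The only change is in the first step. There, the pointwise sharp-function estimate of Lemma \ref{acotación puntal del maximal sharp del operador} required $K\in H_{r(\cdot),2}$ and boundedness on some $L^{p_0}$, and we replace that input using Remark \ref{cambio de hipo}.

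\textbf{Step 1: the sharp-function estimate.} Fix $0<\delta<1$, $x_0$, a cube $Q\ni x_0$, and split $f=f_1+f_2$ with $f_1=f\chi_{2Q}$. The local term $I$ is handled exactly as in Remark \ref{cambio de hipo}, using Jensen, \eqref{comparacion de promedios}, and the boundedness of $T$ on $L^{r'(\cdot)}(\mathbb{R})$ from Proposition \ref{fuerte rprima}. This gives $I\lesssim (M_{r'(\cdot)}f(x_0))^\delta$.

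The nonlocal term $II$ uses only $K\in H_{r(\cdot),1}$, which holds by Proposition \ref{ejemplo de núcleo en Hr1} since $r\equiv2$ on $[0,1]$ and $r^-_{[0,16]}>1$. This yields $(M^\sharp|Tf|^\delta)^{1/\delta}\lesssim M_{r'(\cdot)}f$ pointwise.

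\textbf{Step 2: the $A_\infty$ and extrapolation estimates.} With the pointwise bound in hand, the proof of Theorem \ref{teo Hör} applies verbatim. Theorem \ref{teo sharp} gives $\int|Tf|^pw\lesssim\int(M_{r'(\cdot)}f)^pw$ for $w\in A_\infty$, provided the left side is finite. Theorem \ref{extrapol} then gives $\|(Tf)\omega\|_{p(\cdot)}\lesssim\|(M_{r'(\cdot)}f)\omega\|_{p(\cdot)}$ for $\omega\in\mathcal{A}_{p(\cdot)}$.

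\textbf{Step 3: conclusion.} By Lemma \ref{Apr implica Ap}, $\omega\in\mathbb{A}_{p(\cdot),r'(\cdot)}$ implies $\omega\in\mathcal{A}_{p(\cdot)}$. Corollary \ref{coro Mr} applies because $p(\cdot)=r'(\cdot)s(\cdot)$ with $s^->1$, and it bounds $\|(M_{r'(\cdot)}f)\omega\|_{p(\cdot)}\lesssim\|f\omega\|_{p(\cdot)}$.

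\textbf{Main obstacle: the finiteness requirement.} The statement carries no ``whenever finite'' clause, so finiteness must be verified.
\begin{itemize}
\item For $f\in C_c^\infty$, Proposition \ref{lado izq finito K} gives $\|(Tf)\omega\|_{p(\cdot)}<\infty$, so the inequality holds on $C_c^\infty$.
\item For the Step 2 estimate at constant exponents (the extrapolation family), I also need $\int|Tf|^{p_0}w_0<\infty$ for $w_0\in A_{p_0}$. This follows as in Proposition \ref{lado izq finito K}, since $Tf$ is supported in $[2,4]$ and bounded by $\|K_2\|_2\|f\|_2$.
\item $C_c^\infty$ is dense in $L^{p(\cdot)}(\omega)$ by Lemma \ref{densidad en Lp de omega}. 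This needs $p^+<\infty$ and $\omega\in L^{p(\cdot)}_{\text{loc}}$, and the latter follows from $\omega\in\mathcal{A}_{p(\cdot)}$.
\item Extend $T$ to all of $L^{p(\cdot)}(\omega)$ by continuity. Along an approximating sequence, the integral operator converges pointwise a.e. on $[2,4]$, because $\omega^{-1}\in L^{p'(\cdot)}_{\text{loc}}$ and $K_2\in L^{r(\cdot)}$.
\item Fatou's lemma for the modular then transfers the inequality to general $f$.
\end{itemize}
Checking this last convergence carefully is where I expect the most care to be needed.
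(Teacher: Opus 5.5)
Your Steps 1--3, with finiteness on $C_c^\infty$ from Proposition~\ref{lado izq finito K}, are exactly the paper's argument. The paper deduces the statement from Proposition~\ref{fuerte rprima}, Remark~\ref{cambio de hipo}, Proposition~\ref{lado izq finito K} and Theorem~\ref{teo T}. Your second bullet is superfluous: Theorem~\ref{extrapol} only needs the constant-exponent inequality when its left-hand side is finite, and the $A_\infty$ step already provides that.

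The paper does not carry out the extension beyond $C_c^\infty$, and that is where your stated reason fails. Having $Tf_k\to Tf$ on $[2,4]$ whenever $\|(f_k-f)\omega\|_{p(\cdot)}\to0$ amounts to $K_2\omega^{-1}\chi_{(0,1)}\in L^{p'(\cdot)}$. This does not follow from $\omega^{-1}\in L^{p'(\cdot)}_{\mathrm{loc}}$ and $K_2\in L^2$, because $p=2s>2$ on $(0,1)$, so $p'<2$ there. For instance, take $p\equiv4$ and $r\equiv s\equiv2$. The weight $\omega(t)=|t|^{1/2}\in\mathcal{A}_4$ has $\omega^{-1}\in L^{4/3}(0,1)$, but $K_2\omega^{-1}=t^{-1}\log(e/t)^{-(1+\beta)/2}\notin L^{4/3}(0,1)$. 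Indeed $Tf\equiv\infty$ on $(2,3]$ for suitable $f\in L^4(\omega)$, so no information valid for all $\mathcal{A}_{p(\cdot)}$ weights can suffice.

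The missing ingredient is the second factor in the $\mathbb{A}_{p(\cdot),r'(\cdot)}$ condition. It gives $\omega^{-1}\chi_{(0,1)}\in L^{q(\cdot)}$ with $1/q(\cdot)=1/r'(\cdot)-1/p(\cdot)$. Since $1/p'(\cdot)=1/r(\cdot)+1/q(\cdot)$, H\"older's inequality gives $\|K_2\omega^{-1}\chi_{(0,1)}\|_{p'(\cdot)}\lesssim\|K_2\|_2\|\omega^{-1}\chi_{(0,1)}\|_{q(\cdot)}<\infty$. Hence $|Tf(x)|\lesssim\chi_{[2,4]}(x)\|f\omega\|_{p(\cdot)}$ for every $f\in L^{p(\cdot)}(\omega)$. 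The finiteness proviso in your Steps 1--3 therefore holds for all $f$, and no density argument is needed. In fact this bound alone already yields $\|(Tf)\omega\|_{p(\cdot)}\lesssim\|\omega\chi_{[2,4]}\|_{p(\cdot)}\|f\omega\|_{p(\cdot)}$, bypassing the sharp-function machinery for this particular kernel. Alternatively, since $K\ge0$, you can approximate $|f|$ from below by bounded compactly supported functions and use monotone convergence and the Fatou property of the norm.
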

We now show that the kernel $\tilde{K}$ belongs to $H_{r(\cdot)}$ for an exponent $r(\cdot)\in\mathcal{P}^{\text{log}}(\mathbb{R})$.

\begin{propo} ($H_{r(\cdot)}\neq\emptyset$)\label{ejemplo de núcleo en Hr} Let $r(\cdot)\in\mathcal{P}^{\text{log}}(\mathbb{R})$. If $r(x)=2$ for all $x\in\left[2,5\right]$ and $r^-_{[-7,14]}>1$, then $\tilde{K}\in H_{r(\cdot)}$.
	
\end{propo}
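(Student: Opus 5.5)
We treat the two halves of $H_{r(\cdot)}=H_{r(\cdot),1}\cap H_{r(\cdot),2}$ separately. Both follow the template of Proposition \ref{ejemplo de núcleo en Hr1}. Here $n=1$ and $\beta(\cdot)\equiv\infty$, so the quantity to bound is
$$\sum_{m\ge1}\frac{2^m\ell(Q)}{\|\chi_{2^mQ}\|_{r(\cdot)}}A_m, \qquad A_m=\bigl\|[\tilde K(x,\cdot)-\tilde K(z,\cdot)]\chi_{2^mQ\setminus 2^{m-1}Q}\bigr\|_{r(\cdot)}$$
for $H_{r(\cdot),1}$. For $H_{r(\cdot),2}$, $A_m$ is replaced by the analogous norm of $[\tilde K(\cdot,x)-\tilde K(\cdot,z)]\chi_{2^mQ\setminus2^{m-1}Q}$, with $x,z\in\frac12 Q$.

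In both cases the plan is the same three steps:
\begin{enumerate}
\item Show that $A_m$ is uniformly bounded.
\item Show that $A_m\neq0$ forces $c_Q$ to lie in a fixed bounded interval and $|2^{m-1}Q|$ to stay below a fixed constant.
\item Sum only over $m\le m_0$, where $|2^{m_0}Q|$ is bounded, using Lemmas \ref{norma de cubos} and \ref{norma de cubos comparada con |Q|}. This gives $2^m\ell(Q)/\|\chi_{2^mQ}\|_{r(\cdot)}\approx\|\chi_{2^mQ}\|_{r'(\cdot)}$, and the resulting sums are geometric, with ratio controlled by $(r^-_{[-7,14]})'<\infty$ and by $r'_\infty$. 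This is exactly the computation in the proof of Proposition \ref{ejemplo de núcleo en Hr1}.
\end{enumerate}

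\emph{$H_{r(\cdot),1}$: the variable is $y$.} Here $\tilde K(x,y)=\chi_{[2,3]}(y)K_2(x-y-1)$, so $y$ ranges over $[2,3]$ and $r(y)=2$ there. The substitution $t=x-y-1$ gives
$$\|\tilde K(x,\cdot)\|_{r(\cdot)}=\|\chi_{[2,3]}(\cdot)K_2(x-\cdot-1)\|_{2}\le\|K_2\|_2<\infty,$$
and this is where the hypothesis $r\equiv2$ on $[2,5]$ is used through Remark \ref{norma de K2}. The difference is nonzero only if $x$ or $z$ satisfies $x-y-1\in(0,1)$ for some $y\in[2,3]$, that is $x\in(3,5)$. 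Hence $\frac12Q\cap(3,5)\ne\emptyset$ and $2^mQ\setminus2^{m-1}Q$ meets $[2,3]$. These two conditions localize $c_Q$ and $\ell(2^{m-1}Q)$ as in the claim of Proposition \ref{ejemplo de núcleo en Hr1}, with the constants shifted.

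\emph{$H_{r(\cdot),2}$: the variable is $x$.} Now $\tilde K(\cdot,y)=\chi_{[2,3]}(y)K_2(\cdot-y-1)$ is supported in $y+1+(0,1)\subset(3,5)$. Since $r\equiv2$ on $[2,5]$, the norm again reduces to an $L^2$ norm bounded by $\|K_2\|_2$. Nonvanishing requires $\frac12Q\cap[2,3]\ne\emptyset$ and $(2^mQ\setminus2^{m-1}Q)\cap(3,5)\neq\emptyset$, which localizes $Q$ as before. The interval $[-7,14]$ in the hypothesis is presumably chosen to contain every center $c_Q$ that can occur in either case, so that $r^-_{[-7,14]}>1$ controls $r'(c_Q)$ uniformly.

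\emph{Main obstacle.} The localization claim is the hardest part. One has to check carefully that, once $2^{m-1}Q$ exceeds a fixed size (e.g.\ $|2^{m-1}Q|\ge 32$), the annulus $2^mQ\setminus2^{m-1}Q$ misses the support. Equivalently, the support lies inside $2^{m-1}Q$ because $c_Q$ is pinned near it. One also has to handle the alternative configuration, in which $Q$ is very large and already contains the support, so that every annulus misses it and $A_m=0$ for all $m$. This requires a case analysis on the endpoints $a,b$ of $Q=[a,b]$, mirroring the argument in Proposition \ref{ejemplo de núcleo en Hr1}, and a tally of the resulting bounds on $a,b$ to confirm they fall within $[-7,14]$.
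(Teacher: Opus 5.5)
Your proposal is correct and follows the paper's proof step for step. The steps are: the uniform bound $A_m,B_m\le 2\|K_2\|_2$ coming from $r\equiv 2$ on $[2,5]$; the localization of $c_Q$ and of $|2^{m-1}Q|$ whenever $A_m\neq 0$ (resp.\ $B_m\neq 0$); and the same geometric summation as in the proposition showing $K\in H_{r(\cdot),1}$. The paper carries out the case analysis you defer exactly as you anticipate, using $[2,3]\subset Q\Rightarrow A_m=0$ and $(3,5)\subset Q\Rightarrow B_m=0$. This gives $c_Q\in(2,14)$ in the first half and $c_Q\in(-7,5)$ in the second, both with $|2^{m-1}Q|<24$, which is why the hypothesis is $r^-_{[-7,14]}>1$.
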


\begin{proof}
	 Let $Q=[a,b]$, and let $x,y\in\frac{1}{2}Q$. Since 
	 $$A_m=\left\|\left[\tilde{K}(x,\cdot)-\tilde{K}(y,\cdot)\right]\chi_{2^mQ\setminus2^{m-1}Q}(\cdot)\right\|_{r(\cdot)}\leq 2\|K_2\|_2$$
	 and
	 $$B_m=\left\|\left[\tilde{K}(\cdot, x)-\tilde{K}(\cdot,y)\right]\chi_{2^mQ\setminus2^{m-1}Q}(\cdot)\right\|_{r(\cdot)}\leq 2\|K_2\|_2.$$
	 Following the same approach as in the proof of Proposition \ref{ejemplo de núcleo en Hr1}, it is enough to prove that	 
	\begin{enumerate}
		\item\label{Am=0} If $A_m\neq0$, then $c_Q\in(2,14)$ and $|2^{m-1} Q|<24$.
		\item\label{Bm=0} If $B_m\neq0$, then $c_Q\in(-7,5)$ and $|2^{m-1}Q|<24$.
	\end{enumerate}
	For \eqref{Am=0}, suppose that $A_m\neq0$. Then, for some $z\in[2,3]$, $x-z-1 \in (0,1)$ or $y-z-1\in(0,1)$. Without loss of generality, assume that $3<x<5$. Also, $[2,3] \subset Q$ implies $A_m=0$, so we only need to consider $a>2$ or $b<3$. Since $x\in\frac{1}{2}Q=\left[\frac{3a+b}{4},\frac{3b+a}{4}\right]$, if $a>2$, then $b<14$. If $b<3$ then $a>3$ which is absurd. Therefore, $c_Q=c_{2^m Q}\in(2,14)$, which implies that if $|2^{m-1} Q|\geq24$, then $[2,3]\subset2^{m-1}Q$ and consequently, $A_m=0$.
	
	For \eqref{Bm=0}, note that if $(3,5)\subset Q$ then $B_m=0$. In fact, suppose that $(3,5)\subset Q$ and $B_m\neq0$. Then, without loss of generality, assume that $K_1(x)K_2(z-x-1)\chi_{2^mQ\setminus2^{m-1}Q}(z)$ does not vanish for some $z$. Then there exists $z$ such that $x\in [2,3]$, $z-x-1\in(0,1)$, and $z\notin(3,5)$, which is a contradiction. Therefore, $a>3$ or $b<5$. Since $x\in [2,3]$ or $y\in[2,3]$, and $x,y\in\frac{1}{2}Q=\left[\frac{3a+b}{4},\frac{3b+a}{4}\right]$, if $a>3$, then $b\leq3$, which is a contradiction. If $b<5$, then $-7\leq a$. So $c_Q=c_{2^mQ}\in(-7,5)$, therefore if $|2^{m-1}Q|\geq24$, then $(3,5)\subset2^{m-1}Q$ and consequently, $B_m=0$.
\end{proof}	 

\begin{propo}($H_{s(\cdot)}\varsubsetneq H_{r(\cdot)}$)
	Let ${s(\cdot)}\in\mathcal{P}(\mathbb{R})$. If $s(x)=s_0>2$ for all $x\in [4,5]$, then $\tilde{K}\notin H_{{s(\cdot)},2}$. In particular, $\tilde{K}\notin H_{s(\cdot)}$.
\end{propo}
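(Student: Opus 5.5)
We will mimic the proof of the proposition showing $K\notin H_{s(\cdot),1}$. We exhibit a single cube $Q$, points $x,y\in\frac12 Q$ and an index $m$ for which the quantity $B_m$ entering the definition of $H_{s(\cdot),2}$ is infinite. Recall that $\tilde K(z,x)=K_1(x)K_2(z-x-1)=\chi_{[2,3]}(x)K_2(z-x-1)$. As a function of $z$, this is nonzero only for $z\in(x+1,x+2)$, and it has the singularity $|z-x-1|^{-1/2}$ (times the log factor) at $z=x+1$.

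The choice is guided by two requirements. First, $x$ should lie in $[2,3]$, so that $K_1(x)=1$. Second, the singular point $z=x+1$ should lie in the interior of $2Q\setminus Q$, and there $s\equiv s_0$. So we want $x+1\in(4,5)$, say $x$ close to $3$. I take $Q=[1,3]$, so that $\frac12Q=[\frac32,\frac52]$ and $2Q\setminus Q=[0,1)\cup(3,4]$. That fails, so I adjust: take $Q$ centered near $2.75$ with $\ell(Q)$ about $2$, e.g.\ $Q=[1.75,3.75]$. Then $2Q\setminus Q=[0.75,1.75)\cup(3.75,4.75]$. Now choose $x\in\frac12 Q=[2.25,3.25]$ with $x\in(2.75,3)$, so $x+1\in(3.75,4)\subset[4,5]$? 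That fails too, since $(3.75,4)\not\subset[4,5]$. I fix it concretely: pick $Q$ with $\frac12Q\ni x=3-\epsilon$ and $y\in\frac12Q$ with $y>3$, so $K_1(y)=0$. For $y>3$ we need the right half of $\frac12 Q$ to exceed $3$. For $x+1=4-\epsilon$ to be in $[4,5]$, instead take $x=3$ exactly, so $x+1=4$, and use $Q=[1.5,3.5]$. Then $\frac12Q=[2,3]$, which contains both $x=3$ and $y=2$. Also $2Q\setminus Q=[0.5,1.5)\cup(3.5,4.5]$ with $m=1$, and $(4,4.5)\subset[4,5]$, so $s=s_0$ there.

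Since $y=2$, the second term $\tilde K(z,2)=K_2(z-3)$ is supported in $z\in(3,4)$. On $(4,4.5)$ it vanishes, so there $|\tilde K(z,3)-\tilde K(z,2)|=K_2(z-4)$. By monotonicity of the norm (Proposition \ref{propo modular}),
\[
B_1\ \ge\ \|K_2(\cdot-4)\chi_{(4,4.5)}\|_{s(\cdot)}=\|K_2\chi_{(0,1/2)}\|_{s_0},
\]
because $s\equiv s_0$ on $(4,4.5)$. By Remark \ref{norma de K2} (and its change-of-variables proof, which only uses behavior near $0$), this is infinite since $s_0>2$.

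The term $m=1$ then appears with the positive finite factor $2\ell(Q)/\|\chi_{2Q}\|_{s(\cdot)}$, so the supremum in Definition \ref{def Hör} (with $\beta\equiv\infty$) is infinite. Hence $\tilde K\notin H_{s(\cdot),2}$, and therefore $\tilde K\notin H_{s(\cdot)}=H_{s(\cdot),1}\cap H_{s(\cdot),2}$.

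The main obstacle is the bookkeeping: choosing $Q$, $x$, $y$ so that the singularity lands in the annulus $2Q\setminus Q$ inside the region where $s=s_0$, and so that the other term does not cancel it. I will also check the endpoint conventions (closed or open $[2,3]$, and $\chi_{2Q\setminus Q}$), but these only affect null sets.
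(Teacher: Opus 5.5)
Your proof is correct and follows the paper's argument: you exhibit a single cube and two points of $\frac12 Q$ for which the $m=1$ term of the $H_{s(\cdot),2}$ sum is infinite, because $K_2\notin L^{s_0}$ near its singularity. The paper takes $Q=[0,4]$, $x\in[1,2)$ (so $K_1(x)=0$ kills the second term) and $y=3$, which puts the whole support $(4,5)$ inside $2Q\setminus Q$ and gives exactly $\|K_2\|_{s_0}$; you instead remove the second term by disjointness of supports and capture only $(4,4.5)$, which suffices since $K_2$ is bounded on $[1/2,1)$.
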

\begin{proof}
	To show this, it is enough to see that $\left\| \left[\tilde{K}(\cdot,x) - \tilde{K}(\cdot,y)\right]\chi_{2Q\setminus Q}(\cdot)\right\|_{r(\cdot)}$ is infinite for some cube $Q$ and $x,y\in\frac{1}{2}Q$. In fact, let $Q=[0,4]$, $x\in\left[1,2\right)$, and $y=3$. Since $(4,5)\subset2Q\setminus Q$, 
	\begin{equation*}
		\left\| \left[\tilde{K}(\cdot,x) - \tilde{K}(\cdot,3)\right]\chi_{2Q\setminus Q}(\cdot)\right\|_{{s(\cdot)}}\geq\left\| K_2(\cdot-4)\chi_{(4,5)}(\cdot)\right\|_{s(\cdot)}=
		\left\| K_2\right\|_{s_0}.
	\end{equation*}
	The last term is infinite by Lemma \ref{norma de K2}.
\end{proof} 

The operator $T$ induced by the kernel $\tilde{K}$ satisfies the hypotheses of Theorem \ref{teo T}. Moreover, the left-hand side of the inequality in this theorem is finite for $f\in C_c^\infty$.

\begin{propo}\label{lado izq finito K tilde}
	Let $p(\cdot)\in \mathcal{P}(\mathbb{R})$, $T$ be the operator induced by $\tilde{K}$, and $\omega\in \mathcal{A}_{p(\cdot)}$. Then
	\begin{enumerate}
		\item $T$ is bounded on $L^2(\mathbb{R})$.
		\item If $f\in C^\infty_c$, then $\|(Tf)\omega\|_{p(\cdot)}<\infty$.
	\end{enumerate}

\end{propo}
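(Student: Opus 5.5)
Both parts rest on one structural fact about $\tilde{K}(x,y)=K_1(y)K_2(x-y-1)=\chi_{[2,3]}(y)K_2(x-y-1)$. The kernel vanishes unless $y\in[2,3]$ and $x-y-1\in(0,1)$, which forces $x\in(3,5)$. So $T$ only sees $f$ on $[2,3]$, and $Tf$ is supported in $[3,5]$. Each part then reduces to a Hölder/Young-type estimate using that $K_2\in L^2(\mathbb{R})$ and $K_2\in L^1(\mathbb{R})$ (Remark \ref{norma de K2}, since $1\le 2$).

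For (1), write
$$Tf(x)=\int_{\mathbb{R}} K_2(x-1-y)\,(f\chi_{[2,3]})(y)\,dy=\big(K_2*(f\chi_{[2,3]})\big)(x-1),$$
a translate of a convolution. Young's inequality then gives
$$\|Tf\|_2\le \|K_2\|_1\|f\chi_{[2,3]}\|_2\le\|K_2\|_1\|f\|_2.$$
Here $\|K_2\|_1<\infty$ because $K_2\in L^2$ and $K_2$ has support in $(0,1)$, or directly from Remark \ref{norma de K2}. As an alternative route, Cauchy--Schwarz gives
$$|Tf(x)|\le \|K_2\|_2\|f\|_2\,\chi_{[3,5]}(x),$$
hence $\|Tf\|_2\le \sqrt2\,\|K_2\|_2\|f\|_2$. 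I would use this second route, since it also serves part (2).

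For (2), take $f\in C^\infty_c$. The pointwise bound above gives $|Tf|\le \|K_2\|_2\|f\|_2\chi_{[3,5]}$. Therefore
$$\|(Tf)\omega\|_{p(\cdot)}\le \|K_2\|_2\|f\|_2\,\|\omega\chi_{[3,5]}\|_{p(\cdot)},$$
using monotonicity of the norm (Proposition \ref{propo modular}). The remaining point is the one place needing care: $\|\omega\chi_{[3,5]}\|_{p(\cdot)}<\infty$. This follows from $\omega\in\mathcal{A}_{p(\cdot)}$. Take the cube $Q=[3,5]$ in the definition, which gives $\|\omega\chi_Q\|_{p(\cdot)}\|\omega^{-1}\chi_Q\|_{p'(\cdot)}\le C|Q|$. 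The factor $\|\omega^{-1}\chi_Q\|_{p'(\cdot)}$ is strictly positive, because $\omega^{-1}>0$ wherever it is defined and the weight is not identically infinite on $Q$. So $\|\omega\chi_Q\|_{p(\cdot)}<\infty$, exactly as in Proposition \ref{lado izq finito K}.
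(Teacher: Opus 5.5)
Your proof is correct and follows the paper's route. The paper just refers back to Propositions \ref{fuerte rprima} and \ref{lado izq finito K}, i.e.\ a Cauchy--Schwarz/H\"older bound in $y$ combined with the support localization $y\in[2,3]$, $x-y-1\in(0,1)$, which places $Tf$ in $[3,5]$ (the paper's $[2,4]$ there belongs to the kernel $K$). You also supply a step the paper leaves implicit: $\|\omega\chi_{[3,5]}\|_{p(\cdot)}<\infty$ follows from the $\mathcal{A}_{p(\cdot)}$ condition on the cube $[3,5]$, because $\|\omega^{-1}\chi_{[3,5]}\|_{p'(\cdot)}>0$.
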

\begin{proof}
	The proof is analogous to that of Propositions \ref{fuerte rprima} and \ref{lado izq finito K}.
\end{proof}
The following theorem follows from Proposition \ref{lado izq finito K tilde} and Theorem \ref{teo T}.
\begin{teo}
	Let ${p(\cdot)},{r(\cdot)},s(\cdot)\in\mathcal{P}^{\log}(\mathbb{R})$ such that, $p^+<\infty$, $r(x)=2$ for all $x\in[2,5]$, $r^-_{[-7,14]}>1$, and ${p(\cdot)}={r^\prime(\cdot)}{s(\cdot)}$ with $s^->1$. Let $T$ the integral operator given by the kernel $\tilde{K}$. If $\omega\in \mathbb{A}_{{p(\cdot)},{r^\prime(\cdot)}}$, then  
	$$\left\|(T f)\omega\right\|_{p(\cdot)}\lesssim \|f\omega\|_{p(\cdot)}.$$
	
\end{teo}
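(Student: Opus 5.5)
The statement is the immediate combination of Proposition \ref{lado izq finito K tilde}, Proposition \ref{ejemplo de núcleo en Hr} and Theorem \ref{teo T}, so the plan is to check that every hypothesis of Theorem \ref{teo T} holds for the operator $T$ induced by $\tilde{K}$.

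\textbf{Step 1: exponents.} Under the hypotheses, $p(\cdot),r(\cdot),s(\cdot)\in\mathcal{P}^{\log}(\mathbb{R})$, $p^+<\infty$, and $p(\cdot)=r^\prime(\cdot)s(\cdot)$ with $s^->1$. These are exactly the exponent assumptions of Theorem \ref{teo T}.

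\textbf{Step 2: the kernel.} Since $r(x)=2$ on $[2,5]$ and $r^-_{[-7,14]}>1$, Proposition \ref{ejemplo de núcleo en Hr} gives $\tilde{K}\in H_{r(\cdot)}$. This covers both Hörmander-1 and Hörmander-2. In particular Remark \ref{T es debil (1,1)} applies, and Lemma \ref{acotación puntal del maximal sharp del operador} can be used without the substitute of Remark \ref{cambio de hipo}.

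\textbf{Step 3: boundedness and finiteness.} Part (1) of Proposition \ref{lado izq finito K tilde} shows that $T$ is bounded on $L^{2}(\mathbb{R})$, so $p_0=2$. The proof is by Hölder's inequality, using the compact support $\tilde{K}(x,y)\neq 0\Rightarrow y\in[2,3]$, $x\in[3,5]$, together with $K_2\in L^2$.

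The weight hypothesis $\omega\in\mathbb{A}_{p(\cdot),r^\prime(\cdot)}$ implies $\omega\in\mathcal{A}_{p(\cdot)}$ by Lemma \ref{Apr implica Ap}. Part (2) of Proposition \ref{lado izq finito K tilde} then shows that $\|(Tf)\omega\|_{p(\cdot)}<\infty$ for every $f\in C_c^\infty$. This finiteness is the proviso required by Theorem \ref{teo T}.

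\textbf{Step 4: conclusion and extension.} Theorem \ref{teo T} gives $\|(Tf)\omega\|_{p(\cdot)}\lesssim\|f\omega\|_{p(\cdot)}$ for all $f\in C_c^\infty$. The only delicate point is passing to general $f\in L^{p(\cdot)}(\omega)$, where the finiteness proviso is not directly available.

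For this I would use Lemma \ref{densidad en Lp de omega}. Its hypothesis $\omega\in L^{p(\cdot)}_{\rm loc}$ follows from $\omega\in\mathcal{A}_{p(\cdot)}$, since $\|\omega\chi_Q\|_{p(\cdot)}<\infty$ for every cube $Q$. Take $f_k\in C_c^\infty$ with $f_k\to f$ in $L^{p(\cdot)}(\omega)$. The uniform estimate shows that $(Tf_k)\omega$ is Cauchy in $L^{p(\cdot)}$.

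Since $\tilde{K}$ is integrable against $f$ in the local sense of Step 3, $Tf_k\to Tf$ pointwise along a subsequence. Fatou's property of the variable norm then yields the estimate for $f$.
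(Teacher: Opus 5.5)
Your Steps 1--3 are exactly the paper's argument. The paper only notes that $\tilde K\in H_{r(\cdot)}$ and cites Proposition \ref{lado izq finito K tilde} ($L^2$-boundedness, and $\|(Tf)\omega\|_{p(\cdot)}<\infty$ for $f\in C_c^\infty$) together with Theorem \ref{teo T}. It never addresses general $f$.

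Your Step 4 is an addition, and it works, but one justification is wrong. You cannot get $Tf_k\to Tf$ "in the local sense of Step 3": that bound used Cauchy--Schwarz with $f\in L^2([2,3])$, and a general $f\in L^{p(\cdot)}(\omega)$ need not have that integrability. Use the $\mathcal{A}_{p(\cdot)}$ condition instead:
\[
\|(f_k-f)\chi_{[2,3]}\|_1\lesssim\|(f_k-f)\omega\|_{p(\cdot)}\,\|\omega^{-1}\chi_{[2,3]}\|_{p^\prime(\cdot)}\to0 .
\]
Since $Tf=K_2(\cdot-1)*(f\chi_{[2,3]})$ and $K_2\in L^1(\mathbb{R})$, Young's inequality gives $Tf_k\to Tf$ in $L^1(\mathbb{R})$. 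Hence a subsequence converges a.e., and this also shows that $Tf$ is well defined for such $f$. The Fatou property of the variable norm then gives the estimate for $f$. Your Cauchy-sequence observation is not needed.
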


\subsection{Construction of kernels in $H_{n/\alpha,r(\cdot),1}\cap \mathcal{S}_{n/\alpha,r(\cdot),2}$}

\

Following the approach of \cite{S_A}, we will build kernels that satisfy the hypotheses of Theorem~\ref{teo Hör frac} for ${\beta(\cdot)}\equiv n/\alpha$ with $0<\alpha<n$.
\begin{remark}

	Note that for ${r(\cdot)},{s(\cdot)}\in\mathcal{P}^{\text{log}}(\R)$ and for ${\beta(\cdot)}\in \mathcal{P}(\R)$ such that  ${r(\cdot)}\leq{s(\cdot)}$, by \eqref{comparacion de promedios}, we set $i=1,2$:
	\begin{equation}\label{contencion de S}
			S_{\beta(\cdot),s(\cdot),i}\subset S_{\beta(\cdot),r(\cdot),i}\subset S_{\beta(\cdot),1,i}.
	\end{equation}
\end{remark}	

\begin{propo}\label{nucleo en S_Br}
	Let $r(\cdot)\in\mathcal{P}^{\text{log}}(\R)$ be such that $0\leq\alpha<n$, and let $\bar{K}(x,y)=|x-y|^\alpha K(x,y)$. If $K\in S_{\infty,r(\cdot),i}$, $i=1,2$, then $\bar{K}\in S_{\frac{n}{\alpha},{r(\cdot)},i}$, with the convention $\frac{n}{0}=\infty$. In particular, $\bar{K}\in S_{\frac{n}{\alpha},1,i}$.
\end{propo}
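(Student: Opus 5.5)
We check the defining inequality of $S_{\frac{n}{\alpha},r(\cdot),i}$ directly, for $i=1$; the case $i=2$ is identical with the roles of the variables exchanged. Fix a cube $Q$ and $x\in\frac{1}{2}Q$. For $y\in 2Q\setminus Q$ both points lie in $2Q$, so
$$|x-y|\le \sqrt{n}\,\ell(2Q)=2\sqrt{n}\,\ell(Q).$$
Hence pointwise on $2Q\setminus Q$ we have $|\bar K(x,y)|\le (2\sqrt n)^{\alpha}\ell(Q)^{\alpha}|K(x,y)|$. By the monotonicity of the norm (Proposition \ref{propo modular}(1)) this gives
$$\left\|\bar K(x,\cdot)\chi_{2Q\setminus Q}(\cdot)\right\|_{r(\cdot)}\lesssim \ell(Q)^{\alpha}\left\|K(x,\cdot)\chi_{2Q\setminus Q}(\cdot)\right\|_{r(\cdot)}.$$

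Next we apply the hypothesis $K\in S_{\infty,r(\cdot),1}$. Since $\|\chi_Q\|_\infty=1$, this hypothesis reads
$$\sup_{x\in\frac12 Q}\frac{\|K(x,\cdot)\chi_{2Q\setminus Q}\|_{r(\cdot)}}{\|\chi_Q\|_{r(\cdot)}}\lesssim \frac{1}{|Q|}.$$
Combining this with the previous bound,
$$\sup_{x\in\frac12 Q}\frac{\|\bar K(x,\cdot)\chi_{2Q\setminus Q}\|_{r(\cdot)}}{\|\chi_Q\|_{r(\cdot)}}\lesssim\frac{\ell(Q)^\alpha}{|Q|}=\frac{|Q|^{\alpha/n}}{|Q|}.$$

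It remains to identify $|Q|^{\alpha/n}$ with $\|\chi_Q\|_{n/\alpha}$. Because $\beta\equiv n/\alpha$ is a constant exponent, this holds with equality: $\|\chi_Q\|_{n/\alpha}=|Q|^{\alpha/n}$. So no log-Hölder estimate is needed for this step. When $\alpha=0$, the convention $n/0=\infty$ gives $\|\chi_Q\|_\infty=1=|Q|^0$, and the statement is just the hypothesis itself.

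Finally, the assertion $\bar K\in S_{\frac n\alpha,1,i}$ follows from the inclusion \eqref{contencion de S} with $1\le r(\cdot)$; this is where $r(\cdot)\in\mathcal{P}^{\log}$ enters. I do not expect any real obstacle. The only point needing care is the diameter bound $|x-y|\lesssim\ell(Q)$ on $2Q\setminus Q$, together with the fact that the constant does not depend on $Q$.
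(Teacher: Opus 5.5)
Your proposal is correct and follows essentially the same route as the paper. Both bound $|x-y|^\alpha\lesssim \ell(Q)^\alpha$ pointwise for $x\in\frac12 Q$ and $y\in 2Q$, apply $K\in S_{\infty,r(\cdot),i}$, and use $\ell(Q)^\alpha=\|\chi_Q\|_{n/\alpha}$; the case $\alpha=0$ is trivial and the final assertion follows from the inclusion \eqref{contencion de S}. Your diameter bound $2\sqrt{n}\,\ell(Q)$ is in fact safer than the paper's $\sqrt{n}\,\ell(Q)$, which slightly understates the true value $\frac54\sqrt{n}\,\ell(Q)$, though this only affects the constant.
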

\begin{proof}
	For $i=1$. If $\alpha=0$ there is noting to prove. If $\alpha>0$, let $Q$ be a cube. For all $x\in\frac{1}{2}Q$ and for all $y\in 2Q$, $|x-y|^\alpha\leq (\sqrt{n}\ell(Q))^\alpha$. Therefore 
	\begin{equation*}
	\frac{\|\bar{K}(x,\cdot)\chi_{2Q\setminus Q}(\cdot)\|_{r(\cdot)}}{\|\chi_{Q}\|_{r(\cdot)}}\lesssim \left(\sqrt{n}\right)^\alpha \ell(Q)^\alpha |Q|^{-1}\lesssim\frac{\|\chi_{Q}\|_{n/\alpha}}{|Q|}. 
	\end{equation*}	
	The case $i=2$ is analogous.
\end{proof}
\begin{propo}\label{construcion kernel}
	Let $r(\cdot)\in\mathcal{P}^{\text{log}}(\R)$ be such that $0<\alpha<n$, and let $\bar{K}(x,y)=|x-y|^\alpha K(x,y)$. If $K\in H_{{r(\cdot)},1}\cap S_{\infty,{r(\cdot)}}$, then $\bar{K}\in H_{\frac{n}{\alpha},{r(\cdot)},1}\cap S_{\frac{n}{\alpha},{r(\cdot)}}$. In particular, $\bar{K}\in H_{\frac{n}{\alpha},{r(\cdot)},1}\cap S_{\frac{n}{\alpha},1,2}$.
\end{propo}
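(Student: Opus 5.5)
The size part $\bar K\in S_{\frac n\alpha,r(\cdot)}$ is exactly Proposition \ref{nucleo en S_Br} applied with $i=1,2$, since $K\in S_{\infty,r(\cdot)}=S_{\infty,r(\cdot),1}\cap S_{\infty,r(\cdot),2}$. The final claim $\bar K\in S_{\frac n\alpha,1,2}$ then follows from the inclusions \eqref{contencion de S}. So the real work is to show $\bar K\in H_{\frac n\alpha,r(\cdot),1}$.

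Fix a cube $Q$, points $x,z\in\frac12 Q$, and $y\in 2^mQ\setminus 2^{m-1}Q$ with $m\ge1$. I will split the difference as
$$\bar K(x,y)-\bar K(z,y)=|x-y|^\alpha\,[K(x,y)-K(z,y)]+\big(|x-y|^\alpha-|z-y|^\alpha\big)K(z,y).$$

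\textbf{First term.} Since $|x-y|\lesssim 2^m\ell(Q)$, it contributes at most $(2^m\ell(Q))^\alpha\|[K(x,\cdot)-K(z,\cdot)]\chi_{2^mQ\setminus2^{m-1}Q}\|_{r(\cdot)}$. Because $\|\chi_{2^mQ}\|_{n/\alpha}=(2^m\ell(Q))^\alpha$, the factor $(2^m\ell(Q))^\alpha$ is cancelled by the normalization $1/\|\chi_{2^mQ}\|_{n/\alpha}$ in the definition of $H_{\frac n\alpha,r(\cdot),1}$. What remains is exactly the $H_{r(\cdot),1}$ sum for $K$, which is uniformly bounded.

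\textbf{Second term.} For $m\ge2$ we have $|x-y|\approx|z-y|\approx 2^m\ell(Q)$ and $|x-z|\lesssim\ell(Q)$. The mean value theorem gives
$$\big||x-y|^\alpha-|z-y|^\alpha\big|\lesssim \ell(Q)\,(2^m\ell(Q))^{\alpha-1}.$$
For $m=1$, $y$ may be close to $x$ or $z$, so I will use the crude bound $\lesssim\ell(Q)^\alpha$ (recall $\alpha<n$, and in the case $\alpha<1$ the MVT bound may fail). Either way the coefficient is at most $2^{-m\gamma}(2^m\ell(Q))^\alpha$ for some $\gamma>0$ (with $\gamma=1-\alpha$ or $1$ in the MVT case, and $\gamma=0$ if only $m=1$ is treated crudely).

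Next I need to control $\|K(z,\cdot)\chi_{2^mQ\setminus2^{m-1}Q}\|_{r(\cdot)}$ using $S_{\infty,r(\cdot),1}$. Let $P=2^{m-1}Q$. Then $z\in\frac12Q\subset\frac12P$ and $2^mQ\setminus2^{m-1}Q=2P\setminus P$, so the size condition applied to $P$ gives
$$\|K(z,\cdot)\chi_{2P\setminus P}\|_{r(\cdot)}\lesssim \|\chi_P\|_{r(\cdot)}/|P|.$$
Multiplying by $(2^m\ell(Q))^n/\|\chi_{2^mQ}\|_{r(\cdot)}$ and using the doubling property \eqref{doblante}, the $m$-th term is bounded by $C\,2^{-m\gamma}\cdot(2^m\ell(Q))^\alpha/\|\chi_{2^mQ}\|_{n/\alpha}=C2^{-m\gamma}$.

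\textbf{Main obstacle.} The series $\sum_m 2^{-m\gamma}$ converges only when $\gamma>0$, i.e.\ when $\alpha<1$ in the MVT case. For $\alpha\ge1$ I would instead use
$$\big||x-y|^\alpha-|z-y|^\alpha\big|\lesssim |x-z|\,(2^m\ell(Q))^{\alpha-1},$$
which still gives the decay factor $2^{-m}$ relative to $(2^m\ell(Q))^\alpha$, so $\gamma=1$ always. This relies on $|x-z|\le\ell(Q)$ being small compared with $|x-y|\ge c\,2^m\ell(Q)$ for $m\ge2$; the term $m=1$ is a single term handled by the crude bound. Summing both contributions over $m$ then yields the $H_{\frac n\alpha,r(\cdot),1}$ bound uniformly in $Q$, $x$, $z$.
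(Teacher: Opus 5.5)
Your proposal is correct and follows essentially the same route as the paper. You use the same splitting of $\bar K(x,y)-\bar K(z,y)$ into two terms. In the kernel-difference term, $(2^m\ell(Q))^\alpha=\|\chi_{2^mQ}\|_{n/\alpha}$ cancels and $H_{r(\cdot),1}$ applies. The other term is handled by the mean value bound $\bigl||x-y|^\alpha-|z-y|^\alpha\bigr|\lesssim 2^{-m}(2^m\ell(Q))^\alpha$ together with $S_{\infty,r(\cdot),1}$ at scale $2^{m-1}Q$. The size part and the final inclusion come from Proposition \ref{nucleo en S_Br} and \eqref{contencion de S}.

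Your worry about $m=1$ and about $\gamma=1-\alpha$ is unnecessary, though harmless. Since $y\notin 2^{m-1}Q$ and $x,z\in\frac12Q$, you have $|x-y|,|z-y|\ge 2^{m-3}\ell(Q)$ already for $m=1$. So the mean value bound with decay $2^{-m}$ holds for every $m\ge1$ and every $\alpha\in(0,n)$.
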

\begin{proof}
	Let $K\in H_{{r(\cdot)},1}\cap S_{\infty,{r(\cdot)}}$, by Proposition \ref{nucleo en S_Br} it is enough to see that $\bar{K}\in H_{\frac{n}{\alpha},{r(\cdot)},1}$. Let $Q$ be a cube, $x,y\in\frac{1}{2}Q$, and $z\in2^mQ\setminus2^{m-1}Q$. By the mean value theorem, $|y-z|^\alpha-|x-z|^\alpha\lesssim \frac{(2^m \ell(Q))^\alpha}{2^m}$. Therefore,
	\begin{align*}
	|\bar{K}(x,z)&-\bar{K}(y,z)|\\
	&\lesssim ||x-z|^\alpha-|y-z|^\alpha||K(x,z)|+|y-z|^\alpha |K(x,z)-K(y,z)|\\
	&\lesssim (2^m \ell(Q))^\alpha\left( \frac{|K(x,z)|}{2^m}+|K(x,z)-K(y,z)|  \right).
	\end{align*}
	Finally, since $K\in H_{r(\cdot),1}\cap S_{\infty,r(\cdot),1}$,
	\begin{align*}
	\sum_{m=1}^{\infty}\frac{\left(2^m \ell(Q)\right)^n}{\|\chi_{2^mQ}\|_{\frac{n}{\alpha}}}&\frac{\left\|(\bar{K}(x,\cdot)-\bar{K}(y,\cdot))\chi_{2^mQ\setminus2^{m-1}Q}\right\|_{r(\cdot)}}{\|\chi_{2^mQ}\|_{r(\cdot)}}\\
	&\lesssim \sum_{m=1}^{\infty}\frac{(2^m\ell(Q))^{n}}{2^m}\frac{\|K(x,\cdot)\chi_{2^mQ\setminus2^{m-1}Q}\|_{r(\cdot)}}{\|\chi_{2^mQ}\|_{r(\cdot)}}\\
	&\quad+\sum_{m=1}^{\infty}\left(2^m\ell(Q)\right)^n\frac{\|(K(x,\cdot)-K(y,\cdot))\chi_{2^mQ\setminus2^{m-1}Q}\|_{r(\cdot)}}{\|\chi_{2^mQ}\|_{r(\cdot)}}\\
	&\lesssim \sum_{m=1}^{\infty}\frac{(2^m\ell(Q))^{n}}{2^m}|2^mQ|^{-1}+C<\infty.
	\end{align*}	
\end{proof}

We will see next that the kernel $\tilde{K}$ defined in \eqref{K tilde} lies in $K_{r(\cdot),1}\cap S_{\infty,r(\cdot)}$. Moreover, the operator $T_{n/\alpha}$ induced by $\bar{K}(x,y)=|x-y|^\alpha\tilde{K}(x,y)$ satisfies that the left-hand side of inequality of Theorem \ref{teo Hör frac} is finite for $f\in C_c^\infty$.

\begin{propo}\label{k tilde en s infinito}
	Let $\tilde{K}$ be the kernel defined in \eqref{K tilde} and let $r(\cdot)\in\mathcal{P}(\mathbb{R})$. If $r(x)=2$ for all $x\in[-7,14]$ then $\tilde{K}\in S_{\infty,r(\cdot)}$.
\end{propo}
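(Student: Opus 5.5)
The plan is to verify the two size conditions of Definition~\ref{def SrB} with $\beta(\cdot)\equiv\infty$ (here $n=1$) directly. Since $\|\chi_Q\|_\infty=1$, this means showing that
\begin{equation*}
\sup_{x\in\frac12 Q}\frac{\|\tilde K(x,\cdot)\chi_{2Q\setminus Q}\|_{r(\cdot)}}{\|\chi_Q\|_{r(\cdot)}}\lesssim\frac{1}{|Q|},
\qquad
\sup_{y\in\frac12 Q}\frac{\|\tilde K(\cdot,y)\chi_{2Q\setminus Q}\|_{r(\cdot)}}{\|\chi_Q\|_{r(\cdot)}}\lesssim\frac{1}{|Q|}
\end{equation*}
uniformly in $Q$. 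The idea is that the support of $\tilde K(x,y)=\chi_{[2,3]}(y)K_2(x-y-1)$ forces $x-y\in(1,2)$. Combined with the geometry of $\frac12Q$ and $2Q\setminus Q$, this pins down both the size and the location of every cube for which these quantities are nonzero.

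For the first condition, fix $Q$ and $x\in\frac12Q$, and suppose the norm is nonzero. Then there is $y\in(2Q\setminus Q)\cap[2,3]$ with $x-y\in(1,2)$, so in particular $x\in(3,5)$. Since $x\in\frac12Q$ and $y\notin Q$, we get $|x-y|\ge \ell(Q)/4$. Since $y\in 2Q$, we get $|x-y|\le 3\ell(Q)/4$. Together with $1<|x-y|<2$ this gives $4/3<\ell(Q)<8$. Because $Q$ contains $x\in(3,5)$ and $2Q$ has length less than $16$, both $Q$ and $2Q$ lie inside $[-7,14]$, where $r\equiv 2$. Hence $\|\chi_Q\|_{r(\cdot)}=|Q|^{1/2}\ge(4/3)^{1/2}$. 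For the numerator, the function $\tilde K(x,\cdot)$ is supported in $[2,3]$, where $r=2$, so its norm is at most $\|K_2(x-\cdot-1)\|_2=\|K_2\|_2<\infty$ by Remark~\ref{norma de K2}. The ratio is therefore bounded by a constant, and since $|Q|<8$ that constant is $\lesssim 1/|Q|$.

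The second condition goes the same way with the roles of the variables swapped. Fix $y\in\frac12Q$ with $\tilde K(\cdot,y)\chi_{2Q\setminus Q}\not\equiv0$. Then $y\in[2,3]$, and some $x\in 2Q\setminus Q$ satisfies $x-y\in(1,2)$, so $x\in(3,5)$. The same distance argument (now $y\in\frac12Q$ and $x\in 2Q\setminus Q$) gives $4/3<\ell(Q)<8$. Since $y\in[2,3]$, this places $Q$ and $2Q$ inside roughly $[-6,11]\subset[-7,14]$. As before, the numerator is at most $\|K_2\|_2$, because the $x$-support lies in $(3,5)$ where $r=2$, and the denominator is $|Q|^{1/2}\gtrsim1$. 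This gives the bound $\lesssim 1/|Q|$.

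There is no real obstacle; the only delicate step is the bookkeeping that confirms every relevant cube (and its double) stays inside $[-7,14]$. That is what allows replacing $\|\cdot\|_{r(\cdot)}$ by the $L^2$ norm and $\|\chi_Q\|_{r(\cdot)}$ by $|Q|^{1/2}$, and it is the reason for the hypothesis $r\equiv2$ on $[-7,14]$.
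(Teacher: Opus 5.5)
Your argument is correct and is essentially the paper's: show that any cube with nonzero numerator satisfies $Q\subset[-7,14]$, bound the numerator by $\|K_2\|_2$ and the denominator by $|Q|^{1/2}$, and use the upper bound on $|Q|$.

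There are two slips, neither of which is load-bearing.

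\begin{enumerate}
\item Since $2Q$ has half-side $\ell(Q)$, the correct estimate is $|x-y|\le 5\ell(Q)/4$, not $3\ell(Q)/4$. This gives only $\ell(Q)>4/5$. For example, $Q=[3,4]$, $x=3.7$, $y=2.6$ produces a nonzero numerator with $\ell(Q)=1<4/3$.
\item The double $2Q$ need not lie in $[-7,14]$. For example, $Q=[3.05,10.65]$, $x=4.975$, $y=3$ gives $2Q=[-0.75,14.45]$.
\end{enumerate}

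Neither fact is needed. The denominator only involves $Q$, and you correctly show $Q\subset[-7,14]$. What lets you pass to the $L^2$ norm in the numerators is that they are supported on $[2,3]$ and $(3,5)$, not any containment of $2Q$. Finally, the bound $\|K_2\|_2|Q|^{-1/2}\le\sqrt{8}\,\|K_2\|_2|Q|^{-1}$ already finishes the proof without any lower bound on $|Q|$, which is exactly how the paper concludes.
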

\begin{proof}
	In the proof of Proposition \ref{ejemplo de núcleo en Hr}, it was implicitly proved that for all $x\in \frac{1}{2}Q$, $\|\tilde{K}(x,\cdot)\chi_{2Q\setminus Q}\|_{r(\cdot)}\neq0$ implies $Q\subset [-7,14]$. Therefore we need only consider this case. Let $Q\subset[-7,14]$, hence $|Q|^{-1/2}\lesssim|Q|^{-1}$. For all $x\in\frac{1}{2}Q$, 
	\begin{equation*}
	\frac{\|\tilde{K}(x,\cdot)\chi_{2Q\setminus Q}\|_{r(\cdot)}}{\|\chi_Q\|_{r(\cdot)}}\leq\frac{\|K_2\|_2}{|Q|^{1/2}}\lesssim |Q|^{-1}.
	\end{equation*}
	The proof that $\tilde{K}\in S_{\infty,r(\cdot),2}$ is analogous.
\end{proof}

\begin{propo}\label{lado izq finito K alpha}
	Let $p(\cdot)\in \mathcal{P}(\mathbb{R})$, $T_{n/\alpha}$ be the operator induced by $|x-y|^\alpha\tilde{K}(x,y)$, and $\omega\in \mathcal{A}_{p(\cdot)}$. If $f\in C^\infty_c$, then $\|(T_{n/\alpha}f)\omega\|_{p(\cdot)}<\infty$.
\end{propo}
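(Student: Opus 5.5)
Since $n=1$ in the examples and $\tilde{K}(x,y)=\chi_{[2,3]}(y)K_2(x-y-1)$, I will follow the proof of Proposition \ref{lado izq finito K} and first localize the support of $T_{n/\alpha}f$. If $\tilde{K}(x,y)\neq0$, then $y\in[2,3]$ and $x-y-1\in(0,1)$, so $x\in(3,5)$ and $|x-y|\le 3$. Hence the weight factor $|x-y|^\alpha$ is harmless: it is bounded by $3^\alpha$ on the support. This gives the pointwise bound
\begin{equation*}
|T_{n/\alpha}f(x)|\le 3^\alpha\chi_{[3,5]}(x)\int_{\mathbb{R}}\chi_{[2,3]}(y)K_2(x-y-1)|f(y)|\,dy .
\end{equation*}

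Next I estimate the inner integral uniformly in $x$. Since $f\in C^\infty_c$, it is bounded, and the Cauchy--Schwarz inequality (the constant exponent case of Hölder's inequality \ref{teo Hölder}) gives
\begin{equation*}
\int_{\mathbb{R}}\chi_{[2,3]}(y)K_2(x-y-1)|f(y)|\,dy\le \|K_2\|_2\,\|f\|_2 .
\end{equation*}
By Remark \ref{norma de K2}, $\|K_2\|_2<\infty$, so $|T_{n/\alpha}f|\lesssim\|K_2\|_2\|f\|_2\,\chi_{[3,5]}$ pointwise.

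By property (1) of Proposition \ref{propo modular}, it then follows that $\|(T_{n/\alpha}f)\omega\|_{p(\cdot)}\lesssim\|K_2\|_2\|f\|_2\,\|\omega\chi_{[3,5]}\|_{p(\cdot)}$. It remains to see that $\omega\chi_{[3,5]}\in L^{p(\cdot)}$. This is implicit in $\omega\in\mathcal{A}_{p(\cdot)}$: the condition $\|\omega\chi_Q\|_{p(\cdot)}\|\omega^{-1}\chi_Q\|_{p'(\cdot)}\le C|Q|$ with $Q=[3,5]$ forces $\|\omega\chi_Q\|_{p(\cdot)}<\infty$, since $\|\omega^{-1}\chi_Q\|_{p'(\cdot)}>0$ (the weight is finite a.e.). This last point is the only mildly delicate step. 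It is the same convention as in Proposition \ref{lado izq finito K}, where the $\mathcal{A}_{p(\cdot)}$ condition is understood to include local membership of $\omega$ in $L^{p(\cdot)}$.
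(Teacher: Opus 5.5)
Your argument is correct and is essentially the paper's proof. You localize the support of $T_{n/\alpha}f$, bound $|x-y|^\alpha$ by $3^\alpha$, apply Cauchy--Schwarz to get $\|K_2\|_2\|f\|_2$, and then use that $\|\omega\chi_I\|_{p(\cdot)}<\infty$ for a bounded interval $I$. Your interval $[3,5]$ is in fact the right one for $\tilde{K}$; the paper's $[2,4]$ is inherited from the kernel $K$, which is harmless since any bounded interval suffices. Your explanation of why $\omega\in\mathcal{A}_{p(\cdot)}$ forces $\|\omega\chi_I\|_{p(\cdot)}<\infty$ fills in a step the paper leaves implicit.
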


\begin{proof}
	By the same argument of Proposition \ref{lado izq finito K}, we have
	\begin{equation*}
	\|(T_{n/\alpha}f)\omega\|_{p(\cdot)}\lesssim3^\alpha\|\omega\chi_{[2,4]}\|_{p(\cdot)}\|K_2\|_2\|f\|_2<\infty.
	\end{equation*}
\end{proof}
 The following theorem follows from Propositions \ref{ejemplo de núcleo en Hr}, \ref{construcion kernel}, \ref{k tilde en s infinito}, and \ref{lado izq finito K alpha}, and \\Theorem \ref{teo TB}.
\begin{teo}
	Let $p(\cdot),q(\cdot),r(\cdot),s(\cdot)\in\mathcal{P}^{\log}(\mathbb{R})$ such that $p(\cdot)<q(\cdot)\leq q^+<\infty$, $p(\cdot)=r^\prime(\cdot)s(\cdot)$ with $s^->1$, $r(x)=2$ for all $x\in[-7,14]$, and $\alpha/n=1/p(\cdot)-1/q(\cdot)$ with $0<\alpha<n$. Let $T_{n/\alpha}$ be the integral given by the kernel $\bar{K}(x,y)=|x-y|^\alpha\tilde{K}(x,y)$. If $\omega\in \mathbb{A}_{q(\cdot),\frac{\frac{n}{\alpha} r^\prime(\cdot)}{\frac{n}{\alpha}-r^\prime(\cdot)}}$, then 
	$$\left\|\left(T_{n/\alpha}f\right)\omega\right\|_{q(\cdot)}\lesssim\|f\omega\|_{p(\cdot)}.$$
\end{teo}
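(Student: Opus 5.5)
This theorem is an assembly of earlier results: we chain the Coifman--Fefferman type domination for $T_{n/\alpha}$ with the weighted boundedness of $M_{\beta(\cdot),r'(\cdot)}$. Concretely, we check that the operator induced by $\bar{K}(x,y)=|x-y|^\alpha\tilde{K}(x,y)$ satisfies every hypothesis of Theorem \ref{teo TB} with the constant exponent $\beta(\cdot)\equiv n/\alpha$. Then we verify that the left-hand side is finite on a dense class.

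\textbf{Step 1: the kernel hypotheses.} Since $r(x)=2$ on $[-7,14]$, we have $r(x)=2$ on $[2,5]$ and $r^-_{[-7,14]}=2>1$. Proposition \ref{ejemplo de núcleo en Hr} then gives $\tilde{K}\in H_{r(\cdot)}\subset H_{r(\cdot),1}$, and Proposition \ref{k tilde en s infinito} gives $\tilde{K}\in S_{\infty,r(\cdot)}$. Proposition \ref{construcion kernel} now yields $\bar{K}\in H_{n/\alpha,r(\cdot),1}\cap S_{n/\alpha,1,2}$, which is exactly the kernel hypothesis of Theorem \ref{teo TB} with $\beta(\cdot)\equiv n/\alpha$.

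\textbf{Step 2: the exponent hypotheses.} The constant exponent $\beta\equiv n/\alpha$ lies in $\mathcal{P}^{\log}(\R)$ and satisfies $\beta^+=n/\alpha<\infty$ because $\alpha>0$. The identity $1/\beta(\cdot)=1/p(\cdot)-1/q(\cdot)$ is the assumption $\alpha/n=1/p(\cdot)-1/q(\cdot)$. The remaining conditions, $p(\cdot)<q(\cdot)\le q^+<\infty$ and $p(\cdot)=r'(\cdot)s(\cdot)$ with $s^->1$, are assumed verbatim. Finally, the weight class $\mathbb{A}_{q(\cdot),\frac{\frac{n}{\alpha}r'(\cdot)}{\frac{n}{\alpha}-r'(\cdot)}}$ is the class of Theorem \ref{teo TB} with $\beta=n/\alpha$.

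\textbf{Step 3: conclusion and the main obstacle.} Theorem \ref{teo TB} gives $\|(T_{n/\alpha}f)\omega\|_{q(\cdot)}\lesssim\|f\omega\|_{p(\cdot)}$, but only when the left-hand side is finite, and removing this proviso is the delicate point. I would argue as follows.
\begin{itemize}
\item By Lemma \ref{Apr implica Ap}, $\omega\in\mathcal{A}_{q(\cdot)}$.
\item By Proposition \ref{lado izq finito K alpha}, applied with exponent $q(\cdot)$, $\|(T_{n/\alpha}f)\omega\|_{q(\cdot)}<\infty$ for every $f\in C_c^\infty$. So the inequality holds on $C_c^\infty$.
\item The weight $\omega$ is locally in $L^{p(\cdot)}$: from $\omega\in\mathcal{A}_{q(\cdot)}$ it is locally in $L^{q(\cdot)}$, and $p(\cdot)\le q(\cdot)$ on cubes by \eqref{comparacion de promedios}. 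Hence Lemma \ref{densidad en Lp de omega} makes $C_c^\infty$ dense in $L^{p(\cdot)}(\omega)$.
\item For general $f$, approximate by $f_k\in C_c^\infty$ with $f_k\to f$ in $L^{p(\cdot)}(\omega)$. The estimate shows that $(T_{n/\alpha}f_k)\omega$ is Cauchy in $L^{q(\cdot)}$.
\item Identify the limit with $(T_{n/\alpha}f)\omega$. The kernel is supported where $y\in[2,3]$, $x\in[3,5]$ and is bounded by $5^\alpha K_2$, so $T_{n/\alpha}f$ only involves $f\chi_{[2,3]}$ and is controlled locally by the $L^2$ bound of Proposition \ref{lado izq finito K alpha}. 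Passing to an a.e.-convergent subsequence and using Fatou's property of the norm then gives the bound for every $f$.
\end{itemize}
If the identification of the limit gives trouble, I would simply state the inequality for $f\in C_c^\infty$, which is what the cited propositions directly support.
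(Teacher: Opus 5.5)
Your Steps 1--2 are the paper's argument: it derives this theorem by combining the proposition giving $\tilde{K}\in H_{r(\cdot)}$ with Propositions \ref{k tilde en s infinito} and \ref{construcion kernel}, then applying Theorem \ref{teo TB} with $\beta\equiv n/\alpha$. It uses Proposition \ref{lado izq finito K alpha} only to guarantee that the left-hand side is finite for $f\in C_c^\infty$, and writes no density argument. Your extra Step 3 is a sound addition once you justify the limit identification, which you left vague. The defining condition of $\mathbb{A}_{q(\cdot),\frac{\frac{n}{\alpha}r'(\cdot)}{\frac{n}{\alpha}-r'(\cdot)}}$ puts $\omega^{-1}\chi_{[2,3]}$ in $L^{\frac{p(\cdot)r'(\cdot)}{p(\cdot)-r'(\cdot)}}$, so H\"older together with $r'=2$ on $[2,3]$ gives $\|g\chi_{[2,3]}\|_2\lesssim\|g\omega\|_{p(\cdot)}$, whence $T_{n/\alpha}f_k\to T_{n/\alpha}f$ uniformly. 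In fact this same bound shows $\|(T_{n/\alpha}f)\omega\|_{q(\cdot)}<\infty$ for every $f$ with $\|f\omega\|_{p(\cdot)}<\infty$, so Theorem \ref{teo TB} applies directly and density is unnecessary.
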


\section{Appendix}\label{sec 5} In this section, we present the proof of the equivalence between the norms $\|\cdot\|_{p(\cdot)}^*$ and $\|\cdot\|_{p(\cdot)}$, as well as Calderón-Zygmund decomposition. To prove Proposition \ref{propo norma conjugada}, we need two preliminary results.
\begin{lema}\label{lema acotación de la modular para q finito}
	Let ${p(\cdot)},{r(\cdot)}\in\mathcal{P}(\R)$ with $r(\cdot)\leq p(\cdot)<\infty$, and let ${q(\cdot)}$ be the exponent defined by $1/r(\cdot)=1/p(\cdot)+1/q(\cdot)$.	If $\left\|f\chi_{\R\setminus(\R)_\infty^{q(\cdot)}}\right\|^\ast_{p(\cdot)}\leq1$ and $\rho_{p(\cdot)}\left(f\chi_{\R\setminus(\R)_\infty^{q(\cdot)}}\right)<\infty$, then $\rho_{p(\cdot)}\left(f\chi_{\R\setminus(\R)_\infty^{q(\cdot)}}\right)\leq1.$	
\end{lema}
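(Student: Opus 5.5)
Write $F=f\chi_{\R\setminus(\R)^{q(\cdot)}_\infty}$ and set $E=\R\setminus(\R)^{q(\cdot)}_\infty$. On $E$ we have $q(x)<\infty$, hence $r(x)<p(x)$ there. Since $p(\cdot)<\infty$, the modular of $F$ is simply $\rho_{p(\cdot)}(F)=\int_E|f|^{p(x)}dx$, and by hypothesis this is finite. The idea is to test the definition of $\|\cdot\|^\ast_{p(\cdot)}$ against a function $g$ that reproduces the modular, exactly as in the classical proof that the associate norm dominates the modular.

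\textbf{Choice of test function.} On $E$ take
$$g(x)=\frac{|f(x)|^{p(x)/q(x)}}{\lambda^{1/q(x)}},\qquad \lambda=\max\{\rho_{p(\cdot)}(F),\,1\}\ \text{(or }\lambda=\rho_{p(\cdot)}(F)\text{ if positive)},$$
and set $g=0$ off $E$. The algebra rests on $p/q=p/r-1$ and $1/q=1/r-1/p$, which give
$$|f|^{r}|g|^{r}=|f|^{r+rp/q}\lambda^{-r/q}=|f|^{p}\lambda^{-r/q}.$$
The modular of $g$ is $\rho_{q(\cdot)}(g)=\int_E|f|^{p}/\lambda=\rho_{p(\cdot)}(F)/\lambda\le 1$. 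Since $g$ vanishes on $(\R)^{q(\cdot)}_\infty$, no $L^\infty$ term appears, so $\|g\|_{q(\cdot)}\le1$. The hypothesis $\|F\|^\ast\le1$ then yields $\|Fg\|_{r(\cdot)}\le1$, and therefore $\rho_{r(\cdot)}(Fg)\le1$ by Proposition \ref{propo modular}(4).

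\textbf{Conclusion, case $\lambda=1$.} If $\rho_{p(\cdot)}(F)\le1$ there is nothing to prove. So assume $\rho:=\rho_{p(\cdot)}(F)>1$ and take $\lambda=\rho$. The previous step gives
$$1\ge\rho_{r(\cdot)}(Fg)=\int_E|f|^{p(x)}\rho^{-r(x)/q(x)}dx.$$
Since $r/q=1-r/p<1$, we have $\rho^{-r/q}\ge\rho^{-1}$, so this integral is at least $\rho/\rho=1$. Equality would force $\rho^{-r/q}=\rho^{-1}$ almost everywhere on the support, which is impossible because $r/q<1$ on $E$. This contradicts $\rho>1$ only if the inequality is strict; equality $=1$ is still consistent with the chain above.

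\textbf{Main obstacle.} The step I expect to need care is exactly this failure of strictness in the contradiction. To repair it, I will argue by scaling. Suppose $\rho_{p(\cdot)}(F)>1$. By Proposition \ref{propo modular}(2), applicable because the modular is finite, there is $\mu>1$ with $\rho_{p(\cdot)}(F/\mu)=1$. Since the modular $t\mapsto\rho_{p(\cdot)}(tF)$ is strictly increasing on the set where $F\neq0$, the function $\mu F$ still satisfies $\rho_{p(\cdot)}(\mu F)>1$ for $\mu>1$.

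The cleaner route is to use $G=F/\mu$, which has modular exactly $1$, together with $g_G=|G|^{p/q}$ (now $\lambda=1$), which satisfies $\|g_G\|_{q(\cdot)}\le1$. Then $\rho_{r(\cdot)}(G g_G)=\rho_{p(\cdot)}(G)=1$, so $\|Gg_G\|_{r(\cdot)}=1$ (the norm equals $1$ when the modular is $1$ and $p^+<\infty$ on $E$; if the exponents are unbounded, I would use continuity of the modular). On the other hand,
$$\|Gg_G\|_{r(\cdot)}\le\|G\|^\ast=\mu^{-1}\|F\|^\ast\le\mu^{-1}<1,$$
which is a contradiction. This contradiction requires that the modular $1$ implies norm $1$ for $Gg_G$ in $L^{r(\cdot)}$, and verifying this is the delicate point, handled by Proposition \ref{propo modular}(2)–(4).
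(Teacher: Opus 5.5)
Your final scaling argument is correct and is essentially the paper's proof. Your $g_G=|F/\mu|^{p(\cdot)/q(\cdot)}$ is exactly the paper's test function $(|f|/\lambda)^{(p(\cdot)-r(\cdot))/r(\cdot)}$, since $p/q=(p-r)/r$. Where you use homogeneity of $\|\cdot\|^\ast_{p(\cdot)}$, the paper uses $\rho_{r(\cdot)}(\lambda h)\ge\lambda\,\rho_{r(\cdot)}(h)$. The step ``modular $1$ implies norm $1$'' needs no bound on the exponents: $\|h\|_{r(\cdot)}<1$ would give $\rho_{r(\cdot)}(h)\le\|h\|_{r(\cdot)}<1$ by Proposition \ref{propo modular}(4).

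Your first attempt was in fact already complete. As you observed, equality would force $\rho^{-r/q}=\rho^{-1}$ a.e.\ on $\{f\neq0\}\cap E$, and that set has positive measure because $\int_E|f|^{p(x)}dx=\rho>0$. So $\rho_{r(\cdot)}(Fg)>1$ strictly, which contradicts $\rho_{r(\cdot)}(Fg)\le1$. The ``main obstacle'' was not real, and that version even avoids choosing $\mu$ via Proposition \ref{propo modular}(2).
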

\begin{proof}
	Suppose that $\rho_{p(\cdot)}\left(f\chi_{\R\setminus(\R)_\infty^{q(\cdot)}}\right)>1$, then by Proposition \ref{propo modular} there exists $\lambda>1$ such that
	$$\rho_{p(\cdot)}\left(\frac{f\chi_{\R\setminus(\R)_\infty^{q(\cdot)}}}{\lambda}\right)=1.$$
	Define
	$$	 g(x)= \left\{ \begin{array}{lcc} \left(\frac{|f(x)|}{\lambda}\right)^\frac{p(x)-r(x)}{r(x)}\chi_{\R\setminus(\R)^{q(\cdot)}_\infty} & \text{if} & f(x)\neq0,  \\ \\ 0 & \text{if} & f(x)=0. \end{array} \right.$$  
	Then 
	\begin{equation*}
		\rho_{q(\cdot)}(g)=\int_{\chi_{\R\setminus(\R)^{q(\cdot)}_\infty}}\left(\frac{|f(x)|}{\lambda}\right)^{p(x)}dx\leq\rho_{p(\cdot)}\left(\frac{f\chi_{\R\setminus(\R)^{q(\cdot)}_\infty}}{\lambda}\right)=1,
	\end{equation*}
	which implies that $\|g\|_{p(\cdot)}\leq 1$. Therefore, by the hypothesis, definition of $\|\cdot\|^*_{p(\cdot)}$ and Proposition \ref{propo modular}
	\begin{align*}
		1&\geq\left\|f\chi_{\R\setminus(\R)_\infty^{q(\cdot)}}\right\|^\ast_{p(\cdot)}\geq\left\|f\chi_{\R\setminus(\R)_\infty^{q(\cdot)}}g\right\|_{r(\cdot)}
		\geq\rho_{r(\cdot)}\left(f\chi_{\R\setminus(\R)_\infty^{q(\cdot)} }g\right)\\
		&\geq\int_{\R\setminus(\R)_\infty^{q(\cdot)}}|f(x)|^{r(x)}\left(\frac{|f(x)|}{\lambda}\right)^{\frac{p(x)-r(x)}{r(x)}r(x)}dx\geq\lambda\rho_{p(\cdot)}\left(\frac{f\chi_{\R\setminus(\R)_\infty^{q(\cdot)}}}{\lambda}\right)=\lambda,
	\end{align*}
	which is a contradiction.
\end{proof}
\begin{lema}\label{lema acotación de la modular para p=r}
	Let ${p(\cdot)},{r(\cdot)}\in\mathcal{P}(\R)$ such that $r(\cdot)\leq p(\cdot)<\infty$ with $r^+<\infty$. Let ${q(\cdot)}$ by the exponent defined by $1/r(\cdot)=1/p(\cdot)+1/q(\cdot)$.	If $\left\|f\right\|^\ast_{p(\cdot)}\leq1$ and $\rho_{p(\cdot)}\left(f\chi_{\R\setminus(\R)_\infty^{q(\cdot)}}\right)<\infty$, then $\rho_{p(\cdot)}\left(k_{q(\cdot)}^{r^+}f\right)\leq1$, where
	$$k_{q(\cdot)}^{-1}=\left\|\chi_{\R\setminus(\R)_\infty^{q(\cdot)}}\right\|_\infty+\left\|\chi_{(\R)_\infty^{q(\cdot)}}\right\|_\infty.$$
\end{lema}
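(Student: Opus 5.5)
The plan is to split $f$ according to whether $q(\cdot)$ is finite or not, bound the modular of each piece separately by $1$, and then use the factor $k_{q(\cdot)}^{r^+}$ to absorb the fact that we are adding two such bounds. Write $E=\R\setminus(\R)^{q(\cdot)}_\infty$ and $F=(\R)^{q(\cdot)}_\infty$, and set $f_1=f\chi_E$, $f_2=f\chi_F$. Since $p(\cdot)<\infty$, the modular has no $L^\infty$ part, so
$$\rho_{p(\cdot)}\bigl(k_{q(\cdot)}^{r^+}f\bigr)=\rho_{p(\cdot)}\bigl(k_{q(\cdot)}^{r^+}f_1\bigr)+\rho_{p(\cdot)}\bigl(k_{q(\cdot)}^{r^+}f_2\bigr).$$
Note that $k_{q(\cdot)}=1$ if one of $E,F$ is null, and $k_{q(\cdot)}=1/2$ if both have positive measure.

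For $f_1$, I will use that $\|\cdot\|^\ast_{p(\cdot)}$ is monotone in $|f|$. This holds because $\|fg\|_{r(\cdot)}$ is monotone by Proposition \ref{propo modular}, so $\|f_1\|^\ast_{p(\cdot)}\le\|f\|^\ast_{p(\cdot)}\le1$. Together with the hypothesis $\rho_{p(\cdot)}(f_1)<\infty$, Lemma \ref{lema acotación de la modular para q finito} then gives $\rho_{p(\cdot)}(f_1)\le1$.

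For $f_2$, observe that on $F$ we have $1/q=0$, hence $r(\cdot)=p(\cdot)$ there. I will test the definition \eqref{norma conjugada} with $g=\chi_F$. This is legitimate because $\rho_{q(\cdot)}(\chi_F)=\|\chi_F\|_\infty\le1$, so $\|g\|_{q(\cdot)}\le1$. Testing gives $\|f_2\|_{r(\cdot)}\le\|f\|^\ast_{p(\cdot)}\le1$. Since $r^+<\infty$, Proposition \ref{propo modular}(4) yields $\rho_{r(\cdot)}(f_2)\le1$. Because $r=p$ on $F$, this is exactly $\rho_{p(\cdot)}(f_2)\le1$. This step is where the hypothesis $r^+<\infty$ is used: it removes any $L^\infty$ part from $\rho_{r(\cdot)}$.

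Finally, since $0<k_{q(\cdot)}^{r^+}\le k_{q(\cdot)}\le1$, Proposition \ref{propo modular}(3) gives $\rho_{p(\cdot)}(k_{q(\cdot)}^{r^+}f_i)\le k_{q(\cdot)}\,\rho_{p(\cdot)}(f_i)$ for $i=1,2$. If both $E$ and $F$ have positive measure, the total is at most $\tfrac12(1+1)=1$. If only one of them does, then only one term is present, and it is at most $1$. The only delicate point I anticipate is checking that the test function $g=\chi_F$ is admissible and that $r=p$ on $F$ is used correctly. The rest follows directly from the cited lemma and the modular properties.
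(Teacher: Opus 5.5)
Your proof is correct, but it is organized differently from the paper's. Like you, the paper starts with Lemma \ref{lema acotación de la modular para q finito} to get $\rho_{p(\cdot)}(f\chi_E)\le 1$ (in your notation $E$, $F$). It then uses this only to check that a single combined test function satisfies $\|g\|_{q(\cdot)}\le 1$, namely $g=k_{q(\cdot)}|f|^{(p(\cdot)-r(\cdot))/r(\cdot)}$ on $E$ and $g=k_{q(\cdot)}$ on $F$. Testing $\|f\|^\ast_{p(\cdot)}$ against this one $g$ gives $1\ge\rho_{r(\cdot)}(fg)\ge k_{q(\cdot)}^{r^+}\rho_{p(\cdot)}(f)$ in one stroke. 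The factor $k_{q(\cdot)}$ built into $g$ comes out as $k_{q(\cdot)}^{r(x)}\ge k_{q(\cdot)}^{r^+}$, and this is exactly where the paper needs $r^+<\infty$.

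You instead treat the two pieces separately: the lemma handles $E$, and the indicator $\chi_F$ together with $r(\cdot)=p(\cdot)$ on $F$ handles $F$. You bound each modular by $1$ and only afterwards absorb the sum with $k_{q(\cdot)}$ via Proposition \ref{propo modular}(3). This is slightly more elementary and proves more, namely $\rho_{p(\cdot)}(k_{q(\cdot)}f)\le 1$, which implies the stated bound since $k_{q(\cdot)}^{r^+}\le k_{q(\cdot)}$.

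Your closing remark about where $r^+<\infty$ is used is inaccurate, though harmlessly so: your argument never uses it. The inequality $r(\cdot)\le p(\cdot)<\infty$ already makes $\rho_{r(\cdot)}$ a pure integral, and Proposition \ref{propo modular}(4) holds for every exponent. Consequently your version would also give Case 2 of Proposition \ref{propo norma conjugada} under $r(\cdot)\le p(\cdot)<\infty$ alone. The constant there would be $k_{q(\cdot)}^{-1}\le 2$ instead of $k_{q(\cdot)}^{-r^+}$.
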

\begin{proof}
	By Lemma \ref{lema acotación de la modular para q finito}, $\rho_{p(\cdot)}\left(f\chi_{\R\setminus(\R)_\infty^{q(\cdot)}}\right)\leq1$. Therefore
	\begin{equation}\label{A1}
		\rho_{p(\cdot)}\left(f\chi_{\R\setminus(\R)_\infty^{q(\cdot)}}\right)\leq\left\| \chi_{\R\setminus(\R)^{q(\cdot)}_\infty}\right\|_\infty.
	\end{equation} 
	Define
	$$	 g(x)= \left\{ \begin{array}{lcc} k_{q(\cdot)}|f(x)|^\frac{p(x)-r(x)}{r(x)} & si & q(x)<\infty,\\ \\
	k_{q(\cdot)} & si & q(x)=\infty.\end{array} \right. $$
	By $(\ref{A1})$,
	\begin{align*}
		\rho_{q(\cdot)}\left(\frac{g}{k_{q(\cdot)}}\right)\leq\left\|\chi_{\R\setminus(\R)_\infty^{q(\cdot)}}\right\|_\infty+\left\|\chi_{(\R)^{q(\cdot)}_\infty}\right\|_\infty=k_{q(\cdot)}^{-1}.
	\end{align*}
	Since $k_{q(\cdot)}\leq 1$, Proposition \ref{propo modular} implies that $\|g\|_{q(\cdot)}\leq1$.
	
	Finally, again by Proposition \ref{propo modular},
	\begin{align*}
		1&\geq\|f\|^\ast_{p(\cdot)}\geq\|fg\|_{r(\cdot)}\geq\rho_{r(\cdot)}(fg)\\
		&= \int_{\R\setminus(\R)^{q(\cdot)}_\infty}k_{r(\cdot)}^{r(x)}|f(x)|^{r(x)}|f(x)|^{\frac{p(x)-r(x)}{r(x)}r(x)}dx+\int_{(\R)_\infty^{q(\cdot)}}k_{q(\cdot)}^{r(x)}|f(x)|^{r(x)}dx\\
		&\geq k_{q(\cdot)}^{r^+}\int_\R |f(x)|^{p(x)}=k^{r^+}_{q(\cdot)}\rho_{p(\cdot)}(f)\geq\rho_{p(\cdot)}(k^{r^+}_{q(\cdot)} f).
	\end{align*}	
\end{proof}

\begin{proof}[Proof of Proposition \ref{propo norma conjugada}]
	By Hölder's inequality \ref{teo Hölder},
	\begin{equation*}
		\|f\|^\ast_{p(\cdot)}=\underset{\|g\|_{q(\cdot)}\leq1}{\sup}\|fg\|_{r(\cdot)}
		\lesssim\underset{\|g\|_{q(\cdot)}\leq1}{\sup}\|f\|_{p(\cdot)}\|g\|_{q(\cdot)}
		\leq\|f\|_{p(\cdot)}.
	\end{equation*}
	
	On the other hand, since $\||f|\|^\ast_{p(\cdot)}=\|f\|^\ast_{p(\cdot)}$, without loss of generality we can take $f\geq0$. Moreover, by homogeneity of the norm $\|\cdot\|^*_{p(\cdot)}$, is enough to consider $\|f\|^*_{p(\cdot)}=1$. 
	
	Let $B_k(0)$ be the ball centered at the origin with radius $k$. We define $E_k=B_k(0)\cap\left\{x\in\R:p(x)<k\right\}$, and $f_k(x)=\min\left\{f(x),k\right\} \chi_{E_k}(x)$. Therefore $f_k\nearrow f$, as $k \to \infty$,  $\|f_k\|^\ast_{p(\cdot)}\leq\|f\|^\ast_{p(\cdot)}=1$, and $\rho_{p(\cdot)}(f_k)<\infty$.
	 	
	\textbf{Case 1:} If $r(\cdot)<p(\cdot)<\infty$, result $(\R)^{q(\cdot)}_\infty=\emptyset$. Applying Lemma \ref{lema acotación de la modular para q finito} to $f_k$, we obtain $\rho_{p(\cdot)}(f_k)\leq1$. Therefore, by Fatou's lemma, $\rho_{p(\cdot)}(f)\leq1$, and consequently
	\begin{equation*}
	\|f\|_{p(\cdot)}\leq1=\|f\|^\ast_{p(\cdot)}.
	\end{equation*}
		
	\textbf{Case 2:} If $r(\cdot)\leq p(\cdot)<\infty$ and $r^+<\infty$. By Lemma \ref{lema acotación de la modular para p=r},\\ $\rho_{p(\cdot)}\left(k_{q(\cdot)}^{r^+} f_k\right)\leq 1$. Therefore, by Fatou's lemma,
	\begin{equation*}
		\|f\|_{p(\cdot)}\leq k_{q(\cdot)}^{-r^+}=k_{q(\cdot)}^{-r^+}\|f\|^\ast_{p(\cdot)}.
	\end{equation*} 
\end{proof}
Now we prove the Calderón-Zygmund decomposition.
\begin{proof}[Proof of Lemma \ref{lema descomposición de C-Z}]	
		The proof of \eqref{1 de CZ} and \eqref{2 de CZ} are an direct adaptation of the proofs of the Lemmas 3.9  and 3.13 of \cite{libro CU-FIO}. For \eqref{3 de CZ}, let $a_0 > 1$ be a constant to be determined later. For each $k\in\mathbb{Z}$ and $a>a_0$  let
		$$\Omega_k=\left\{x\in\R : M^{\mathcal{D}}_{{\beta(\cdot)},{r(\cdot)}}f(x)>a^k\right\}=\bigcup_{j\in {\mathbb{J}}_k} Q_j^k$$
		where $\{Q_j^k\}_{j\in{\mathbb{J}}_k}$ are the cubes given by $\eqref{1 de CZ}$ for $\lambda=a^k$. Therefore
			\begin{equation}\label{ecuacion 2 de C-Z}
				a^k<\|\chi_{Q_j^k}\|_{\beta(\cdot)}\frac{\|f\chi_{Q_j^k}\|_{r(\cdot)}}{\|\chi_{Q_j^k}\|_{r(\cdot)}}\leq D^2_{r(\cdot)} a^k.
			\end{equation}
		We define $E_j^k=Q_j^k\setminus\Omega_k$, then the sets $E_j^k$ are pairwise disjoint for each $k$. Moreover the sets $E_j^k$ are pairwise disjoints for all $k$ and $j$. In fact, if $k>l$, and   
		\begin{align*}
			x\in E_j^k\cap E_i^l=(Q_j^k\cap Q_i^l)\setminus \Omega_l.
		\end{align*}
		Then
		$a^k<M^{\mathcal{D}}_{{\beta(\cdot)},{r(\cdot)}}f(x)\leq a^l\leq a^k$ which is absurd. We prove that the family $\{Q_j^k\}_j$ is sparse, for this we most estimate
		$$\frac{|E_j^k|}{|Q_j^k|}=\frac{|Q_j^k\setminus\Omega_{1+k}|}{|Q_j^k|}=\frac{|Q_j^k\setminus(Q_j^k\cap\Omega_{k+1})|}{|Q_j^k|}=1-\frac{|Q_j^k\cap\Omega_{k+1}|}{|Q_j^k|}.$$	
		By the definition of $\Omega_k$, the maximality of $Q_j^k$, Hölder's inequality \ref{teo Hölder}, inequality \eqref{ecuacion 2 de C-Z} applied twice, Theorem \ref{teo suma de normas}, and Lemma \ref{norma de cubos},
		\begin{align*}
			|Q_j^k\cap\Omega_{k+1}|&=\sum_{Q_i^{k+1}\subset Q_j^k}\left|Q^{k+1}_i\right|\leq C \sum_{Q_i^{k+1}\subset Q_j^k}\left\|\chi_{Q^{k+1}_i}\right\|_{r(\cdot)}\left\|\chi_{Q^{k+1}_i}\right\|_{r^\prime(\cdot)}\\
			&\leq C \sum_{Q_i^{k+1}\subset Q_j^k}a^{-(k+1)}\left\|\chi_{Q^{k+1}_i}\right\|_{\beta(\cdot)}\left\|f\chi_{Q^{k+1}_i}\right\|_{r(\cdot)}\left\|\chi_{Q^{k+1}_i}\right\|_{r^\prime(\cdot)}\\
			&\leq C a^{-(k+1)} \left\|\chi_{Q^k_j}\right\|_{\beta(\cdot)}\sum_{Q_i^{k+1}\subset Q_j^k}\left\|f\chi_{Q_j^k}\chi_{Q^{k+1}_i}\right\|_{r(\cdot)}\left\|\chi_{Q^k_j}\chi_{Q^{k+1}_i}\right\|_{r^\prime(\cdot)}\\
			&\leq C a^{-(k+1)}\left\|\chi_{Q_j^k}\right\|_{\beta(\cdot)}\left\|f\chi_{Q_j^k}\right\|_{r(\cdot)}\left\|\chi_{Q_j^k}\right\|_{r^\prime(\cdot)}\\
			&\leq Ca^{-(k+1)}D^2_{r(\cdot)} a^k\|\chi_{Q_j^k}\|_{r(\cdot)}\|\chi_{Q_j^k}\|_{r^\prime(\cdot)}\leq \frac{C}{a}|Q_j^k|.
		\end{align*}
		Finally, taking $a_0=C$,
		\begin{equation*}
			\frac{|E_j^k|}{|Q_j^k|}=1-\frac{|Q_j^k\cap\Omega_k|}{|Q_j^k|}\geq 1- \frac{C}{a}>0.
		\end{equation*}
\end{proof}

\subsection*{Acknowledgement}
	We want to thank Gonzalo Ibañez-Firnkorn, for his generosity and knowledge given to this work.

\end{document}